\numberwithin{equation}{section}
\newcommand{\B}{\mathbb{B}}
\newcommand{\N}{\mathbb{N}}
\newcommand{\R}{\mathbb{R}}
\newcommand{\sfd}{{\sf d}}
\renewcommand{\d}{{\mathrm d}}
\newcommand{\restr}[1]{\lower3pt\hbox{\(|_{#1}\)}}
\newcommand{\nchi}{{\raise.3ex\hbox{\(\chi\)}}}
\newcommand{\1}{\mathbbm 1}
\newcommand{\fr}{\penalty-20\null\hfill\(\blacksquare\)}
\newcommand{\X}{{\rm X}}
\newcommand{\Y}{{\rm Y}}
\newcommand{\XX}{\mathbb{X}}
\newcommand{\YY}{\mathbb{Y}}
\newcommand{\mm}{\mathfrak m}
\newtheorem{theorem}{Theorem}[section]
\newtheorem{corollary}[theorem]{Corollary}
\newtheorem{lemma}[theorem]{Lemma}
\newtheorem{proposition}[theorem]{Proposition}
\newtheorem{definition}[theorem]{Definition}
\newtheorem{remark}[theorem]{Remark}
\title[Projective and injective tensor products of Banach $L^0$-modules]
{Projective and injective tensor \\ products of Banach $L^0$-modules}
\author{Enrico Pasqualetto}
\address{Department of Mathematics and Statistics,
P.O.\ Box 35 (MaD), FI-40014 University of Jyvaskyla}
\email{enrico.e.pasqualetto@jyu.fi}
\begin{document}
\date{\today} 
\keywords{Banach module, tensor product, Schauder basis}
\subjclass[2020]{46M05, 47A80, 53C23, 16D90, 18F15}
\begin{abstract}
We study projective and injective tensor products of Banach $L^0$-modules over a $\sigma$-finite measure space.
En route, we extend to Banach $L^0$-modules several technical tools of independent interest, such as quotient
operators, summable families, and Schauder bases.
\end{abstract}
\maketitle
\tableofcontents
\section{Introduction}
As of now, the language of normed modules introduced by Gigli in \cite{Gigli14} has become an indispensable tool
in analysis on metric measure spaces, especially on those verifying synthetic lower Ricci curvature bounds (the so-called
\(\sf RCD\) spaces). Normed modules allow to define several spaces of measurable tensor fields, whose investigation
has remarkable analytic and geometric consequences. In this respect, three constructions are particularly important:
duals, pullbacks, and (in the case of Hilbert modules) tensor products. For example, the dual of the pullback is
important for constructing the differential of a map of bounded deformation or the velocity of a test plan (cf.\ with
the introduction of \cite{GLP22}), while the tensor product of Hilbert modules is a fundamental tool when studying
the second order differential calculus on \(\sf RCD\) spaces (see \cite[Section 3]{Gigli14}). However, since many
spaces of interest are `non-Riemannian', it would be interesting to study tensor products of non-Hilbert normed modules,
as well as to understand their relation with duals and pullbacks: this is the main goal of this paper.
We assume the reader is familiar with (projective and injective) tensor products of Banach spaces,
for which we refer e.g.\ to the authoritative monograph \cite{Ryan02}.
\medskip

Let us briefly describe the content of the paper. Given a \(\sigma\)-finite measure space \(\XX=(\X,\Sigma,\mm)\),
we consider the class of \emph{Banach \(L^0(\XX)\)-modules}, i.e.\ modules over the commutative ring \(L^0(\XX)\)
that are endowed with a complete pointwise norm operator (cf.\ with Definition \ref{def:Ban_mod}). Even though we are mostly
interested to their applications in metric measure geometry, we consider Banach \(L^0\)-modules over general measure spaces.
Our choice is due to the fact that Banach \(L^0\)-modules play an important role also in other research areas,
see for example \cite{HLR91}, as well as \cite{Guo-2011} and the references therein. The only results where
we need to require an additional assumption on the base measure space (verified in the case of metric
measure spaces) are Theorems \ref{thm:pullback_and_proj} and \ref{thm:pullback_and_inj}.
Given two Banach \(L^0(\XX)\)-modules \(\mathscr M\) and \(\mathscr N\), we first provide a useful criterion to detect
the null tensors of the algebraic tensor product \(\mathscr M\otimes\mathscr N\); see Lemma \ref{lem:crit_null_tensor}.
Its proof is quite subtle, one reason being the fact that the algebraic dual of a module might not separate the points
(differently from duals of vector spaces); cf.\ with Remark \ref{rmk:alg_tens_diff}. Having Lemma \ref{lem:crit_null_tensor}
at our disposal, we can:
\begin{itemize}
\item Define and study the \emph{projective tensor product} \(\mathscr M\hat\otimes_\pi\mathscr N\), see Section \ref{s:proj_tens}.
\item Define and study the \emph{injective tensor product} \(\mathscr M\hat\otimes_\varepsilon\mathscr N\), see Section \ref{s:inj_tens}.
\end{itemize}
Motivated by the analysis on metric spaces, our attention is focussed on the following results:
\begin{itemize}
\item The \emph{dual} of \(\mathscr M\hat\otimes_\pi\mathscr N\) can be identified with the space
\({\rm B}(\mathscr M,\mathscr N)\) of bounded \(L^0(\XX)\)-bilinear maps from \(\mathscr M\times\mathscr N\) to \(L^0(\XX)\)
(see Theorem \ref{thm:dual_proj_tensor}), while the dual of \(\mathscr M\hat\otimes_\varepsilon\mathscr N\) is a quotient
of the dual of the space \({\rm C}_{\rm pb}(\mathbb D_{\mathscr M^*}^{w^*}\times\mathbb D_{\mathscr N^*}^{w^*};L^0(\XX))\)
(see Definition \ref{def:ptwse_cont} and Theorem \ref{thm:dual_inj_prod}).
\item The operation of taking \emph{pullbacks} of Banach \(L^0(\XX)\)-modules commutes both with projective tensor
products (Theorem \ref{thm:pullback_and_proj}) and with injective tensor products (Theorem \ref{thm:pullback_and_inj}).
\end{itemize}
While some of the concepts and results we presented above are natural extensions of their version for Banach spaces,
other ones are non-trivial generalisations (see e.g.\ the two different notions of a continuous module-valued map
in Section \ref{s:cont_mod-valued}) or have no counterpart in the Banach space setting (as in the case of
pullback modules). We conclude the introduction by mentioning that a significant portion of the paper is devoted
to the development of several technical tools (new in the setting of Banach \(L^0(\XX)\)-modules), which are needed
in Sections \ref{s:proj_tens} and \ref{s:inj_tens}, and can be useful in the future research concerning normed modules:
we study \emph{quotient operators} (Section \ref{s:quotient_oper}), \emph{summable families} in Banach \(L^0(\XX)\)-modules
(Section \ref{s:summability}), and \emph{local Schauder bases} (Section \ref{s:Schauder}).
\section{Preliminaries}
Given an arbitrary set \(I\neq\varnothing\), we denote by \(\mathscr P(I)\) its power set (i.e.\ the set of its subsets) and
\[
\mathscr P_f(I)\coloneqq\big\{F\in\mathscr P(I)\;\big|\;F\text{ is finite}\big\}.
\]
Given any couple of indexes \(i,j\in I\), we define \(\delta_{ij}\in\{0,1\}\) as \(\delta_{ij}\coloneqq 1\)
if \(i=j\) and \(\delta_{ij}\coloneqq 0\) if \(i\neq j\). Moreover, if \(\X\) is a set, then the
characteristic function \(\1_E\colon\X\to\{0,1\}\) of a subset \(E\subseteq\X\) is
\[
\1_E(x)\coloneqq\left\{\begin{array}{ll}
1\\
0
\end{array}\quad\begin{array}{ll}
\text{ for every }x\in E,\\
\text{ for every }x\in\X\setminus E.
\end{array}\right.
\]
For any map \(\varphi\colon\X\to\Y\) between two sets \(\X\) and \(\Y\), we denote by \(\varphi[\X]\subseteq\Y\) the image of \(\varphi\).
\subsection{Tensor products of modules}
In this section, we recall the basics of the theory of tensor products of modules, which is originally due
to \cite{Bourbaki48}. See also \cite{Conrad18} and the references indicated therein. Our standing convention
is that all rings are assumed to have a multiplicative identity.
\begin{theorem}[Tensor products of modules]
Let \(R\) be a commutative ring. Let \(M\) and \(N\) be modules over \(R\). Then there exists a unique couple \((M\otimes N,\otimes)\),
where \(M\otimes N\) is an \(R\)-module and \(\otimes\colon M\times N\to M\otimes N\) is an \(R\)-bilinear map, such that the following
universal property holds: given any \(R\)-module \(Q\) and any \(R\)-bilinear map \(b\colon M\times N\to Q\), there exists a unique \(R\)-linear
map \(\tilde b\colon M\otimes N\to Q\), called the \textbf{\(R\)-linearisation} of \(b\), for which the diagram
\[\begin{tikzcd}
M\times N \arrow[r,"b"] \arrow[d,swap,"\otimes"] & Q \\
M\otimes N \arrow[ur,swap,"\tilde b"] &
\end{tikzcd}\]
commutes. The couple \((M\otimes N,\otimes)\) is unique up to a unique isomorphism: given any \((T,\tilde\otimes)\)
with the same properties, there exists a unique isomorphism of \(R\)-modules \(\Phi\colon M\otimes N\to T\) such that
\[\begin{tikzcd}
M\times N \arrow[r,"\otimes"] \arrow[dr,swap,"\tilde\otimes"] & M\otimes N \arrow[d,"\Phi"] \\
& T
\end{tikzcd}\]
commutes. We say that \((M\otimes N,\otimes)\), or just \(M\otimes N\), is the \textbf{tensor product} of \(M\) and \(N\).
\end{theorem}

Those elements of \(M\otimes N\) of the form \(v\otimes w\) are called \textbf{elementary tensors}.
Any \(\alpha\in M\otimes N\) is a sum of elementary tensors: \(\alpha=\sum_{i=1}^n v_i\otimes w_i\)
for some \(v_1,\ldots,v_n\in M\) and \(w_1,\ldots,w_n\in N\).
\medskip

We recall the following criterion to detect when an element \(\sum_{i=1}^n v_i\otimes w_i\in M\otimes N\) is null:
\begin{equation}\label{eq:gen_criter_null_tensor}
\sum_{i=1}^n v_i\otimes w_i=0\quad\Longleftrightarrow\quad\sum_{i=1}^n b(v_i,w_i)=0\;\text{ for every }R\text{-bilinear map }b\colon M\times N\to Q.
\end{equation}
Differently from the case of tensor products of vector spaces, in \eqref{eq:gen_criter_null_tensor}
one has to consider \(R\)-bilinear maps \(b\) with values into an arbitrary \(R\)-module \(Q\) (taking \(Q=R\) is
not sufficient). Indeed, it can happen that no non-null bilinear map \(b\colon M\times N\to R\) exists even if \(M\), \(N\) are non-trivial; see \cite{Pabst00}.
\begin{lemma}[Tensor products of \(R\)-linear maps]\label{lem:alg_tensor_hom}
Let \(R\) be a commutative ring. Let \(T\colon M\to\tilde M\) and \(S\colon N\to\tilde N\) be \(R\)-linear maps between \(R\)-modules.
Then there exists a unique \(R\)-linear map \(T\otimes S\colon M\otimes N\to\tilde M\otimes\tilde N\)
such that \((T\otimes S)(v\otimes w)=T(v)\otimes S(w)\) for every \(v\in M\) and \(w\in N\).
\end{lemma}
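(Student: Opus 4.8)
The plan is to build $T\otimes S$ directly from the universal property of the tensor product $M\otimes N$ recorded in the theorem above. First I would define the map $b\colon M\times N\to\tilde M\otimes\tilde N$ by setting $b(v,w)\coloneqq T(v)\otimes S(w)$ for all $v\in M$ and $w\in N$, and verify that $b$ is $R$-bilinear. This is really the only computational step: fixing $w$ and combining the $R$-linearity of $T$ with the $R$-bilinearity of the canonical map $\otimes\colon\tilde M\times\tilde N\to\tilde M\otimes\tilde N$, one obtains $b(rv_1+v_2,w)=(rT(v_1)+T(v_2))\otimes S(w)=r\,b(v_1,w)+b(v_2,w)$ for all $r\in R$ and $v_1,v_2\in M$; symmetrically, fixing $v$ and using the $R$-linearity of $S$ yields $R$-linearity in the second slot.

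Once $b$ is known to be $R$-bilinear, I would invoke the universal property of the couple $(M\otimes N,\otimes)$: it provides a unique $R$-linear map $\tilde b\colon M\otimes N\to\tilde M\otimes\tilde N$ such that $\tilde b\circ\otimes=b$, that is, $\tilde b(v\otimes w)=T(v)\otimes S(w)$ for every $v\in M$ and $w\in N$. I would then set $T\otimes S\coloneqq\tilde b$, which by construction satisfies the required identity on elementary tensors, thereby establishing existence.

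For uniqueness, I would exploit the fact that the elementary tensors generate $M\otimes N$ as an $R$-module, so that every element can be written as a finite sum $\sum_{i=1}^n v_i\otimes w_i$. Consequently any $R$-linear map out of $M\otimes N$ is completely determined by its values on elementary tensors, and the prescription $(T\otimes S)(v\otimes w)=T(v)\otimes S(w)$ forces any such map to coincide with $\tilde b$. No genuine obstacle arises in this argument: the statement is a direct application of the universal property, and the only point demanding a little care is the (entirely routine) bilinearity check for $b$.
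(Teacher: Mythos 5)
Your proof is correct and follows exactly the standard argument that the paper implicitly relies on (the lemma is recalled there without proof, citing the classical references): linearise the $R$-bilinear map $(v,w)\mapsto T(v)\otimes S(w)$ via the universal property of $M\otimes N$, and deduce uniqueness from the fact that elementary tensors generate $M\otimes N$. Both the bilinearity check and the uniqueness step are carried out correctly, so there is nothing to add.
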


Each commutative ring \(R\) is an \(R\)-module. Each \(R\)-module \(M\) is canonically isomorphic (as an \(R\)-module) to \(R\otimes M\)
via the \(R\)-linear map \(M\ni v\mapsto 1_R\otimes v\in R\otimes M\). In particular, \(R\otimes R\cong R\).
\subsection{The space \texorpdfstring{\(L^0(\XX)\)}{L0(X)}}
Let \(\XX=(\X,\Sigma,\mm)\) be a \(\sigma\)-finite measure space. We denote by \(L^0(\XX)\) the space
of all real-valued measurable functions from \(\X\) to \(\R\), quotiented up to \(\mm\)-a.e.\ identity.
The equivalence class in \(L^0(\XX)\) of a given measurable function \(\bar f\colon\X\to\R\) will be denoted by \([\bar f]_\mm\).
The space \(L^0(\XX)\) is a vector space and a commutative ring if endowed with the natural pointwise operations.
Moreover, fixed a probability measure \(\tilde\mm\) on \((\X,\Sigma)\) with \(\mm\ll\tilde\mm\ll\mm\), we have that
\[
\sfd_{L^0(\XX)}(f,g)\coloneqq\int|f-g|\wedge 1\,\d\tilde\mm\quad\text{ for every }f,g\in L^0(\XX)
\]
is a complete distance, and \(L^0(\XX)\) becomes a topological vector space and a topological ring if endowed
with \(\sfd_{L^0(\XX)}\). The distance \(\sfd_{L^0(\XX)}\) depends on the chosen auxiliary measure \(\tilde\mm\),
but its induced topology does not. We also have that a given sequence \((f_n)_{n\in\N}\subseteq L^0(\XX)\) converges
to a limit function \(f\in L^0(\XX)\) with respect to \(\sfd_{L^0(\XX)}\) if and only if there exists a subsequence
\((n_i)_{i\in\N}\subseteq\N\) such that \(f(x)=\lim_i f_{n_i}(x)\) for \(\mm\)-a.e.\ \(x\in\X\). Finally, \(L^0(\XX)\)
is a Riesz space if endowed with the natural partial order defined in the following way: given \(f,g\in L^0(\XX)\),
we declare that \(f\leq g\) if and only if \(f(x)\leq g(x)\) for \(\mm\)-a.e.\ \(x\in\X\). The \textbf{positive cone}
of \(L^0(\XX)\) is then denoted by
\[
L^0(\XX)^+\coloneqq\big\{f\in L^0(\XX)\;\big|\;f\geq 0\big\}.
\]
We also point out that \(L^0(\XX)\) is \textbf{Dedekind complete}, i.e.\ every subset \(\{f_i\}_{i\in I}\) of
\(L^0(\XX)\) that is \textbf{order-bounded} (which means that there exists \(g\in L^0(\XX)^+\) such that \(|f_i|\leq g\) for
every \(i\in I\)) has both a supremum \(\bigvee_{i\in I}f_i\in L^0(\XX)\) and an infimum \(\bigwedge_{i\in I}f_i\in L^0(\XX)\).
Furthermore, \(L^0(\XX)\) has both the \textbf{countable sup property} and the \textbf{countable inf property}, i.e.\ for
any order-bounded set \(\{f_i\}_{i\in I}\subseteq L^0(\XX)\) one can find \(C\subseteq I\) countable with
\(\bigvee_{i\in C}f_i=\bigvee_{i\in I}f_i\) and \(\bigwedge_{i\in C}f_i=\bigwedge_{i\in I}f_i\).
More generally, the space \(L^0_{\rm ext}(\XX)\) of measurable functions from \(\X\) to \([-\infty,+\infty]\), quotiented up
to \(\mm\)-a.e.\ identity, is a Dedekind complete Riesz space with the countable sup/inf properties. Notice that every set in
\(L^0_{\rm ext}(\XX)\) is order-bounded and that \(L^0(\XX)\) is a solid Riesz subspace of \(L^0_{\rm ext}(\XX)\).
\medskip

Given any measurable set \(E\in\Sigma\) with \(\mm(E)>0\), we will use the following shorthand notation:
\[
\XX|_E\coloneqq(\X,\Sigma,\mm|_E),
\]
where \(\mm|_E\) stands for the restriction of \(\mm\) to \(E\), i.e.\ we set \(\mm|_E(F)\coloneqq\mm(E\cap F)\) for every \(F\in\Sigma\).
\begin{remark}\label{rmk:aux_estim_sup}{\rm
Let \(\XX=(\X,\Sigma,\mm)\) be a \(\sigma\)-finite measure space and \(\{f_i\}_{i\in I}\subseteq L^0_{\rm ext}(\XX)\).
Fix a representative \(\bar f_i\) of \(f_i\) for any \(i\in I\). Suppose there exists a measurable function \(\bar g\colon\X\to[-\infty,+\infty]\)
such that \(\sup_{i\in I}\bar f_i(x)\leq\bar g(x)\) for all \(x\in\X\); we do not require that \(x\mapsto\sup_{i\in I}\bar f_i(x)\) is measurable.
Then \(\bigvee_{i\in I}f_i\leq[\bar g]_\mm\). Indeed, we can find a countable set \(C\subseteq I\) such that
\(\bigvee_{i\in C}f_i=\bigvee_{i\in I}f_i\). As \(\sup_{i\in C}\bar f_i(x)\leq\bar g(x)\) for every \(x\in\X\) and
\(\bigvee_{i\in C}f_i=\big[\sup_{i\in C}\bar f_i\big]_\mm\), we get \(\bigvee_{i\in I}f_i\leq[\bar g]_\mm\).
\fr}\end{remark}

We also point out that the metric space \((L^0(\XX),\sfd_{L^0(\XX)})\) is separable if and only if the measure space
\((\X,\Sigma,\mm)\) is \textbf{separable}, which means that we can find a sequence \((E_n)_{n\in\N}\subseteq\Sigma\) such that
\[
\inf_{n\in\N}\mm(E_n\Delta E)=0\quad\text{ for every }E\in\Sigma\text{ such that }\mm(E)<+\infty.
\]
We refer e.g.\ to \cite[Section 1.1.2]{GP20} for a more detailed discussion on \(L^0(\XX)\) spaces.
See also \cite{AliprantisBorder99,Bogachev07}.
\subsection{Banach spaces}
We briefly recall some definitions and results concerning Banach spaces.
\medskip

Given an index set \(I\neq\varnothing\) and an exponent \(p\in[1,\infty]\), we denote by \(\ell_p(I)\) the vector space
\[
\ell_p(I)\coloneqq\big\{a=(a_i)_{i\in I}\in\R^I\;\big|\;\|a\|_{\ell_p(I)}<+\infty\big\},
\]
where for any \(a=(a_i)_{i\in I}\in\R^I\) we define the quantity \(\|a\|_{\ell_p(I)}\in[0,+\infty]\) as
\[
\|a\|_{\ell_p(I)}\coloneqq\left\{\begin{array}{ll}
\big(\sum_{i\in I}|a_i|^p\big)^{1/p}\\
\sup_{i\in I}|a_i|
\end{array}\quad\begin{array}{ll}
\text{ if }p<\infty,\\
\text{ if }p=\infty,
\end{array}\right.
\]
where \(\sum_{i\in I}|a_i|^p\coloneqq\sup_{F\in\mathscr P_f(I)}\sum_{i\in F}|a_i|^p\).
It holds that \((\ell_p(I),\|\cdot\|_{\ell_p(I)})\) is a Banach space.
\medskip

An \textbf{(unconditional) Schauder basis} of a Banach space \(\B\) is a family of vectors \(\{v_i\}_{i\in I}\subseteq\B\)
such that for any \(v\in\B\) there is a unique \((\lambda_i)_{i\in I}\subseteq\R^I\) for which
\(\{\lambda_i v_i\}_{i\in I}\subseteq\B\) is summable and
\[
v=\sum_{i\in I}\lambda_i v_i.
\]
We recall that this means that for any \(\varepsilon>0\) there exists \(F_\varepsilon\in\mathscr P_f(I)\) such that
\[
\bigg\|v-\sum_{i\in F}\lambda_i v_i\bigg\|_\B<\varepsilon\quad\text{ for every }F\in\mathscr P_f(I)\text{ with }F_\varepsilon\subseteq F.
\]
We point out that, letting \(\overline{\rm span}(S)\) denote the closure of the linear span of a set \(S\subseteq\B\), we have
\begin{equation}\label{eq:direct_decomp_Schauder}
v_i\notin\overline{\rm span}\big\{v_j\;\big|\;j\in I\setminus\{i\}\big\}\quad\text{ for every }i\in I.
\end{equation}
We also recall that the canonical elements \(({\sf e}_i)_{i\in I}\subseteq\ell_1(I)\), which are given by
\begin{equation}\label{eq:def_canonical_elements}
{\sf e}_i\coloneqq(\delta_{ij})_{j\in I}\in\ell_1(I),
\end{equation}
form an (unconditional) Schauder basis of \(\ell_1(I)\). See e.g.\ \cite{FHHMZ10} for an account of Schauder bases.
\medskip

A separable Banach space \(\B\) is said to be a \textbf{universal separable Banach space} if every separable Banach space
can be embedded linearly and isometrically in \(\B\). The Banach--Mazur theorem states that universal separable Banach spaces
exist; for instance, the space \({\rm C}([0,1])\) endowed with the supremum norm has this property. See e.g.\ \cite{BP75} for a proof
of this result.
\subsection{Banach \texorpdfstring{\(L^0\)}{L0}-modules}
The notion of normed/Banach \(L^0(\XX)\)-module we are going to recall was introduced in \cite{Gigli14},
but the axiomatisation we will present is taken from \cite{Gigli17} (with a slight difference in the terminology,
since here we distinguish between non-complete normed modules and Banach modules). Unless otherwise specified,
the discussion is essentially taken from \cite{Gigli14,Gigli17,Ben18}.
\begin{definition}[Banach \(L^0\)-module]\label{def:Ban_mod}
Let \(\XX\) be a \(\sigma\)-finite measure space, \(\mathscr M\) a module over \(L^0(\XX)\).
Then we say that \(\mathscr M\) is a \textbf{normed \(L^0(\XX)\)-module} if it is endowed with
a map \(|\cdot|\colon\mathscr M\to L^0(\XX)^+\), which is said to be a \textbf{pointwise norm}
on \(\mathscr M\), verifying the following properties:
\[\begin{split}
|v|\geq 0&\quad\text{ for every }v\in\mathscr M,\text{ with equality if and only if }v=0,\\
|v+w|\leq|v|+|w|&\quad\text{ for every }v,w\in\mathscr M,\\
|f\cdot v|=|f||v|&\quad\text{ for every }f\in L^0(\XX)\text{ and }v\in\mathscr M.
\end{split}\]
Moreover, we say that \(\mathscr M\) is a \textbf{Banach \(L^0(\XX)\)-module} when the following distance is complete:
\[
\sfd_{\mathscr M}(v,w)\coloneqq\sfd_{L^0(\XX)}(|v-w|,0)\quad\text{ for every }v,w\in\mathscr M.
\]
\end{definition}

In the case where \(\XX_{\sf o}=(\{{\sf o}\},\delta_{\sf o})\) is the one-point probability space,
the normed \(L^0(\XX_{\sf o})\)-modules (resp.\ the Banach \(L^0(\XX_{\sf o})\)-modules) can be identified
with the normed spaces (resp.\ the Banach spaces), with the only caveat that the distance \(\sfd_{\mathscr M}\)
associated with a normed \(L^0(\XX_{\sf o})\)-module \(\mathscr M\) is not induced by the norm
\(\|\cdot\|_{\mathscr M}\) of \(\mathscr M\). However, one has \(\sfd_{\mathscr M}(v,0)=\|v\|_{\mathscr M}\wedge 1\)
for all \(v\in\mathscr M\).
\medskip

Next, we recall/introduce a number of definitions related to normed and Banach \(L^0(\XX)\)-modules.
Let \(\XX=(\X,\Sigma,\mm)\) be a \(\sigma\)-finite measure space. Let \(\mathscr M\) and \(\mathscr N\)
be normed \(L^0(\XX)\)-modules. Given any measurable set \(E\in\Sigma\), we can consider the `localisation'
of \(\mathscr M\) on \(E\), i.e.\ the space
\[
\mathscr M|_E\coloneqq\1_E\cdot\mathscr M=\big\{v\in\mathscr M\;\big|\;\1_{\X\setminus E}\cdot v=0\big\}.
\]
We can regard \(\mathscr M|_E\) either as a normed \(L^0(\XX|_E)\)-module or as a normed \(L^0(\XX)\)-submodule of \(\mathscr M\).
We say that some elements \(v_1,\ldots,v_n\in\mathscr M\) are \textbf{independent} on \(E\) provided the mapping
\[
L^0(\XX|_E)^n\ni(f_1,\ldots,f_n)\mapsto\sum_{i=1}^n f_i\cdot v_i\in\mathscr M|_E
\]
is injective, while a vector subspace \(\mathcal V\subseteq\mathscr M\) \textbf{generates} \(\mathscr M\)
on \(E\) if \(\mathscr M|_E={\rm cl}_{\mathscr M}\big(\1_E\cdot\mathscr G(\mathcal V)\big)\), where
\[
\mathscr G(\mathcal S)\coloneqq\bigg\{\sum_{i=1}^n\1_{E_i}\cdot v_i\in\mathscr M\;\bigg|\;
n\in\N,\,(E_i)_{i=1}^n\subseteq\Sigma\text{ partition of }\X,\,(v_i)_{i=1}^n\subseteq\mathscr M\bigg\}
\]
for every subset \(\mathcal S\subseteq\mathscr M\). The module \(\mathscr M\) is said to be \textbf{finitely-generated}
if there exists a finite-dimensional vector subspace \(\mathcal V\subseteq\mathscr M\) that generates \(\mathscr M\)
(on \(\X\)). A \textbf{local basis} for \(\mathscr M\) on \(E\) is a collection of
elements \(v_1,\ldots,v_n\in\mathscr M\) that are independent on \(E\) and have the property that their linear span generates
\(\mathscr M\) on \(E\). In this case, \(L^0(\XX|_E)^n\ni(f_1,\ldots,f_n)\mapsto\sum_{i=1}^n f_i\cdot v_i\in\mathscr M|_E\)
is bijective. Since two local bases on \(E\) must have the same cardinality, one can unambiguously say that \(\mathscr M\)
has \textbf{local dimension} \(n\) on \(E\). Moreover, \(\mathscr M\) admits a (\(\mm\)-a.e.\ essentially unique)
\textbf{dimensional decomposition} \((D_n)_{n\in\N\cup\{\infty\}}\), which means that \((D_n)_{n\in\N\cup\{\infty\}}\subseteq\Sigma\)
is a partition of \(\X\) having the following property: \(\mathscr M\) has local dimension \(n\) on \(D_n\) for every \(n\in\N\),
and \(\mathscr M|_E\) is not finitely-generated whenever \(E\in\Sigma\) is a set satisfying \(E\subseteq D_\infty\) and \(\mm(E)>0\).
\medskip

The \textbf{support} of \(\mathscr M\) is the `biggest' subset \({\rm S}(\mathscr M)\) of \(\X\) where some element of \(\mathscr M\) is not null, i.e.
\[
{\rm S}(\mathscr M)\in\Sigma\quad\text{ is }\mm\text{-a.e.\ characterised by }\1_{{\rm S}(\mathscr M)}\coloneqq\bigvee_{v\in\mathscr M}\1_{\{|v|>0\}}.
\]
The space \(L^0(\XX)\) itself is a Banach \(L^0(\XX)\)-module with \({\rm S}(L^0(\XX))=\X\).
The \textbf{unit sphere} of \(\mathscr M\) is
\[
\mathbb S_{\mathscr M}\coloneqq\big\{v\in\mathscr M\;\big|\;|v|(x)\in\{0,1\}\text{ for }\mm\text{-a.e.\ }x\in\X\big\}.
\]
The \textbf{signum map} \({\rm sgn}\colon\mathscr M\to\mathbb S_{\mathscr M}\) on \(\mathscr M\) is defined as
\[
{\rm sgn}(v)\coloneqq\frac{\1_{\{|v|>0\}}}{|v|}\cdot v\in\mathbb S_{\mathscr M}\quad\text{ for every }v\in\mathscr M.
\]
Notice that \(v=|v|\cdot{\rm sgn}(v)\) for every \(v\in\mathscr M\). Moreover, we define the \textbf{unit disc} of \(\mathscr M\) as
\[
\mathbb D_{\mathscr M}\coloneqq\big\{v\in\mathscr M\;\big|\;|v|(x)\leq 1\text{ for }\mm\text{-a.e.\ }x\in\X\big\}.
\]
A map \(T\colon\mathscr M\to\mathscr N\) is said to be a \textbf{homomorphism of normed \(L^0(\XX)\)-modules}
provided it is \(L^0(\XX)\)-linear and continuous, or equivalently if it is linear and there exists \(g\in L^0(\XX)^+\) such that
\begin{equation}\label{eq:def_hom_nmod}
|T(v)|\leq g|v|\quad\text{ for every }v\in\mathscr M.
\end{equation}
We denote by \(\textsc{Hom}(\mathscr M;\mathscr N)\) the space of all homomorphisms of normed \(L^0(\XX)\)-modules from
\(\mathscr M\) to \(\mathscr N\). It is a normed \(L^0(\XX)\)-module if endowed with the natural pointwise operations and with
\[
|T|\coloneqq\bigvee_{v\in\mathscr M}\frac{\1_{\{|v|>0\}}|T(v)|}{|v|}=
\bigwedge\big\{g\in L^0(\XX)^+\;\big|\;g\text{ satisfies }\eqref{eq:def_hom_nmod}\big\}
\quad\text{ for all }T\in\textsc{Hom}(\mathscr M;\mathscr N).
\]
If \(\mathscr N\) is complete, then \(\textsc{Hom}(\mathscr M;\mathscr N)\) is a Banach \(L^0(\XX)\)-module. By an \textbf{isomorphism of normed \(L^0(\XX)\)-modules} we mean
a bijective homomorphism of normed \(L^0(\XX)\)-modules \(T\colon\mathscr M\to\mathscr N\) that preserves the pointwise norm, i.e.\ \(|T(v)|=|v|\) holds for every \(v\in\mathscr M\).
Whenever an isomorphism between \(\mathscr M\) and \(\mathscr N\) exists, we write \(\mathscr M\cong\mathscr N\).
The \textbf{kernel} \({\rm ker}(T)\) of any \(T\in\textsc{Hom}(\mathscr M;\mathscr N)\), which is given by
\({\rm ker}(T)\coloneqq\big\{v\in\mathscr M\,:\,T(v)=0\big\}\), is a closed normed \(L^0(\XX)\)-submodule of \(\mathscr M\).
\medskip

The \textbf{dual} of a normed \(L^0(\XX)\)-module \(\mathscr M\) is defined as
\[
\mathscr M^*\coloneqq\textsc{Hom}(\mathscr M;L^0(\XX)).
\]
If \(\mathscr M\) is a Banach \(L^0(\XX)\)-module and \(\mathscr V\) is a Banach \(L^0(\XX)\)-submodule of \(\mathscr M\), then
we have that the \textbf{quotient module} \(\mathscr M/\mathscr V\) is a Banach \(L^0(\XX)\)-module if endowed with the pointwise norm
\[
|w+\mathscr V|\coloneqq\bigwedge_{v\in\mathscr V}|w+v|\quad\text{ for every }w+\mathscr V\in\mathscr M/\mathscr V.
\]
Any normed \(L^0(\XX)\)-module \(\mathscr M\) has a unique \textbf{completion} \((\bar{\mathscr M},\iota)\),
i.e.\ \(\bar{\mathscr M}\) is a Banach \(L^0(\XX)\)-module and \(\iota\colon\mathscr M\to\bar{\mathscr M}\) is a
pointwise norm preserving homomorphism of normed \(L^0(\XX)\)-modules such that \(\iota[\mathscr M]\) is dense
in \(\bar{\mathscr M}\). Uniqueness is in this sense:
given any \((\tilde{\mathscr M},\tilde\iota)\) having the same properties as \((\bar{\mathscr M},\iota)\), there is a unique
isomorphism of Banach \(L^0(\XX)\)-modules \(\phi\colon\bar{\mathscr M}\to\tilde{\mathscr M}\) with \(\tilde\iota=\phi\circ\iota\).
\begin{definition}[Categories of Banach \(L^0(\XX)\)-modules]
Let \(\XX\) be a \(\sigma\)-finite measure space. Then:
\begin{itemize}
\item[\(\rm i)\)] We denote by \({\bf BanMod}_\XX\) the category whose objects are the Banach \(L^0(\XX)\)-modules, and where
the morphisms between Banach \(L^0(\XX)\)-modules \(\mathscr M\), \(\mathscr N\) are given by \(\textsc{Hom}(\mathscr M;\mathscr N).\)
\item[\(\rm ii)\)] We denote by \({\bf BanMod}_\XX^1\) the category whose objects are the Banach \(L^0(\XX)\)-modules, and where
the morphisms between Banach \(L^0(\XX)\)-modules \(\mathscr M\),\(\mathscr N\) are given by \(\mathbb D_{\textsc{Hom}(\mathscr M;\mathscr N)}\).
\end{itemize}
\end{definition}

Notice that \({\bf BanMod}_\XX^1\) is a lluf subcategory of \({\bf BanMod}_\XX\). It is proved in \cite[Theorem 3.13]{Pas22}
that \({\bf BanMod}_\XX^1\) is a bicomplete category (i.e.\ it admits all small limits and colimits), while it is observed
in \cite[Remark 3.1]{Pas22} that \({\bf BanMod}_\XX\) admits all finite limits and colimits.
\begin{theorem}[Hahn--Banach]\label{thm:Hahn-Banach}
Let \(\XX\) be a \(\sigma\)-finite measure space and \(\mathscr M\) a Banach \(L^0(\XX)\)-module. Then for any given
\(v\in\mathscr M\) there exists an element \(\omega_v\in\mathbb S_{\mathscr M^*}\) such that \(\omega_v(v)=|v|\).
\end{theorem}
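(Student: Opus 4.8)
The plan is to obtain \(\omega_v\) through a Hahn--Banach/Zorn argument, using the Dedekind completeness of \(L^0(\XX)\) to carry out the one-step extension at the level of pointwise norms. Write \(E\coloneqq\{|v|>0\}\). First I would fix the desired functional on the cyclic submodule \(\mathscr V\coloneqq L^0(\XX)\cdot v\) by setting \(\omega(f\cdot v)\coloneqq f|v|\). This is well defined, since \(f\cdot v=g\cdot v\) forces \(f=g\) \(\mm\)-a.e.\ on \(E\) while \(|v|=0\) off \(E\); it is \(L^0(\XX)\)-linear; it satisfies \(\omega(v)=|v|\); and since \(|\omega(f\cdot v)|=|f||v|=|f\cdot v|\) its pointwise operator norm on \(\mathscr V\) equals \(\1_E\). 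The whole task then reduces to extending \(\omega\) to all of \(\mathscr M\) while preserving the bound \(|\tilde\omega(u)|\le\1_E|u|\).

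Next I would set up Zorn's lemma on the collection of pairs \((\mathscr W,\tilde\omega)\), where \(\mathscr W\) is a closed \(L^0(\XX)\)-submodule containing \(\mathscr V\) and \(\tilde\omega\colon\mathscr W\to L^0(\XX)\) is an \(L^0(\XX)\)-linear extension of \(\omega\) with \(|\tilde\omega(u)|\le\1_E|u|\) for all \(u\in\mathscr W\), ordered by extension. For a chain, the common functional is defined on the union of the \(\mathscr W\)'s, is \(1\)-Lipschitz for the module distances because of the bound, and therefore extends by uniform continuity to the closure of the union; this yields an upper bound, so Zorn produces a maximal element \((\mathscr W_0,\omega_0)\).

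The hard part will be showing \(\mathscr W_0=\mathscr M\) by a single-step extension, where the module structure creates a genuine difficulty absent in the Banach space case: a vector \(w\notin\mathscr W_0\) may still have localisations \(\1_A\cdot w\) lying in \(\mathscr W_0\). To isolate the truly new direction I would let \(S\in\Sigma\) be the essential union of all \(A\) with \(\1_A\cdot w\in\mathscr W_0\); by the countable sup property \(\1_S=\bigvee_n\1_{A_n}\) for suitable such \(A_n\), and finite gluing plus closedness of \(\mathscr W_0\) give \(\1_S\cdot w\in\mathscr W_0\). Then \(w'\coloneqq\1_{\X\setminus S}\cdot w\) has no nonzero localisation inside \(\mathscr W_0\), so the formula \(\omega_1(u+h\cdot w')\coloneqq\omega_0(u)+h\,c\) on \(\mathscr W_0+L^0(\XX)\cdot w'\) is consistent precisely when \(c\) is supported in \(\{|w'|>0\}\) and \(|\omega_0(u)+c|\le\1_E|u+w'|\) for all \(u\in\mathscr W_0\). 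For the latter I would take any \(c\) with \(c_-\le c\le c_+\), where
\[
c_-\coloneqq\bigvee_{u\in\mathscr W_0}\big(\omega_0(u)-\1_E|u-w'|\big),\qquad c_+\coloneqq\bigwedge_{u\in\mathscr W_0}\big(\1_E|u+w'|-\omega_0(u)\big).
\]
Computed in \(L^0_{\rm ext}(\XX)\) these satisfy \(|c_-|,|c_+|\le\1_E|w'|\) (so they lie in the solid subspace \(L^0(\XX)\) and automatically vanish on \(\{|w'|=0\}\), meeting the support constraint), and the triangle inequality \(\omega_0(u_1+u_2)\le\1_E|u_1-w'|+\1_E|u_2+w'|\) gives \(c_-\le c_+\). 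A short localisation on \(\{h\neq0\}\), using \(L^0(\XX)\)-linearity of \(\omega_0\), upgrades the estimate to \(|\omega_1(u+h\cdot w')|\le\1_E|u+h\cdot w'|\) for every \(h\). Since \(w=\1_S\cdot w+w'\in\mathscr W_0+L^0(\XX)\cdot w'\), the closure of \(\mathscr W_0+L^0(\XX)\cdot w'\) (with \(\omega_1\) extended by continuity) strictly enlarges \(\mathscr W_0\), contradicting maximality; hence \(\mathscr W_0=\mathscr M\).

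Finally I would promote the bound to membership in the unit sphere: writing \(\omega_v\coloneqq\omega_0\), we have \(\omega_v(v)=|v|\) and \(|\omega_v|\le\1_E\), while \(|v|=\omega_v(v)=|\omega_v(v)|\le|\omega_v|\,|v|\) forces \(|\omega_v|\ge1\) on \(E\). Therefore \(|\omega_v|=\1_E\), which takes values in \(\{0,1\}\) \(\mm\)-a.e., i.e.\ \(\omega_v\in\mathbb S_{\mathscr M^*}\), as required. The one genuine obstacle is the single-step extension of the previous paragraph: everything else is the standard Hahn--Banach scaffolding, whereas the passage from \(w\) to \(w'\) and the verification that the Dedekind-complete suprema \(c_\pm\) are finite and correctly supported is exactly the point where the \(L^0(\XX)\)-module setting departs from the classical one.
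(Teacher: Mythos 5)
Your proof is correct, but be aware that the paper contains no proof of Theorem \ref{thm:Hahn-Banach} to compare against: it cites \cite{Gigli14}, where the statement is deduced from the classical Hahn--Banach theorem for Banach spaces, and \cite{LP23} for ``a more direct proof tailored to normed modules''. Your argument is of the latter, self-contained kind: a transcription of the classical Zorn-plus-one-step-extension scheme in which the Dedekind complete lattice \(L^0(\XX)\) (resp.\ \(L^0_{\rm ext}(\XX)\)) plays the role of \(\R\). The genuinely module-theoretic points are all handled correctly: the reduction from \(w\) to \(w'=\1_{\X\setminus S}\cdot w\) (using the countable sup property, finite gluing and closedness of \(\mathscr W_0\) to get \(\1_S\cdot w\in\mathscr W_0\)) is exactly what makes the one-step extension consistent, since well-posedness of \(\omega_1(u+h\cdot w')=\omega_0(u)+hc\) is equivalent to \(c\) vanishing on \(\{|w'|=0\}\); your bounds \(|c_\pm|\le\1_E|w'|\) show \(c_\pm\in L^0(\XX)\) and force any \(c\) with \(c_-\le c\le c_+\) to meet that support constraint; \(c_-\le c_+\) follows from the triangle inequality as you indicate (taking first the supremum in \(u_1\), then the infimum in \(u_2\)); and the localisation of the \(h=1\) estimate to arbitrary \(h\) via \(\tilde u=\tfrac{\1_{\{h\neq 0\}}}{h}\cdot u\in\mathscr W_0\) goes through, as does the final bootstrap \(|\omega_v|=\1_E\), which indeed lands in \(\mathbb S_{\mathscr M^*}\). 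Compared with the route the paper attributes to \cite{Gigli14}, yours never invokes the scalar Hahn--Banach theorem and never leaves the module category, so the pointwise bound \(|\tilde\omega(u)|\le\1_E|u|\) is preserved at every stage and no upgrade from \(\R\)-linearity to \(L^0(\XX)\)-linearity is needed; what the classical reduction buys instead is brevity, outsourcing both Zorn's lemma and the extension step to known results at the cost of an auxiliary Banach-space structure.
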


Theorem \ref{thm:Hahn-Banach} appeared in \cite{Gigli14} and was obtained as a consequence of the classical
Hahn--Banach theorem. For a more direct proof tailored to normed modules, we refer to \cite[Theorem 3.30]{LP23}.
\medskip

A \textbf{norming subset} of \(\mathscr M^*\) is a subset \(\mathcal T\subseteq\mathbb D_{\mathscr M^*}\)
satisfying \(|v|=\bigvee_{\omega\in\mathcal T}\omega(v)\) for every \(v\in\mathscr M\).
The Hahn--Banach theorem ensures that the unit disc \(\mathbb D_{\mathscr M^*}\) itself is a norming subset of \(\mathscr M^*\).
\begin{definition}[Weak\(^*\) topology]\label{def:weak_star_top}
Let \(\XX\) be a \(\sigma\)-finite measure space and let \(\mathscr M\) be a Banach \(L^0(\XX)\)-module.
Then we define the \textbf{weak\(^*\) topology} on \(\mathscr M^*\) as the coarsest topology induced by
\[
\{\delta_v\,:\,v\in\mathscr M\},\quad\text{ where }\delta_v\colon\mathscr M^*\to L^0(\XX)\text{ is given by }
\delta_v(\omega)\coloneqq\omega(v)\text{ for every }\omega\in\mathscr M^*.
\]
\end{definition}
\begin{remark}{\rm
Similarly, one could define a weak topology on \(\mathscr M\). Moreover, the weak and weak\(^*\) topologies
on Banach \(L^0\)-modules verify properties that generalise the corresponding ones for Banach spaces.
However, in this paper we will not investigate further in this direction.
\fr}\end{remark}
\subsubsection{Examples of Banach \texorpdfstring{\(L^0\)}{L0}-modules}
We recall two key examples of Banach \(L^0(\XX)\)-modules.
\begin{definition}[The space \(L^0(\XX;\B)\)]
Let \(\XX=(\X,\Sigma,\mm)\) be a \(\sigma\)-finite measure space, \(\B\) a Banach space. Then we denote by \(L^0(\XX;\B)\) the space of all measurable maps from \(\X\)
to \(\B\) taking values into a separable subset of \(\B\) (which depends on the map itself), quotiented up to \(\mm\)-a.e.\ equality.
\end{definition}

The \emph{\(L^0\)-Lebesgue--Bochner space} \(L^0(\XX;\B)\) is a Banach \(L^0(\XX)\)-module if endowed with
\[
|v|(x)\coloneqq\|v(x)\|_\B\quad\text{ for every }v\in L^0(\XX;\B)\text{ and }\mm\text{-a.e.\ }x\in\X.
\]
Given a vector \({\sf v}\in\B\), we denote by \(\underline{\sf v}\in L^0(\XX;\B)\) the vector field that is
a.e.\ equal to \(\sf v\), i.e.\ we set
\begin{equation}\label{eq:def_const_vf}
\underline{\sf v}(x)\coloneqq{\sf v}\in\B\quad\text{ for }\mm\text{-a.e.\ }x\in\X.
\end{equation}

We also recall the following definition of \emph{module-valued space of generalised sequences}:
\begin{definition}[The space \(\ell_p(I,\mathscr M)\)]
Let \(I\neq\varnothing\) be an index set and \(p\in[1,\infty)\). Let \(\XX\) be a \(\sigma\)-finite measure space and \(\mathscr M\) a Banach \(L^0(\XX)\)-module.
Given any \(v=(v_i)_{i\in I}\in\mathscr M^I\), we set
\[
|v|_p\coloneqq\bigvee_{F\in\mathscr P_f(I)}\Big(\sum_{i\in F}|v_i|^p\Big)^{1/p}.
\]
Notice that \(|v|_p\in L^0_{\rm ext}(\XX)^+\) for every \(v\in\mathscr M^I\). Then we define the space \(\ell_p(I,\mathscr M)\) as
\[
\ell_p(I,\mathscr M)\coloneqq\big\{v\in\mathscr M^I\;\big|\;|v|_p\in L^0(\XX)^+\big\}.
\]
\end{definition}

The space \(\big(\ell_p(I,\mathscr M),|\cdot|_p\big)\) is a Banach \(L^0(\XX)\)-module, as it follows from \cite[Proposition 3.10]{Pas22} (notice indeed that
\(\ell_p(I,\mathscr M)\) is a particular example of \(\ell_p\)-sum in the sense of \cite[Definition 3.9]{Pas22}).
\subsubsection{Fiberwise representation of a Banach \texorpdfstring{\(L^0\)}{L0}-module}
One can easily check that the space of measurable sections of a measurable Banach bundle is a Banach \(L^0(\XX)\)-module,
a particular example being given by the \(L^0\)-Lebesgue--Bochner space \(L^0(\XX;\B)\), which corresponds to the constant
bundle \(\B\). On the other hand, it is much more difficult to show the converse, i.e.\ that any Banach \(L^0(\XX)\)-module
can be represented as the space of sections of some measurable Banach bundle. Results in this direction have been obtained
in \cite{LP18,DMLP21}. We will use one such result (i.e.\ Theorem \ref{thm:Serre-Swan} below) to prove Lemma \ref{lem:fin-gen_cont},
which in turn will be essential in order to obtain Lemma \ref{lem:crit_null_tensor}, and accordingly to introduce projective
and injective tensor products of Banach \(L^0\)-modules.
\medskip

Given a \(\sigma\)-finite measure space \(\XX=(\X,\Sigma,\mm)\), a separable Banach space \(\B\), and measurable maps
\(v_1,\ldots,v_n\colon\X\to\B\), we say that the multivalued map \(\X\ni x\mapsto{\bf E}(x)\subseteq\B\), which we define as
\[
{\bf E}(x)\coloneqq{\rm span}\{v_1(x),\ldots,v_n(x)\}\subseteq\B\quad\text{ for every }x\in\X,
\]
is a \textbf{measurable Banach bundle} on \(\XX\). Notice that each \({\bf E}(x)\) is a closed vector subspace of \(\B\).
The space \(\Gamma_\mm({\bf E})\) of all \(\mm\)-measurable \textbf{sections} of \(\bf E\) is then defined as the set of
all measurable maps \(v\colon\X\to\B\) satisfying \(v(x)\in{\bf E}(x)\) for \(\mm\)-a.e.\ \(x\in\X\), quotiented up to
\(\mm\)-a.e.\ equality. It turns out that \(\Gamma_\mm({\bf E})\) is a Banach \(L^0(\XX)\)-module if endowed with the
natural pointwise operations.
\begin{theorem}[Fiberwise representation of Banach \(L^0\)-modules]\label{thm:Serre-Swan}
Let \(\XX\) be a \(\sigma\)-finite measure space and \(\mathscr M\) a Banach \(L^0(\XX)\)-module. Let \(\B\)
be a universal separable Banach space. Suppose \(\mathscr M\) has local dimension \(n\in\N\) on a set \(E\in\Sigma\).
Then there exist measurable maps \(v_1,\ldots,v_n\colon\X\to\B\) such that \(\mathscr M|_E\cong\Gamma_\mm({\bf E})\),
where we set \({\bf E}(x)\coloneqq{\rm span}\{v_1(x),\ldots,v_n(x)\}\) for every \(x\in\X\).
\end{theorem}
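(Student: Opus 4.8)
The plan is to reduce the statement, which asserts a fiberwise representation for a module of constant finite local dimension $n$ on $E$, to the case of a module that admits a genuine local basis, and then to embed the fibers isometrically into the universal space $\B$ in a measurable way. Working entirely on $E$, I replace $\mathscr M$ by $\mathscr M|_E$ and $\XX$ by $\XX|_E$, so that I may assume $\mathscr M$ has local dimension $n$ on all of $\X$. By the very definition of local dimension, there exists a local basis $w_1,\ldots,w_n\in\mathscr M$, i.e.\ the map $L^0(\XX)^n\ni(f_1,\ldots,f_n)\mapsto\sum_i f_i\cdot w_i\in\mathscr M$ is a bijection. The first concrete task is to upgrade this algebraic statement to a quantitative one: I want to show that the functions $x\mapsto|w_i|(x)$ and, more importantly, the pointwise Gram-type comparison between $\big|\sum_i f_i w_i\big|$ and $\big(\sum_i f_i(x)^2\big)^{1/2}$ are controlled above and below by strictly positive measurable functions on $\X$, at least after a further countable measurable partition of $\X$. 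On each piece of such a partition the pointwise norm of $\sum_i f_i w_i$ is comparably equivalent to a fixed norm on $\R^n$, which is what will let me speak of continuously (indeed measurably) varying finite-dimensional normed fibers.

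Next I would construct the fibers. On a fixed piece of the partition the assignment $x\mapsto N_x$, where $N_x$ is the norm on $\R^n$ given by $N_x(a_1,\ldots,a_n)\coloneqq\big|\sum_i a_i w_i\big|(x)$, is a measurable family of norms on $\R^n$. Since $\B$ is a universal separable Banach space, every finite-dimensional normed space embeds isometrically into $\B$; the point is to perform this embedding \emph{measurably} in $x$. Concretely, I would exhibit measurable maps $v_1,\ldots,v_n\colon\X\to\B$ with the property that for $\mm$-a.e.\ $x$ the linear map $\R^n\to\B$ sending the standard basis to $\big(v_1(x),\ldots,v_n(x)\big)$ is an isometry from $(\R^n,N_x)$ onto ${\bf E}(x)={\rm span}\{v_1(x),\ldots,v_n(x)\}$. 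This is where the measurable selection is delicate and where I expect the main obstacle to lie: one must choose the embeddings coherently across $x$ so that the resulting sections $v_i$ are measurable, rather than merely choosing an isometry fiber-by-fiber. I would handle this by working on the separable measure space first (or by reducing to it via the countable sup property and separability arguments already available in the preliminaries), exploiting that the space of isometric embeddings of $(\R^n,N_x)$ into a universal $\B$ can be parametrised measurably, and then gluing over the countable partition.

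Finally I would define the candidate isomorphism $\Phi\colon\mathscr M\to\Gamma_\mm({\bf E})$ by declaring $\Phi\big(\sum_i f_i\cdot w_i\big)$ to be the section $x\mapsto\sum_i f_i(x)\,v_i(x)$. Using the local basis property, every element of $\mathscr M$ has a unique such representation, so $\Phi$ is well defined and $L^0(\XX)$-linear; and by the fiberwise isometry built in the previous step one gets $|\Phi(v)|(x)=N_x(f_1(x),\ldots,f_n(x))=|v|(x)$ for $\mm$-a.e.\ $x$, so $\Phi$ preserves the pointwise norm. It remains to check that $\Phi$ is onto $\Gamma_\mm({\bf E})$: given a measurable section $s$ with $s(x)\in{\bf E}(x)$, the fiberwise isometries let me read off measurable coordinate functions $f_i(x)$ with $s(x)=\sum_i f_i(x)v_i(x)$, and the lower bound on $N_x$ guarantees that the $f_i$ lie in $L^0(\XX)$, whence $s=\Phi\big(\sum_i f_i w_i\big)$. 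This yields the desired isomorphism $\mathscr M|_E\cong\Gamma_\mm({\bf E})$ of Banach $L^0(\XX)$-modules, completing the proof.
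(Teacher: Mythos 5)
A preliminary remark: the paper does not prove Theorem \ref{thm:Serre-Swan} at all; it imports the result from \cite{LP18}, in the reformulation of \cite{DMLP21}. So your proposal can only be compared with the strategy of those references, which it does resemble in outline: restrict to \(E\), take a local basis \(w_1,\ldots,w_n\), turn the pointwise norm into a measurable family of norms \(N_x\) on \(\R^n\), realise the fibers \((\R^n,N_x)\) isometrically and measurably inside \(\B\), and define \(\Phi\big(\sum_i f_i\cdot w_i\big)(x)=\sum_i f_i(x)v_i(x)\).

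The problem is that the steps carrying essentially all of the content are asserted rather than carried out, and the decisive one is the measurable embedding into \(\B\). Universality of \(\B\) produces an isometric embedding of each \emph{single} fiber \((\R^n,N_x)\), with no coherence whatsoever in \(x\); your proposed fix, that ``the space of isometric embeddings of \((\R^n,N_x)\) into a universal \(\B\) can be parametrised measurably'', is precisely the statement that needs proof, and no mechanism for it appears in the sketch. There are two standard ways to close this. One avoids selection altogether: embed every fiber into one \emph{fixed} separable space by an explicit formula, e.g.\ \(a\mapsto\langle a,\rho_x(\cdot)\rangle\in{\rm C}(\overline B{}^n)\), where \(\rho_x\colon\overline B{}^n\to B_{N_x^*}\) is the radial homeomorphism of the closed Euclidean unit ball onto the dual unit ball of \(N_x\); this map is isometric for each \(x\) (since \(\sup_{b\in B_{N_x^*}}\langle a,b\rangle=N_x(a)\)) and measurable in \(x\) because \(N_x^*\) is, and one then composes with a \emph{single} isometric embedding \({\rm C}(\overline B{}^n)\hookrightarrow\B\), obtained by applying universality once to the fixed separable space \({\rm C}(\overline B{}^n)\). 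The other route is a genuine measurable-selection argument (Kuratowski--Ryll-Nardzewski or Jankov--von Neumann) applied to the multifunction \(x\mapsto\big\{(u_1,\ldots,u_n)\in\B^n:\big\|\sum_i a_iu_i\big\|_\B=N_x(a)\ \forall a\in\R^n\big\}\), which requires verifying measurability of this multifunction and repairing the resulting selection into a \(\Sigma\)-measurable one up to a null set. As written, the proposal simply assumes the crux.

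A second, smaller but real gap: the a.e.\ nondegeneracy of the fiber norms. Defining \(N_x\) on rational tuples through chosen representatives and discarding countably many null sets yields, after Lipschitz extension, only a \emph{seminorm} on \(\R^n\) which is positive at nonzero rational points, and this does not imply positivity on \(\R^n\): the seminorm \(a\mapsto|a_1+\sqrt2\,a_2|\) on \(\R^2\) is positive on \(\mathbb Q^2\setminus\{0\}\) yet degenerate (and it genuinely arises, e.g.\ from the non-independent pair \(w_1=\1_\X\), \(w_2=\sqrt2\,\1_\X\) in \(L^0(\XX)\)). Nondegeneracy for a.e.\ \(x\) does follow from independence of the local basis, but via an argument your sketch does not contain: first extend the fiberwise formula \(\big|\sum_i f_i\cdot w_i\big|(x)=N_x(f_1(x),\ldots,f_n(x))\) from rational simple coefficients to all \(f\in L^0(\XX)^n\) by approximation, then, if \(\ker N_x\neq\{0\}\) on a set of positive measure, measurably select a Euclidean-unit vector in the kernel and contradict independence. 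Without this, both the fiberwise isometry of \(\Phi\) and the surjectivity step (where the coordinates of a section must be a.e.\ finite and measurable) are unavailable. In short: the architecture is the right one, but the proposal leaves open exactly the points where the theorem is hard.
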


Theorem \ref{thm:Serre-Swan} was first proved in \cite{LP18}, but we preferred to present its reformulation from \cite{DMLP21}.
\begin{lemma}\label{lem:fin-gen_cont}
Let \(\XX\) be a \(\sigma\)-finite measure space and let \(\mathscr M\) be a finitely-generated Banach \(L^0(\XX)\)-module.
Let \(T\colon\mathscr M\to L^0(\XX)\) be an \(L^0(\XX)\)-linear operator. Then it holds that \(T\in\mathscr M^*\).
\end{lemma}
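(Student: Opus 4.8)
The plan is to produce, for the given $L^0(\XX)$-linear map $T$, an element $g\in L^0(\XX)^+$ witnessing \eqref{eq:def_hom_nmod}, i.e.\ $|T(v)|\le g|v|$ for every $v\in\mathscr M$; since $T$ is already assumed $L^0(\XX)$-linear, this is exactly the condition for $T\in\mathscr M^*$. As $\mathscr M$ is finitely-generated, its dimensional decomposition reduces to a \emph{finite} partition $(D_n)_{n=0}^N$ of $\X$, where $N$ is the dimension of a generating subspace. Using $L^0(\XX)$-linearity one has $\1_{D_n}|T(v)|=|T(\1_{D_n}\cdot v)|$ for every $n$ and every $v$, so it suffices to construct a bound $g_n$ on each stratum $\mathscr M|_{D_n}$ and then glue them into $g\coloneqq\sum_{n=0}^N\1_{D_n}g_n$. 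Hence I would fix $n\ge 1$ with $\mm(D_n)>0$, write $E\coloneqq D_n$, and work on $\mathscr M|_E$ (the case $n=0$ being trivial, as $\mathscr M|_{D_0}=\{0\}$).

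On $E$, pick a local basis $e_1,\dots,e_n$ of $\mathscr M$ (which exists since $\mathscr M$ has local dimension $n$ there), chosen in $\mathscr M|_E$. Every $v\in\mathscr M|_E$ then has a unique representation $v=\sum_{i=1}^n f_i\cdot e_i$ with $f_i\in L^0(\XX|_E)$, whence by $L^0(\XX)$-linearity $T(v)=\sum_{i=1}^n f_i\,T(e_i)$ and therefore $|T(v)|\le\big(\bigvee_{i=1}^n|T(e_i)|\big)\sum_{i=1}^n|f_i|$. The problem is thus reduced to the single key estimate: there is $h\in L^0(\XX|_E)^+$, finite $\mm$-a.e., such that $\sum_{i=1}^n|f_i|\le h\,|v|$ for all $v$, i.e.\ the coordinates are controlled by the pointwise norm. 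Granting this, $g_n\coloneqq h\,\bigvee_{i=1}^n|T(e_i)|$ does the job.

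To prove the key estimate I would invoke Theorem \ref{thm:Serre-Swan}: it gives a pointwise-norm preserving isomorphism $\mathscr M|_E\cong\Gamma_\mm(\mathbf E)$ with $\mathbf E(x)={\rm span}\{s_1(x),\dots,s_n(x)\}$, where $s_1,\dots,s_n$ are measurable representatives corresponding to $e_1,\dots,e_n$. Because the $e_i$ are independent on $E$, a measurable selection argument shows that $s_1(x),\dots,s_n(x)$ are linearly independent in $\B$ for $\mm$-a.e.\ $x\in E$ (otherwise one could build a nonzero $(a_i)\in L^0(\XX|_E)^n$ with $\sum_i a_i\cdot e_i=0$). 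For such $x$ define
\[
m(x)\coloneqq\min\Big\{\big\|\textstyle\sum_{i=1}^n a_i\,s_i(x)\big\|_\B\;:\;a\in\R^n,\ \sum_{i=1}^n|a_i|=1\Big\}.
\]
Since the $\ell_1$-unit sphere of $\R^n$ is compact and $a\mapsto\|\sum_i a_i s_i(x)\|_\B$ is continuous and strictly positive there, $m(x)>0$ for $\mm$-a.e.\ $x$; and $m$ is measurable, being realised as a countable infimum over a dense set of coefficients. By homogeneity $\|\sum_i a_i s_i(x)\|_\B\ge m(x)\sum_i|a_i|$ for all $a$, which applied to the coordinate functions of $v$ yields $|v|(x)=\|\sum_i f_i(x)s_i(x)\|_\B\ge m(x)\sum_i|f_i(x)|$. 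Thus $h\coloneqq 1/m$ is finite $\mm$-a.e.\ and satisfies the required bound.

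The main obstacle is exactly this key estimate, i.e.\ the boundedness of the coordinate functionals. Pointwise each fiber is finite-dimensional, so all norms on it are equivalent; but the equivalence constant $m(x)$ may degenerate to $0$ as $x$ varies, so there is in general no uniform bound and $h$ need only lie in $L^0(\XX)^+$, not in $L^\infty$. The two delicate points are therefore (i) passing from module independence of the $e_i$ to $\mm$-a.e.\ pointwise linear independence of the $s_i(x)$, and (ii) guaranteeing that $m$ is measurable and strictly positive $\mm$-a.e.; both are handled through the fiberwise model provided by Theorem \ref{thm:Serre-Swan} together with a standard compactness-and-measurable-selection argument.
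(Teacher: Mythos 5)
Your proof is correct and follows essentially the same route as the paper: finite dimensional decomposition, reduction to each stratum via Theorem \ref{thm:Serre-Swan}, and a fiberwise bound whose measurability comes from a countable supremum/infimum over rational coefficients, with finiteness guaranteed by finite-dimensional compactness of the fibers. The only difference is cosmetic: the paper directly defines \(g_n(x)\coloneqq\sup_{q\in\mathbb Q^n\setminus\{0\}}\big|\sum_i q_i\phi_i(x)\big|/\big\|\sum_i q_i v_i(x)\big\|_\B\) as the fiberwise operator norm, whereas you factor the same estimate through the coordinate functionals (\(|T(v)|\le\big(\bigvee_i|T(e_i)|\big)\sum_i|f_i|\) together with \(\sum_i|f_i|\le|v|/m\)), producing a slightly larger but equally admissible bound in \(L^0(\XX)^+\).
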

\begin{proof}
Let \(D_0,\ldots,D_{\bar n}\) be the dimensional decomposition of \(\mathscr M\). Fix any \(n=1,\ldots,\bar n\). Thanks to Theorem \ref{thm:Serre-Swan},
we can find measurable vector fields \(v_1,\ldots,v_n\colon\X\to\B\), where \(\B\) is any given universal separable Banach space, such that
\(v_1(x),\ldots,v_n(x)\in\B\) are linearly independent for every \(x\in D_n\) and \(\Gamma_\mm({\bf E})\) is isomorphic to \(\mathscr M|_{D_n}\),
where we set \({\bf E}(x)\coloneqq{\rm span}\{v_1(x),\ldots,v_n(x)\}\) for every \(x\in\X\). For any \(i=1,\ldots,n\), we choose a measurable
representative \(\phi_i\colon\X\to\R\) of the function \(T(v_i)\in L^0(\XX)\), where we are identifying \(\mathscr M|_{D_n}\) with \(\Gamma_\mm({\bf E})\).
Given any point \(x\in D_n\), the unique linear operator from \({\bf E}(x)\) to \(\R\) sending each \(v_i(x)\) to \(\phi_i(x)\) is continuous, thus
\[
g_n(x)\coloneqq\sup\bigg\{\frac{\big|\sum_{i=1}^n q_i\phi_i(x)\big|}{\big\|\sum_{i=1}^n q_i v_i(x)\big\|_\B}\;\bigg|\;(q_1,\ldots,q_n)\in\mathbb Q^n\setminus\{0\}\bigg\}<+\infty.
\]
Notice that \(g_n\) is measurable by construction. Moreover, any element \(v\in\mathscr M|_{D_n}\) can be written (in a unique way) as
\(v=\sum_{i=1}^n f_i\cdot v_i\) for some \(f_1,\ldots,f_n\in L^0(\XX)\), so that we can estimate
\[
|T(v)|(x)=\bigg|\sum_{i=1}^n f_i(x)\phi_i(x)\bigg|\leq g_n(x)\bigg\|\sum_{i=1}^n f_i(x)v_i(x)\bigg\|_\B=g_n(x)|v|(x)\quad\text{ for }\mm\text{-a.e.\ }x\in D_n.
\]
Therefore, letting \(g\coloneqq\sum_{n=1}^{\bar n}\1_{D_n}g_n\in L^0(\XX)\), we conclude that \(|T(v)|\leq g|v|\) for every \(v\in\mathscr M\).
\end{proof}
\subsubsection{Pullback modules}
Let \(\XX=(\X,\Sigma_\X,\mm_\X)\) and \(\YY=(\Y,\Sigma_\Y,\mm_\Y)\) be \(\sigma\)-finite measure spaces. Let
\(\varphi\colon\X\to\Y\) be a measurable map such that \(\varphi_\#\mm_\X\ll\mm_\Y\). Notice that \(\varphi\) induces via
composition a ring homomorphism \(L^0(\YY)\ni f\mapsto f\circ\varphi\in L^0(\XX)\) that is also a Riesz homomorphism.
\medskip

This is an instance of a general phenomenon: given a Banach \(L^0(\YY)\)-module \(\mathscr M\), there is a unique couple
\((\varphi^*\mathscr M,\varphi^*)\), where \(\varphi^*\mathscr M\) is a Banach \(L^0(\XX)\)-module,
\(\varphi^*\colon\mathscr M\to\varphi^*\mathscr M\) is linear, and
\[\begin{split}
|\varphi^*v|=|v|\circ\varphi&\quad\text{ for every }v\in\mathscr M,\\
\varphi^*[\mathscr M]&\quad\text{ generates }\varphi^*\mathscr M\text{ on }\X.
\end{split}\]
We say that \(\varphi^*\mathscr M\) is the \textbf{pullback module} of \(\mathscr M\) under \(\varphi\).
Uniqueness is in the sense of the following universal property: given any couple \((\mathscr N,T)\)
having the same properties as \((\varphi^*\mathscr M,\varphi^*)\), there exists a unique isomorphism of
Banach \(L^0(\XX)\)-modules \(\phi\colon\varphi^*\mathscr M\to\mathscr N\) with \(T=\phi\circ\varphi^*\).
\medskip

The pullback of the dual \(\varphi^*\mathscr M^*\) is isomorphic to a Banach \(L^0(\XX)\)-submodule of the dual of the pullback
\((\varphi^*\mathscr M)^*\), but in general the two spaces do not coincide. More precisely, the unique homomorphism of Banach
\(L^0(\XX)\)-modules \({\sf I}_\varphi\colon\varphi^*\mathscr M^*\to(\varphi^*\mathscr M)^*\) satisfying
\begin{equation}\label{eq:def_I_varphi}
{\sf I}_\varphi(\varphi^*\omega)(\varphi^*v)=\omega(v)\circ\varphi\quad\text{ for every }\omega\in\mathscr M^*\text{ and }v\in\mathscr M
\end{equation}
preserves the pointwise norm, but in general is not surjective. However, the following fact holds:
\begin{theorem}[Sequential weak\(^*\) density of \(\varphi^*\mathscr M^*\) in \((\varphi^*\mathscr M)^*\)]
\label{thm:seq_weak-star_density}
Let \(\XX\), \(\YY\) be separable, \(\sigma\)-finite measure spaces. Let \(\varphi\colon\X\to\Y\) be a measurable map such that \(\varphi_\#\mm_\X\ll\mm_\Y\).
Let \(\mathscr M\) be a Banach \(L^0(\YY)\)-module. Let \(\Theta\in(\varphi^*\mathscr M)^*\) be given. Then there exists a sequence
\((\Theta_n)_{n\in\N}\subseteq\varphi^*\mathscr M^*\) such that \({\sf I}_\varphi(\Theta_n)\to\Theta\) with respect to the weak\(^*\) topology of \((\varphi^*\mathscr M)^*\)
introduced in Definition \ref{def:weak_star_top}.
\end{theorem}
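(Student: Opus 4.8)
The plan is to recover a general \(\Theta\in(\varphi^*\mathscr M)^*\) as the weak\(^*\)-limit of a sequence of \emph{conditional expectations} of \(\Theta\) taken along a filtration exhausting \(\Sigma_\X\), each term of which will turn out to lie in \({\sf I}_\varphi[\varphi^*\mathscr M^*]\). Heuristically, an element of \({\sf I}_\varphi[\varphi^*\mathscr M^*]\) acts on \(\varphi^*\mathscr M\) through a functional whose value at \(x\) depends only on \(\varphi(x)\), whereas a general \(\Theta\) assigns to \(x\) a functional that may vary along the fibre \(\varphi^{-1}(\varphi(x))\); averaging \(\Theta\) over finer and finer measurable pieces of \(\X\) should erase this extra freedom in the limit. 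Before constructing the sequence, I would record a reduction. Since \(\varphi^*[\mathscr M]\) generates \(\varphi^*\mathscr M\) on \(\X\), the finite combinations \(\sum_{i=1}^m\1_{E_i}\cdot\varphi^* v_i\) are dense in \(\varphi^*\mathscr M\); hence, by \(L^0(\XX)\)-linearity of all the maps involved, to prove \({\sf I}_\varphi(\Theta_n)\to\Theta\) weakly\(^*\) it suffices to check that \({\sf I}_\varphi(\Theta_n)(\varphi^* v)\to\Theta(\varphi^* v)\) in \(L^0(\XX)\) for every \(v\in\mathscr M\), together with a uniform bound \(|{\sf I}_\varphi(\Theta_n)|\le G\) for some \(G\in L^0(\XX)^+\) independent of \(n\) (with \(|\Theta|\le G\) as well). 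Indeed, approximating an arbitrary \(w\in\varphi^*\mathscr M\) by such finite combinations \(w'\) and estimating \(|{\sf I}_\varphi(\Theta_n)(w)-{\sf I}_\varphi(\Theta_n)(w')|\le G\,|w-w'|\), a three-term argument upgrades convergence on the generators to convergence on all of \(\varphi^*\mathscr M\); crucially, this needs no countable generation of \(\mathscr M\).

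For the construction, I would exploit that \(L^0(\YY)\ni f\mapsto f\circ\varphi\in L^0(\XX)\) identifies \(L^0(\YY)\) (on \(\mathrm{supp}(\varphi_\#\mm_\X)\)) with the functions in \(L^0(\XX)\) that are measurable with respect to the sub-\(\sigma\)-algebra \(\sigma(\varphi)\subseteq\Sigma_\X\). Let \(E_\varphi\colon L^0(\XX)\to L^0(\YY)\) denote the induced push-down of the conditional expectation onto \(\sigma(\varphi)\), characterised by \(E_\varphi(h)\circ\varphi=E[h\,|\,\sigma(\varphi)]\). Using the separability of \(\XX\), I would fix an increasing sequence \((\mathcal G_n)_n\) of countable partitions of \(\X\) into sets of finite positive measure generating \(\Sigma_\X\) up to \(\mm_\X\)-null sets, and set \(\mathcal F_n\coloneqq\mathcal G_n\vee\sigma(\varphi)\). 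For each atom \(A\in\mathcal G_n\), I would define \(\omega_A\colon\mathscr M\to L^0(\YY)\) by
\[
\omega_A(v)\coloneqq\frac{E_\varphi\big(\1_A\cdot\Theta(\varphi^* v)\big)}{E_\varphi(\1_A)}\qquad\text{for every }v\in\mathscr M.
\]
The key point is that \(\omega_A\) is \(L^0(\YY)\)-linear, because \(\Theta(\varphi^*(f\cdot v))=(f\circ\varphi)\,\Theta(\varphi^* v)\) and the \(\sigma(\varphi)\)-measurable factor \(f\circ\varphi\) may be pulled out of the conditional expectation; moreover \(|\omega_A|\le E_\varphi(\1_A|\Theta|)/E_\varphi(\1_A)\) shows \(\omega_A\in\mathscr M^*\). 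Since the pointwise-norm-sum of the family \(\{\1_A\cdot\varphi^*\omega_A\}_{A\in\mathcal G_n}\) is dominated by \(E[|\Theta|\,|\,\mathcal F_n]\in L^0(\XX)\), this family is summable and I would set \(\Theta_n\coloneqq\sum_{A\in\mathcal G_n}\1_A\cdot\varphi^*\omega_A\in\varphi^*\mathscr M^*\).

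The verification then rests on the elementary identity for conditional expectations relative to a partition, namely
\[
{\sf I}_\varphi(\Theta_n)(\varphi^* v)=\sum_{A\in\mathcal G_n}\1_A\,\frac{E[\1_A\,\Theta(\varphi^* v)\,|\,\sigma(\varphi)]}{E[\1_A\,|\,\sigma(\varphi)]}=E\big[\Theta(\varphi^* v)\,\big|\,\mathcal F_n\big].
\]
Thus the required convergence \({\sf I}_\varphi(\Theta_n)(\varphi^* v)\to\Theta(\varphi^* v)\) is precisely the martingale convergence theorem along \(\mathcal F_n\uparrow\Sigma_\X\). Since \({\sf I}_\varphi\) preserves the pointwise norm, the same computation applied to \(|\Theta|\) gives \(|{\sf I}_\varphi(\Theta_n)|\le E[|\Theta|\,|\,\mathcal F_n]\), whose \(\mm_\X\)-a.e.\ convergence to \(|\Theta|\) furnishes the uniform bound \(G\coloneqq\sup_n E[|\Theta|\,|\,\mathcal F_n]\in L^0(\XX)\) demanded in the reduction. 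Together with the first paragraph, this yields \({\sf I}_\varphi(\Theta_n)\to\Theta\) in the weak\(^*\) topology.

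I expect the main obstacle to be technical rather than conceptual: making rigorous sense of \(E_\varphi\) and of all the conditional expectations above when \(|\Theta|\) is merely finite \(\mm_\X\)-a.e.\ and not integrable. I would deal with this by a preliminary truncation, exhausting \(\X\) by sets of finite measure on which \(|\Theta|\) is bounded, running the argument there, and gluing; the factor \(|v|\circ\varphi\) being \(\sigma(\varphi)\)-measurable keeps the relevant bounds under control. I would also restrict to \(\mathrm{supp}(\varphi_\#\mm_\X)\), where \(f\mapsto f\circ\varphi\) is injective, so that \(E_\varphi\) is well defined. This integrability bookkeeping, and the \(L^0\)-form of martingale convergence along \(\mathcal F_n\uparrow\Sigma_\X\), are exactly the points where the separability hypotheses on \(\XX\) and \(\YY\) (through the existence of the generating filtration and of the identification underlying \(E_\varphi\)) are genuinely used.
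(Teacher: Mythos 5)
First, a point of comparison: the paper does not prove Theorem \ref{thm:seq_weak-star_density} at all --- it is imported verbatim from \cite[Theorem B.1]{GLP22}, as the sentence following the statement explains --- so there is no internal proof to measure your argument against. On its own merits, your scheme is the right kind of argument: the reduction in your first paragraph is correct (convergence on the elements \(\varphi^*v\), together with a uniform bound \(|{\sf I}_\varphi(\Theta_n)|\le G\) and \(|\Theta|\le G\), does upgrade to weak\(^*\) convergence, by density of \(\mathscr G(\varphi^*[\mathscr M])\) in \(\varphi^*\mathscr M\) and continuity of the module operations in \(L^0\)), and approximating \(\Theta\) by fibrewise averages along a refining filtration and concluding via martingale convergence is a sound plan.

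The genuine gap is that the object your whole construction rests on need not exist: \(\mm_\X\) restricted to \(\sigma(\varphi)\) need not be \(\sigma\)-finite, and then neither \(E[\,\cdot\,|\,\sigma(\varphi)]\) nor your \(E_\varphi\) is well defined, even for bounded integrands supported on sets of finite measure. Concretely, take \(\XX=\R\) with Lebesgue measure, \(\YY\) the one-point probability space, and \(\varphi\) the constant map (the hypotheses of the theorem hold, and this is exactly a situation where it has nontrivial content, namely \(L^0(\XX;\B)^*\) versus \(L^0(\XX;\B')\)): then \(\sigma(\varphi)=\{\varnothing,\R\}\), every \(\sigma(\varphi)\)-measurable function is a.e.\ constant, and no constant \(c\) satisfies \(\int_\R c\,\d x=\int_\R\1_{[0,1]}\,\d x\), so \(E[\1_{[0,1]}\,|\,\sigma(\varphi)]\) does not exist. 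Neither of your proposed remedies touches this: restricting to \({\rm supp}(\varphi_\#\mm_\X)\) changes nothing (that support is all of \(\Y\) in the example), and truncating \(|\Theta|\) is irrelevant (the failure already occurs for the integrand \(\1_{[0,1]}\)). The repair is to define the averages directly on \(\Y\) as Radon--Nikodym derivatives of pushforward measures: for an atom \(A\) of finite measure on which \(|\Theta|\) is bounded, set \(\omega_A(v)\coloneqq\d\varphi_\#\big(\1_A\Theta(\varphi^*v)\,\mm_\X\big)\big/\d\varphi_\#\big(\1_A\,\mm_\X\big)\), first for \(v\) with \(|v|\) bounded and then for general \(v\) by gluing along the sets \(\{k-1\le|v|<k\}\subseteq\Y\); this uses only that \(\varphi_\#(\1_A\mm_\X)\) is a finite measure, and the gluing is compatible with \(L^0(\YY)\)-linearity precisely because \(|v|\circ\varphi\) is \(\sigma(\varphi)\)-measurable. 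With this substitute your formal identities survive: \(\mm_\X\) restricted to \(\mathcal F_n=\sigma(\mathcal G_n)\vee\sigma(\varphi)\) \emph{is} \(\sigma\)-finite (the atoms of \(\mathcal G_n\) witness it), so \(E[\,\cdot\,|\,\mathcal F_n]\) exists; your partition formula for \({\sf I}_\varphi(\Theta_n)(\varphi^*v)\) can be verified against the generating sets \(A\cap\varphi^{-1}(B)\); and martingale convergence can be run atom by atom, localising once more via the cutoffs \(\{|v|\circ\varphi\le k\}\) since \(\Theta(\varphi^*v)\) is not integrable. Incidentally, once \(E_\varphi\) is realised through Radon--Nikodym derivatives with respect to the \(\sigma\)-finite \(\mm_\Y\), your proof appears to use only the separability of \(\XX\), not that of \(\YY\); carried out in full, this would be a small sharpening, consistent with the paper's remark that the separability assumptions are an artifact of the proof strategy of \cite{GLP22}.
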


Theorem \ref{thm:seq_weak-star_density} was proved in \cite[Theorem B.1]{GLP22}. It is unclear whether the separability assumption on \(\XX\) and \(\YY\),
which is due only to the proof strategy of \cite[Theorem B.1]{GLP22}, can be dropped.
\subsubsection{Bounded \texorpdfstring{\(L^0\)}{L0}-bilinear operators}
In Sections \ref{s:proj_tens} and \ref{s:inj_tens} we will need to use the space \({\rm B}(\mathscr M,\mathscr N)\):
\begin{definition}[The space \({\rm B}(\mathscr M,\mathscr N;\mathscr Q)\)]
Let \(\XX\) be a \(\sigma\)-finite measure space. Let \(\mathscr M\), \(\mathscr N\), \(\mathscr Q\) be normed \(L^0(\XX)\)-modules. Then we denote by
\({\rm B}(\mathscr M,\mathscr N;\mathscr Q)\) the space of all those \(L^0(\XX)\)-bilinear operators \(b\colon\mathscr M\times\mathscr N\to\mathscr Q\)
that are also continuous. We also set \({\rm B}(\mathscr M,\mathscr N)\coloneqq{\rm B}(\mathscr M,\mathscr N;L^0(\XX))\).
\end{definition}

One can readily check that a bilinear map \(b\colon\mathscr M\times\mathscr N\to\mathscr Q\) is \(L^0(\XX)\)-bilinear and continuous
(i.e.\ it belongs to \({\rm B}(\mathscr M,\mathscr N;\mathscr Q)\)) if and only if there exists a function \(g\in L^0(\XX)^+\) such that
\begin{equation}\label{eq:def_bilin_ptwse_norm}
|b(v,w)|\leq g|v||w|\quad\text{ for every }(v,w)\in\mathscr M\times\mathscr N.
\end{equation}
Moreover, \({\rm B}(\mathscr M,\mathscr N;\mathscr Q)\) is a normed \(L^0(\XX)\)-module if endowed with the pointwise operations and
\[
|b|\coloneqq\bigvee_{(v,w)\in\mathscr M\times\mathscr N}\frac{\1_{\{|v||w|>0\}}|b(v,w)|}{|v||w|}=
\bigwedge\big\{g\in L^0(\XX)^+\;\big|\;g\text{ satisfies }\eqref{eq:def_bilin_ptwse_norm}\big\}
\]
for every \(b\in{\rm B}(\mathscr M,\mathscr N;\mathscr Q)\). If \(\mathscr Q\) is complete, then \({\rm B}(\mathscr M,\mathscr N;\mathscr Q)\) is a Banach \(L^0(\XX)\)-module.
\medskip

If \(\mathscr M\), \(\mathscr N\), \(\mathscr Q\) are normed \(L^0(\XX)\)-modules, then each \(b\in{\rm B}(\mathscr M,\mathscr N;\mathscr Q)\) can be uniquely extended
to \(\bar b\in{\rm B}(\bar{\mathscr M},\bar{\mathscr N};\bar{\mathscr Q})\), where \(\bar{\mathscr M}\), \(\bar{\mathscr N}\), \(\bar{\mathscr Q}\) are the
completions of \(\mathscr M\), \(\mathscr N\), \(\mathscr Q\), respectively, and \(|\bar b|=|b|\).
\section{Auxiliary results on Banach \texorpdfstring{\(L^0\)}{L0}-modules}
\subsection{Quotient operators between Banach \texorpdfstring{\(L^0\)}{L0}-modules}\label{s:quotient_oper}
We begin with the key definition:
\begin{definition}[Quotient operator]
Let \(\XX\) be a \(\sigma\)-finite measure space. Let \(\mathscr M\) and \(\mathscr N\) be normed \(L^0(\XX)\)-modules.
Then we say that a homomorphism \(T\colon\mathscr M\to\mathscr N\) of normed \(L^0(\XX)\)-modules is a
\textbf{quotient operator} provided it is surjective and it satisfies
\[
|w|=\bigwedge_{v\in T^{-1}(w)}|v|\quad\text{ for every }w\in\mathscr N.
\]
\end{definition}

Notice that each quotient operator \(T\colon\mathscr M\to\mathscr N\) verifies \(|T|\leq 1\). More precisely, it holds that
\begin{equation}\label{eq:norm_quotient_oper}
|T|=\1_{{\sf S}(\mathscr N)}.
\end{equation}
If \(\mathscr M\), \(\mathscr N\) are two Banach \(L^0(\XX)\)-modules and \(T\colon\mathscr M\to\mathscr N\)
is a homomorphism of Banach \(L^0(\XX)\)-modules, then \(T\) is a quotient operator if and only if the unique
map \(\hat T\colon\mathscr M/{\rm ker}(T)\to\mathscr N\) satisfying
\[\begin{tikzcd}
\mathscr M \arrow[r,"T"] \arrow[d,swap,"\pi"] & \mathscr N \\
\mathscr M/{\rm ker}(T) \arrow[ur,swap,"\hat T"] &
\end{tikzcd}\]
is an isomorphism of Banach \(L^0(\XX)\)-modules, with \(\pi\colon\mathscr M\to\mathscr M/{\rm ker}(T)\)
the canonical projection.

\begin{remark}\label{rmk:ext_quotient_oper}{\rm
If \(T\colon\mathscr M\to\mathscr N\) is a quotient operator between normed \(L^0(\XX)\)-modules, then its unique linear
continuous extension \(\bar T\colon\bar{\mathscr M}\to\bar{\mathscr N}\) to the completions is a quotient operator.
\fr}\end{remark}

The glueing property of \(\mathscr M\) ensures that if \(T\colon\mathscr M\to\mathscr N\) is a quotient operator and \(w\in\mathscr N\) is given,
then for every \(\varepsilon>0\) we can find an element \(v\in\mathscr M\) such that \(T(v)=w\) and \(|v|\leq|w|+\varepsilon\).
\begin{lemma}\label{lem:annihilator}
Let \(\XX\) be a \(\sigma\)-finite measure space and let \(\mathscr M\) be a Banach \(L^0(\XX)\)-module.
Given any Banach \(L^0(\XX)\)-submodule \(\mathscr V\) of \(\mathscr M\), we define the \textbf{annihilator}
\(\mathscr V^\perp\) of \(\mathscr V\) in \(\mathscr M^*\) as
\[
\mathscr V^\perp\coloneqq\big\{\omega\in\mathscr M^*\;\big|\;\omega(v)=0\text{ for every }v\in\mathscr V\big\}.
\]
Then \(\mathscr V^\perp\) is a Banach \(L^0(\XX)\)-submodule of \(\mathscr M^*\). Moreover, it holds that
\[
\mathscr V^*\cong\mathscr M^*/\mathscr V^\perp,
\]
an isomorphism of Banach \(L^0(\XX)\)-modules being given by
\(\mathscr M^*/\mathscr V^\perp\ni\omega+\mathscr V^\perp\mapsto\omega|_{\mathscr V}\in\mathscr V^*\).
\end{lemma}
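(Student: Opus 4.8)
The plan is to realise $\mathscr V^\perp$ as the kernel of the natural restriction map and to prove that this map is a quotient operator; the desired isomorphism then drops out of the characterisation of quotient operators recalled after \eqref{eq:norm_quotient_oper}.

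First I would dispose of the assertion that $\mathscr V^\perp$ is a Banach $L^0(\XX)$-submodule of $\mathscr M^*$. That $\mathscr V^\perp$ is an $L^0(\XX)$-submodule is immediate from the $L^0(\XX)$-bilinearity of the evaluation $(\omega,v)\mapsto\omega(v)$. To see it is closed, take $\omega_n\in\mathscr V^\perp$ with $\omega_n\to\omega$ in $\mathscr M^*$, i.e.\ $|\omega_n-\omega|\to 0$ in $L^0(\XX)$; then for every $v\in\mathscr V$ one has $|\omega(v)|=|\omega(v)-\omega_n(v)|\leq|\omega-\omega_n||v|\to 0$, whence $\omega(v)=0$ and $\omega\in\mathscr V^\perp$. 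Since $\mathscr M^*=\textsc{Hom}(\mathscr M;L^0(\XX))$ is a Banach $L^0(\XX)$-module (as $L^0(\XX)$ is complete) and a closed submodule of a Banach module is again one, the first assertion follows.

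Next I would introduce the restriction operator $R\colon\mathscr M^*\to\mathscr V^*$, $R(\omega)\coloneqq\omega|_{\mathscr V}$. It is $L^0(\XX)$-linear, and from $|\omega(v)|\leq|\omega||v|$ for $v\in\mathscr V$ one reads off both that $R(\omega)\in\mathscr V^*$ and the pointwise bound $|R(\omega)|\leq|\omega|$ (restriction to a submodule cannot increase the pointwise norm, since the supremum formula for $|\cdot|$ on duals is taken over a smaller set of vectors); thus $R$ is a homomorphism. By the very definition of the annihilator, $\ker(R)=\mathscr V^\perp$. Consequently, once we know that $R$ is a quotient operator, the characterisation recalled after \eqref{eq:norm_quotient_oper} yields an isomorphism of Banach $L^0(\XX)$-modules $\hat R\colon\mathscr M^*/\mathscr V^\perp\to\mathscr V^*$ fitting into the relevant commutative triangle, and this $\hat R$ is precisely the map $\omega+\mathscr V^\perp\mapsto\omega|_{\mathscr V}$ of the statement.

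It remains to verify that $R$ is a quotient operator, and here lies the real content. Both the surjectivity of $R$ and the identity $|\eta|=\bigwedge_{\omega\in R^{-1}(\eta)}|\omega|$ for every $\eta\in\mathscr V^*$ are encoded in the pointwise-norm-preserving extension property: every $\eta\in\mathscr V^*$ admits an extension $\omega\in\mathscr M^*$ with $|\omega|=|\eta|$. Indeed, existence of such an $\omega$ gives surjectivity; and since any $\omega\in R^{-1}(\eta)$ satisfies $|\omega|\geq|R(\omega)|=|\eta|$, the infimum $\bigwedge_{\omega\in R^{-1}(\eta)}|\omega|$ is bounded below by $|\eta|$ and is attained by a norm-preserving extension, hence equals $|\eta|$. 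The main obstacle is thus exactly this extension of functionals from $\mathscr V$ to $\mathscr M$ with control on the pointwise norm: Theorem \ref{thm:Hahn-Banach} only furnishes support functionals, so I would invoke the full extension form of the Hahn--Banach theorem available for normed $L^0(\XX)$-modules (cf.\ \cite{Gigli14,LP23}); a technical point worth recording is that one wants the exact equality $|\omega|=|\eta|$ rather than an $\eps$-approximate bound, which the Dedekind completeness and glueing properties of $L^0(\XX)$ make available.
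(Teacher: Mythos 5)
Your proposal is correct and follows essentially the same route as the paper: realise $\mathscr V^\perp$ as the kernel of the restriction homomorphism $\mathscr M^*\to\mathscr V^*$, show this map is a quotient operator via the pointwise-norm-preserving Hahn--Banach extension of functionals from $\mathscr V$ to $\mathscr M$, and conclude by the characterisation of quotient operators in terms of the induced map on $\mathscr M^*/{\rm ker}$. Your added remark that the extension form of Hahn--Banach (rather than the support-functional form of Theorem \ref{thm:Hahn-Banach}) is what is really needed is accurate and makes explicit a point the paper leaves implicit.
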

\begin{proof}
It is straightforward to check that \(\mathscr V^\perp\) is a Banach \(L^0(\XX)\)-submodule of \(\mathscr M^*\).
Consider the homomorphism of Banach \(L^0(\XX)\)-modules \(T\colon\mathscr M^*\to\mathscr V^*\) given by
\(T(\omega)\coloneqq\omega|_{\mathscr V}\) for all \(\omega\in\mathscr M^*\). Observe that \(|T|\leq 1\).
Moreover, the Hahn--Banach theorem ensures that for any \(\eta\in\mathscr V^*\) we can find \(\omega\in\mathscr M^*\)
such that \(T(\omega)=\eta\) and \(|\omega|=|\eta|\). This shows that \(T\) is a quotient operator.
Since \({\rm ker}(T)=\mathscr V^\perp\), we conclude that the operator
\(\mathscr M^*/\mathscr V^\perp\ni\omega+\mathscr V^\perp\mapsto\omega|_{\mathscr V}\in\mathscr V^*\)
is an isomorphism of Banach \(L^0(\XX)\)-modules. Therefore, the proof of the statement is complete.
\end{proof}

We conclude with a sufficient condition for a given homomorphism to be a quotient operator:
\begin{lemma}\label{lem:suff_cond_quotient_oper}
Let \(\XX\) be a \(\sigma\)-finite measure space. Let \(\mathscr M\), \(\mathscr N\) be Banach \(L^0(\XX)\)-modules.
Let \(\mathscr W\) be a dense vector subspace of \(\mathscr N\). Let \(T\colon\mathscr M\to\mathscr N\) be a homomorphism of Banach \(L^0(\XX)\)-modules
with \(|T|\leq 1\) satisfying the following property: given any \(w\in\mathscr N\) and \(\varepsilon>0\), there exists
\(v\in\mathscr M\) such that \(\sfd_{\mathscr N}(T(v),w)<\varepsilon\) and \(\sfd_{L^0(\XX)}(|v|,|w|)<\varepsilon\).
Then \(T\) is a quotient operator.
\end{lemma}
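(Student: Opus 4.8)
The plan is to reduce the whole statement to a single exact-lifting claim and then prove that claim by a completeness-driven iteration. Since $|T|\le 1$, we have $|T(v)|\le|v|$ for all $v\in\mathscr M$, so every $v\in T^{-1}(w)$ satisfies $|v|\ge|T(v)|=|w|$; in particular $\bigwedge_{v\in T^{-1}(w)}|v|\ge|w|$ as soon as $T^{-1}(w)\ne\varnothing$. Moreover, for any fixed $v_0\in T^{-1}(w)$ one has $0\le\bigwedge_{v\in T^{-1}(w)}|v|-|w|\le|v_0|-|w|$ pointwise, whence $\sfd_{L^0(\XX)}\big(\bigwedge_{v\in T^{-1}(w)}|v|,|w|\big)\le\sfd_{L^0(\XX)}(|v_0|,|w|)$. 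Therefore both the surjectivity of $T$ and the quotient identity follow once I prove the \emph{Main Claim}: for every $w\in\mathscr N$ and $\varepsilon>0$ there is $v\in\mathscr M$ with $T(v)=w$ (exactly) and $\sfd_{L^0(\XX)}(|v|,|w|)<\varepsilon$.

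Before iterating, I would upgrade the hypothesis twice. First, the density of $\mathscr W$ in $\mathscr N$, together with the elementary bound $\sfd_{L^0(\XX)}(|w'|,|w|)\le\sfd_{\mathscr N}(w',w)$, lets me assume the approximation property for every $w\in\mathscr N$. Second---and this is the real point---I would convert the merely in-measure estimate $\sfd_{L^0(\XX)}(|v|,|w|)$ small into a genuine pointwise bound. Given $w$, $\varepsilon>0$ and $\delta>0$, pick $v$ with $\sfd_{\mathscr N}(T(v),w)<\varepsilon/2$ and $\sfd_{L^0(\XX)}(|v|,|w|)$ so small that, by Chebyshev's inequality, the set $A\coloneqq\{|v|>|w|+\delta\}$ has $\tilde\mm(A)<\varepsilon/2$. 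Replacing $v$ by $\1_{\X\setminus A}\cdot v$ produces an element whose pointwise norm is $\le|w|+\delta$ $\mm$-a.e., while the distance $\sfd_{\mathscr N}$ to $w$ increases only by $\int_A|w|\wedge 1\,\d\tilde\mm\le\tilde\mm(A)$. This yields the upgraded property $(\ast)$: for all $w\in\mathscr N$, $\varepsilon>0$, $\delta>0$ there is $v\in\mathscr M$ with $\sfd_{\mathscr N}(T(v),w)<\varepsilon$ and $|v|\le|w|+\delta$ $\mm$-a.e.

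Now fix $w$ and $\varepsilon$, and choose summable sequences $(\varepsilon_n)_{n\ge 0}$, $(\delta_n)_{n\ge 0}$ with $\sum_{n}(\varepsilon_n+\delta_n)<\varepsilon$. Set $w_0\coloneqq w$, and recursively use $(\ast)$ to find $v_n\in\mathscr M$ with $|v_n|\le|w_{n-1}|+\delta_{n-1}$ a.e.\ and $\sfd_{\mathscr N}(T(v_n),w_{n-1})<\varepsilon_{n-1}$, putting $w_n\coloneqq w_{n-1}-T(v_n)$, so that $\sfd_{\mathscr N}(w_n,0)<\varepsilon_{n-1}$. The pointwise bounds give $\sfd_{L^0(\XX)}\big(\sum_{n>N}|v_n|,0\big)\le\sum_{m\ge N}(\varepsilon_{m-1}+\delta_m)\to 0$, so the partial sums $S_N\coloneqq\sum_{n=1}^N v_n$ are $\sfd_{\mathscr M}$-Cauchy; by completeness of $\mathscr M$ they converge to some $v$. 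Since $T(S_N)=w-w_N$ and $w_N\to 0$ in $\mathscr N$, continuity of $T$ forces $T(v)=w$. Finally $|v|\ge|w|$, and from $|S_N|\le|v_1|+\sum_{n\ge 2}|v_n|\le|w|+\delta_0+\sum_{m\ge 1}(|w_m|+\delta_m)$, passing to an a.e.-convergent subsequence of $(|S_N|)$ and using the subadditivity of $t\mapsto t\wedge 1$ one obtains $\sfd_{L^0(\XX)}(|v|,|w|)\le\sum_{n}(\varepsilon_n+\delta_n)<\varepsilon$. This establishes the Main Claim, and with it the lemma.

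The main obstacle is precisely the truncated nature of $\sfd_{L^0(\XX)}$: closeness in measure does not add up, so the iteration cannot be run naively. The truncation upgrade $(\ast)$, which trades in-measure control for honest $\mm$-a.e.\ pointwise bounds, is the device that makes the series $\sum_n v_n$ summable with controlled pointwise norm; and because $\sum_m|w_m|$ need not be a.e.\ finite, the concluding norm estimate must be obtained through the a.e.-subsequence argument combined with the subadditivity of $t\mapsto t\wedge 1$, rather than by a direct $L^0$ computation.
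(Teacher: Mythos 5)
Your proof is correct, and it follows the same overall strategy as the paper --- recursively lifting approximate preimages, telescoping, and invoking the completeness of \(\mathscr M\) --- but the way you control the errors is genuinely different. The paper runs the recursion directly on the in-measure hypotheses: it picks \(u^k_n\) with \(\sfd_{\mathscr N}\big(T(u^k_n),w-\sum_{i<n}T(u^k_i)\big)<2^{-k-n-1}\) and \(\sfd_{L^0(\XX)}\big(|u^k_n|,\big|w-\sum_{i<n}T(u^k_i)\big|\big)<2^{-k-n-1}\), obtains an exact lift \(v^k\) of \(w\) as the limit of the partial sums, and bounds \(|v^k|\le|w|+r_k\) pointwise, where \(r_k\) collects all the accumulated pointwise errors; the mechanism that makes this work is that \(\sfd_{L^0(\XX)}(\cdot,0)\) is countably subadditive on nonnegative functions (since \((f+g)\wedge 1\le f\wedge 1+g\wedge 1\)), which gives \(\sfd_{L^0(\XX)}(r_k,0)<2^{-k}\), after which an \(\mm\)-a.e.\ convergent subsequence of \((|v^k|)_k\) yields the quotient identity. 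You instead pre-process each approximate lift by a Chebyshev truncation, replacing \(v\) by \(\1_{\X\setminus A}\cdot v\) with \(A=\{|v|>|w|+\delta\}\), so that your recursion runs with genuine \(\mm\)-a.e.\ bounds \(|v_n|\le|w_{n-1}|+\delta_{n-1}\); this is correct (the truncation costs at most \(\tilde\mm(A)\) in the \(\sfd_{\mathscr N}\)-distance, as you note), it is a nice exploitation of the \(L^0\)-module structure that has no Banach-space counterpart, and it makes the concluding series estimate transparent. Two remarks. First, your stated obstacle --- that ``closeness in measure does not add up, so the iteration cannot be run naively'' --- is not accurate: the paper's proof is precisely the naive iteration, and it succeeds because of the countable subadditivity just mentioned; your truncation device is therefore an elegant but avoidable detour. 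Second, your opening reduction (the quotient identity follows once, for every \(\varepsilon>0\), a single exact lift \(v\) with \(\sfd_{L^0(\XX)}(|v|,|w|)<\varepsilon\) is produced, because \(0\le\bigwedge_{v\in T^{-1}(w)}|v|-|w|\le|v_0|-|w|\) pointwise for any \(v_0\in T^{-1}(w)\)) is a clean repackaging of what the paper achieves implicitly with its a.e.-subsequence argument; both ways of concluding are equally short.
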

\begin{proof}
Let \(w\in\mathscr N\) and \(k\in\N\) be given. Set \(u^k_0\coloneqq 0\in\mathscr M\) and find recursively
\(u^k_n\in\mathscr M\) for \(n\in\N\) such that \(\sfd_{\mathscr N}\big(T(u^k_n),w-\sum_{i=0}^{n-1}T(u^k_i)\big)<2^{-k-n-1}\)
and \(\sfd_{L^0(\XX)}\big(|u^k_n|,\big|w-\sum_{i=0}^{n-1}T(u^k_i)\big|\big)<2^{-k-n-1}\).
Now define \(v^k_n\coloneqq\sum_{i=1}^n u^k_i\) for every \(n\in\N\). Then we have that \(\sum_{n\in\N}\sfd_{\mathscr M}(v^k_{n+1},v^k_n)<+\infty\), since
\[
\sfd_{\mathscr M}(v^k_{n+1},v^k_n)\leq\sfd_{L^0(\XX)}\bigg(|u^k_{n+1}|,\Big|w-\sum_{i=0}^n T(u^k_i)\Big|\bigg)
+\sfd_{\mathscr N}\bigg(T(u^k_n),w-\sum_{i=0}^{n-1}T(u^k_i)\bigg)<\frac{3}{2^{k+n+2}}.
\]
It follows that \((v^k_n)_{n\in\N}\subseteq\mathscr M\) is Cauchy, thus it makes sense to define
\(v^k\coloneqq\lim_n v^k_n\in\mathscr M\). Since
\[
\sfd_{\mathscr N}(T(v^k_n),w)=\sfd_{\mathscr N}\bigg(T(u^k_n),w-\sum_{i<n}T(u^k_i)\bigg)\leq\frac{1}{2^{k+n+1}}\to 0
\quad\text{ as }n\to\infty,
\]
the continuity of the map \(T\) ensures that \(T(v^k)=w\). Moreover, we can estimate
\[
|v^k|\leq\sum_{n=1}^\infty|u^k_n|\leq|w|+\underset{\eqqcolon r_k}{\underbrace{\sum_{n=1}^\infty\bigg||u^k_n|-\Big|w-\sum_{i<n}T(u^k_i)\Big|\bigg|
+\sum_{n=2}^\infty\bigg|T(u^k_{n-1})-\Big(w-\sum_{i<n-1}T(u^k_i)\Big)\bigg|}}.
\]
Given that \(\sfd_{L^0(\XX)}(r_k,0)<2^{-k}\) and \(|w|=|T(v^k)|\leq|v^k|\), we can extract a subsequence
\((k_j)_{j\in\N}\subseteq\N\) such that \(|v^{k_j}|\to|w|\) in the \(\mm\)-a.e.\ sense. This implies that
\(|w|=\bigwedge_{v\in T^{-1}(w)}|v|\), as desired.
\end{proof}
\subsection{Summability in Banach \texorpdfstring{\(L^0\)}{L0}-modules}\label{s:summability}
First, we introduce a notion of \emph{summable family}:
\begin{definition}[Summable family in a Banach \(L^0\)-module]\label{def:summable_family}
Let \(\XX\) be a \(\sigma\)-finite measure space and \(\mathscr M\) a Banach \(L^0(\XX)\)-module. Then we say that a family \(\{v_i\}_{i\in I}\subseteq\mathscr M\)
is \textbf{summable} in \(\mathscr M\) if
\begin{equation}\label{eq:summable_mod}
\bigwedge_{F\in\mathscr P_f(I)}\,\bigvee_{G\in\mathscr P_f(I\setminus F)}\Big|v-\sum_{i\in F\cup G}v_i\Big|=0\quad\text{ for some }v\in\mathscr M.
\end{equation}
The element \(v\in\mathscr M\) is unique, is called the \textbf{sum} of \(\{v_i\}_{i\in I}\) in \(\mathscr M\), and is denoted by \(\sum_{i\in I}v_i\).
\end{definition}

In the sequel, instead of the definition, we will often need the following summability criterion:
\begin{proposition}[Cauchy summability criterion]\label{prop:Cauchy_sum_criterion}
Let \(\XX\) be a \(\sigma\)-finite measure space. Let \(\mathscr M\) be a Banach \(L^0(\XX)\)-module.
Then it holds that a family \(\{v_i\}_{i\in I}\subseteq\mathscr M\) is summable if and only if
\begin{equation}\label{eq:Cauchy_criterion}
\bigwedge_{F\in\mathscr P_f(I)}\,\bigvee_{G\in\mathscr P_f(I\setminus F)}\Big|\sum_{i\in G}v_i\Big|=0.
\end{equation}
In this case, the set \(J\coloneqq\big\{i\in I\,:\,v_i\neq 0\big\}\) is at most countable. Moreover, given any increasing sequence \((F_n)_{n\in\N}\) of finite subsets
of \(J\) satisfying \(J=\bigcup_{n\in\N}F_n\), we have that
\begin{equation}\label{eq:summable_aux}
\sum_{i\in F_n}v_i\to\sum_{i\in I}v_i\quad\text{ as }n\to\infty.
\end{equation}
\end{proposition}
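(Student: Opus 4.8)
The plan is to prove the two implications separately, extracting the countability of \(J\) and the convergence \eqref{eq:summable_aux} along the way in the (harder) sufficiency direction. Throughout I abbreviate \(g_F\coloneqq\bigvee_{G\in\mathscr P_f(I\setminus F)}\big|\sum_{i\in G}v_i\big|\in L^0_{\rm ext}(\XX)^+\) and \(h_F\coloneqq\bigvee_{G\in\mathscr P_f(I\setminus F)}\big|v-\sum_{i\in F\cup G}v_i\big|\), noting that both nets are decreasing in \(F\), since enlarging \(F\) shrinks the admissible family of \(G\)'s. For \textbf{necessity} (summability \(\Rightarrow\) \eqref{eq:Cauchy_criterion}) I would use a telescoping identity: if \(v=\sum_{i\in I}v_i\), then for \(F\in\mathscr P_f(I)\) and \(G\in\mathscr P_f(I\setminus F)\) one has \(\sum_{i\in G}v_i=\big(\sum_{i\in F\cup G}v_i-v\big)-\big(\sum_{i\in F}v_i-v\big)\), hence \(\big|\sum_{i\in G}v_i\big|\le\big|v-\sum_{i\in F\cup G}v_i\big|+\big|v-\sum_{i\in F}v_i\big|\). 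Choosing \(G=\varnothing\) shows \(\big|v-\sum_{i\in F}v_i\big|\le h_F\), so the previous bound gives \(\big|\sum_{i\in G}v_i\big|\le 2\,h_F\) for every such \(G\); passing to the supremum over \(G\) and then to the infimum over \(F\) yields \(\bigwedge_F g_F\le 2\bigwedge_F h_F=0\), which is \eqref{eq:Cauchy_criterion}.

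For \textbf{sufficiency} I would first extract a good sequence. Since every family in \(L^0_{\rm ext}(\XX)\) is order-bounded and \(L^0_{\rm ext}(\XX)\) enjoys the countable inf property, from \(\bigwedge_F g_F=0\) I can select a countable subfamily and, replacing its members by their finite unions, obtain an increasing sequence \((\bar F_n)_n\) with \(g_{\bar F_n}\downarrow 0\). If \(i\notin\bigcup_n\bar F_n\), then \(\{i\}\in\mathscr P_f(I\setminus\bar F_n)\) for every \(n\), so \(|v_i|\le g_{\bar F_n}\downarrow 0\), forcing \(v_i=0\); hence \(J\subseteq\bigcup_n\bar F_n\) is at most countable. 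Because \(v_i=0\) for \(i\notin J\), each admissible \(G\) may be replaced by \(G\cap J\), so \(g_F=g_{F\cap J}\) depends only on \(F\cap J\); consequently, for any increasing sequence \((F_n)_n\) of finite subsets of \(J\) with \(\bigcup_n F_n=J\), cofinality among finite subsets of \(J\) together with monotonicity gives \(g_{F_n}\downarrow\bigwedge_{F'\in\mathscr P_f(J)}g_{F'}=\bigwedge_F g_F=0\).

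Fix such a sequence and set \(w_n\coloneqq\sum_{i\in F_n}v_i\). For \(m>n\) one has \(|w_m-w_n|=\big|\sum_{i\in F_m\setminus F_n}v_i\big|\le g_{F_n}\), so \(\sfd_{\mathscr M}(w_m,w_n)=\int|w_m-w_n|\wedge 1\,\d\tilde\mm\le\int g_{F_n}\wedge 1\,\d\tilde\mm\), and the latter tends to \(0\) by dominated convergence (the truncation makes the integrand dominated by the constant \(1\), bypassing any integrability issue for \(g_{F_n}\) itself). Thus \((w_n)_n\) is \(\sfd_{\mathscr M}\)-Cauchy and, by completeness of \(\mathscr M\), converges to some \(v\in\mathscr M\). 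To identify \(v\) as the sum, fix \(n\) and \(G\in\mathscr P_f(I\setminus F_n)\), put \(H\coloneqq F_n\cup(G\cap J)\supseteq F_n\); for \(m\) large enough that \(F_m\supseteq H\) one has \(|w_m-\sum_{i\in H}v_i|=\big|\sum_{i\in F_m\setminus H}v_i\big|\le g_H\le g_{F_n}\), and letting \(m\to\infty\) (using continuity of the pointwise norm and \(\sum_{i\in H}v_i=\sum_{i\in F_n\cup G}v_i\)) gives \(\big|v-\sum_{i\in F_n\cup G}v_i\big|\le g_{F_n}\). By Remark \ref{rmk:aux_estim_sup} this uniform bound yields \(h_{F_n}\le g_{F_n}\), hence \(\bigwedge_F h_F\le\bigwedge_n h_{F_n}\le\bigwedge_n g_{F_n}=0\); so \(v\) satisfies \eqref{eq:summable_mod} and \(\{v_i\}_{i\in I}\) is summable with \(\sum_{i\in I}v_i=v\). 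Finally, since the sum is unique by Definition \ref{def:summable_family}, the limit \(v\) is the same for every admissible sequence, which is exactly \eqref{eq:summable_aux}.

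The main obstacle, which the argument above is designed to circumvent, is that \(I\) is an arbitrary (possibly uncountable) index set and the quantities \(g_F\) live a priori only in \(L^0_{\rm ext}(\XX)\): the crux is to exploit the countable inf/sup properties to compress all the lattice operations onto a single increasing sequence exhausting \(J\), after which only the scalar, truncated dominated convergence theorem is required.
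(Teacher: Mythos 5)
Your proof is correct and follows essentially the same route as the paper's: necessity is the identical triangle-inequality estimate, and sufficiency likewise uses the countable inf property to extract an increasing sequence of finite sets along which the Cauchy quantity decreases to zero a.e., shows the corresponding partial sums are \(\sfd_{\mathscr M}\)-Cauchy (via the truncated integral), and identifies the limit as the sum. The only real difference is organizational: by observing \(g_F=g_{F\cap J}\) and using cofinality to get \(g_{F_n}\downarrow 0\) for \emph{every} admissible exhausting sequence, you deduce \eqref{eq:summable_aux} directly from uniqueness of the sum, whereas the paper fixes one special sequence and treats general sequences by a final subsequence-extraction step; your countability argument for \(J\) is also a touch more direct than the paper's auxiliary sets \(\big\{i\,:\,\sfd_{\mathscr M}(v_i,0)>k^{-1}\big\}\).
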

\begin{proof}
Suppose \(\{v_i\}_{i\in I}\) is summable and set \(v\coloneqq\sum_{i\in I}v_i\in\mathscr M\) for brevity. We have that
\[
\bigg|\sum_{i\in G}v_i\bigg|\leq\bigg|v-\sum_{i\in F}v_i\bigg|+\bigg|v-\sum_{i\in F\cup G}v_i\bigg|\quad\text{ for every }F\in\mathscr P_f(I)\text{ and }G\in\mathscr P_f(I\setminus F),
\]
whence it follows that \(\bigvee_{G\in\mathscr P_f(I\setminus F)}\big|\sum_{i\in G}v_i\big|\leq 2\bigvee_{G\in\mathscr P_f(I\setminus F)}\big|v-\sum_{i\in F\cup G}v_i\big|\) and accordingly
\[
\bigwedge_{F\in\mathscr P_f(I)}\,\bigvee_{G\in\mathscr P_f(I\setminus F)}\Big|\sum_{i\in G}v_i\Big|\leq 2\bigwedge_{F\in\mathscr P_f(I)}\,\bigvee_{G\in\mathscr P_f(I\setminus F)}\Big|v-\sum_{i\in F\cup G}v_i\Big|=0.
\]

Conversely, suppose that \eqref{eq:Cauchy_criterion} holds. Then we can find an increasing sequence \((\tilde F_k)_{k\in\N}\) of finite subsets of \(J\) such that
\(\psi_k\coloneqq\bigvee_{G\in\mathscr P_f(I\setminus\tilde F_k)}\big|\sum_{i\in G}v_i\big|\searrow 0\) holds \(\mm\)-a.e. Notice that \(J=\bigcup_{k\in\N}\tilde F_k\).
Up to a non-relabelled subsequence, we can also assume that \(\sfd_{L^0(\XX)}(\psi_k\wedge 1,0)\leq k^{-1}\) for all \(k\in\N\). Define
\(J_k\coloneqq\big\{i\in I\,:\,\sfd_{\mathscr M}(v_i,0)>k^{-1}\big\}\) for every \(k\in\N\). Given that \(|v_i|\leq\psi_k\) for every \(i\in I\setminus\tilde F_k\),
we deduce that \(\sfd_{\mathscr M}(v_i,0)\leq\sfd_{L^0(\XX)}(\psi_k\wedge 1,0)\leq k^{-1}\), so that \(i\notin J_k\). This shows that \(J_k\subseteq\tilde F_k\),
thus in particular \(J_k\) is finite. Since \(J=\bigcup_{k\in\N}J_k\), we deduce that \(J\) is at most countable. Moreover,
\[
\bigg|\sum_{i\in\tilde F_m}v_i-\sum_{i\in\tilde F_k}v_i\bigg|=\bigg|\sum_{i\in\tilde F_m\setminus\tilde F_k}v_i\bigg|\leq\psi_k\quad\text{ for every }k,m\in\N\text{ with }m\geq k
\]
implies that \(\big(\sum_{i\in\tilde F_k}v_i\big)_{k\in\N}\) is a Cauchy sequence in \(\mathscr M\). Denoting by \(v\in\mathscr M\) its limit, we claim that
\(\{v_i\}_{i\in I}\) is summable and \(v=\sum_{i\in I}v_i\). First, letting \(\phi_k\coloneqq\big|v-\sum_{i\in\tilde F_k}v_i\big|\in L^0(\XX)^+\),
we have that \(\phi_k\to 0\) in \(L^0(\XX)\) as \(k\to\infty\), thus in particular \(\bigwedge_{k\in\N}\phi_k=0\). Therefore, we deduce that
\[
\bigwedge_{F\in\mathscr P_f(I)}\,\bigvee_{G\in\mathscr P_f(I\setminus F)}\bigg|v-\sum_{i\in F\cup G}v_i\bigg|\leq
\bigwedge_{k\in\N}\,\bigvee_{G\in\mathscr P_f(I\setminus\tilde F_k)}\bigg|v-\sum_{i\in\tilde F_k\cup G}v_i\bigg|
\leq\bigwedge_{k\in\N}\phi_k+\bigwedge_{k\in\N}\psi_k=0,
\]
which shows that \(\{v_i\}_{i\in I}\) is summable with sum \(v\), as we claimed. Finally, given any increasing sequence \((F_n)_{n\in\N}\) of finite subsets of \(J\)
with \(J=\bigcup_{n\in\N}F_n\), we can extract a subsequence \((n_k)_{k\in\N}\) such that \(\tilde F_k\subseteq F_{n_k}\) for every \(k\in\N\), so that
\(\big|v-\sum_{i\in F_{n_k}}v_i\big|\leq\phi_k+\psi_k\) for every \(k\in\N\), whence it follows that \(\sum_{i\in F_{n_k}}v_i\to v\) as \(k\to\infty\).
Given that the limit \(v\) does not depend on the specific choice of the sequence \((F_n)_{n\in\N}\), we can conclude that \eqref{eq:summable_aux} is verified. The proof is complete.
\end{proof}

Furthermore, given any family \(\{f_i\}_{i\in I}\subseteq L^0(\XX)^+\), we define
\[
\sum_{i\in I}f_i\coloneqq\bigvee_{F\in\mathscr P_f(I)}\,\sum_{i\in F}f_i\in L^0_{\rm ext}(\XX)^+.
\]
This definition is not in conflict with Definition \ref{def:summable_family}, since \(\bigvee_{F\in\mathscr P_f(I)}\sum_{i\in F}f_i\in L^0(\XX)^+\)
if and only if \(\{f_i\}_{i\in I}\subseteq L^0(\XX)\) is summable. In this case, its sum coincides with \(\bigvee_{F\in\mathscr P_f(I)}\sum_{i\in F}f_i\). Moreover,
\[
|v|_p=\Big(\sum_{i\in I}|v_i|^p\Big)^{1/p}\quad\text{ for every }v=(v_i)_{i\in I}\in\ell_p(I,\mathscr M)
\]
holds whenever \(\mathscr M\) is a Banach \(L^0(\XX)\)-module and \(p\in[1,\infty)\). Let us also observe that
\begin{equation}\label{eq:sum_elem_ell1_I_M}
v=\sum_{i\in I}(\delta_{ij}v_i)_{j\in I}\quad\text{ for every }v=(v_i)_{i\in I}\in\ell_1(I,\mathscr M).
\end{equation}
Indeed, using the summability of \(\{|v_i|\}_{i\in I}\) in \(L^0(\XX)\) and Proposition \ref{prop:Cauchy_sum_criterion} we obtain that
\[
\bigwedge_{F\in\mathscr P_f(I)}\,\bigvee_{G\in\mathscr P_f(I\setminus F)}\bigg|v-\sum_{i\in F\cup G}(\delta_{ij}v_i)_{j\in I}\bigg|_1=
\bigwedge_{F\in\mathscr P_f(I)}\,\bigvee_{G\in\mathscr P_f(I\setminus F)}\,\bigvee_{H\in\mathscr P_f(I\setminus(F\cup G))}\,\sum_{i\in H}|v_i|=0,
\]
whence the claimed identity \eqref{eq:sum_elem_ell1_I_M} follows.
\begin{remark}\label{rmk:summable_ptwse_norms}{\rm
Let \(\{v_i\}_{i\in I}\) be a summable family in a Banach \(L^0(\XX)\)-module \(\mathscr M\). Then it holds
\begin{equation}\label{eq:summable_ptwse_norms}
\Big|\sum_{i\in I}v_i\Big|\leq\sum_{i\in I}|v_i|.
\end{equation}
Indeed, by Proposition \ref{prop:Cauchy_sum_criterion} we find \((F_n)_{n\in\N}\subseteq\mathscr P_f(I)\)
for which \(\big|\sum_{i\in F_n}v_i-\sum_{i\in I}v_i\big|\to 0\) in the \(\mm\)-a.e.\ sense as \(n\to\infty\), so that
\(\big|\sum_{i\in I}v_i\big|=\lim_n\big|\sum_{i\in F_n}v_i\big|\leq\lim_n\sum_{i\in F_n}|v_i|\leq\sum_{i\in I}|v_i|\). Also,
\[
\{v_i\}_{i\in I}\subseteq\mathscr M\,\text{ is summable}\quad\text{ for every }v=(v_i)_{i\in I}\in\ell_1(I,\mathscr M).
\]
Indeed, arguing as in the proof of \eqref{eq:sum_elem_ell1_I_M} we deduce that \eqref{eq:Cauchy_criterion} is verified, so that \(\{v_i\}_{i\in I}\)
is summable by Proposition \ref{prop:Cauchy_sum_criterion}. Notice also that \(\big|\sum_{i\in I}v_i\big|\leq|v|_1\) holds by \eqref{eq:summable_ptwse_norms}.
\fr}\end{remark}
\begin{lemma}\label{lem:summable_comp_dual}
Let \(\XX\) be a \(\sigma\)-finite measure space. Let \(\varphi\colon\mathscr M\to\mathscr N\) be a homomorphism
of Banach \(L^0(\XX)\)-modules \(\mathscr M\), \(\mathscr N\). Let \(\{v_i\}_{i\in I}\subseteq\mathscr M\)
be summable. Then \(\{\varphi(v_i)\}_{i\in I}\subseteq\mathscr N\) is summable and
\begin{equation}\label{eq:summable_comp_dual}
\varphi\Big(\sum_{i\in I}v_i\Big)=\sum_{i\in I}\varphi(v_i).
\end{equation}
\end{lemma}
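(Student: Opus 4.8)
The plan is to exploit the Cauchy summability criterion from Proposition~\ref{prop:Cauchy_sum_criterion} together with the defining bound for a homomorphism. Since $\varphi$ is a homomorphism of Banach $L^0(\XX)$-modules, there exists $g\in L^0(\XX)^+$ such that $|\varphi(u)|\leq g|u|$ for every $u\in\mathscr M$; in particular, applying this to the finite partial sums $\sum_{i\in G}v_i$ gives $\big|\varphi\big(\sum_{i\in G}v_i\big)\big|\leq g\big|\sum_{i\in G}v_i\big|$. By $L^0(\XX)$-linearity of $\varphi$, the left-hand side equals $\big|\sum_{i\in G}\varphi(v_i)\big|$.

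First I would establish summability of $\{\varphi(v_i)\}_{i\in I}$ via~\eqref{eq:Cauchy_criterion}. From the pointwise estimate above, for every $F\in\mathscr P_f(I)$ we have
\[
\bigvee_{G\in\mathscr P_f(I\setminus F)}\Big|\sum_{i\in G}\varphi(v_i)\Big|\leq g\,\bigvee_{G\in\mathscr P_f(I\setminus F)}\Big|\sum_{i\in G}v_i\Big|,
\]
where I should justify pulling the fixed $g$ out of the supremum (this uses $g\geq 0$ and the order structure, exactly as in Remark~\ref{rmk:aux_estim_sup}-type manipulations). Taking the infimum over $F\in\mathscr P_f(I)$ and using that $\{v_i\}_{i\in I}$ is summable, so that the right-hand infimum-of-suprema vanishes by Proposition~\ref{prop:Cauchy_sum_criterion}, I conclude that the analogous quantity for $\{\varphi(v_i)\}_{i\in I}$ is bounded above by $g\cdot 0=0$, hence equals $0$. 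Thus $\{\varphi(v_i)\}_{i\in I}$ is summable in $\mathscr N$.

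To identify the sum and prove~\eqref{eq:summable_comp_dual}, I would use the second part of Proposition~\ref{prop:Cauchy_sum_criterion}: the set $J\coloneqq\{i\in I:v_i\neq 0\}$ is at most countable, and for any increasing sequence $(F_n)_{n\in\N}$ of finite subsets of $J$ with $J=\bigcup_n F_n$ we have $\sum_{i\in F_n}v_i\to\sum_{i\in I}v_i$ in $\mathscr M$. Since $\varphi$ is continuous, $\varphi\big(\sum_{i\in F_n}v_i\big)\to\varphi\big(\sum_{i\in I}v_i\big)$ in $\mathscr N$; by $L^0(\XX)$-linearity the left-hand side equals $\sum_{i\in F_n}\varphi(v_i)$. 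On the other hand, the same sequence $(F_n)_{n\in\N}$ exhausts the (at most countable) support $\{i:\varphi(v_i)\neq 0\}\subseteq J$ up to indices contributing nothing, so applying~\eqref{eq:summable_aux} to the already-established summable family $\{\varphi(v_i)\}_{i\in I}$ yields $\sum_{i\in F_n}\varphi(v_i)\to\sum_{i\in I}\varphi(v_i)$. Uniqueness of limits in $\mathscr N$ then forces $\varphi\big(\sum_{i\in I}v_i\big)=\sum_{i\in I}\varphi(v_i)$.

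I expect the only genuinely delicate point to be the interchange of the fixed multiplier $g$ with the supremum over $G$ in the first step; this is harmless because $g$ does not depend on $G$ and multiplication by a nonnegative element of $L^0(\XX)$ is order-preserving, but it should be stated carefully to avoid measurability subtleties (one may invoke the countable sup property, choosing a countable cofinal family realising the supremum). Everything else is a routine combination of the Cauchy criterion, continuity of $\varphi$, and uniqueness of the sum, so no further obstacle is anticipated.
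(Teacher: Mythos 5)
Your proposal is correct, but it takes a genuinely different route from the paper. The paper obtains summability of $\{\varphi(v_i)\}_{i\in I}$ and the identity \eqref{eq:summable_comp_dual} in a single stroke: setting $v\coloneqq\sum_{i\in I}v_i$, it tests the definition \eqref{eq:summable_mod} directly against the candidate sum $\varphi(v)$, using the estimate
\[
\Big|\varphi(v)-\sum_{i\in F\cup G}\varphi(v_i)\Big|=\Big|\varphi\Big(v-\sum_{i\in F\cup G}v_i\Big)\Big|\leq|\varphi|\,\Big|v-\sum_{i\in F\cup G}v_i\Big|,
\]
and then taking the supremum over $G$ and the infimum over $F$; since the right-hand side has vanishing inf-sup by summability of $\{v_i\}_{i\in I}$, both conclusions follow at once. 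Note that this requires exactly the same order-theoretic interchange (pulling the fixed nonnegative multiplier $|\varphi|$ out of the supremum and the infimum, via the countable sup/inf property of $L^0$) that you single out as the delicate point, so that issue is common to both arguments and is not a drawback of yours specifically. What differs is the architecture: you first establish summability abstractly via the Cauchy criterion \eqref{eq:Cauchy_criterion}, and then identify the sum by exhausting the countable support with an increasing sequence $(F_n)_{n\in\N}$, combining \eqref{eq:summable_aux}, the continuity of $\varphi$, and uniqueness of limits in the metric space $(\mathscr N,\sfd_{\mathscr N})$; your observation that indices in $\{i:v_i\neq 0\}\setminus\{i:\varphi(v_i)\neq 0\}$ contribute nothing is the right way to reconcile the two supports, and it closes that step. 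The trade-off: your proof is more modular and showcases the sequential characterisation of sums, but it invokes the full strength of Proposition \ref{prop:Cauchy_sum_criterion} (both implications and \eqref{eq:summable_aux}), whereas the paper's argument needs nothing beyond the definition of summability and is essentially three lines long.
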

\begin{proof}
Set \(v\coloneqq\sum_{i\in I}v_i\) for brevity. Notice that if \(F\in\mathscr P_f(I)\) and \(G\in\mathscr P_f(I\setminus F)\), then
\[
\bigg|\varphi(v)-\sum_{i\in F\cup G}\varphi(v_i)\bigg|=\bigg|\varphi\bigg(v-\sum_{i\in F\cup G}v_i\bigg)\bigg|
\leq|\varphi|\bigg|v-\sum_{i\in F\cup G}v_i\bigg|.
\]
By taking first the supremum over \(G\) and then the infimum over \(F\), we thus obtain \eqref{eq:summable_comp_dual}.
\end{proof}
\subsection{Local Schauder bases}\label{s:Schauder}
We propose a notion of (unconditional) Schauder basis in a Banach \(L^0\)-module. The term `unconditional'
will be often omitted, as no other kind of basis is considered.
\begin{definition}[Local Schauder basis]
Let \(\XX=(\X,\Sigma,\mm)\) be a \(\sigma\)-finite measure space and \(\mathscr M\) a Banach \(L^0(\XX)\)-module. Let \(E\in\Sigma\) satisfy \(\mm(E)>0\).
Then we say that a family \(\{v_i\}_{i\in I}\subseteq\mathscr M\) is an \textbf{(unconditional) local Schauder basis} of \(\mathscr M\) on \(E\) provided
for any given \(v\in\mathscr M|_E\) there exists a unique \((f_i)_{i\in I}\in L^0(\XX|_E)^I\) such that the family \(\{f_i\cdot v_i\}_{i\in I}\) is summable
in \(\mathscr M\) and
\[
v=\sum_{i\in I}f_i\cdot v_i.
\]
In the case where \(E=\X\), we just say that \(\{v_i\}_{i\in I}\) is a local Schauder basis of \(\mathscr M\).
\end{definition}
\begin{lemma}
Let \(\XX\) be a \(\sigma\)-finite measure space, \(\B\) a Banach space with a Schauder basis \(\{{\sf v}_i\}_{i\in I}\).
Then it holds that the family \(\{\underline{\sf v}_i\}_{i\in I}\) defined as in \eqref{eq:def_const_vf} is a local Schauder basis of \(L^0(\XX;\B)\).
\end{lemma}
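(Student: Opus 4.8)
The plan is to expand a vector field fiberwise by means of the scalar Schauder basis, and then to promote the pointwise expansion to an $L^0$-summable one by exploiting the uniform boundedness of the partial-sum projections. Since $\B$ admits a Schauder basis it is separable and, by a standard fact, the index set $I$ is (at most) countable. Let $\{{\sf v}_i^*\}_{i\in I}\subseteq\B^*$ be the associated coordinate functionals, which are continuous, and for $G\in\mathscr P_f(I)$ let $S_G\colon\B\to\B$, $S_G({\sf w})\coloneqq\sum_{i\in G}{\sf v}_i^*({\sf w}){\sf v}_i$, be the corresponding finite partial-sum projection. Unconditionality furnishes a constant $K\geq 1$ with $\|S_G\|\leq K$ for every $G\in\mathscr P_f(I)$. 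Given $v\in L^0(\XX;\B)$, I fix a representative $\bar v\colon\X\to\B$; since each ${\sf v}_i^*$ is continuous, $x\mapsto{\sf v}_i^*(\bar v(x))$ is measurable, and I set $f_i\coloneqq[{\sf v}_i^*\circ\bar v]_\mm\in L^0(\XX)$. These will serve as the coefficients: I will first show that $\{f_i\cdot\underline{\sf v}_i\}_{i\in I}$ is summable with sum $v$, and then establish uniqueness.

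For summability I would verify \eqref{eq:summable_mod} directly with candidate sum $v$. For disjoint finite sets $F$ and $G$ one has $S_{F\cup G}=S_F+S_G$ and $S_GS_F=0$, whence the operator identity ${\rm Id}-S_{F\cup G}=({\rm Id}-S_G)\circ({\rm Id}-S_F)$; evaluating at $\bar v(x)$ yields the pointwise bound
\[
\Big|v-\sum_{i\in F\cup G}f_i\cdot\underline{\sf v}_i\Big|(x)=\big\|({\rm Id}-S_{F\cup G})\bar v(x)\big\|_\B\leq(1+K)\big\|({\rm Id}-S_F)\bar v(x)\big\|_\B,
\]
which is uniform in $G$. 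Setting $\rho_F\coloneqq\big[x\mapsto\|({\rm Id}-S_F)\bar v(x)\|_\B\big]_\mm\in L^0(\XX)^+$ and invoking Remark \ref{rmk:aux_estim_sup}, I obtain $\bigvee_{G\in\mathscr P_f(I\setminus F)}\big|v-\sum_{i\in F\cup G}f_i\cdot\underline{\sf v}_i\big|\leq(1+K)\rho_F$ for every $F\in\mathscr P_f(I)$. It then remains to check $\bigwedge_{F\in\mathscr P_f(I)}\rho_F=0$: choosing an increasing sequence $(F_n)_{n\in\N}\subseteq\mathscr P_f(I)$ with $\bigcup_nF_n=I$, the fiberwise convergence of the Schauder expansion gives $\|({\rm Id}-S_{F_n})\bar v(x)\|_\B\to0$ for every $x$, so $\bigwedge_n\rho_{F_n}=0$ and a fortiori $\bigwedge_F\rho_F=0$. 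This establishes \eqref{eq:summable_mod}, i.e.\ $v=\sum_{i\in I}f_i\cdot\underline{\sf v}_i$.

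For uniqueness, suppose $(g_i)_{i\in I}\subseteq L^0(\XX)$ also satisfies $v=\sum_{i\in I}g_i\cdot\underline{\sf v}_i$ with $\{g_i\cdot\underline{\sf v}_i\}_{i\in I}$ summable. For fixed $j\in I$, the map $\Phi_j\colon L^0(\XX;\B)\to L^0(\XX)$ characterised by $\Phi_j(w)(x)\coloneqq{\sf v}_j^*(w(x))$ is $L^0(\XX)$-linear and satisfies $|\Phi_j(w)|\leq\|{\sf v}_j^*\|_{\B^*}|w|$, hence is a homomorphism of Banach $L^0(\XX)$-modules. By Lemma \ref{lem:summable_comp_dual} it commutes with the summable sum, and using biorthogonality $\Phi_j(\underline{\sf v}_i)=\delta_{ij}$ I conclude $g_j=\sum_{i\in I}g_i\,\delta_{ij}=\Phi_j(v)=f_j$. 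I expect the main obstacle to be precisely the summability step: the fiberwise expansions converge at rates depending on $x$, so the passage from pointwise to $L^0$-summability is not automatic; the decisive ingredient is the unconditional constant $K$, which makes the bound on the $G$-supremum uniform and reduces everything to the scalar tail estimate $\bigwedge_F\rho_F=0$.
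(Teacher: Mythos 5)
Your route genuinely differs from the paper's, and parts of it are an improvement. For the summability step, the paper works directly with the pointwise double inf--sup coming from the fiberwise expansion and passes to the module-valued statement \eqref{eq:summable_mod} via Remark \ref{rmk:aux_estim_sup} ``and its natural variants'', whereas you produce a majorant that is \emph{uniform in} $G$ through the identity ${\rm Id}-S_{F\cup G}=({\rm Id}-S_G)\circ({\rm Id}-S_F)$ and the unconditional constant $K$, thereby reducing everything to the scalar tail estimate $\bigwedge_F\rho_F=0$; this is a legitimate and rather transparent way to handle the supremum over $G$, and your application of Remark \ref{rmk:aux_estim_sup} is correct. Your uniqueness argument is also cleaner than the paper's: instead of comparing measurable representatives off a common null set (which is what the paper does, after noting that the competitor family has countable support), you push the second expansion through the homomorphisms $\Phi_j$ via Lemma \ref{lem:summable_comp_dual} and use biorthogonality; this is correct and shorter.

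There is, however, a genuine gap, and it is your very first sentence: ``since $\B$ admits a Schauder basis it is separable, hence $I$ is at most countable.'' In this paper an (unconditional) Schauder basis is indexed by an \emph{arbitrary} set $I$, and the standard fact you invoke fails in that generality: $\ell_1(I)$ with $I$ uncountable is non-separable, yet its canonical elements $({\sf e}_i)_{i\in I}$ from \eqref{eq:def_canonical_elements} form an unconditional Schauder basis --- this is precisely one of the examples the paper needs the lemma for. (The implication goes the other way: separability of $\B$ forces $I$ to be countable.) Countability is not decorative in your proof: it is used exactly where you choose an increasing sequence $(F_n)_{n\in\N}\subseteq\mathscr P_f(I)$ with $\bigcup_{n\in\N}F_n=I$ to conclude $\bigwedge_F\rho_F=0$, and no such sequence exists when $I$ is uncountable. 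The gap can be repaired, but it requires an extra ingredient, namely the defining property of $L^0(\XX;\B)$ that $\bar v$ takes values (up to a null set) in a separable subset $S\subseteq\B$: since the coordinate functionals ${\sf v}_i^*$ are continuous (this follows from \eqref{eq:direct_decomp_Schauder} and Hahn--Banach, with no countability needed), the union $J$ of the supports of the elements of a countable dense subset of $S$ is a countable set such that ${\sf v}_i^*(\bar v(x))=0$ for every $i\in I\setminus J$ and a.e.\ $x\in\X$. Running your exhaustion along $J$ instead of $I$ (i.e.\ $F_n\nearrow J$, so that $\rho_{F_n}(x)\to 0$ because the expansion of $\bar v(x)$ is supported in $J$) closes the argument. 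Note that nothing else in your proof actually needs $I$ countable: the operators $S_G$ are bounded because the ${\sf v}_i^*$ are continuous, their pointwise boundedness follows from unconditional convergence, and Banach--Steinhaus applies to arbitrary families, so $K$ exists; the uniqueness step is index-set-free as written.
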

\begin{proof}
Let \(v\in L^0(\XX;\B)\) be given. Fix a measurable representative \(\bar v\colon\X\to\B\) of \(v\). Since \(\{{\sf v}_i\}_{i\in I}\) is a Schauder basis
of \(\B\), for any point \(x\in\X\) we can find a unique \((\bar f_i(x))_{i\in I}\in\R^I\) such that
\begin{equation}\label{eq:Schauder_Leb-Boch}
\bar v(x)=\sum_{i\in I}\bar f_i(x){\sf v}_i.
\end{equation}
Thanks to \eqref{eq:direct_decomp_Schauder} and the classical Hahn--Banach theorem, for any index \(i\in I\) we can find \(\omega_i\in\B'\) (where \(\B'\) stands
for the topological dual of \(\B\)) with \(\omega_i({\sf v}_j)=0\) for every \(j\in I\setminus\{i\}\) and \(\omega_i({\sf v}_i)=1\). Hence, Lemma \ref{lem:summable_comp_dual} gives
\[
\omega_i(\bar v(x))=\omega_i\Big(\sum_{j\in I}\bar f_j(x){\sf v}_j\Big)=\sum_{j\in I}\bar f_j(x)\,\omega_i({\sf v}_j)=\bar f_i(x)\quad\text{ for every }x\in\X,
\]
whence it follows that \(\bar f_i\colon\X\to\R\) is measurable. Define \(f_i\coloneqq[\bar f_i]_\mm\in L^0(\XX)\) for every \(i\in I\). Since
\[
\inf_{F\in\mathscr P_f(I)}\,\sup_{G\in\mathscr P_f(I\setminus F)}\bigg\|\bar v(x)-\sum_{i\in F\cup G}\bar f_i(x){\sf v}_i\bigg\|_\B=0\quad\text{ for every }x\in\X
\]
by \eqref{eq:Schauder_Leb-Boch}, taking into account also Remark \ref{rmk:aux_estim_sup} (as well as its natural variants) we deduce that
\[
\bigwedge_{F\in\mathscr P_f(I)}\,\bigvee_{G\in\mathscr P_f(I\setminus F)}\Big|v-\sum_{i\in F\cup G}f_i\cdot\underline{\sf v}_i\Big|=0.
\]
This proves that \(\{f_i\cdot\underline{\sf v}_i\}_{i\in I}\) is summable in \(L^0(\XX;\B)\) and \(v=\sum_{i\in I}f_i\cdot\underline{\sf v}_i\).
Finally, let us check that \((f_i)_{i\in I}\in L^0(\XX)^I\) is the unique family with this property.
Suppose \((g_i)_{i\in I}\in L^0(\XX)^I\) satisfies the identity \(\sum_{i\in I}g_i\cdot\underline{\sf v}_i=v\) in \(L^0(\XX;\B)\).
By virtue of Proposition \ref{prop:Cauchy_sum_criterion}, the set \(J\coloneqq\big\{i\in I\,:\,g_i\neq 0\big\}\)
is at most countable. Fix a measurable representative \(\bar g_i\colon\X\to\R\) of \(g_i\) for every \(i\in J\).
Since the family of all couples \((F,G)\) with \(F\in\mathscr P_f(J)\) and \(G\in\mathscr P_f(J\setminus F)\) is
at most countable, we can find a set \(N\in\Sigma\) such that \(\mm(N)=0\) and
\[
\inf_{F\in\mathscr P_f(J)}\,\sup_{G\in\mathscr P_f(J\setminus F)}\bigg\|\bar v(x)-\sum_{i\in F\cup G}\bar g_i(x){\sf v}_i\bigg\|_\B=0
\quad\text{ for every }x\in\X\setminus N.
\]
It follows that \(\bar g_i(x)=\bar f_i(x)\) for every \(i\in J\) and \(x\in\X\setminus N\), as well as
\(\bar f_i(x)=0\) for every \(i\in I\setminus J\) and \(x\in\X\setminus N\). Hence, we conclude that
\((g_i)_{i\in I}=(f_i)_{i\in I}\), so that the statement is achieved.
\end{proof}
\subsubsection{Applications to spaces of generalised sequences and \texorpdfstring{\(L^0\)}{L0}-Lebesgue--Bochner spaces}
Fix an arbitrary index set \(I\neq\varnothing\). Given any exponent \(p\in[1,\infty)\) and any index \(i\in I\), we define
\[
p_i\big((a_j)_{j\in I}\big)\coloneqq a_i\quad\text{ for every }(a_j)_{j\in I}\in\ell_p(I).
\]
The resulting map \(p_i\colon\ell_p(I)\to\R\) is a \(1\)-Lipschitz linear operator. Hence, it makes sense to define
\[
a(\cdot)_i\coloneqq p_i\circ a\in L^0(\XX)\quad\text{ for every }i\in I\text{ and }a\in L^0(\XX;\ell_p(I))
\]
whenever \(\XX\) is a \(\sigma\)-finite measure space. Moreover, recall that any element \({\sf a}\in\ell_p(I)\)
is associated with the a.e.\ constant vector field \(\underline{\sf a}\in L^0(\XX;\ell_p(I))\), which is given by
\(\underline{\sf a}(x)\coloneqq{\sf a}\) for \(\mm\)-a.e.\ \(x\in\X\).
\begin{lemma}\label{lem:basic_L0_l1(I)}
Let \(\XX\) be a \(\sigma\)-finite measure space and \(I\neq\varnothing\) an index set. Let \(a\in L^0(\XX;\ell_1(I))\) be given.
Let \(({\sf e}_i)_{i\in I}\) be as in \eqref{eq:def_canonical_elements}.
Then the family \(\{a(\cdot)_i\cdot\underline{\sf e}_i\}_{i\in I}\) is summable in \(L^0(\XX;\ell_1(I))\) and
\begin{equation}\label{eq:basic_L0_l1(I)_cl1}
a=\sum_{i\in I}a(\cdot)_i\cdot\underline{\sf e}_i.
\end{equation}
In particular, the family \(\big\{|a(\cdot)_i|\big\}_{i\in I}\) is summable in \(L^0(\XX)\) and it holds that
\begin{equation}\label{eq:basic_L0_l1(I)_cl2}
|a|=\sum_{i\in I}|a(\cdot)_i|.
\end{equation}
\end{lemma}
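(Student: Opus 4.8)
The plan is to deduce both identities from the fact, recalled in the preliminaries, that \(\{{\sf e}_i\}_{i\in I}\) is an (unconditional) Schauder basis of \(\ell_1(I)\). First I would observe that, by the previous lemma applied with \(\B=\ell_1(I)\) and \({\sf v}_i={\sf e}_i\), the family \(\{\underline{\sf e}_i\}_{i\in I}\) is a local Schauder basis of \(L^0(\XX;\ell_1(I))\). Hence there exists a \emph{unique} \((f_i)_{i\in I}\in L^0(\XX)^I\) such that \(\{f_i\cdot\underline{\sf e}_i\}_{i\in I}\) is summable and \(a=\sum_{i\in I}f_i\cdot\underline{\sf e}_i\); it then remains only to identify \(f_i=a(\cdot)_i\). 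To this end I would use, for each fixed \(i\in I\), the map \(P_i\colon L^0(\XX;\ell_1(I))\to L^0(\XX)\) given by \(P_i(v)\coloneqq v(\cdot)_i=p_i\circ v\): since \(p_i\) is a \(1\)-Lipschitz linear operator, \(P_i\) is a homomorphism of Banach \(L^0(\XX)\)-modules with \(|P_i(v)|=|v(\cdot)_i|\leq\|v(\cdot)\|_{\ell_1(I)}=|v|\). Applying Lemma \ref{lem:summable_comp_dual} to \(P_i\), together with \(P_i(\underline{\sf e}_j)=[p_i({\sf e}_j)]_\mm=\delta_{ij}\) (a constant function), I obtain
\[
a(\cdot)_i=P_i(a)=\sum_{j\in I}f_j\,P_i(\underline{\sf e}_j)=\sum_{j\in I}\delta_{ij}\,f_j=f_i,
\]
which proves \eqref{eq:basic_L0_l1(I)_cl1}.

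For \eqref{eq:basic_L0_l1(I)_cl2} I would exploit that the decomposition is now known to be summable. By Proposition \ref{prop:Cauchy_sum_criterion} the set \(J\coloneqq\{i\in I:a(\cdot)_i\neq 0\}\) is at most countable, and for any increasing sequence \((F_n)_{n\in\N}\subseteq\mathscr P_f(J)\) with \(J=\bigcup_n F_n\) one has \(\sum_{i\in F_n}a(\cdot)_i\cdot\underline{\sf e}_i\to a\) in \(L^0(\XX;\ell_1(I))\). The crucial ingredient is the elementary \(\ell_1\)-identity, valid for every \(x\) and a fixed measurable representative \(\bar a\) of \(a\): \(\big\|\bar a(x)-\sum_{i\in F_n}\bar a(x)_i{\sf e}_i\big\|_{\ell_1(I)}=\|\bar a(x)\|_{\ell_1(I)}-\sum_{i\in F_n}|\bar a(x)_i|\). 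Passing to \(L^0(\XX)\)-classes, this reads \(\big|a-\sum_{i\in F_n}a(\cdot)_i\cdot\underline{\sf e}_i\big|=|a|-\sum_{i\in F_n}|a(\cdot)_i|\), so the convergence above forces \(\sum_{i\in F_n}|a(\cdot)_i|\to|a|\) in \(L^0(\XX)\). Since these partial sums increase, their \(L^0\)-limit equals their supremum, and since \(|a(\cdot)_i|=0\) for \(i\notin J\) I get \(\bigvee_{F\in\mathscr P_f(I)}\sum_{i\in F}|a(\cdot)_i|=\bigvee_n\sum_{i\in F_n}|a(\cdot)_i|=|a|\in L^0(\XX)^+\). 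By the summability criterion for nonnegative \(L^0(\XX)\)-families recalled after Proposition \ref{prop:Cauchy_sum_criterion}, this is precisely the statement that \(\{|a(\cdot)_i|\}_{i\in I}\) is summable with sum \(|a|\).

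The main obstacle is that \(I\) may be uncountable, so that the essential suprema and infima over \(\mathscr P_f(I)\) need not reduce to pointwise operations on any fixed representative, and \(x\mapsto\sup_i\bar f_i(x)\) may fail to be measurable (cf.\ Remark \ref{rmk:aux_estim_sup}). My plan sidesteps this in two ways: for \eqref{eq:basic_L0_l1(I)_cl1} the uncountability is absorbed into the preceding lemma, and the identification of coefficients is purely algebraic via Lemma \ref{lem:summable_comp_dual}; for \eqref{eq:basic_L0_l1(I)_cl2} the decisive point is that, once summability is available, Proposition \ref{prop:Cauchy_sum_criterion} collapses the effective index set to the countable \(J\), after which ordinary monotone convergence of series in \(L^0(\XX)\) applies. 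A fully self-contained route to \eqref{eq:basic_L0_l1(I)_cl1} would instead verify the Cauchy criterion \eqref{eq:Cauchy_criterion} directly: one uses that \(\bar a\) takes values in a separable subset of \(\ell_1(I)\), hence in \(\ell_1(I_0)\) for some countable \(I_0\subseteq I\), and then bounds \(\bigvee_{G}\big|\sum_{i\in G}a(\cdot)_i\cdot\underline{\sf e}_i\big|\) by the tail \(\big[\sum_{i\in I_0\setminus F}|\bar a(\cdot)_i|\big]_\mm\) through Remark \ref{rmk:aux_estim_sup}, letting \(F\nearrow I_0\).
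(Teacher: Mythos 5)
Your proof is correct, but it takes a genuinely different route from the paper's on both claims. For \eqref{eq:basic_L0_l1(I)_cl1}, the paper argues directly: it fixes a measurable representative \(\bar a\), uses the pointwise unconditional expansion \(\bar a(x)=\sum_{i\in I}\bar a(x)_i\,{\sf e}_i\) in \(\ell_1(I)\) at every \(x\), and upgrades the resulting pointwise infimum/supremum identity to the module-level one via Remark \ref{rmk:aux_estim_sup} --- that is, it repeats in this special case the argument of the preceding lemma rather than citing it. You instead invoke that lemma as a black box to obtain some expansion \(a=\sum_{i\in I}f_i\cdot\underline{\sf e}_i\) and then identify \(f_i=a(\cdot)_i\); this identification step is genuinely needed (the cited lemma asserts only existence and uniqueness of coefficients, not their form), and your way of carrying it out --- composing with the coordinate homomorphisms \(P_i\) and applying Lemma \ref{lem:summable_comp_dual}, the family \(\{\delta_{ij}f_j\}_{j\in I}\) trivially summing to \(f_i\) --- is sound, since \(p_i\) being linear and \(1\)-Lipschitz makes \(P_i\) a homomorphism of Banach \(L^0(\XX)\)-modules. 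For \eqref{eq:basic_L0_l1(I)_cl2}, the paper deduces summability of \(\{|a(\cdot)_i|\}_{i\in I}\) from that of \(\{a(\cdot)_i\cdot\underline{\sf e}_i\}_{i\in I}\) in one line, by combining the finite-sum identity \(\sum_{i\in F\cup G}|a(\cdot)_i|=\big|\sum_{i\in F\cup G}a(\cdot)_i\cdot\underline{\sf e}_i\big|\) with the reverse triangle inequality, so that the very net witnessing \eqref{eq:basic_L0_l1(I)_cl1} also witnesses \eqref{eq:basic_L0_l1(I)_cl2}; you instead reduce to the countable set \(J\) and an increasing exhaustion \(F_n\nearrow J\) via Proposition \ref{prop:Cauchy_sum_criterion}, use the exact \(\ell_1\) tail identity, and conclude through the order-theoretic characterisation of summability of nonnegative families stated after that proposition. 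Both derivations are valid, and your key estimate is an equality where the paper needs only an inequality. What your route buys is economy: none of the measurable-representative bookkeeping is repeated, being entirely absorbed into the cited lemma. What the paper's route buys is self-containedness and symmetry: both claims follow from a single displayed estimate, with no coefficient-identification step, since the coefficients are constructed explicitly. Your closing sketch of a fully self-contained verification of the Cauchy criterion \eqref{eq:Cauchy_criterion}, via a countable \(I_0\subseteq I\) supporting the essential image of \(\bar a\), would also work.
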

\begin{proof}
Fix a measurable representative \(\bar a\colon\X\to\ell_1(I)\) of \(a\). As \(\{{\sf e}_i\}_{i\in I}\) is a Schauder basis of \(\ell_1(I)\),
\[
\inf_{F\in\mathscr P_f(I)}\,\sup_{G\in\mathscr P_f(I\setminus F)}\bigg\|\bar a(x)-\sum_{i\in F\cup G}\bar a(x)_i\,{\sf e}_i\bigg\|_{\ell_1(I)}=0\quad\text{ for every }x\in\X.
\]
Using that \(\bar a(x)_i=(p_i\circ\bar a)(x)\) and taking into account Remark \ref{rmk:aux_estim_sup}, we can thus conclude that
\[
\bigwedge_{F\in\mathscr P_f(I)}\,\bigvee_{G\in\mathscr P_f(I\setminus F)}\Big|a-\sum_{i\in F\cup G}a(\cdot)_i\cdot\underline{\sf e}_i\Big|=0,
\]
which gives the first claim \eqref{eq:basic_L0_l1(I)_cl1}.
Finally, \eqref{eq:basic_L0_l1(I)_cl2} follows from \eqref{eq:basic_L0_l1(I)_cl1} together with the fact that
\[
\bigg||a|-\sum_{i\in F\cup G}|a(\cdot)_i|\bigg|=\bigg||a|-\Big|\sum_{i\in F\cup G}a(\cdot)_i\cdot\underline{\sf e}_i\Big|\bigg|
\leq\bigg|a-\sum_{i\in F\cup G}a(\cdot)_i\cdot\underline{\sf e}_i\bigg|
\]
for every \(F\in\mathscr P_f(I)\) and \(G\in\mathscr P_f(I\setminus F)\). All in all, the proof of the statement is achieved.
\end{proof}

Finally, the Banach \(L^0(\XX)\)-modules \(L^0(\XX;\ell_1(I))\) and \(\ell_1(I,L^0(\XX))\) can be canonically identified:
\begin{corollary}\label{cor:two_ell1_L0}
Let \(\XX\) be a \(\sigma\)-finite measure space and \(I\neq\varnothing\) an index family. Let us define
\begin{equation}\label{eq:two_ell1_L0}
\phi(a)\coloneqq\big(a(\cdot)_i\big)_{i\in I}\in\ell_1(I,L^0(\XX))\quad\text{ for every }a\in L^0(\XX;\ell_1(I)).
\end{equation}
Then the operator \(\phi\colon L^0(\XX;\ell_1(I))\to\ell_1(I,L^0(\XX))\) is an isomorphism of Banach \(L^0(\XX)\)-modules.
\end{corollary}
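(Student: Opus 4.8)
The plan is to verify that $\phi$ is an $L^0(\XX)$-linear, pointwise-norm-preserving bijection, which is precisely what it means for $\phi$ to be an isomorphism of Banach $L^0(\XX)$-modules. The $L^0(\XX)$-linearity of $\phi$ is immediate from the fibrewise description of the maps $a\mapsto a(\cdot)_i=p_i\circ a$: since $p_i\colon\ell_1(I)\to\R$ is linear, one checks at the level of representatives that $(f\cdot a+g\cdot b)(\cdot)_i=f\cdot a(\cdot)_i+g\cdot b(\cdot)_i$ for all $f,g\in L^0(\XX)$ and $a,b\in L^0(\XX;\ell_1(I))$. For the pointwise norm I would combine the identity $|v|_1=\sum_{i\in I}|v_i|$, valid on $\ell_1(I,L^0(\XX))$, with equation \eqref{eq:basic_L0_l1(I)_cl2}: this gives $|\phi(a)|_1=\sum_{i\in I}|a(\cdot)_i|=|a|$ for every $a$. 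In particular $|\phi(a)|_1=|a|\in L^0(\XX)^+$, so $\phi$ indeed takes values in $\ell_1(I,L^0(\XX))$ (which is the well-definedness), and the norm-preservation identity holds. Injectivity is then automatic, since $\phi(a)=0$ forces $|a|=|\phi(a)|_1=0$, hence $a=0$.

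The substantive part is surjectivity. Given $w=(w_i)_{i\in I}\in\ell_1(I,L^0(\XX))$, the family $\{|w_i|\}_{i\in I}$ is summable in $L^0(\XX)$ with $\sum_{i\in I}|w_i|=|w|_1$, so Proposition \ref{prop:Cauchy_sum_criterion} tells me that $J\coloneqq\{i\in I\,:\,w_i\neq 0\}$ is at most countable. I would fix measurable representatives $\bar w_i$ of $w_i$ for $i\in J$, set $\bar w_i\coloneqq 0$ for $i\notin J$, and define $\bar a(x)\coloneqq(\bar w_i(x))_{i\in I}$. Enumerating $J=\{i_1,i_2,\dots\}$, the increasing sequence $\big[\sum_{k\le n}|\bar w_{i_k}|\big]_\mm$ has supremum $|w|_1\in L^0(\XX)^+$, so the pointwise series $\sum_{k}|\bar w_{i_k}(x)|$ converges for $\mm$-a.e.\ $x$; thus $\bar a(x)\in\ell_1(I)$ a.e., with values in the separable subspace $\ell_1(J)$. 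Measurability of $\bar a$ then follows because it is the $\mm$-a.e.\ limit, in the $\ell_1(I)$-norm, of the finitely supported and manifestly measurable truncations $x\mapsto\sum_{k\le n}\bar w_{i_k}(x)\,{\sf e}_{i_k}$. Setting $a\coloneqq[\bar a]_\mm\in L^0(\XX;\ell_1(I))$, one has $a(\cdot)_i=[p_i\circ\bar a]_\mm=w_i$ for all $i$, that is, $\phi(a)=w$.

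The main obstacle I anticipate lies in this surjectivity step, and within it the measurability of $\bar a$ as a map into $\ell_1(I)$: when $I$ is uncountable the target is not separable, so $\bar a$ could not even belong to $L^0(\XX;\ell_1(I))$ without an argument. The point that rescues the construction is the at-most-countability of the support $J$ furnished by Proposition \ref{prop:Cauchy_sum_criterion}, which confines $\bar a$ to the separable subspace $\ell_1(J)$ and exhibits it as an a.e.\ limit of measurable finitely supported maps. Everything else reduces to bookkeeping with equations \eqref{eq:basic_L0_l1(I)_cl1}--\eqref{eq:basic_L0_l1(I)_cl2} and the already-established pointwise-norm formula for $\ell_1(I,L^0(\XX))$.
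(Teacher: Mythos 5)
Your proposal is correct and follows essentially the same route as the paper: well-definedness, $L^0(\XX)$-linearity and pointwise-norm preservation (hence injectivity) are deduced from Lemma \ref{lem:basic_L0_l1(I)}, and surjectivity is obtained by exploiting the at most countable support $J$ of a given element of $\ell_1(I,L^0(\XX))$ to build a measurable representative with values in the separable closed span of $\{{\sf e}_i\}_{i\in J}$. Your extra details (the truncation argument for measurability and the explicit null-set bookkeeping for a.e.\ summability) merely spell out what the paper's proof states more tersely.
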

\begin{proof}
The fact that \(\phi\) is a homomorphism of Banach \(L^0(\XX)\)-modules satisfying \(|\phi(a)|_1=|a|\) for every \(a\in L^0(\XX;\ell_1(I))\) follows from Lemma \ref{lem:basic_L0_l1(I)}.
Therefore, it remains to check only that \(\phi\) is surjective. To this aim, fix \(f=(f_i)_{i\in I}\in\ell_1(I,L^0(\XX))\). We know that \(J\coloneqq\big\{i\in I\,:\,f_i\neq 0\big\}\)
is at most countable. Take a measurable representative \(\bar f_i\colon\X\to\R\) of \(f_i\) for every \(i\in I\), with \(\bar f_i\equiv 0\) for every \(i\in I\setminus J\).
Since \(\sum_{i\in I}|f_i|=\sum_{i\in J}|f_i|\in L^0(\XX)\), we can also assume (up to modifying the functions \(\bar f_i\) for \(i\in J\) on a null set) that \(\big\{|\bar f_i(x)|\big\}_{i\in I}\subseteq\R\)
is summable for every \(x\in\X\). Then the mapping \(\X\ni x\mapsto\bar a(x)\coloneqq\big(\bar f_i(x)\big)_{i\in I}\in\ell_1(I)\)
is well-defined, is measurable, and takes values into a separable subset of \(\ell_1(I)\) (namely, the closure of the vector subspace
generated by \(\{{\sf e}_i\}_{i\in J}\)). Letting \(a\in L^0(\XX;\ell_1(I))\) be the equivalence class of \(\bar a\colon\X\to\ell_1(I)\),
we have that \(\phi(a)=f\) by construction. This proves the surjectivity of \(\phi\), thus accordingly the statement is achieved.
\end{proof}
\subsection{Some notions of continuous module-valued maps}\label{s:cont_mod-valued}
When dealing with injective tensor products of Banach spaces, a special role is played by the Banach space \({\rm C}(K)\),
where \(K\) is a compact, Hausdorff topological space; cf.\ with the first paragraph of Section \ref{s:rel_with_ord-cont}.
It seems that in the more general setting of Banach \(L^0\)-modules there is no `canonical' counterpart of \({\rm C}(K)\).
Rather, we will propose two generalisations of \({\rm C}(K)\) in Definitions \ref{def:unif_ord-cont} and
\ref{def:ptwse_cont}, respectively.
\medskip

Let \((\Omega,\Phi)\) be a uniform space (see \cite{Bourbaki98}). Given an entourage \(\mathcal U\in\Phi\) and any
\(p\in\Omega\), we define
\[
\mathcal U[p]\coloneqq\big\{q\in\Omega\;\big|\;(p,q)\in\mathcal U\big\}.
\]
Recall that the uniform structure \(\Phi\) induces a topology \(\tau_\Phi\) on \(\Omega\), which is defined as follows:
\[
\tau_\Phi\coloneqq\big\{U\subseteq\Omega\;\big|\;\forall p\in U\;\exists\,\mathcal U\in\Phi:\;\mathcal U[p]\subseteq U\big\}.
\]
We then regard every uniform space \((\Omega,\Phi)\) as a topological space, endowed with \(\tau_\Phi\).
\begin{definition}[Uniform order-continuity]\label{def:unif_ord-cont}
Let \((\Omega,\Phi)\) be a uniform space, \(\XX\) a \(\sigma\)-finite measure space, and \(\mathscr M\) a Banach
\(L^0(\XX)\)-module. Then we say that a map \(v\colon\Omega\to\mathscr M\) is \textbf{order-bounded} if
\begin{equation}\label{eq:def_ptwse_norm_UC}
|v|\coloneqq\bigvee_{p\in\Omega}|v(p)|\in L^0(\XX)^+.
\end{equation}
Moreover, we say that \(v\colon\Omega\to\mathscr M\) is \textbf{uniformly order-continuous} provided
\[
\bigwedge_{\mathcal U\in\Phi}{\rm Var}(v;\mathcal U)=0,\quad\text{ where we define }
{\rm Var}(v;\mathcal U)\coloneqq\bigvee_{(p,q)\in\mathcal U}|v(p)-v(q)|.
\]
We denote by \({\rm UC}_{\rm ord}(\Omega;\mathscr M)\) the space of all order-bounded, uniformly order-continuous maps.
\end{definition}

Given any \(v,w\in{\rm UC}_{\rm ord}(\Omega;\mathscr M)\) and \(f\in L^0(\XX)\), we define \(v+w\colon\Omega\to\mathscr M\)
and \(f\cdot v\colon\Omega\to\mathscr M\) as
\[\begin{split}
(v+w)(p)\coloneqq v(p)+w(p)&\quad\text{ for every }p\in\Omega,\\
(f\cdot v)(p)\coloneqq f\cdot v(p)&\quad\text{ for every }p\in\Omega,
\end{split}\]
respectively. It can be readily checked that \(v+w,f\cdot v\in{\rm UC}_{\rm ord}(\Omega;\mathscr M)\), that
\(\big({\rm UC}_{\rm ord}(\Omega;\mathscr M),+,\cdot\big)\) is a module over \(L^0(\XX)\), and that the map
\(|\cdot|\colon{\rm UC}_{\rm ord}(\Omega;\mathscr M)\to L^0(\XX)^+\) defined in \eqref{eq:def_ptwse_norm_UC} is a pointwise
norm on \({\rm UC}_{\rm ord}(\Omega;\mathscr M)\), so that \(\big({\rm UC}_{\rm ord}(\Omega;\mathscr M),|\cdot|\big)\)
is a normed \(L^0(\XX)\)-module. Moreover:
\begin{lemma}\label{lem:UC_complete}
Let \((\Omega,\Phi)\) be a uniform space. Let \(\XX\) be a \(\sigma\)-finite measure space and
\(\mathscr M\) a Banach \(L^0(\XX)\)-module. Let \(|\cdot|\) be as in \eqref{eq:def_ptwse_norm_UC}.
Then \(\big({\rm UC}_{\rm ord}(\Omega;\mathscr M),|\cdot|\big)\) is a Banach \(L^0(\XX)\)-module.
\end{lemma}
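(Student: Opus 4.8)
The normed $L^0(\XX)$-module structure on ${\rm UC}_{\rm ord}(\Omega;\mathscr M)$ has already been recorded, so the only thing to prove is completeness of $\sfd_{{\rm UC}_{\rm ord}}$. The plan is to take a $\sfd_{{\rm UC}_{\rm ord}}$-Cauchy sequence $(v_n)_{n\in\N}$ and build its limit fiberwise. The starting observation is that for each fixed $p\in\Omega$ the evaluation $v\mapsto v(p)$ is $1$-Lipschitz, since $|v(p)-w(p)|\leq\bigvee_{q\in\Omega}|v(q)-w(q)|=|v-w|$ gives $\sfd_{\mathscr M}(v(p),w(p))\leq\sfd_{{\rm UC}_{\rm ord}}(v,w)$. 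Hence $(v_n(p))_{n\in\N}$ is Cauchy in the complete module $\mathscr M$, and I may define $v(p)\coloneqq\lim_n v_n(p)\in\mathscr M$ for every $p\in\Omega$. It then remains to show that $v\in{\rm UC}_{\rm ord}(\Omega;\mathscr M)$ and that $v_n\to v$.

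To control the convergence I would pass to a subsequence. As $(v_n)$ is Cauchy, I extract $(n_k)_{k\in\N}$ with $g_k\coloneqq|v_{n_{k+1}}-v_{n_k}|\in L^0(\XX)^+$ satisfying $\sfd_{L^0(\XX)}(g_k,0)\leq 2^{-k}$. Since $\sfd_{L^0(\XX)}(g_k,0)=\int g_k\wedge 1\,\d\tilde\mm$, the series $\sum_k(g_k\wedge 1)$ is $\tilde\mm$-integrable, hence finite $\mm$-a.e.; in particular $g_k(x)<1$ for all large $k$ at $\mm$-a.e.\ $x$, which forces $h\coloneqq\sum_{k\in\N}g_k\in L^0(\XX)^+$ to be finite $\mm$-a.e. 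Setting $R_k\coloneqq\sum_{j\geq k}g_j\in L^0(\XX)^+$ we then have $R_k\downarrow 0$ in the $\mm$-a.e.\ sense, hence also $\sfd_{L^0(\XX)}(R_k,0)\to 0$. For fixed $p$ and $m>k$ the bound $|v_{n_m}(p)-v_{n_k}(p)|\leq\sum_{j=k}^{m-1}g_j$ passes to the limit in $m$ (using that $v_{n_m}(p)\to v(p)$ in $\mathscr M$ and that $\big||w_m|-|w|\big|\leq|w_m-w|$ makes the pointwise norm continuous), giving the key estimate
\[
|v(p)-v_{n_k}(p)|\leq R_k\quad\text{ for every }p\in\Omega\text{ and }k\in\N.
\]

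Using this estimate I would finish on three fronts. First, order-boundedness: $|v(p)|\leq|v(p)-v_{n_1}(p)|+|v_{n_1}(p)|\leq R_1+|v_{n_1}|$ for every $p$, where $R_1+|v_{n_1}|\in L^0(\XX)^+$ is independent of $p$, so $|v|=\bigvee_{p\in\Omega}|v(p)|\leq R_1+|v_{n_1}|$ lies in $L^0(\XX)^+$ by Dedekind completeness. Second, convergence: $|v-v_{n_k}|=\bigvee_{p\in\Omega}|v(p)-v_{n_k}(p)|\leq R_k$, whence $\sfd_{{\rm UC}_{\rm ord}}(v,v_{n_k})\leq\sfd_{L^0(\XX)}(R_k,0)\to 0$, and since $(v_n)$ is Cauchy with a convergent subsequence, the whole sequence converges to $v$. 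Third — and this is the step I expect to be the main obstacle, namely transferring uniform order-continuity to the limit — I use the triangle inequality for $(p,q)\in\mathcal U\in\Phi$, $|v(p)-v(q)|\leq|v(p)-v_{n_k}(p)|+|v_{n_k}(p)-v_{n_k}(q)|+|v_{n_k}(q)-v(q)|\leq 2R_k+{\rm Var}(v_{n_k};\mathcal U)$, so that ${\rm Var}(v;\mathcal U)\leq 2R_k+{\rm Var}(v_{n_k};\mathcal U)$. Taking $\bigwedge_{\mathcal U\in\Phi}$ (the constant $2R_k$ factors out of the infimum) and using $\bigwedge_{\mathcal U\in\Phi}{\rm Var}(v_{n_k};\mathcal U)=0$ from $v_{n_k}\in{\rm UC}_{\rm ord}(\Omega;\mathscr M)$ yields $\bigwedge_{\mathcal U\in\Phi}{\rm Var}(v;\mathcal U)\leq 2R_k$ for every $k$, and finally $\bigwedge_k 2R_k=0$ gives uniform order-continuity of $v$. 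The delicate points are purely the lattice-infimum manipulation and the a.e.\ finiteness of $h$, both of which are handled by the countable sup/inf property and Dedekind completeness of $L^0(\XX)$; no genuinely new difficulty arises beyond the standard three-$\varepsilon$ argument recast in the order-theoretic language.
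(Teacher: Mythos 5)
Your proof is correct, and it shares the paper's overall skeleton: pass to a subsequence with \(\sfd_{L^0(\XX)}(|v_{n_{k+1}}-v_{n_k}|,0)\leq 2^{-k}\), define the limit \(v(p)=\lim_k v_{n_k}(p)\) pointwise using the completeness of \(\mathscr M\) and the \(1\)-Lipschitz evaluations, then run a three-\(\varepsilon\) argument. Where you genuinely diverge is in the choice of majorants and, consequently, in the treatment of the uniform order-continuity step. The paper never forms untruncated tails: it works with \(h_n=\sum_{k\geq n}|v_k-v_{k+1}|\wedge 1\in L^1(\tilde\mm)\), whose finiteness is automatic, and argues at the level of the metric by integrating \({\rm Var}(v;\mathcal U)\wedge 1\leq 2h_n+{\rm Var}(v_n;\mathcal U)\wedge 1\) against \(\tilde\mm\); since the resulting bounds are only metric, it must then select, for each \(n\), a sequence of entourages \((\mathcal U^n_i)_{i\in\N}\) realising \(\bigwedge_i{\rm Var}(v_n;\mathcal U^n_i)=0\) (countable inf property) and perform a diagonal extraction \(\mathcal U_k=\mathcal U^{n_k}_{i_k}\), concluding via \(\bigwedge_{\mathcal U\in\Phi}{\rm Var}(v;\mathcal U)\leq\varliminf_k{\rm Var}(v;\mathcal U_k)=0\). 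You instead build honest \(L^0\) majorants \(R_k=\sum_{j\geq k}|v_{n_{j+1}}-v_{n_j}|\) --- and your justification of their \(\mm\)-a.e.\ finiteness (the terms eventually coincide with their truncations) is precisely the point that makes this legitimate --- obtaining the single uniform estimate \(|v-v_{n_k}|\leq R_k\). From there everything is pure lattice algebra: \({\rm Var}(v;\mathcal U)\leq 2R_k+{\rm Var}(v_{n_k};\mathcal U)\), translation-invariance of the infimum over \(\mathcal U\in\Phi\), and \(R_k\downarrow 0\). This avoids both the integration and the diagonal selection of entourages, so your route is shorter and cleaner at the step you correctly identified as the main obstacle; the price is the preliminary finiteness argument for the \(R_k\), which the paper's systematic use of \(\wedge 1\) sidesteps. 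Both arguments are complete, and your order-boundedness argument (dominating by \(R_1+|v_{n_1}|\)) is also slightly more direct than the paper's, which first shows \((|v_n|)_{n\in\N}\) is Cauchy in \(L^0(\XX)\) and compares \(|v(p)|\) with its limit.
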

\begin{proof}
In view of the above discussion, it only remains to check that \({\rm UC}_{\rm ord}(\Omega;\mathscr M)\) is complete.
To this aim, let \((v_n)_{n\in\N}\subseteq{\rm UC}_{\rm ord}(\Omega;\mathscr M)\) be a given Cauchy sequence.
 Up to a non-relabelled
subsequence, we can assume that \(\sfd_{L^0(\XX)}(|v_n-v_{n+1}|,0)\leq 2^{-n}\) for all \(n\in\N\). For any \(p\in\Omega\) we can
estimate \(\sfd_{\mathscr M}(v_n(p),v_{n+1}(p))\leq\sfd_{L^0(\XX)}(|v_n-v_{n+1}|,0)\leq 2^{-n}\). It follows that
\((v_n(p))_{n\in\N}\subseteq\mathscr M\) is a Cauchy sequence, so that the limit \(v(p)\coloneqq\lim_n v_n(p)\in\mathscr M\)
exists. To prove that \(v\colon\Omega\to\mathscr M\) is order-bounded, notice that \(\big||v_n|-|v_{n+1}|\big|\leq|v_n-v_{n+1}|\) implies
\(\sfd_{L^0(\XX)}(|v_n|,|v_{n+1}|)\leq 2^{-n}\), so that the sequence \((|v_n|)_{n\in\N}\subseteq L^0(\XX)\) is Cauchy.
Define \(g\coloneqq\lim_n|v_n|\in L^0(\XX)\). Given \(p\in\Omega\), we can extract a subsequence \((n_i)_{i\in\N}\)
such that \(|v_{n_i}(p)|\to|v(p)|\) and \(|v_{n_i}|\to g\) \(\mm\)-a.e.\ as \(i\to\infty\). Hence,
\[
|v(p)|(x)=\lim_{i\to\infty}|v_{n_i}(p)|(x)\leq\lim_{i\to\infty}|v_{n_i}|(x)=g(x)\quad\text{ for }\mm\text{-a.e.\ }x\in\X,
\]
which implies \(|v|=\bigvee_{p\in\Omega}|v(p)|\leq g\). We pass to the verification of the uniform order-continuity
of \(v\). For any \(n\in\N\) we can find a sequence of entourages \((\mathcal U^n_i)_{i\in\N}\subseteq\Phi\) with
\(\bigwedge_{i\in\N}{\rm Var}(v_n;\mathcal U^n_i)=0\). With no loss of generality, we can also require that
\(\mathcal U^n_{i+1}\subseteq\mathcal U^n_i\) for every \(i\in\N\), whence it follows that \(\sfd_{L^0(\XX)}\big({\rm Var}(v_n;\mathcal U^n_i),0\big)\to 0\)
as \(i\to\infty\). Define \(h_n\coloneqq\sum_{k=n}^\infty|v_k-v_{k+1}|\wedge 1\) for all \(n\in\N\). Then
\[
\int h_n\,\d\tilde\mm=\sum_{k=n}^\infty\int|v_k-v_{k+1}|\wedge 1\,\d\tilde\mm
=\sum_{k=n}^\infty\sfd_{L^0(\XX)}(|v_k-v_{k+1}|,0)\leq\sum_{k=n}^\infty\frac{1}{2^k}=\frac{1}{2^{n-1}}
\]
by the monotone convergence theorem, so that \(h_n\in L^1(\tilde\mm)\) and \(\|h_n\|_{L^1(\tilde\mm)}\leq 2^{-n+1}\). Notice that
\[\begin{split}
|v(p)-v(q)|\wedge 1&\leq|v(p)-v_n(p)|\wedge 1+|v_n(p)-v_n(q)|\wedge 1+|v_n(q)-v(q)|\wedge 1\\
&\leq 2h_n+|v_n(p)-v_n(q)|\wedge 1
\end{split}\]
for every \(p,q\in\Omega\) and \(n\in\N\). Fixing \(i\in\N\) and passing to the supremum over all \((p,q)\in\mathcal U^n_i\), we deduce
that \({\rm Var}(v;\mathcal U^n_i)\wedge 1\leq 2h_n+{\rm Var}(v_n;\mathcal U^n_i)\wedge 1\). Integrating with respect to \(\tilde\mm\), we thus get
\[
\sfd_{L^0(\XX)}\big({\rm Var}(v;\mathcal U^n_i),0\big)\leq 2\|h_n\|_{L^1(\tilde\mm)}+\sfd_{L^0(\XX)}\big({\rm Var}(v_n;\mathcal U^n_i),0\big)
\leq\frac{1}{2^{n-2}}+\sfd_{L^0(\XX)}\big({\rm Var}(v_n;\mathcal U^n_i),0\big).
\]
Given any \(k\in\N\), we first choose \(n_k\in\N\) such that \(2^{-n_k+2}<1/(2k)\), then we choose \(i_k\in\N\) such
that \(\sfd_{L^0(\XX)}\big({\rm Var}(v_{n_k};\mathcal U_k),0\big)<1/(2k)\), where we set \(\mathcal U_k\coloneqq \mathcal U^{n_k}_{i_k}\). Hence,
\(\sfd_{L^0(\XX)}\big({\rm Var}(v;\mathcal U_k),0\big)<1/k\) for every \(k\in\N\), so that \(\varliminf_k{\rm Var}(v;\mathcal U_k)(x)=0\)
for \(\mm\)-a.e.\ \(x\in\X\). In particular, we conclude that
\[
\bigwedge_{\mathcal U\in\Phi}{\rm Var}(v;\mathcal U)\leq\bigwedge_{k\in\N}{\rm Var}(v;\mathcal U_k)\leq\varliminf_{k\to\infty}{\rm Var}(v;\mathcal U_k)=0,
\]
which shows that \(v\colon\Omega\to\mathscr M\) is uniformly order-continuous. All in all, \(v\) belongs to
\({\rm UC}_{\rm ord}(\Omega;\mathscr M)\).

In order to conclude, it remains to check that \(\sfd_{{\rm UC}_{\rm ord}(\Omega;\mathscr M)}(v_n,v)\to 0\)
as \(n\to\infty\). Fix \(p\in\Omega\) and take a subsequence \((n_j)_{j\in\N}\) such that
\(|v_{n_j}(p)-v_n(p)|\to|v(p)-v_n(p)|\) in the \(\mm\)-a.e.\ sense. Then
\(|v(p)-v_n(p)|\wedge 1=\lim_j|v_{n_j}(p)-v_n(p)|\wedge 1\leq\varliminf_j|v_{n_j}-v_n|\wedge 1\leq h_n\) \(\mm\)-a.e., whence it follows
that \(|v-v_n|\wedge 1\leq h_n\). Therefore, \(\lim_n\sfd_{{\rm UC}_{\rm ord}(\Omega;\mathscr M)}(v,v_n)\leq\lim_n\|h_n\|_{L^1(\tilde\mm)}=0\), as desired.
\end{proof}
\begin{remark}\label{rmk:evaluations_dual_UC}{\rm
Given \(p\in\Omega\), we define the \textbf{evaluation functional}
\(\delta^{\mathscr M}_p\colon{\rm UC}_{\rm ord}(\Omega;\mathscr M)\to\mathscr M\) as
\[
\delta^{\mathscr M}_p(v)\coloneqq v(p)\quad\text{ for every }v\in{\rm UC}_{\rm ord}(\Omega;\mathscr M).
\]
Observe that \(\delta^{\mathscr M}_p\in\textsc{Hom}\big({\rm UC}_{\rm ord}(\Omega;\mathscr M);\mathscr M\big)\)
and \(|\delta^{\mathscr M}_p|\leq 1\). In particular, \(\delta_p\coloneqq\delta^{L^0(\XX)}_p\) satisfies
\[
\delta_p\in{\rm UC}_{\rm ord}(\Omega;L^0(\XX))^*\quad\text{ and }\quad|\delta_p|\leq 1.
\]
Furthermore, \(\{\delta_p\,:\,p\in\Omega\}\) is a norming subset of \({\rm UC}_{\rm ord}(\Omega;L^0(\XX))^*\).
Indeed, thanks to \eqref{eq:def_ptwse_norm_UC} we have that \(|f|=\bigvee_{p\in\Omega}|f(p)|=\bigvee_{p\in\Omega}|\delta_p(f)|\)
holds for every \(f\in{\rm UC}_{\rm ord}(\Omega;L^0(\XX))\).
\fr}\end{remark}
\begin{definition}[Pointwise bounded continuous maps]\label{def:ptwse_cont}
Let \((\Omega,\tau)\) be a topological space. Let \(\XX\) be a \(\sigma\)-finite measure space and \(\mathscr M\)
a Banach \(L^0(\XX)\)-module. Then we define \({\rm C}_{\rm pb}(\Omega;\mathscr M)\) as
\[
{\rm C}_{\rm pb}(\Omega;\mathscr M)\coloneqq\big\{v\colon\Omega\to\mathscr M\;\big|\;v\text{ is continuous and order-bounded}\big\}.
\]
We say that \({\rm C}_{\rm pb}(\Omega;\mathscr M)\) is the space of
\textbf{pointwise bounded continuous maps} from \(\Omega\) to \(\mathscr M\).
\end{definition}

The space \({\rm C}_{\rm pb}(\Omega;\mathscr M)\) is a Banach \(L^0(\XX)\)-module if endowed with the pointwise
norm in \eqref{eq:def_ptwse_norm_UC}. This claim can be proved by repeating almost verbatim the arguments for
Lemma \ref{lem:UC_complete}, the main difference being in the verification of the completeness, where
one can use the following remark:
\begin{remark}{\rm
Take \((v_n)_{n\in\N}\subseteq{\rm C}_{\rm pb}(\Omega;\mathscr M)\) and an order-bounded map
\(v\colon\Omega\to\mathscr M\) such that
\[
\delta_n\coloneqq\sup_{p\in\Omega}\sfd_{\mathscr M}(v_n(p),v(p))\to 0\quad\text{ as }n\to\infty.
\]
Then \(v\in{\rm C}_{\rm pb}(\Omega;\mathscr M)\). Indeed, given any \(p\in\Omega\) and \(\varepsilon>0\),
we can fix \(n_0\in\N\) such that \(\delta_{n_0}<\varepsilon/4\) and choose a neighbourhood \(U\) of
\(p\) such that \(\sfd_{\mathscr M}(v_{n_0}(q),v_{n_0}(p))<\varepsilon/2\) for every \(q\in U\). Then
\[
\sfd_{\mathscr M}(v(q),v(p))\leq\sfd_{\mathscr M}(v(q),v_{n_0}(q))+\sfd_{\mathscr M}(v_{n_0}(q),v_{n_0}(p))+\sfd_{\mathscr M}(v_{n_0}(p),v(p))<2\delta_{n_0}+\frac{\varepsilon}{2}<\varepsilon
\]
for every \(q\in U\), which implies that \(v\) is continuous at each point \(p\in\Omega\), as we claimed.
\fr}\end{remark}

We also point out that if \((\Omega,\Phi)\) is a uniform space, then we have that
\begin{equation}\label{eq:UC_in_C}
{\rm UC}_{\rm ord}(\Omega;\mathscr M)\quad\text{ is a Banach }L^0(\XX)\text{-submodule of }{\rm C}_{\rm pb}(\Omega;\mathscr M).
\end{equation}
Indeed, if \(v\in{\rm UC}_{\rm ord}(\Omega;\mathscr M)\), \(p\in\Omega\), and \(\varepsilon>0\) are given, then
we can find an entourage \(\mathcal U\in\Phi\) such that \(\sfd_{L^0(\XX)}\big({\rm Var}(v;\mathcal U),0\big)<\varepsilon\).
Hence, for every point \(q\) in the open set \(\mathcal U[p]\) we have that
\[
\sfd_{\mathscr M}(v(q),v(p))=\sfd_{L^0(\XX)}\big(|v(q)-v(p)|,0\big)\leq\sfd_{L^0(\XX)}\big({\rm Var}(v;\mathcal U),0\big)<\varepsilon.
\]
\begin{remark}{\rm
If \((K,\Phi)\) is compact and \(\B\) is Banach (so that \(\B\) is a Banach \(L^0(\XX_{\sf o})\)-module,
where \(\XX_{\sf o}\) is the one-point probability space), then \({\rm UC}_{\rm ord}(K;\B)={\rm C}_{\rm pb}(K;\B)={\rm C}(K;\B)\).
\fr}\end{remark}
\subsection{Algebraic tensor products of Banach \texorpdfstring{\(L^0\)}{L0}-modules}
In order to define tensor products of Banach \(L^0\)-modules, the following criterion to
detect null tensors will play a fundamental role:
\begin{lemma}[Null tensors in Banach \(L^0\)-modules]\label{lem:crit_null_tensor}
Let \(\XX\) be a \(\sigma\)-finite measure space. Let \(\mathscr M\), \(\mathscr N\) be Banach \(L^0(\XX)\)-modules.
Fix any \(\alpha=\sum_{i=1}^n v_i\otimes w_i\in\mathscr M\otimes\mathscr N\). Then \(\alpha=0\) if and only if
\begin{equation}\label{eq:null_tensor_cl}
\sum_{i=1}^n\omega(v_i)\eta(w_i)=0\quad\text{ for every }\omega\in\mathscr M^*\text{ and }\eta\in\mathscr N^*.
\end{equation}
\end{lemma}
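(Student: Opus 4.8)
The plan is to treat the two implications separately, the forward one being immediate and the reverse one requiring a reduction to the finitely-generated case.

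For the implication ``\(\Rightarrow\)'', suppose \(\alpha=0\). Given \(\omega\in\mathscr M^*\) and \(\eta\in\mathscr N^*\), the map \(\mathscr M\times\mathscr N\ni(v,w)\mapsto\omega(v)\eta(w)\in L^0(\XX)\) is \(L^0(\XX)\)-bilinear, so the general criterion \eqref{eq:gen_criter_null_tensor} (applied with \(Q=L^0(\XX)\)) immediately yields \(\sum_{i=1}^n\omega(v_i)\eta(w_i)=0\), which is \eqref{eq:null_tensor_cl}.

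For ``\(\Leftarrow\)'', the difficulty is that \eqref{eq:null_tensor_cl} only controls bilinear maps into \(L^0(\XX)\) of product form, whereas the general criterion \eqref{eq:gen_criter_null_tensor} quantifies over bilinear maps into \emph{arbitrary} modules \(Q\). I would circumvent this by reducing to finitely-generated modules, where duality becomes effective. Let \(\mathscr M_0\coloneqq{\rm cl}_{\mathscr M}(\mathscr G(\mathcal V))\) and \(\mathscr N_0\coloneqq{\rm cl}_{\mathscr N}(\mathscr G(\mathcal W))\), where \(\mathcal V\coloneqq{\rm span}\{v_1,\ldots,v_n\}\) and \(\mathcal W\coloneqq{\rm span}\{w_1,\ldots,w_n\}\); these are finitely-generated Banach \(L^0(\XX)\)-submodules containing all the \(v_i\) and \(w_i\). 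Since \(\mathscr M_0\), \(\mathscr N_0\) are Banach submodules, the Hahn--Banach surjectivity of the restriction maps \(\mathscr M^*\to\mathscr M_0^*\) and \(\mathscr N^*\to\mathscr N_0^*\) (as used in the proof of Lemma \ref{lem:annihilator}) shows that every functional in \(\mathscr M_0^*\), resp.\ \(\mathscr N_0^*\), extends to one in \(\mathscr M^*\), resp.\ \(\mathscr N^*\); hence \eqref{eq:null_tensor_cl} continues to hold for all \(\omega\in\mathscr M_0^*\) and \(\eta\in\mathscr N_0^*\). Moreover, the inclusions induce via Lemma \ref{lem:alg_tensor_hom} an \(L^0(\XX)\)-linear map \(\mathscr M_0\otimes\mathscr N_0\to\mathscr M\otimes\mathscr N\) sending \(\sum_i v_i\otimes w_i\) to \(\alpha\), so it suffices to prove that \(\sum_i v_i\otimes w_i=0\) in \(\mathscr M_0\otimes\mathscr N_0\). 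We may thus assume \(\mathscr M\) and \(\mathscr N\) finitely-generated.

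In the finitely-generated case, the dimensional decompositions of \(\mathscr M\) and \(\mathscr N\) are \emph{finite} (the local dimension is bounded by the number of generators), so their common refinement is a finite partition \(\{E_l\}_{l=1}^K\subseteq\Sigma\) of \(\X\) on each piece of which \(\mathscr M\) and \(\mathscr N\) admit local bases \(e_1,\ldots,e_p\) and \(f_1,\ldots,f_q\). As \(\alpha=\sum_{l=1}^K\1_{E_l}\cdot\alpha\) is a finite sum, it is enough to show \(\1_{E_l}\cdot\alpha=0\) for each \(l\). Fixing such an \(l\) and working over \(L^0(\XX|_{E_l})\), the modules \(\mathscr M|_{E_l}\) and \(\mathscr N|_{E_l}\) are free with bases \(\{e_j\}\) and \(\{f_k\}\), so \(\1_{E_l}\cdot(\mathscr M\otimes\mathscr N)\cong\mathscr M|_{E_l}\otimes\mathscr N|_{E_l}\) is free with basis \(\{e_j\otimes f_k\}\). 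Writing \(\1_{E_l}\cdot v_i=\sum_j a_{ij}\,e_j\) and \(\1_{E_l}\cdot w_i=\sum_k b_{ik}\,f_k\), one gets \(\1_{E_l}\cdot\alpha=\sum_{j,k}\big(\sum_i a_{ij}b_{ik}\big)\,e_j\otimes f_k\), which vanishes exactly when every coefficient \(\sum_i a_{ij}b_{ik}\) is null. To extract these coefficients I would use the coordinate functionals \(e_j^*\in(\mathscr M|_{E_l})^*\) and \(f_k^*\in(\mathscr N|_{E_l})^*\) of the local bases, which are genuine homomorphisms by Lemma \ref{lem:fin-gen_cont}; composing them with multiplication by \(\1_{E_l}\) produces functionals in \(\mathscr M^*\) and \(\mathscr N^*\). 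Evaluating \eqref{eq:null_tensor_cl} on these precisely gives \(\sum_i a_{ij}b_{ik}=0\) for all \(j,k\), whence \(\1_{E_l}\cdot\alpha=0\).

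The main obstacle is the reverse implication, specifically the passage from the product-type scalar conditions in \eqref{eq:null_tensor_cl} to vanishing of \(\alpha\) against the far larger class of module-valued bilinear maps appearing in \eqref{eq:gen_criter_null_tensor}. The reduction to finitely-generated modules is what makes this tractable: only there do the coordinate functionals of Lemma \ref{lem:fin-gen_cont} let one realise the tensor coefficients as values of product bilinear maps, and only there is the dimensional decomposition finite, so that the localisation identities can be glued without invoking any (a priori unavailable) completeness of the \emph{algebraic} tensor product. Some care is also needed to justify that localisation commutes with the algebraic tensor product, i.e.\ that \(\1_{E_l}\cdot(\mathscr M\otimes\mathscr N)\) is the tensor product of the localisations over \(L^0(\XX|_{E_l})\); this is base change along the idempotent \(\1_{E_l}\) and should be checked directly from the universal property.
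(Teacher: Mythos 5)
Your proof is correct, but the reverse implication follows a genuinely different route from the paper's, even though the two arguments share all their key ingredients (reduction to the finitely-generated submodules generated by the \(v_i\) and \(w_i\), the finite dimensional decomposition, local bases, continuity of coordinate functionals via Lemma \ref{lem:fin-gen_cont}, and Hahn--Banach extension). The paper never leaves the universal criterion \eqref{eq:gen_criter_null_tensor}: it fixes an \emph{arbitrary} \(L^0(\XX)\)-bilinear map \(b\colon\mathscr M\times\mathscr N\to Q\), localises to the pieces \(D_{m,q}\) of the common refinement, expands \(\1_{D_{m,q}}\cdot v_i\) and \(\1_{D_{m,q}}\cdot w_i\) in the local bases, and obtains \(\1_{D_{m,q}}\cdot\sum_i b(v_i,w_i)=\sum_{j,k}\big(\sum_i\omega_j(v_i)\eta_k(w_i)\big)\cdot b(x_j,y_k)=0\); the conclusion \(\alpha=0\) then drops out of \eqref{eq:gen_criter_null_tensor} with no structure theory of the tensor product needed. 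You instead transport the problem into \(\mathscr M_0\otimes\mathscr N_0\) (correctly using only functoriality -- zero maps to zero -- so the delicate question of injectivity of \(\mathscr M_0\otimes\mathscr N_0\to\mathscr M\otimes\mathscr N\) never arises), identify \(\1_{E_l}\cdot(\mathscr M_0\otimes\mathscr N_0)\) with the tensor product of the localisations via base change along the idempotent \(\1_{E_l}\), and read the vanishing off the coefficients in the free basis \(\{e_j\otimes f_k\}\). What your route buys is an explicit description of the tensor (coordinates in a basis), which is arguably more transparent; what it costs is the two commutative-algebra facts you flag at the end (base change commutes with \(\otimes\); tensor products of free modules are free with the product basis), both standard and true, but requiring verification from the universal property, whereas the paper's argument sidesteps them entirely by paying with the quantification over arbitrary module-valued bilinear maps. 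Your attention to the points where care is genuinely needed -- finiteness of the decomposition so that gluing is a finite sum in the purely algebraic tensor product, and the one-directional use of functoriality -- is exactly right.
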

\begin{proof}
Assume \(\alpha=0\). For any \(\omega\in\mathscr M^*\) and \(\eta\in\mathscr N^*\),
the map \(b_{\omega,\eta}\colon\mathscr M\times\mathscr N\to L^0(\XX)\), which we define as
\(b_{\omega,\eta}(v,w)\coloneqq\omega(v)\eta(w)\) for every \((v,w)\in\mathscr M\times\mathscr N\), is \(L^0(\XX)\)-bilinear.
Therefore, we deduce from \eqref{eq:gen_criter_null_tensor} that \(\sum_{i=1}^n\omega(v_i)\eta(w_i)=\sum_{i=1}^n b_{\omega,\eta}(v_i,w_i)=0\),
which proves that \eqref{eq:null_tensor_cl} holds.

Conversely, assume \eqref{eq:null_tensor_cl} holds. Fix an arbitrary \(L^0(\XX)\)-bilinear map \(b\colon\mathscr M\times\mathscr N\to Q\), for some \(L^0(\XX)\)-module \(Q\).
Denote by \(\mathscr V\) (resp.\ by \(\mathscr W\)) the \(L^0(\XX)\)-submodule of \(\mathscr M\) (resp.\ of \(\mathscr N\)) that is generated by \(v_1,\ldots,v_n\)
(resp.\ by \(w_1,\ldots,w_n\)). Given that the modules \(\mathscr V\) and \(\mathscr W\) are finitely-generated, they are Banach \(L^0(\XX)\)-modules. Now let \(D^{\mathscr V}_0,\ldots,D^{\mathscr V}_{\bar m}\)
and \(D^{\mathscr W}_0,\ldots,D^{\mathscr W}_{\bar q}\) be the dimensional decompositions of \(\mathscr V\) and \(\mathscr W\), respectively. To prove that \(\sum_{i=1}^n b(v_i,w_i)=0\)
amounts to showing that \(\1_{D_{m,q}}\cdot\sum_{i=1}^n b(v_i,w_i)=0\) holds for every \(m=1,\ldots,\bar m\) and \(q=1,\ldots,\bar q\), where we set \(D_{m,q}\coloneqq D^{\mathscr V}_m\cap D^{\mathscr W}_q\).
To this aim, fix a local basis \(x_1,\ldots,x_m\) of \(\mathscr V\) on \(D_{m,q}\) and a local basis \(y_1,\ldots,y_q\) of \(\mathscr W\) on \(D_{m,q}\). Given any \(v\in\mathscr V|_{D_{m,q}}\), we can find \(\tilde\omega_1(v),\ldots,\tilde\omega_m(v)\in L^0(\XX)|_{D_{m,q}}\)
in a unique way so that \(v=\sum_{j=1}^m\tilde\omega_j(v)\cdot x_j\). Moreover, each mapping \(\tilde\omega_j\colon\mathscr V|_{D_{m,q}}\to L^0(\XX)\)
is \(L^0(\XX)\)-linear, thus it is also continuous thanks to Lemma \ref{lem:fin-gen_cont}. An application of the
Hahn--Banach theorem for normed \(L^0\)-modules ensures the existence of some elements \(\omega_1,\ldots,\omega_m\in\mathscr M^*\)
such that \(\omega_j|_{\mathscr V|_{D_{m,q}}}=\tilde\omega_j\) for every \(j=1,\ldots,m\). Similarly, we can find
elements \(\eta_1,\ldots,\eta_q\in\mathscr N^*\) such that \(w=\sum_{k=1}^q\eta_k(w)\cdot y_k\) for every \(w\in\mathscr W|_{D_{m,q}}\).
Therefore, the \(L^0(\XX)\)-bilinearity of \(b\) yields
\[\begin{split}
\1_{D_{m,q}}\cdot\sum_{i=1}^n b(v_i,w_i)&=\sum_{i=1}^n b\big(\1_{D_{m,q}}\cdot v_i,\1_{D_{m,q}}\cdot w_i\big)
=\sum_{j=1}^m\sum_{k=1}^q\bigg(\sum_{i=1}^n\omega_j(v_i)\eta_k(w_i)\bigg)\cdot b(x_j,y_k)\overset{\eqref{eq:null_tensor_cl}}=0.
\end{split}\]
This implies that \(\sum_{i=1}^n b(v_i,w_i)=0\), whence it follows that \(\alpha=\sum_{i=1}^n v_i\otimes w_i=0\) by \eqref{eq:gen_criter_null_tensor}.
\end{proof}
\begin{remark}\label{rmk:alg_tens_diff}{\rm
We stress that Lemma \ref{lem:crit_null_tensor} shows that, in the case of Banach \(L^0(\XX)\)-modules, a null
tensor can be detected by checking only against (a class of) \(L^0(\XX)\)-bilinear maps taking values into the ring \(L^0(\XX)\).
It is not clear whether this happens for arbitrary \(L^0(\XX)\)-modules that are not equipped with a
pointwise norm; cf.\ with the discussion after \eqref{eq:gen_criter_null_tensor}. In other words, the proof of
Lemma \ref{lem:crit_null_tensor} is heavily relying on the fact that we are considering Banach \(L^0(\XX)\)-modules.
\fr}\end{remark}
\begin{corollary}\label{cor:null_tensor_conseq}
Let \(\XX\) be a \(\sigma\)-finite measure space. Let \(\mathscr M\) and \(\mathscr N\) be Banach \(L^0(\XX)\)-modules. Fix
any tensor \(\alpha=\sum_{i=1}^n v_i\otimes w_i\in\mathscr M\otimes\mathscr N\). Then the following conditions are equivalent:
\begin{itemize}
\item[\(\rm i)\)] \(\alpha=0\).
\item[\(\rm ii)\)] \(\sum_{i=1}^n\omega(v_i)\cdot w_i=0\) for every \(\omega\in\mathscr M^*\).
\item[\(\rm iii)\)] \(\sum_{i=1}^n\eta(w_i)\cdot v_i=0\) for every \(\eta\in\mathscr N^*\).
\end{itemize}
\end{corollary}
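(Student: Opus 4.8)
The plan is to prove the chain of equivalences by establishing \(\rm i)\Leftrightarrow\rm ii)\) directly and then deducing \(\rm i)\Leftrightarrow\rm iii)\) from the symmetry of the situation. The only tools I would need are Lemma \ref{lem:crit_null_tensor}, which reduces the nullity of \(\alpha\) to the scalar condition \eqref{eq:null_tensor_cl}, and the Hahn--Banach Theorem \ref{thm:Hahn-Banach}, which detects when an element of a Banach \(L^0(\XX)\)-module vanishes.

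For the implication \(\rm ii)\Rightarrow\rm i)\), I would fix arbitrary \(\omega\in\mathscr M^*\) and \(\eta\in\mathscr N^*\) and apply the \(L^0(\XX)\)-linear functional \(\eta\) to the identity \(\sum_{i=1}^n\omega(v_i)\cdot w_i=0\), obtaining \(\sum_{i=1}^n\omega(v_i)\eta(w_i)=\eta\big(\sum_{i=1}^n\omega(v_i)\cdot w_i\big)=0\). Since this holds for all \(\omega\) and \(\eta\), condition \eqref{eq:null_tensor_cl} is verified, and Lemma \ref{lem:crit_null_tensor} yields \(\alpha=0\).

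For the converse \(\rm i)\Rightarrow\rm ii)\), I would start from \(\alpha=0\) and fix an arbitrary \(\omega\in\mathscr M^*\); the goal is to show that the element \(w_\omega\coloneqq\sum_{i=1}^n\omega(v_i)\cdot w_i\in\mathscr N\) is null. By Lemma \ref{lem:crit_null_tensor}, the scalar identity \eqref{eq:null_tensor_cl} holds, so for every \(\eta\in\mathscr N^*\) the \(L^0(\XX)\)-linearity of \(\eta\) gives \(\eta(w_\omega)=\sum_{i=1}^n\omega(v_i)\eta(w_i)=0\). In particular, taking \(\eta=\omega_{w_\omega}\in\mathbb S_{\mathscr N^*}\) furnished by Theorem \ref{thm:Hahn-Banach}, we get \(|w_\omega|=\omega_{w_\omega}(w_\omega)=0\), whence \(w_\omega=0\). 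As \(\omega\) was arbitrary, \(\rm ii)\) follows.

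Finally, I would observe that swapping the roles of \((\mathscr M,v_1,\ldots,v_n)\) and \((\mathscr N,w_1,\ldots,w_n)\) leaves condition \(\rm i)\) invariant (via the canonical isomorphism \(\mathscr M\otimes\mathscr N\cong\mathscr N\otimes\mathscr M\) and the symmetry of \eqref{eq:null_tensor_cl} in \(\omega,\eta\), using that \(L^0(\XX)\) is commutative) and interchanges conditions \(\rm ii)\) and \(\rm iii)\); hence the already-proved equivalence \(\rm i)\Leftrightarrow\rm ii)\) immediately yields \(\rm i)\Leftrightarrow\rm iii)\). There is no serious obstacle here: the whole argument rests on applying a dual functional to pass from the module-valued identities in \(\rm ii)\)/\(\rm iii)\) to the scalar identity \eqref{eq:null_tensor_cl}, with Hahn--Banach supplying the reverse passage. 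The only point requiring a little care is that, to conclude \(w_\omega=0\), one cannot merely use an arbitrary \(\eta\) but must invoke the norming functional guaranteed by Theorem \ref{thm:Hahn-Banach}.
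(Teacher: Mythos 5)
Your proof is correct and follows essentially the same route as the paper: the implication $\rm ii)\Rightarrow\rm i)$ by applying $\eta$ and invoking Lemma \ref{lem:crit_null_tensor}, the converse by combining Lemma \ref{lem:crit_null_tensor} with the Hahn--Banach theorem (your explicit use of the norming functional $\omega_{w_\omega}$ is exactly how the paper's appeal to Hahn--Banach works), and symmetry for $\rm iii)$, which the paper likewise dispatches as ``very similar''. No gaps.
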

\begin{proof}
We prove only the equivalence between i) and ii); the proof of the equivalence between i) and iii) is very similar.
Assuming ii), we deduce that \(\sum_{i=1}^n\omega(v_i)\eta(w_i)=\eta\big(\sum_{i=1}^n\omega(v_i)\cdot w_i\big)=0\) for every \(\omega\in\mathscr M^*\)
and \(\eta\in\mathscr N^*\), so that \(\alpha=0\) by Lemma \ref{lem:crit_null_tensor}. Conversely, if \(\alpha=0\), then the same computation as above
shows that \(\eta\big(\sum_{i=1}^n\omega(v_i)\cdot w_i\big)=0\) for every \(\omega\in\mathscr M^*\) and \(\eta\in\mathscr N^*\), so that
\(\sum_{i=1}^n\omega(v_i)\cdot w_i=0\) for every \(\omega\in\mathscr M^*\) by the Hahn--Banach theorem, which gives ii).
\end{proof}
\section{Projective tensor products of Banach \texorpdfstring{\(L^0\)}{L0}-modules}\label{s:proj_tens}
\subsection{Definition and main properties}
We begin by introducing the projective pointwise norm:
\begin{theorem}
Let \(\XX\) be a \(\sigma\)-finite measure space. Let \(\mathscr M\), \(\mathscr N\) be Banach \(L^0(\XX)\)-modules. Define
\begin{equation}\label{eq:def_proj_ptwse_norm}
|\alpha|_\pi\coloneqq\bigwedge\bigg\{\sum_{i=1}^n|v_i||w_i|\;\bigg|\;n\in\N,\,(v_i)_{i=1}^n\subseteq\mathscr M,
\,(w_i)_{i=1}^n\subseteq\mathscr N,\,\alpha=\sum_{i=1}^n v_i\otimes w_i\bigg\}\in L^0(\XX)^+
\end{equation}
for every \(\alpha\in\mathscr M\otimes\mathscr N\). Then \(|\cdot|_\pi\colon\mathscr M\otimes\mathscr N\to L^0(\XX)^+\)
is a pointwise norm on \(\mathscr M\otimes\mathscr N\). Moreover,
\begin{equation}\label{eq:proj_norm_elem_tensor}
|v\otimes w|_\pi=|v||w|\quad\text{ for every }v\in\mathscr M\text{ and }w\in\mathscr N.
\end{equation}
\end{theorem}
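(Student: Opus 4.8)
The statement has two parts: that $|\cdot|_\pi$ is a pointwise norm, and the identity \eqref{eq:proj_norm_elem_tensor}. I would handle the pointwise-norm axioms first, as they are mostly formal, and reserve the bulk of the effort for the elementary-tensor formula, where the real content lies.

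For the three pointwise-norm axioms, the plan is as follows. The subadditivity $|\alpha+\beta|_\pi\leq|\alpha|_\pi+|\beta|_\pi$ follows by concatenating representations: if $\alpha=\sum_i v_i\otimes w_i$ and $\beta=\sum_j v_j'\otimes w_j'$, then juxtaposing gives a representation of $\alpha+\beta$ whose associated sum is $\sum_i|v_i||w_i|+\sum_j|v_j'||w_j'|$, and one passes to the infimum over representations of $\alpha$ and of $\beta$ separately (using that the two infima are attained in a directed/countable-inf sense, via the countable inf property of $L^0(\XX)$). For homogeneity, $|f\cdot\alpha|_\pi=|f||\alpha|_\pi$, I would note that scaling a representation $\alpha=\sum_i v_i\otimes w_i$ by $f$ (placing $f$ on the first factor) gives $f\cdot\alpha=\sum_i(f\cdot v_i)\otimes w_i$, so $|f\cdot\alpha|_\pi\leq\sum_i|f||v_i||w_i|=|f|\sum_i|v_i||w_i|$; taking the infimum yields $\leq|f||\alpha|_\pi$, and the reverse inequality comes from applying this to $f^{-1}$ on the set $\{|f|>0\}$ (and treating $\{f=0\}$ separately, where both sides vanish). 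The positivity of $|\alpha|_\pi$ is automatic, being an infimum of elements of $L^0(\XX)^+$; the delicate half is the implication $|\alpha|_\pi=0\Rightarrow\alpha=0$, and this is exactly where I would invoke the null-tensor criterion.

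The heart of the proof is \eqref{eq:proj_norm_elem_tensor}, namely $|v\otimes w|_\pi=|v||w|$. The inequality $\leq$ is immediate, since $v\otimes w$ itself is a one-term representation giving the value $|v||w|$. The reverse inequality $|v\otimes w|_\pi\geq|v||w|$ is the crux, and it is where I expect the main obstacle. The strategy is to use Hahn--Banach (Theorem \ref{thm:Hahn-Banach}) to produce dual elements $\omega\in\mathbb S_{\mathscr M^*}$ and $\eta\in\mathbb S_{\mathscr N^*}$ with $\omega(v)=|v|$ and $\eta(w)=|w|$, so that the bilinear map $b_{\omega,\eta}(x,y)\coloneqq\omega(x)\eta(y)$ detects the size of $v\otimes w$. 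For any representation $v\otimes w=\sum_{i=1}^n v_i\otimes w_i$, the criterion \eqref{eq:gen_criter_null_tensor} (applied to $b_{\omega,\eta}$, which is $L^0(\XX)$-bilinear into $L^0(\XX)$) gives $\omega(v)\eta(w)=\sum_{i=1}^n\omega(v_i)\eta(w_i)$, whence
\[
|v||w|=|\omega(v)\eta(w)|=\Big|\sum_{i=1}^n\omega(v_i)\eta(w_i)\Big|\leq\sum_{i=1}^n|\omega(v_i)||\eta(w_i)|\leq\sum_{i=1}^n|v_i||w_i|,
\]
using $|\omega|\leq 1$ and $|\eta|\leq 1$ in the last step. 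Taking the infimum over all representations yields $|v||w|\leq|v\otimes w|_\pi$, as desired.

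The same bilinear-functional device closes the remaining gap in the pointwise-norm axioms: to show $|\alpha|_\pi=0\Rightarrow\alpha=0$, I would argue that if $|\alpha|_\pi=0$ then for every $\omega\in\mathscr M^*$ and $\eta\in\mathscr N^*$ one has, along any representation $\alpha=\sum_i v_i\otimes w_i$, the bound $\big|\sum_i\omega(v_i)\eta(w_i)\big|\leq|\omega||\eta|\sum_i|v_i||w_i|$, and passing to the infimum forces $\sum_i\omega(v_i)\eta(w_i)=0$ for all $\omega,\eta$; Lemma \ref{lem:crit_null_tensor} then gives $\alpha=0$. The subtle point throughout — and the reason the countable inf property is indispensable — is that the infimum defining $|\cdot|_\pi$ is over a genuinely $L^0$-valued family that need not attain a minimum, so every estimate must be stated as an inequality holding for \emph{each} representation and only then passed to the infimum; I expect this bookkeeping, rather than any single hard estimate, to be where care is most needed.
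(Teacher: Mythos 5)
Your proposal is correct and follows essentially the same route as the paper's proof: the Hahn--Banach theorem for modules combined with the null-tensor criterion (Lemma \ref{lem:crit_null_tensor}) handles both the definiteness axiom and the identity \(|v\otimes w|_\pi=|v||w|\), while subadditivity and homogeneity are obtained by the same manipulation of representations (concatenation, and scaling by \(f\) together with \(\1_{\{f\neq 0\}}/f\)). The only difference is cosmetic: where you pass directly to the lattice infimum (via the countable inf property) to force \(\sum_i\omega(v_i)\eta(w_i)=0\), the paper carries out the same bookkeeping by fixing \(\varepsilon>0\) and choosing a countable partition \((E_k)_{k\in\N}\) of \(\X\) on each piece of which some representation of \(\alpha\) has sum at most \(\varepsilon\).
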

\begin{proof}
To prove that \(|\cdot|_\pi\) is a pointwise norm on \(\mathscr M\otimes\mathscr N\) amounts to showing that:
\begin{itemize}
\item[\(\rm i)\)] If \(\alpha\in\mathscr M\otimes\mathscr N\) satisfies \(|\alpha|_\pi=0\), then \(\alpha=0\).
\item[\(\rm ii)\)] \(|\alpha+\beta|_\pi\leq|\alpha|_\pi+|\beta|_\pi\) for every \(\alpha,\beta\in\mathscr M\otimes\mathscr N\).
\item[\(\rm iii)\)] \(|f\cdot\alpha|_\pi=|f||\alpha|_\pi\) for every \(f\in L^0(\XX)\) and \(\alpha\in\mathscr M\otimes\mathscr N\).
\end{itemize}
Let us first check the validity of i). Assume \(|\alpha|_\pi=0\). Let \(\omega\in\mathscr M^*\) and \(\eta\in\mathscr N^*\)
be given. Then we define \(\theta_{\omega,\eta}\in L^0(\XX)\) as \(\theta_{\omega,\eta}\coloneqq\sum_{i=1}^n\omega(v_i)\eta(w_i)\)
for any \(v_1,\ldots,v_n\in\mathscr M\) and \(w_1,\ldots,w_n\in\mathscr N\) satisfying \(\alpha=\sum_{i=1}^n v_i\otimes w_i\);
thanks to Lemma \ref{lem:crit_null_tensor}, the function \(\theta_{\omega,\eta}\) is independent of the chosen
representation \(\sum_{i=1}^nv_i\otimes w_i\) of \(\alpha\). Now fix \(\varepsilon>0\). Then there exists a partition
\((E_k)_{k\in\N}\subseteq\Sigma\) of \(\X\) and \(v_1^k,\ldots,v_{n_k}^k\in\mathscr M\),
\(w_1^k,\ldots,w_{n_k}^k\in\mathscr N\) such that \(\alpha=\sum_{i=1}^{n_k}v_i^k\otimes w_i^k\)
and \(\1_{E_k}\sum_{i=1}^{n_k}|v_i^k||w_i^k|\leq\varepsilon\) for every \(k\in\N\). Therefore, can estimate
\[
|\theta_{\omega,\eta}|=\sum_{k\in\N}\1_{E_k}\bigg|\sum_{i=1}^{n_k}\omega(v_i^k)\eta(w_i^k)\bigg|
\leq\sum_{k\in\N}\1_{E_k}\sum_{i=1}^{n_k}|\omega(v_i^k)||\eta(w_i^k)|
\leq|\omega||\eta|\sum_{k\in\N}\sum_{i=1}^{n_k}|v_i^k||w_i^k|\leq\varepsilon|\omega||\eta|.
\]
Thanks to the arbitrariness of \(\varepsilon>0\), we deduce that \(\theta_{\omega,\eta}=0\), so that
\(\alpha=0\) by Lemma \ref{lem:crit_null_tensor}.

In order to prove ii), let us write \(\alpha=\sum_{i=1}^n v_i\otimes w_i\) and \(\beta=\sum_{j=1}^m\tilde v_j\otimes\tilde w_j\).
Then we have that
\[
|\alpha+\beta|_\pi\leq\sum_{i=1}^n|v_i||w_i|+\sum_{j=1}^m|\tilde v_j||\tilde w_j|,
\]
where we used the fact that \(\alpha+\beta=\sum_{i=1}^n v_i\otimes w_i+\sum_{j=1}^m\tilde v_j\otimes\tilde w_j\).
By passing to the infimum over all the possible representations of \(\alpha\) and \(\beta\), we conclude that
\(|\alpha+\beta|_\pi\leq|\alpha|_\pi+|\beta|_\pi\).

We now pass to the verification of iii). If \(\alpha=\sum_{i=1}^n v_i\otimes w_i\), then \(f\cdot\alpha=\sum_{i=1}^n(f\cdot v_i)\otimes w_i\).
It follows that \(|f\cdot\alpha|_\pi\leq\sum_{i=1}^n|f\cdot v_i||w_i|=|f|\sum_{i=1}^n|v_i||w_i|\).
By passing to the infimum over all the representations of \(\alpha\), we obtain that \(|f\cdot\alpha|_\pi\leq|f||\alpha|_\pi\).
Moreover, the same estimates yield
\[
|f||\alpha|_\pi=|f|\bigg|\frac{\1_{\{f\neq 0\}}}{f}\cdot(f\cdot\alpha)\bigg|_\pi
\leq|f|\frac{\1_{\{f\neq 0\}}}{|f|}|f\cdot\alpha|_\pi=\1_{\{f\neq 0\}}|f\cdot\alpha|_\pi\leq|f\cdot\alpha|_\pi.
\]
All in all, we have shown that \(|f\cdot\alpha|_\pi=|f||\alpha|_\pi\).

Finally, let us check that \eqref{eq:proj_norm_elem_tensor} holds. The inequality \(|v\otimes w|_\pi\leq|v||w|\) is
trivially verified. For the converse inequality, choose elements \(\omega\in\mathscr M^*\) and \(\eta\in\mathscr N^*\)
such that \(|\omega|,|\eta|\leq 1\), \(\omega(v)=|v|\), and \(\eta(w)=|w|\). The \(L^0(\XX)\)-linearisation
\(T\) of \(\mathscr M\times\mathscr N\ni(\tilde v,\tilde w)\mapsto\omega(\tilde v)\eta(\tilde w)\in L^0(\XX)\) satisfies
\[
|T(\alpha)|\leq\sum_{i=1}^n|T(v_i\otimes w_i)|=\sum_{i=1}^n|\omega(v_i)||\eta(w_i)|\leq\sum_{i=1}^n|v_i||w_i|
\quad\;\forall\alpha=\sum_{i=1}^n v_i\otimes w_i\in\mathscr M\otimes\mathscr N,
\]
whence it follows that \(|T(\alpha)|\leq|\alpha|_\pi\) for every \(\alpha\in\mathscr M\otimes\mathscr N\).
In particular, we have that
\[
|v||w|=\big|\omega(v)\eta(w)\big|=\big|T(v\otimes w)\big|\leq|v\otimes w|_\pi.
\]
All in all, we have shown that \(|v\otimes w|_\pi=|v||w|\), thus accordingly \eqref{eq:proj_norm_elem_tensor} is proved.
\end{proof}
\begin{definition}[Projective tensor product]
Let \(\XX\) be a \(\sigma\)-finite measure space. Let \(\mathscr M\), \(\mathscr N\) be Banach \(L^0(\XX)\)-modules. Then we
denote by \(\mathscr M\otimes_\pi\mathscr N\) the normed \(L^0(\XX)\)-module \((\mathscr M\otimes\mathscr N,|\cdot|_\pi)\),
where the pointwise norm \(|\cdot|_\pi\) is defined as in \eqref{eq:def_proj_ptwse_norm}. Moreover, the
\textbf{projective tensor product} of \(\mathscr M\), \(\mathscr N\) is the Banach \(L^0(\XX)\)-module
\(\mathscr M\hat\otimes_\pi\mathscr N\) defined as the \(L^0(\XX)\)-completion of \(\mathscr M\otimes_\pi\mathscr N\).
\end{definition}

Let us now consider the projective tensor product of homomorphisms of Banach \(L^0(\XX)\)-modules:
\begin{proposition}[Projective tensor products of homomorphisms]\label{prop:prod_norm_tens_hom}
Let \(\XX\) be a \(\sigma\)-finite measure space. Let \(T\colon\mathscr M\to\tilde{\mathscr M}\) and \(S\colon\mathscr N\to\tilde{\mathscr N}\) be homomorphisms
of Banach \(L^0(\XX)\)-modules. Then there exists a unique homomorphism of Banach \(L^0(\XX)\)-modules
\(T\otimes_\pi S\colon\mathscr M\hat\otimes_\pi\mathscr N\to\tilde{\mathscr M}\hat\otimes_\pi\tilde{\mathscr N}\) with
\[
(T\otimes_\pi S)(v\otimes w)=T(v)\otimes S(w)\quad\text{ for every }v\in\mathscr M\text{ and }w\in\mathscr N.
\]
Moreover, it holds that \(|T\otimes_\pi S|=|T||S|\).
\end{proposition}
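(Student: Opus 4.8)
The plan is to build the operator first on the algebraic tensor product and then extend it by density. By Lemma \ref{lem:alg_tensor_hom} there is a unique \(L^0(\XX)\)-linear map \(T\otimes S\colon\mathscr M\otimes\mathscr N\to\tilde{\mathscr M}\otimes\tilde{\mathscr N}\) with \((T\otimes S)(v\otimes w)=T(v)\otimes S(w)\). First I would check that \(T\otimes S\) is bounded as a map between the normed modules \(\mathscr M\otimes_\pi\mathscr N\) and \(\tilde{\mathscr M}\otimes_\pi\tilde{\mathscr N}\). Given any representation \(\alpha=\sum_{i=1}^n v_i\otimes w_i\), using \eqref{eq:proj_norm_elem_tensor} together with \(|T(v_i)|\leq|T||v_i|\) and \(|S(w_i)|\leq|S||w_i|\) one estimates
\[
|(T\otimes S)(\alpha)|_\pi=\Big|\sum_{i=1}^n T(v_i)\otimes S(w_i)\Big|_\pi\leq\sum_{i=1}^n|T(v_i)||S(w_i)|\leq|T||S|\sum_{i=1}^n|v_i||w_i|.
\]
Passing to the infimum over all representations of \(\alpha\) yields \(|(T\otimes S)(\alpha)|_\pi\leq|T||S|\,|\alpha|_\pi\), so that \(T\otimes S\) is a homomorphism of normed \(L^0(\XX)\)-modules with \(|T\otimes S|\leq|T||S|\).

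Next I would extend and settle uniqueness. Composing \(T\otimes S\) with the canonical dense inclusion \(\tilde{\mathscr M}\otimes_\pi\tilde{\mathscr N}\hookrightarrow\tilde{\mathscr M}\hat\otimes_\pi\tilde{\mathscr N}\) produces a homomorphism from the dense submodule \(\mathscr M\otimes_\pi\mathscr N\) of \(\mathscr M\hat\otimes_\pi\mathscr N\) into the Banach \(L^0(\XX)\)-module \(\tilde{\mathscr M}\hat\otimes_\pi\tilde{\mathscr N}\); since the bound above makes it continuous, it extends uniquely to a homomorphism \(T\otimes_\pi S\colon\mathscr M\hat\otimes_\pi\mathscr N\to\tilde{\mathscr M}\hat\otimes_\pi\tilde{\mathscr N}\), still satisfying \(|T\otimes_\pi S|\leq|T||S|\). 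Uniqueness follows because the prescribed formula fixes the map on elementary tensors, \(L^0(\XX)\)-linearity then fixes it on \(\mathscr M\otimes\mathscr N\), and continuity together with density fixes it on the whole completion.

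It remains to prove the reverse inequality \(|T||S|\leq|T\otimes_\pi S|\), which I expect to be the main point. For all \(v\in\mathscr M\) and \(w\in\mathscr N\), applying the homomorphism bound to the elementary tensor \(v\otimes w\) and invoking \eqref{eq:proj_norm_elem_tensor} gives \(|T(v)||S(w)|=|(T\otimes_\pi S)(v\otimes w)|_\pi\leq|T\otimes_\pi S|\,|v||w|\). Dividing by \(|v||w|\) on the relevant supports, I obtain
\[
\frac{\1_{\{|v|>0\}}|T(v)|}{|v|}\cdot\frac{\1_{\{|w|>0\}}|S(w)|}{|w|}\leq|T\otimes_\pi S|\quad\text{ for all }v\in\mathscr M,\ w\in\mathscr N.
\]
I would then take the supremum over \(v\) and over \(w\). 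The subtle step, and the one I expect to be the genuine obstacle, is the interchange of supremum and product: for an order-bounded nonnegative family \(\{a_v\}\subseteq L^0(\XX)^+\) and a fixed \(b\in L^0(\XX)^+\) one has \(\bigvee_v(a_v\,b)=b\,\bigvee_v a_v\) in the Dedekind complete Riesz space \(L^0(\XX)\), since multiplication by a nonnegative element is order-continuous on order-bounded sets (order-boundedness holds here because every term is dominated by \(|T\otimes_\pi S|\in L^0(\XX)^+\)). Applying this twice yields
\[
|T||S|=\Big(\bigvee_{v\in\mathscr M}\frac{\1_{\{|v|>0\}}|T(v)|}{|v|}\Big)\Big(\bigvee_{w\in\mathscr N}\frac{\1_{\{|w|>0\}}|S(w)|}{|w|}\Big)\leq|T\otimes_\pi S|.
\]
Combined with the bound from the second paragraph, this gives \(|T\otimes_\pi S|=|T||S|\), completing the argument.
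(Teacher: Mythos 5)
Your proposal is correct and follows essentially the same route as the paper: construct the map on the algebraic tensor product via Lemma \ref{lem:alg_tensor_hom}, bound it by \(|T||S|\) through the infimum over representations, extend by density, and obtain the reverse inequality by testing \(T\otimes_\pi S\) on elementary tensors and using \eqref{eq:proj_norm_elem_tensor}. The paper phrases the last step with suprema over the unit spheres \(\mathbb S_{\mathscr M}\), \(\mathbb S_{\mathscr N}\) rather than your normalized quotients, but this is the same argument (and the distribution of the product over order suprema that you justify explicitly is used there implicitly).
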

\begin{proof}
By virtue of Lemma \ref{lem:alg_tensor_hom}, there is a unique \(L^0(\XX)\)-linear map \(T\otimes S\colon\mathscr M\otimes\mathscr N\to\tilde{\mathscr M}\otimes\tilde{\mathscr N}\)
with \((T\otimes S)(v\otimes w)=T(v)\otimes S(w)\) for all \((v,w)\in\mathscr M\times\mathscr N\). If \(\alpha=\sum_{i=1}^n v_i\otimes w_i\in\mathscr M\otimes\mathscr N\), then
\[
\big|(T\otimes S)(\alpha)\big|_\pi=\bigg|\sum_{i=1}^n T(v_i)\otimes S(w_i)\bigg|_\pi\leq\sum_{i=1}^n|T(v_i)||S(w_i)|\leq|T||S|\sum_{i=1}^n|v_i||w_i|.
\]
By taking the infimum over all representations of \(\alpha\), we obtain that \(\big|(T\otimes S)(\alpha)\big|_\pi\leq|T||S||\alpha|_\pi\). It follows that the operator \(T\otimes S\)
can be uniquely extended to a homomorphism of Banach \(L^0(\XX)\)-modules \(T\otimes_\pi S\colon\mathscr M\hat\otimes_\pi\mathscr N\to\tilde{\mathscr M}\hat\otimes_\pi\tilde{\mathscr N}\)
satisfying \(|T\otimes_\pi S|\leq|T||S|\). Finally, we have that
\[\begin{split}
|T||S|&=\bigvee_{v\in\mathbb S_{\mathscr M}}\bigvee_{w\in\mathbb S_{\mathscr N}}|T(v)||S(w)|=\bigvee_{v\in\mathbb S_{\mathscr M}}\bigvee_{w\in\mathbb S_{\mathscr N}}\big|T(v)\otimes S(w)\big|_\pi
=\bigvee_{v\in\mathbb S_{\mathscr M}}\bigvee_{w\in\mathbb S_{\mathscr N}}\big|(T\otimes_\pi S)(v\otimes w)\big|_\pi\\
&\leq|T\otimes_\pi S|\bigvee_{v\in\mathbb S_{\mathscr M}}\bigvee_{w\in\mathbb S_{\mathscr N}}|v\otimes w|_\pi=
|T\otimes_\pi S|\bigvee_{v\in\mathbb S_{\mathscr M}}\bigvee_{w\in\mathbb S_{\mathscr N}}|v||w|\leq|T\otimes_\pi S|.
\end{split}\]
Consequently, the identity \(|T\otimes_\pi S|=|T||S|\) is proved.
\end{proof}

One can easily check that \(L^0(\XX)\hat\otimes_\pi L^0(\XX)=L^0(\XX)\otimes_\pi L^0(\XX)\cong L^0(\XX)\) as Banach \(L^0(\XX)\)-modules via
\(L^0(\XX)\otimes_\pi L^0(\XX)\ni\sum_{i=1}^n f_i\otimes g_i\mapsto\sum_{i=1}^n f_i g_i\in L^0(\XX)\). In particular, up to this identification,
\[
\omega\otimes_\pi\eta\in(\mathscr M\hat\otimes_\pi\mathscr N)^*,\quad|\omega\otimes_\pi\eta|=|\omega||\eta|\quad\text{ for every }\omega\in\mathscr M^*\text{ and }\eta\in\mathscr N^*.
\]
\begin{lemma}\label{lem:proj_hom_quotient}
Let \(\XX\) be a \(\sigma\)-finite measure space. Let \(T\colon\mathscr M\to\tilde{\mathscr M}\) and \(S\colon\mathscr N\to\tilde{\mathscr N}\)
be quotient operators of Banach \(L^0(\XX)\)-modules. Then \(T\otimes_\pi S\colon\mathscr M\hat\otimes_\pi\mathscr N\to\tilde{\mathscr M}\hat\otimes_\pi\tilde{\mathscr N}\)
is a quotient operator.
\end{lemma}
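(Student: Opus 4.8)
The plan is to deduce the statement from Lemma \ref{lem:suff_cond_quotient_oper}, applied to the homomorphism $T\otimes_\pi S$ together with the dense vector subspace $\tilde{\mathscr M}\otimes\tilde{\mathscr N}$ of $\tilde{\mathscr M}\hat\otimes_\pi\tilde{\mathscr N}$. First I would record that $|T\otimes_\pi S|\le 1$: indeed, Proposition \ref{prop:prod_norm_tens_hom} gives $|T\otimes_\pi S|=|T||S|$, and since $T$, $S$ are quotient operators, \eqref{eq:norm_quotient_oper} yields $|T|=\1_{{\sf S}(\tilde{\mathscr M})}$ and $|S|=\1_{{\sf S}(\tilde{\mathscr N})}$, whence $|T\otimes_\pi S|=\1_{{\sf S}(\tilde{\mathscr M})\cap{\sf S}(\tilde{\mathscr N})}\le 1$. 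It then remains to verify the approximation hypothesis of Lemma \ref{lem:suff_cond_quotient_oper}: for every $\gamma\in\tilde{\mathscr M}\hat\otimes_\pi\tilde{\mathscr N}$ and $\varepsilon>0$, to exhibit $\alpha\in\mathscr M\hat\otimes_\pi\mathscr N$ with $\sfd_{\tilde{\mathscr M}\hat\otimes_\pi\tilde{\mathscr N}}\big((T\otimes_\pi S)(\alpha),\gamma\big)<\varepsilon$ and $\sfd_{L^0(\XX)}(|\alpha|_\pi,|\gamma|_\pi)<\varepsilon$. Since $\tilde{\mathscr M}\otimes\tilde{\mathscr N}$ is dense and $\big||\gamma'|_\pi-|\gamma|_\pi\big|\le|\gamma'-\gamma|_\pi$, a routine two-step estimate reduces this to the case $\gamma=\sum_{j=1}^m\tilde v_j\otimes\tilde w_j\in\tilde{\mathscr M}\otimes\tilde{\mathscr N}$.

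The core of the argument is the pointwise identity
\[
\bigwedge_{\alpha\in(T\otimes_\pi S)^{-1}(\gamma)}|\alpha|_\pi=|\gamma|_\pi\qquad\text{for every }\gamma\in\tilde{\mathscr M}\otimes\tilde{\mathscr N}.
\]
The inequality $\ge$ is immediate from $|\gamma|_\pi=\big|(T\otimes_\pi S)(\alpha)\big|_\pi\le|T\otimes_\pi S|\,|\alpha|_\pi\le|\alpha|_\pi$. For $\le$, I would fix an arbitrary representation $\gamma=\sum_{j=1}^m\tilde v_j\otimes\tilde w_j$ and use the lifting property of quotient operators recorded before Lemma \ref{lem:annihilator}: for each $\delta>0$ I choose $v_j\in\mathscr M$ and $w_j\in\mathscr N$ with $T(v_j)=\tilde v_j$, $S(w_j)=\tilde w_j$, $|v_j|\le|\tilde v_j|+\delta$ and $|w_j|\le|\tilde w_j|+\delta$. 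Then $\alpha_\delta\coloneqq\sum_{j=1}^m v_j\otimes w_j$ lies in $(T\otimes_\pi S)^{-1}(\gamma)$ and, by \eqref{eq:def_proj_ptwse_norm}, satisfies $|\alpha_\delta|_\pi\le\sum_{j=1}^m(|\tilde v_j|+\delta)(|\tilde w_j|+\delta)$. As the right-hand side is nondecreasing in $\delta\ge 0$ and converges pointwise to $\sum_j|\tilde v_j||\tilde w_j|$ as $\delta\downarrow 0$, taking the infimum over $\delta$ gives $\bigwedge_\alpha|\alpha|_\pi\le\sum_j|\tilde v_j||\tilde w_j|$; since the representation was arbitrary, a further infimum over all representations yields $\bigwedge_\alpha|\alpha|_\pi\le|\gamma|_\pi$.

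It remains to turn this infimum into a single admissible $\alpha$. Using the countable inf property of $L^0(\XX)$, I would select a sequence $(\alpha_n)_{n\in\N}\subseteq(T\otimes_\pi S)^{-1}(\gamma)$ with $\bigwedge_{n\in\N}|\alpha_n|_\pi=|\gamma|_\pi$. For fixed $N$ one disjointifies: there is a finite partition $(F_n)_{n=1}^N$ of $\X$ with $\bigwedge_{n\le N}|\alpha_n|_\pi=\sum_{n=1}^N\1_{F_n}|\alpha_n|_\pi$, so that $\alpha^{(N)}\coloneqq\sum_{n=1}^N\1_{F_n}\cdot\alpha_n$ (again a finite sum of elementary tensors) satisfies $|\alpha^{(N)}|_\pi=\bigwedge_{n\le N}|\alpha_n|_\pi$ by disjointness of supports, and $(T\otimes_\pi S)(\alpha^{(N)})=\sum_{n=1}^N\1_{F_n}\cdot\gamma=\gamma$ by $L^0(\XX)$-linearity and $\sum_n\1_{F_n}=1$. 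Since $\bigwedge_{n\le N}|\alpha_n|_\pi\searrow|\gamma|_\pi$ as $N\to\infty$, dominated convergence gives $\sfd_{L^0(\XX)}(|\alpha^{(N)}|_\pi,|\gamma|_\pi)\to 0$; choosing $N$ large and $\alpha\coloneqq\alpha^{(N)}$ verifies the two required estimates (the first distance being even equal to $0$). This establishes the hypothesis of Lemma \ref{lem:suff_cond_quotient_oper}, which concludes the proof. The main obstacle is precisely this last passage: converting the pointwise-essential infimum built into \eqref{eq:def_proj_ptwse_norm} into a single lift whose pointwise norm is $\sfd_{L^0(\XX)}$-close to $|\gamma|_\pi$, which forces the use of the countable inf property together with a gluing of finitely many liftings — the verification that the glued element remains in the fiber over $\gamma$ being automatic from $\sum_n\1_{F_n}=1$.
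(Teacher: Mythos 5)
Your proposal is correct, but it is organised quite differently from the paper's proof. The paper never works with the completions directly: it first proves that the restriction \(T\otimes S\colon\mathscr M\otimes_\pi\mathscr N\to\tilde{\mathscr M}\otimes_\pi\tilde{\mathscr N}\) between the \emph{incomplete} normed modules is a quotient operator (surjectivity is immediate on algebraic tensors, and the fiber-infimum identity is proved there), and then invokes Remark \ref{rmk:ext_quotient_oper} to pass to the completions at no cost; you instead stay in \(\mathscr M\hat\otimes_\pi\mathscr N\) and feed the approximation criterion of Lemma \ref{lem:suff_cond_quotient_oper}, which is why you need the extra machinery at the end (countable inf property, disjointification along a finite partition, density reduction) that has no counterpart in the paper --- Remark \ref{rmk:ext_quotient_oper} is doing that work for you. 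The core estimate also differs in a pleasant way: to get \(\bigwedge_{\alpha\in(T\otimes_\pi S)^{-1}(\gamma)}|\alpha|_\pi\leq|\gamma|_\pi\) you lift one fixed representation with additive errors \(\delta\downarrow 0\) and only afterwards take the lattice infimum over representations, whereas the paper fixes \(\varepsilon\), chooses a countable partition \((E_k)\) of \(\X\) on which suitable representations of \(\beta\) nearly attain \(|\beta|_\pi\), and lifts those with multiplicative error \((1+\varepsilon)\); your order of quantifiers avoids any partition at that stage, but the ``essential infimum versus single element'' issue then resurfaces in your disjointification step, which is the paper's partition trick in disguise. Two cosmetic points, neither affecting correctness: the parenthetical claim that \(\alpha^{(N)}\) is again a finite sum of elementary tensors need not hold, since the \(\alpha_n\) produced by the countable inf property live in the completion (though you could arrange it, as your Step-2 lifts are algebraic), but nothing downstream uses it; and the countable inf property is stated in the paper for order-bounded families, so you should either cap the family (e.g.\ replace \(|\alpha|_\pi\) by \(|\alpha|_\pi\wedge|\alpha_1|_\pi\), which leaves the infimum unchanged) or take the infimum in \(L^0_{\rm ext}(\XX)\) --- both are immediate fixes.
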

\begin{proof}
In view of Remark \ref{rmk:ext_quotient_oper}, it is sufficient to prove that
\(T\otimes S\colon\mathscr M\otimes_\pi\mathscr N\to\tilde{\mathscr M}\otimes_\pi\tilde{\mathscr N}\) is a quotient operator.
Given any \(\beta=\sum_{i=1}^n\tilde v_i\otimes\tilde w_i\in\tilde{\mathscr M}\otimes\tilde{\mathscr N}\), we can
exploit the surjectivity of \(T\) and \(S\) to find \((v_i)_{i=1}^n\subseteq\mathscr M\) and \((w_i)_{i=1}^n\subseteq\mathscr N\)
such that \(\tilde v_i=T(v_i)\) and \(\tilde w_i=S(w_i)\) for all \(i=1,\ldots,n\), whence it follows that
\(\beta=\sum_{i=1}^n T(v_i)\otimes S(w_i)=(T\otimes S)\big(\sum_{i=1}^n v_i\otimes w_i\big)\). This shows that \(T\otimes S\)
is a surjective operator. Moreover, for any tensor \(\beta\in\tilde{\mathscr M}\otimes_\pi\tilde{\mathscr N}\) we can estimate
\[
|\beta|_\pi=\bigwedge_{\alpha\in(T\otimes S)^{-1}(\beta)}|(T\otimes S)(\alpha)|_\pi
\leq|T||S|\bigwedge_{\alpha\in(T\otimes S)^{-1}(\beta)}|\alpha|_\pi
\overset{\eqref{eq:norm_quotient_oper}}\leq\bigwedge_{\alpha\in(T\otimes S)^{-1}(\beta)}|\alpha|_\pi.
\]
In order to prove the converse inequality, fix \(\varepsilon\in(0,1)\). We can thus find a partition \((E_k)_{k\in\N}\subseteq\Sigma\) of \(\X\) and, for any \(k\in\N\),
a number \(n_k\in\N\) and elements \((\tilde v^k_i)_{i=1}^{n_k}\subseteq\tilde{\mathscr M}\), \((\tilde w^k_i)_{i=1}^{n_k}\subseteq\tilde{\mathscr N}\) such that
\[
\beta=\sum_{i=1}^{n_k}\tilde v^k_i\otimes\tilde w^k_i,\qquad
\1_{E_k}\sum_{i=1}^{n_k}|\tilde v^k_i||\tilde w^k_i|\leq\1_{E_k}(|\beta|_\pi+\varepsilon).
\]
Moreover, we can find elements \((v^k_i)_{i=1}^{n_k}\subseteq\mathscr M\) and \((w^k_i)_{i=1}^{n_k}\subseteq\mathscr N\)
with \(T(v^k_i)=\tilde v^k_i\) and \(S(w^k_i)=\tilde w^k_i\) for every \(i=1,\ldots,n_k\) such that
\(|v^k_i|\leq(1+\varepsilon)|\tilde v^k_i|\) and \(|w^k_i|\leq(1+\varepsilon)|\tilde w^k_i|\). Therefore, we have
\[
\1_{E_k}\bigg|\sum_{i=1}^{n_k}v^k_i\otimes w^k_i\bigg|_\pi\leq\1_{E_k}\sum_{i=1}^{n_k}|v^k_i||w^k_i|
\leq(1+\varepsilon)^2\1_{E_k}(|\beta|_\pi+\varepsilon)\leq\1_{E_k}|\beta|_\pi+\1_{E_k}(3|\beta|_\pi+4)\varepsilon.
\]
Since \((T\otimes S)\big(\1_{E_k}\cdot\sum_{i=1}^{n_k}v^k_i\otimes w^k_i\big)=\1_{E_k}\cdot\beta\) for every \(k\in\N\), we deduce that
\[
\bigwedge_{\alpha\in(T\otimes S)^{-1}(\beta)}|\alpha|_\pi\leq\sum_{k\in\N}\1_{E_k}\bigg|\sum_{i=1}^{n_k}v^k_i\otimes w^k_i\bigg|_\pi\leq|\beta|_\pi+(3|\beta|_\pi+4)\varepsilon.
\]
Thanks to the arbitrariness of \(\varepsilon\), we can finally conclude that \(\bigwedge_{\alpha\in(T\otimes S)^{-1}(\beta)}|\alpha|_\pi\leq|\beta|_\pi\).
\end{proof}
\begin{lemma}\label{lem:generators_proj_tensor}
Let \(\XX\) be a \(\sigma\)-finite measure space and \(\mathscr M\), \(\mathscr N\) Banach \(L^0(\XX)\)-modules.
Let \(G\subseteq\mathscr M\) and \(H\subseteq\mathscr N\) be generating subsets. Then the set
\(\{v\otimes w\;\big|\;v\in G,\,w\in H\}\) generates \(\mathscr M\hat\otimes_\pi\mathscr N\).
\end{lemma}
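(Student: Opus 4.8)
The plan is to show that the closed \(L^0(\XX)\)-submodule \(\mathscr S\) generated by \(S\coloneqq\{v\otimes w\mid v\in G,\,w\in H\}\) is the whole of \(\mathscr M\hat\otimes_\pi\mathscr N\), since this is precisely the assertion that \(S\) generates \(\mathscr M\hat\otimes_\pi\mathscr N\). First I would reduce to elementary tensors: as \(\mathscr S\) is closed and \(\mathscr M\otimes\mathscr N\) is dense in \(\mathscr M\hat\otimes_\pi\mathscr N\) by definition of the \(L^0(\XX)\)-completion, it suffices to prove \(\mathscr M\otimes\mathscr N\subseteq\mathscr S\); and since \(\mathscr S\) is a submodule, hence closed under finite sums, and every tensor is a finite sum of elementary tensors, it is enough to check that \(v\otimes w\in\mathscr S\) for all \(v\in\mathscr M\) and \(w\in\mathscr N\).

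The analytic input I would isolate next is the joint continuity of the tensor map. From \eqref{eq:proj_norm_elem_tensor} and the triangle inequality for \(|\cdot|_\pi\), the decomposition \(v\otimes w-v'\otimes w'=(v-v')\otimes w+v'\otimes(w-w')\) yields
\[
|v\otimes w-v'\otimes w'|_\pi\leq|v-v'||w|+|v'||w-w'|\quad\text{ for every }v,v'\in\mathscr M\text{ and }w,w'\in\mathscr N.
\]
Because \(L^0(\XX)\) is a topological ring, this estimate shows that \(v_n\otimes w_n\to v\otimes w\) in \(\mathscr M\hat\otimes_\pi\mathscr N\) whenever \(v_n\to v\) in \(\mathscr M\) and \(w_n\to w\) in \(\mathscr N\); indeed \(|v_n|\to|v|\) in \(L^0(\XX)\) (as \(\big||v_n|-|v|\big|\leq|v_n-v|\)), so both terms \(|v-v_n||w|\) and \(|v_n||w-w_n|\) tend to \(0\) by continuity of multiplication.

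Finally I would exploit the generation hypotheses. Since \(G\) generates \(\mathscr M\), every \(v\in\mathscr M\) is the limit of elements of \(\mathscr G({\rm span}\,G)\), i.e.\ of \(v_n=\sum_i\1_{E_i}\cdot g_i\) with \(\{E_i\}\) a finite partition of \(\X\) and \(g_i\in{\rm span}\,G\); likewise \(w\) is the limit of \(w_n=\sum_j\1_{F_j}\cdot h_j\) with \(\{F_j\}\) a finite partition of \(\X\) and \(h_j\in{\rm span}\,H\). The key algebraic identity, coming from the \(L^0(\XX)\)-bilinearity of \(\otimes\), is
\[
v_n\otimes w_n=\sum_{i,j}\big(\1_{E_i}\cdot g_i\big)\otimes\big(\1_{F_j}\cdot h_j\big)=\sum_{i,j}\1_{E_i\cap F_j}\cdot(g_i\otimes h_j),
\]
where \(\{E_i\cap F_j\}_{i,j}\) is again a finite partition of \(\X\). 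Expanding \(g_i\) and \(h_j\) as finite real-linear combinations of elements of \(G\) and of \(H\), the bilinearity of \(\otimes\) shows that each \(g_i\otimes h_j\) is a finite real-linear combination of elements of \(S\), hence lies in \(\mathscr S\); as \(\mathscr S\) is a submodule, it follows that \(v_n\otimes w_n\in\mathscr S\). Passing to the limit via the continuity established above gives \(v\otimes w=\lim_n v_n\otimes w_n\in\mathscr S\), completing the argument.

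The main obstacle I anticipate is not conceptual but lies in handling the interplay between the two limit procedures: one must combine the continuity of \(\otimes\) in the projective norm with the verification that \(|v-v_n||w|+|v_n||w-w_n|\to0\) in \(L^0(\XX)\). Everything else is the routine bookkeeping of converting two independent glueings of generators into a single glueing of elementary tensors from \(S\) by passing to the common refinement of the two partitions.
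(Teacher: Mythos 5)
Your proof is correct and follows essentially the same route as the paper's: both reduce to approximating an arbitrary elementary tensor \(v\otimes w\), approximate the factors by glueings of elements of \(G\) and \(H\), and conclude via the estimate \(|v\otimes w-v_n\otimes w_n|_\pi\leq|v-v_n||w|+|v_n||w-w_n|\) coming from \eqref{eq:proj_norm_elem_tensor}. The only cosmetic differences are that you argue convergence through the topological-ring structure of \(L^0(\XX)\) where the paper passes to \(\mm\)-a.e.\ convergence, and that you spell out the common-refinement bookkeeping that the paper leaves implicit.
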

\begin{proof}
As the linear span of the elementary tensors is dense in \(\mathscr M\hat\otimes_\pi\mathscr N\), it suffices
to check that any given \(v\otimes w\) with \(v\in\mathscr M\) and \(w\in\mathscr N\) can be approximated by elements
of the \(L^0(\XX)\)-module generated by \(\big\{\tilde v\otimes\tilde w\;\big|\;\tilde v\in G,\,\tilde w\in H\big\}\).
Since \(G\) and \(H\) generate \(\mathscr M\) and \(\mathscr N\), respectively, we can find some
\((v_n)_{n\in\N}\subseteq\mathscr M\) and \((w_n)_{n\in\N}\subseteq\mathscr N\) that are \(L^0(\XX)\)-linear combinations
of elements of \(G\) and \(H\), respectively, such that \(|v_n-v|\to 0\) and \(|w_n-w|\to 0\) in the \(\mm\)-a.e.\ sense. Then
\[
|v\otimes w-v_n\otimes w_n|_\pi\leq|v\otimes w-v_n\otimes w|_\pi+|v_n\otimes w-v_n\otimes w_n|_\pi\overset{\eqref{eq:proj_norm_elem_tensor}}=
|v-v_n||w|+|v_n||w-w_n|\to 0
\]
in the \(\mm\)-a.e.\ sense. In particular, \(v_n\otimes w_n\to v\otimes w\) in \(\mathscr M\hat\otimes_\pi\mathscr N\).
The statement follows.
\end{proof}

Generalising the fact that \(\ell_1(I)\hat\otimes_\pi\B\cong\ell_1(I,\B)\) for every Banach space \(\B\), we have the following:
\begin{theorem}\label{thm:vv_sequence}
Let \(\XX\) be a \(\sigma\)-finite measure space, \(\mathscr M\) a Banach \(L^0(\XX)\)-module,
and \(I\neq\varnothing\) an index family. Then the unique linear continuous operator
\(\mathfrak i\colon L^0(\XX;\ell_1(I))\hat\otimes_\pi\mathscr M\to\ell_1(I,\mathscr M)\) satisfying
\begin{equation}\label{eq:vv_sequence}
\mathfrak i(a\otimes v)=\big(a(\cdot)_i\cdot v\big)_{i\in I}\quad\text{ for every }a\in L^0(\XX;\ell_1(I))\text{ and }v\in\mathscr M
\end{equation}
is an isomorphism of Banach \(L^0(\XX)\)-modules.
\end{theorem}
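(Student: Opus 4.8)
The plan is to construct an explicit two-sided inverse \(\mathfrak j\) of \(\mathfrak i\), to observe that both maps are contractions, and to deduce from this that \(\mathfrak i\) automatically preserves the pointwise norm, hence is an isomorphism. First I would produce \(\mathfrak i\) itself. The map \(b\colon L^0(\XX;\ell_1(I))\times\mathscr M\to\ell_1(I,\mathscr M)\) given by \(b(a,v)\coloneqq(a(\cdot)_i\cdot v)_{i\in I}\) is \(L^0(\XX)\)-bilinear, and \eqref{eq:basic_L0_l1(I)_cl2} yields \(|b(a,v)|_1=|v|\sum_{i\in I}|a(\cdot)_i|=|a||v|\), so in particular \(b(a,v)\in\ell_1(I,\mathscr M)\) and \(b\) is bounded. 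Its \(L^0(\XX)\)-linearisation \(\tilde b\) (Lemma \ref{lem:alg_tensor_hom}) satisfies \(\big|\tilde b(\sum_k a_k\otimes v_k)\big|_1\leq\sum_k|a_k||v_k|\) for every representation, whence \(|\tilde b(\alpha)|_1\leq|\alpha|_\pi\); thus \(\tilde b\) extends uniquely to a homomorphism \(\mathfrak i\) with \(|\mathfrak i(\alpha)|_1\leq|\alpha|_\pi\) and \eqref{eq:vv_sequence}. Uniqueness of \(\mathfrak i\) is clear, since the linear span of elementary tensors is dense in the completion.

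Next I would define the candidate inverse \(\mathfrak j\colon\ell_1(I,\mathscr M)\to L^0(\XX;\ell_1(I))\hat\otimes_\pi\mathscr M\) by \(\mathfrak j(w)\coloneqq\sum_{i\in I}\underline{\sf e}_i\otimes w_i\). Since \(|\underline{\sf e}_i|=1\), identity \eqref{eq:proj_norm_elem_tensor} gives \(|\underline{\sf e}_i\otimes w_i|_\pi=|w_i|\), so the Cauchy criterion \eqref{eq:Cauchy_criterion} for \(\{\underline{\sf e}_i\otimes w_i\}_{i\in I}\) reduces to the summability of \(\{|w_i|\}_{i\in I}\) in \(L^0(\XX)\), which holds because \(\sum_{i\in I}|w_i|=|w|_1\in L^0(\XX)^+\). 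Hence \(\mathfrak j(w)\) is well defined by Proposition \ref{prop:Cauchy_sum_criterion}, and \eqref{eq:summable_ptwse_norms} gives \(|\mathfrak j(w)|_\pi\leq\sum_{i\in I}|w_i|=|w|_1\). I would then check that \(\mathfrak j\) is a homomorphism: \(L^0(\XX)\)-homogeneity follows by applying Lemma \ref{lem:summable_comp_dual} to the multiplication-by-\(f\) homomorphism (which is a homomorphism with the choice \(g=|f|\) in \eqref{eq:def_hom_nmod}), additivity follows from the additivity of sums of summable families via the exhausting-sequence characterisation \eqref{eq:summable_aux}, and the bound above gives \(|\mathfrak j|\leq1\).

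I would then verify that \(\mathfrak i\) and \(\mathfrak j\) are mutually inverse. For \(\mathfrak i\circ\mathfrak j=\mathrm{id}\), Lemma \ref{lem:summable_comp_dual} lets me push \(\mathfrak i\) through the sum, and since \(\underline{\sf e}_i(\cdot)_j=\delta_{ij}\) we get \(\mathfrak i(\underline{\sf e}_i\otimes w_i)=(\delta_{ij}w_i)_{j\in I}\), so that \(\mathfrak i(\mathfrak j(w))=\sum_{i\in I}(\delta_{ij}w_i)_{j\in I}=w\) by \eqref{eq:sum_elem_ell1_I_M}. For \(\mathfrak j\circ\mathfrak i=\mathrm{id}\) it suffices, by continuity of \(\mathfrak j\circ\mathfrak i\) and density of the span of elementary tensors, to treat \(\alpha=a\otimes v\); using \(a=\sum_{i\in I}a(\cdot)_i\cdot\underline{\sf e}_i\) from Lemma \ref{lem:basic_L0_l1(I)} and applying Lemma \ref{lem:summable_comp_dual} to the homomorphism \(\phi_v\colon b\mapsto b\otimes v\), I obtain \(a\otimes v=\sum_{i\in I}\underline{\sf e}_i\otimes(a(\cdot)_i\cdot v)=\mathfrak j(\mathfrak i(a\otimes v))\). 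Finally, since both maps are contractions and mutually inverse, the chain \(|w|_1=|\mathfrak i(\mathfrak j(w))|_1\leq|\mathfrak j(w)|_\pi\leq|w|_1\), together with the symmetric computation \(|\alpha|_\pi=|\mathfrak j(\mathfrak i(\alpha))|_\pi\leq|\mathfrak i(\alpha)|_1\leq|\alpha|_\pi\), forces \(|\mathfrak i(\alpha)|_1=|\alpha|_\pi\) for all \(\alpha\); thus \(\mathfrak i\) is a bijective, pointwise-norm preserving homomorphism, i.e.\ an isomorphism.

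I expect the main obstacle to be the identity \(\mathfrak j\circ\mathfrak i=\mathrm{id}\): one must perform a clean interchange of the algebraic tensor product with the infinite summation (through Lemma \ref{lem:summable_comp_dual}) and, in order to propagate the elementary-tensor identity to the whole completion, confirm that \(\mathfrak j\) is a genuine homomorphism, which hinges on the additivity of summation of summable families. The construction of \(\mathfrak i\) and its contraction bound, by contrast, are routine consequences of \eqref{eq:basic_L0_l1(I)_cl2}.
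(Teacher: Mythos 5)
Your proof is correct, and it reorganises the argument in a genuinely different way from the paper even though the computational core overlaps heavily. Both proofs use the same three ingredients: the contraction bound \(|\mathfrak i(\alpha)|_1\leq|\alpha|_\pi\) via \eqref{eq:basic_L0_l1(I)_cl2}, the summability of \(\{\underline{\sf e}_i\otimes w_i\}_{i\in I}\) in the projective tensor product, and the rewriting of tensors through Lemma \ref{lem:basic_L0_l1(I)}, Lemma \ref{lem:summable_comp_dual} and \eqref{eq:sum_elem_ell1_I_M}. The paper, however, never introduces your map \(\mathfrak j\) as a standalone operator: it proves the isometry \(|\mathfrak i(\alpha)|_1=|\alpha|_\pi\) directly on the algebraic tensor product (the lower bound comes from writing \(\alpha=\sum_{i\in I}\underline{\sf e}_i\otimes w_i\) with \(w_i\coloneqq\sum_j a_j(\cdot)_i\cdot v_j\) and invoking Remark \ref{rmk:summable_ptwse_norms}), extends \(\mathfrak i\) isometrically to the completion, and then proves surjectivity by the computation that in your write-up is \(\mathfrak i\circ\mathfrak j=\mathrm{id}\). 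You instead promote \(\mathfrak j\) to a homomorphism defined on all of \(\ell_1(I,\mathscr M)\), check the two inverse identities, and recover norm preservation a posteriori from the pair of contraction bounds. What your route buys is that the isometry comes for free at the end, with no need for the paper's separate lower-bound argument; what it costs is the verification that \(\mathfrak j\) is additive and \(L^0(\XX)\)-homogeneous, which rests on linearity of infinite sums of summable families --- a fact the paper never has to formulate, since it only ever pushes homomorphisms through sums via Lemma \ref{lem:summable_comp_dual} and never adds two infinite sums. That additivity is routine (combine the Cauchy criterion \eqref{eq:Cauchy_criterion} with a common countable exhaustion as in \eqref{eq:summable_aux}), but it is not decorative in your argument: agreement of \(\mathfrak j\circ\mathfrak i\) with the identity on elementary tensors propagates to the dense linear span only because \(\mathfrak j\) is linear, so you were right to flag it as the step that must be carried out carefully.
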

\begin{proof}
First, notice that \(L^0(\XX;\ell_1(I))\times\mathscr M\ni(a,v)\mapsto\big(a(\cdot)_i\cdot v\big)_{i\in I}\in\ell_1(I,\mathscr M)\)
is well-defined and \(L^0(\XX)\)-bilinear, thus we can consider its \(L^0(\XX)\)-linearisation
\(\mathfrak i\colon L^0(\XX;\ell_1(I))\otimes\mathscr M\to\ell_1(I,\mathscr M)\), i.e.
\[
\mathfrak i(\alpha)=\bigg(\sum_{j=1}^n a_j(\cdot)_i\cdot v_j\bigg)_{i\in I}
\quad\text{ for every }\alpha=\sum_{j=1}^n a_j\otimes v_j\in L^0(\XX;\ell_1(I))\otimes\mathscr M.
\]
Observe that \(\mathfrak i\) is the unique linear operator from \(L^0(\XX;\ell_1(I))\otimes\mathscr M\) to
\(\ell_1(I,\mathscr M)\) satisfying \eqref{eq:vv_sequence}.

On the one hand, given any tensor \(\alpha=\sum_{j=1}^n a_j\otimes v_j\in L^0(\XX;\ell_1(I))\otimes\mathscr M\) we can estimate
\[\begin{split}
|\mathfrak i(\alpha)|_1&=\sum_{i\in I}\Big|\sum_{j=1}^n a_j(\cdot)_i\cdot v_j\Big|\leq\sum_{i\in I}\sum_{j=1}^n|a_j(\cdot)_i||v_j|
=\sum_{j=1}^n\Big(\sum_{i\in I}|a_j(\cdot)_i|\Big)|v_j|\overset{\eqref{eq:basic_L0_l1(I)_cl2}}=\sum_{j=1}^n|a_j||v_j|.
\end{split}\]
By passing to the infimum over all representations of \(\alpha\), we deduce that \(|\mathfrak i(\alpha)|_1\leq|\alpha|_\pi\).
On the other hand, if \(\alpha\) is written as \(\sum_{j=1}^n a_j\otimes v_j\), then we claim that the elements \(w_i\coloneqq\sum_{j=1}^n a_j(\cdot)_i\cdot v_j\in\mathscr M\)
satisfy the following property: the family \(\{\underline{\sf e}_i\otimes w_i\}_{i\in I}\) is summable in \(L^0(\XX;\ell_1(I))\hat\otimes_\pi\mathscr M\) and
\begin{equation}\label{eq:vv_sequence_cl}
\sum_{i\in I}\underline{\sf e}_i\otimes w_i=\alpha.
\end{equation}
In order to prove it, let us first notice that
\begin{equation}\label{eq:vv_sequence_aux}
\underline{\sf e}_i\otimes w_i=\underline{\sf e}_i\otimes\Big(\sum_{j=1}^n a_j(\cdot)_i\cdot v_j\Big)=\sum_{j=1}^n a_j(\cdot)_i\cdot(\underline{\sf e}_i\otimes v_j)
=\sum_{j=1}^n\big(a_j(\cdot)_i\cdot\underline{\sf e}_i\big)\otimes v_j.
\end{equation}
Since \(L^0(\XX;\ell_1(I))\ni s\mapsto s\otimes v_j\in L^0(\XX;\ell_1(I))\hat\otimes_\pi\mathscr M\) is a homomorphism of Banach \(L^0(\XX)\)-modules,
\[
\alpha=\sum_{j=1}^n a_j\otimes v_j\overset{\eqref{eq:basic_L0_l1(I)_cl1}}=\sum_{j=1}^n\Big(\sum_{i\in I}a_j(\cdot)_i\cdot\underline{\sf e}_i\Big)\otimes v_j
\overset{\eqref{eq:summable_comp_dual}}=\sum_{i\in I}\bigg(\sum_{j=1}^n\big(a_j(\cdot)_i\cdot\underline{\sf e}_i\big)\otimes v_j\bigg)\overset{\eqref{eq:vv_sequence_aux}}=
\sum_{i\in I}\underline{\sf e}_i\otimes w_i.
\]
This proves the validity of the claim \eqref{eq:vv_sequence_cl}. By taking Remark \ref{rmk:summable_ptwse_norms} into account, we conclude that
\[
|\alpha|_\pi=\Big|\sum_{i\in I}\underline{\sf e}_i\otimes w_i\Big|_\pi\leq\sum_{i\in I}|\underline{\sf e}_i\otimes w_i|_\pi\overset{\eqref{eq:proj_norm_elem_tensor}}=
\sum_{i\in I}|\underline{\sf e}_i||w_i|=\sum_{i\in I}|w_i|=\big|(w_i)_{i\in I}\big|_1=|\mathfrak i(\alpha)|_1.
\]
All in all, we have shown that \(|\mathfrak i(\alpha)|_1=|\alpha|_\pi\) for every \(\alpha\in L^0(\XX;\ell_1(I))\otimes\mathscr M\). Therefore, the map \(\mathfrak i\) can be uniquely
extended to a homomorphism of Banach \(L^0(\XX)\)-modules from \(L^0(\XX;\ell_1(I))\hat\otimes_\pi\mathscr M\) to \(\ell_1(I,\mathscr M)\), which we still denote by \(\mathfrak i\).
Notice that the extension \(\mathfrak i\) preserves the pointwise norm.

In order to conclude, it only remains to check that \(\mathfrak i\colon L^0(\XX;\ell_1(I))\hat\otimes_\pi\mathscr M\to\ell_1(I,\mathscr M)\)
is surjective. Let \(v=(v_i)_{i\in I}\in\ell_1(I,\mathscr M)\) be fixed. Thanks to Proposition \ref{prop:Cauchy_sum_criterion},
it follows from the estimates
\[
\bigwedge_{F\in\mathscr P_f(I)}\,\bigvee_{G\in\mathscr P_f(I\setminus F)}\Big|\sum_{i\in G}\underline{\sf e}_i\otimes v_i\Big|
\leq\bigwedge_{F\in\mathscr P_f(I)}\,\bigvee_{G\in\mathscr P_f(I\setminus F)}\,\sum_{i\in G}|v_i|=0
\]
that \(\{\underline{\sf e}_i\otimes v_i\}_{i\in I}\) is summable in \(L^0(\XX;\ell_1(I))\hat\otimes_\pi\mathscr M\). Letting \(\alpha\coloneqq\sum_{i\in I}\underline{\sf e}_i\otimes v_i\), we have that
\[
v\overset{\eqref{eq:sum_elem_ell1_I_M}}=\sum_{i\in I}(\delta_{ij}v_i)_{j\in I}=\sum_{i\in I}(\underline{\sf e}_i(\cdot)_j\cdot v_i)_{j\in I}
\overset{\eqref{eq:vv_sequence}}=\sum_{i\in I}\mathfrak i(\underline{\sf e}_i\otimes v_i)\overset{\eqref{eq:summable_comp_dual}}=
\mathfrak i\Big(\sum_{i\in I}\underline{\sf e}_i\otimes v_i\Big)=\mathfrak i(\alpha),
\]
whence it follows that \(\mathfrak i\) is surjective. Consequently, the proof of the statement is complete.
\end{proof}
\begin{remark}{\rm
Under the assumptions of Theorem \ref{thm:vv_sequence},  for any \(i\in I\) we define the operator \(\iota_i\) as
\[\begin{split}
\iota_i\colon\mathscr M&\longrightarrow L^0(\XX;\ell_1(I))\hat\otimes_\pi\mathscr M\\
v&\longmapsto\underline{\sf e}_i\otimes v.
\end{split}\]
Combining Theorem \ref{thm:vv_sequence} with \cite[Theorem 3.12]{Pas22}, we obtain that \(\big(L^0(\XX;\ell_1(I))\hat\otimes_\pi\mathscr M,\{\iota_i\}_{i\in I}\big)\)
is the coproduct of \(\{\mathscr M_i\}_{i\in I}\), where \(\mathscr M_i\coloneqq\mathscr M\) for every \(i\in I\), in the category \({\bf BanMod}^1_\XX\).
\fr}\end{remark}
\begin{lemma}\label{lem:mod_quotient_ell1}
Let \(\XX\) be a \(\sigma\)-finite measure space. Let \(\mathscr M\) be a Banach \(L^0(\XX)\)-module. Define
\begin{equation}\label{eq:mod_quotient_ell1}
\varphi(f)\coloneqq\sum_{v\in\mathbb S_{\mathscr M}}f_v\cdot v\in\mathscr M\quad\text{ for every }
f=(f_v)_{v\in\mathbb S_{\mathscr M}}\in\ell_1(\mathbb S_{\mathscr M},L^0(\XX)).
\end{equation}
Then \(\varphi\colon\ell_1(\mathbb S_{\mathscr M},L^0(\XX))\to\mathscr M\) is a quotient operator.
In particular, \(\mathscr M\cong\ell_1(\mathbb S_{\mathscr M},L^0(\XX))/{\rm ker}(\varphi)\).
\end{lemma}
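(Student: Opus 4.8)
The plan is to verify directly that $\varphi$ is a well-defined homomorphism of Banach $L^0(\XX)$-modules with $|\varphi|\leq 1$, and then to exhibit, for each target element $w$, a single preimage that realises the infimum appearing in the definition of a quotient operator.

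First I would check that $\varphi$ is well-defined. Given $f=(f_v)_{v\in\mathbb S_{\mathscr M}}\in\ell_1(\mathbb S_{\mathscr M},L^0(\XX))$, each $v\in\mathbb S_{\mathscr M}$ satisfies $|v|\leq 1$, so $|f_v\cdot v|=|f_v||v|\leq|f_v|$ and hence $\sum_v|f_v\cdot v|\leq\sum_v|f_v|=|f|_1\in L^0(\XX)^+$. Thus $(f_v\cdot v)_{v\in\mathbb S_{\mathscr M}}\in\ell_1(\mathbb S_{\mathscr M},\mathscr M)$, and by Remark \ref{rmk:summable_ptwse_norms} the family $\{f_v\cdot v\}_v$ is summable, so $\varphi(f)$ is a genuine element of $\mathscr M$. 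The $L^0(\XX)$-linearity of $\varphi$ is routine (one may use Lemma \ref{lem:summable_comp_dual} to commute $\varphi$ with the defining sums), and \eqref{eq:summable_ptwse_norms} gives $|\varphi(f)|\leq\sum_v|f_v\cdot v|\leq|f|_1$, so that $|\varphi|\leq 1$ and $\varphi\in\textsc{Hom}(\ell_1(\mathbb S_{\mathscr M},L^0(\XX));\mathscr M)$.

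The crux is a clever choice of preimage. Fix $w\in\mathscr M$ and set $u\coloneqq{\rm sgn}(w)\in\mathbb S_{\mathscr M}$. Let $f\in\ell_1(\mathbb S_{\mathscr M},L^0(\XX))$ be the family supported at $u$ with $f_u\coloneqq|w|$ and $f_v\coloneqq 0$ for $v\neq u$. Then $|f|_1=|w|\in L^0(\XX)^+$, so $f$ is admissible, and $\varphi(f)=|w|\cdot u=|w|\cdot{\rm sgn}(w)=w$. This simultaneously proves that $\varphi$ is surjective and produces, for every $w$, a preimage with $|f|_1=|w|$.

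Finally I would combine the two bounds. For every $f\in\varphi^{-1}(w)$ we have $|w|=|\varphi(f)|\leq|f|_1$, whence $|w|\leq\bigwedge_{f\in\varphi^{-1}(w)}|f|_1$; the preimage constructed above forces the reverse inequality, so $|w|=\bigwedge_{f\in\varphi^{-1}(w)}|f|_1$ and $\varphi$ is a quotient operator. Since $\ell_1(\mathbb S_{\mathscr M},L^0(\XX))$ is a Banach $L^0(\XX)$-module and ${\rm ker}(\varphi)$ is a closed submodule, the characterisation of quotient operators via the factorisation $\hat\varphi\colon\ell_1(\mathbb S_{\mathscr M},L^0(\XX))/{\rm ker}(\varphi)\to\mathscr M$ (the diagram following the definition of quotient operator) yields the isomorphism $\mathscr M\cong\ell_1(\mathbb S_{\mathscr M},L^0(\XX))/{\rm ker}(\varphi)$. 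The only point that demands genuine care is the summability/well-definedness in the first step; once $\varphi$ is recognised as a norm-$\leq 1$ homomorphism, the elementary identity $w=|w|\cdot{\rm sgn}(w)$ makes the quotient-operator property essentially immediate, so I do not anticipate a serious obstacle.
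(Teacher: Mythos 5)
Your proof is correct and follows essentially the same route as the paper's: establish that $\varphi$ is a well-defined homomorphism with $|\varphi|\leq 1$ (the paper verifies summability via the Cauchy criterion of Proposition \ref{prop:Cauchy_sum_criterion}, you via membership in $\ell_1(\mathbb S_{\mathscr M},\mathscr M)$ and Remark \ref{rmk:summable_ptwse_norms}, which amounts to the same thing), and then use the decomposition $w=|w|\cdot{\rm sgn}(w)$ to produce a preimage supported at the single point ${\rm sgn}(w)\in\mathbb S_{\mathscr M}$ whose $\ell_1$-pointwise norm equals $|w|$, exactly as in the paper. The concluding passage to the isomorphism via the factorisation $\hat\varphi$ also matches the paper's argument.
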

\begin{proof}
First of all, by using Proposition \ref{prop:Cauchy_sum_criterion} we obtain that
\[
\bigwedge_{F\in\mathscr P_f(\mathbb S_{\mathscr M})}\,\bigvee_{G\in\mathscr P_f(\mathbb S_{\mathscr M}\setminus F)}\Big|\sum_{v\in G}f_v\cdot v\Big|
\leq\bigwedge_{F\in\mathscr P_f(\mathbb S_{\mathscr M})}\,\bigvee_{G\in\mathscr P_f(\mathbb S_{\mathscr M}\setminus F)}\Big|\sum_{v\in G}f_v\Big|=0
\]
and thus that \((f_v\cdot v)_{v\in\mathbb S_{\mathscr M}}\) is summable in \(\mathscr M\).
Since \(\big|\sum_{v\in\mathbb S_{\mathscr M}}f_v\cdot v\big|\leq\sum_{v\in\mathbb S_{\mathscr M}}|f_v|=|f|_1\),
we have that \(\varphi\) is a well-defined linear operator satisfying \(|\varphi(f)|\leq|f|_1\) for every
\(f\in\ell_1(\mathbb S_{\mathscr M},L^0(\XX))\), thus in particular it is a homomorphism of Banach \(L^0(\XX)\)-modules.
Moreover, if \(w\in\mathscr M\) is given, then
\[
f^w=(f^w_v)_{v\in\mathbb S_{\mathscr M}}\in\ell_1(\mathbb S_{\mathscr M},L^0(\XX)),
\quad f^w_v\coloneqq\left\{\begin{array}{ll}
|w|\\
0
\end{array}\quad\begin{array}{ll}
\text{ if }v={\rm sgn}(w),\\
\text{ otherwise}
\end{array}\right.
\]
satisfies \(\varphi(f^w)=|w|\cdot{\rm sgn}(w)=w\) and \(|\varphi(f^w)|=|w|=|f^w|_1\). Hence, \(\varphi\) is a quotient operator, thus it
induces an isomorphism of Banach \(L^0(\XX)\)-modules between \(\ell_1(\mathbb S_{\mathscr M},L^0(\XX))/{\rm ker}(\varphi)\) and \(\mathscr M\).
\end{proof}

We conclude this section with a useful representation formula for the projective pointwise norm:
\begin{theorem}[Characterisation of the projective pointwise norm]\label{thm:altern_proj_tensor}
Let \(\XX\) be a \(\sigma\)-finite measure space. Let \(\mathscr M\), \(\mathscr N\) be Banach \(L^0(\XX)\)-modules. Then for every \(\alpha\in\mathscr M\hat\otimes_\pi\mathscr N\) it holds that
\begin{equation}\label{eq:altern_proj_tensor}
|\alpha|_\pi=\bigwedge\bigg\{\sum_{n\in\N}|v_n||w_n|\;\bigg|\;(v_n\otimes w_n)_{n\in\N}\in\ell_1(\N,\mathscr M\hat\otimes_\pi\mathscr N),\,\alpha=\sum_{n\in\N}v_n\otimes w_n\bigg\}.
\end{equation}
\end{theorem}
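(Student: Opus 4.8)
Write $R(\alpha)$ for the right-hand side of \eqref{eq:altern_proj_tensor}. The plan is to prove the two inequalities separately. The inequality $|\alpha|_\pi\le R(\alpha)$ is the easy half: if $(v_n\otimes w_n)_{n\in\N}\in\ell_1(\N,\mathscr M\hat\otimes_\pi\mathscr N)$ represents $\alpha=\sum_{n\in\N}v_n\otimes w_n$, then this family is summable by Remark \ref{rmk:summable_ptwse_norms}, and that same remark together with \eqref{eq:proj_norm_elem_tensor} gives $|\alpha|_\pi=\big|\sum_n v_n\otimes w_n\big|_\pi\le\sum_n|v_n\otimes w_n|_\pi=\sum_n|v_n||w_n|$. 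Taking the infimum over all admissible representations yields $|\alpha|_\pi\le R(\alpha)$.

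For the reverse inequality I would bypass any telescoping construction and instead feed the quotient machinery of the paper. By Lemma \ref{lem:mod_quotient_ell1} we have quotient operators $q_{\mathscr M}\colon\ell_1(\mathbb S_{\mathscr M},L^0(\XX))\to\mathscr M$ and $q_{\mathscr N}\colon\ell_1(\mathbb S_{\mathscr N},L^0(\XX))\to\mathscr N$; denoting by ${\sf e}^{\mathscr M}_v$ the element of $\ell_1(\mathbb S_{\mathscr M},L^0(\XX))$ equal to $\1_\X$ in the slot $v$ and to $0$ elsewhere, one has $q_{\mathscr M}({\sf e}^{\mathscr M}_v)=\1_\X\cdot v=v$, and similarly for $q_{\mathscr N}$. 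By Lemma \ref{lem:proj_hom_quotient} the projective tensor product $Q\coloneqq q_{\mathscr M}\otimes_\pi q_{\mathscr N}\colon\mathcal D\to\mathscr M\hat\otimes_\pi\mathscr N$ is again a quotient operator, where $\mathcal D\coloneqq\ell_1(\mathbb S_{\mathscr M},L^0(\XX))\hat\otimes_\pi\ell_1(\mathbb S_{\mathscr N},L^0(\XX))$. By the very definition of a quotient operator this gives $|\alpha|_\pi=\bigwedge_{\xi\in Q^{-1}(\alpha)}|\xi|_\pi$, so it only remains to extract a good representation of $\alpha$ from each preimage $\xi$.

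To do so I would identify $\mathcal D$ as an iterated sequence space. Combining Corollary \ref{cor:two_ell1_L0} (which gives $\ell_1(\mathbb S_{\mathscr M},L^0(\XX))\cong L^0(\XX;\ell_1(\mathbb S_{\mathscr M}))$) with Theorem \ref{thm:vv_sequence}, applied with $I=\mathbb S_{\mathscr M}$ and with $\ell_1(\mathbb S_{\mathscr N},L^0(\XX))$ in the role of the module, one obtains an isomorphism $\mathcal D\cong\ell_1\big(\mathbb S_{\mathscr M},\ell_1(\mathbb S_{\mathscr N},L^0(\XX))\big)$, under which every $\xi\in\mathcal D$ expands as the summable double series $\xi=\sum_{v\in\mathbb S_{\mathscr M}}\sum_{w\in\mathbb S_{\mathscr N}}\xi_{v,w}\cdot({\sf e}^{\mathscr M}_v\otimes{\sf e}^{\mathscr N}_w)$ with coefficients $\xi_{v,w}\in L^0(\XX)$ and $|\xi|_\pi=\sum_{v,w}|\xi_{v,w}|$ (the last equality via the norm-preservation of these isomorphisms and the Tonelli-type identity $\sum_v\sum_w=\sum_{(v,w)}$ for sums of nonnegative elements of $L^0$). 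By Proposition \ref{prop:Cauchy_sum_criterion} only countably many $\xi_{v,w}$ are nonzero, so after enumeration the expansion is indexed by $\N$. Applying the continuous homomorphism $Q$ and interchanging it with the summation by Lemma \ref{lem:summable_comp_dual}, together with $Q({\sf e}^{\mathscr M}_v\otimes{\sf e}^{\mathscr N}_w)=v\otimes w$, gives $\alpha=Q(\xi)=\sum_{v,w}\xi_{v,w}\cdot(v\otimes w)=\sum_{v,w}(\xi_{v,w}\cdot v)\otimes w$. This is an admissible ($\ell_1$) representation whose associated sum is $\sum_{v,w}|\xi_{v,w}\cdot v||w|=\sum_{v,w}|\xi_{v,w}||v||w|\le\sum_{v,w}|\xi_{v,w}|=|\xi|_\pi$, since $|v|,|w|\le 1$ for $v\in\mathbb S_{\mathscr M}$ and $w\in\mathbb S_{\mathscr N}$. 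Hence $R(\alpha)\le|\xi|_\pi$ for every $\xi\in Q^{-1}(\alpha)$, and taking the infimum over $\xi$ yields $R(\alpha)\le|\alpha|_\pi$, which combined with the first paragraph proves \eqref{eq:altern_proj_tensor}.

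The main obstacle is the bookkeeping of the third paragraph: one must carefully assemble the two-fold identification of $\mathcal D$ so that a generic element genuinely expands as the claimed double series with \emph{additive} pointwise norm, and justify exchanging $Q$ with the (countable) summation through Lemma \ref{lem:summable_comp_dual}. An alternative, more hands-on route avoids the quotient operators altogether and instead constructs the representation directly by a telescoping scheme: approximate $\alpha$ by finite tensors in $\mathscr M\otimes\mathscr N$, split each successive difference via a glueing/partition argument (using the countable inf property to turn the pointwise infimum in \eqref{eq:def_proj_ptwse_norm} into a single representation that is $\eps$-optimal $\mm$-a.e.), and concatenate. In that approach the delicate step is upgrading convergence in $\sfd_{\mathscr M\hat\otimes_\pi\mathscr N}$ to the genuine $\mm$-a.e.\ bounds on the remainders that are needed to control $\sum_n|v_n||w_n|$ pointwise; the quotient-operator proof sketched above sidesteps precisely this difficulty.
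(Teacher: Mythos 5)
Your proposal follows the same core strategy as the paper's proof: realise \(\mathscr M\hat\otimes_\pi\mathscr N\) as the image of a quotient operator defined on an \(\ell_1\)-type module, so that preimages automatically carry \(\ell_1\) representations of \(\alpha\), and then invoke the infimum property defining quotient operators. Both arguments run on the same four ingredients (Lemma \ref{lem:mod_quotient_ell1}, Lemma \ref{lem:proj_hom_quotient}, Corollary \ref{cor:two_ell1_L0}, Theorem \ref{thm:vv_sequence}), and your easy inequality is identical to the paper's. There are two real differences. One is a small improvement on your side: since \(R(\alpha)\le|\xi|_\pi\) for \emph{every} \(\xi\in Q^{-1}(\alpha)\), the lattice infimum in the definition of quotient operator finishes the proof at once, whereas the paper extracts an \(\varepsilon\)-optimal preimage via the glueing property and then lets \(\varepsilon\searrow 0\). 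The other difference is the source of the one genuine gap: you tensorise quotients on \emph{both} sides, \(Q=q_{\mathscr M}\otimes_\pi q_{\mathscr N}\), whereas the paper quotients only on the \(\mathscr M\)-side, building \(\psi=(\tilde\varphi\otimes_\pi{\rm id}_{\mathscr N})\circ\mathfrak i^{-1}\colon\ell_1(\mathbb S_{\mathscr M},\mathscr N)\to\mathscr M\hat\otimes_\pi\mathscr N\) with \(\tilde\varphi\colon L^0(\XX;\ell_1(\mathbb S_{\mathscr M}))\to\mathscr M\) the composition of the maps from Corollary \ref{cor:two_ell1_L0} and Lemma \ref{lem:mod_quotient_ell1}. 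A preimage under \(\psi\) is a single family \((w_v)_{v\in\mathbb S_{\mathscr M}}\), the identity \(\psi(w)=\sum_{v\in\mathbb S_{\mathscr M}}v\otimes w_v\) is a sum over one index set, and turning it into an \(\N\)-indexed representation is exactly what Proposition \ref{prop:Cauchy_sum_criterion} (countability of the nonzero terms, plus \eqref{eq:summable_aux}) licenses.

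The gap in your version is the step you flag yourself: ``after enumeration the expansion is indexed by \(\N\)''. What \eqref{eq:sum_elem_ell1_I_M} and Lemma \ref{lem:summable_comp_dual} actually give you is the \emph{iterated} sum \(\alpha=\sum_v\big(\sum_w(\xi_{v,w}\cdot v)\otimes w\big)\), while the competitors in \eqref{eq:altern_proj_tensor} require a single summable family indexed by \(\N\), i.e.\ a sum over the (countably many nonzero) \emph{pairs} \((v,w)\). Passing from the iterated sum to the pair-indexed sum is a Fubini-type statement for summable families in Banach \(L^0\)-modules; it is nowhere in the paper, you do not prove it, and it does not follow from the tools you cite: the Tonelli identity you invoke concerns suprema of nonnegative scalar sums (fine for the norm identity \(|\xi|_\pi=\sum_{v,w}|\xi_{v,w}|\)), whereas the module-valued sums are limits, and Lemma \ref{lem:summable_comp_dual} only pushes a homomorphism through an existing sum. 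The statement is true and fillable with the paper's machinery (work in \(\ell_1(\mathbb S_{\mathscr M},\ell_1(\mathbb S_{\mathscr N},L^0(\XX)))\), where the deviation of \(\Xi\) from a finite partial sum over a pair set \(P\) is computed exactly as \(\sum_{(v,w)\notin P}|\xi_{v,w}|\), then apply the Cauchy criterion), but as written the proof is incomplete at precisely this point. There is also a cheap repair that collapses your argument onto the paper's: do not expand the inner sums at all. From \(\xi=\sum_v{\sf e}^{\mathscr M}_v\otimes\Xi_v\), where \(\Xi_v\in\ell_1(\mathbb S_{\mathscr N},L^0(\XX))\) is the \(v\)-th slice of \(\xi\), Lemma \ref{lem:summable_comp_dual} gives \(\alpha=Q(\xi)=\sum_v v\otimes q_{\mathscr N}(\Xi_v)\); since \(|q_{\mathscr N}(\Xi_v)|\le|\Xi_v|_1\) and \(\sum_v|\Xi_v|_1=|\xi|_\pi\), this is a single-indexed \(\ell_1\) family, and Proposition \ref{prop:Cauchy_sum_criterion} alone converts it into an admissible \(\N\)-indexed representation with \(\sum_n|v_n||w_n|\le|\xi|_\pi\).
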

\begin{proof}
For brevity, we denote by \(q(\alpha)\) the right-hand side of \eqref{eq:altern_proj_tensor}. On the one hand, notice that
\[
|\alpha|_\pi=\Big|\sum_{n\in\N}v_n\otimes w_n\Big|_\pi\overset{\eqref{eq:summable_ptwse_norms}}\leq
\sum_{n\in\N}|v_n||w_n|\quad\text{ if }(v_n\otimes w_n)_{n\in\N}\in\ell_1(\N,\mathscr M\hat\otimes_\pi\mathscr N),
\text{ }\alpha=\sum_{n\in\N}v_n\otimes w_n,
\]
whence it follows that \(|\alpha|_\pi\leq q(\alpha)\) for every \(\alpha\in\mathscr M\hat\otimes_\pi\mathscr N\). On the other hand, let us denote by
\[\begin{split}
\varphi&\colon\ell_1(\mathbb S_{\mathscr M},L^0(\XX))\to\mathscr M,\\
\phi&\colon L^0(\XX;\ell_1(\mathbb S_{\mathscr M}))\to\ell_1(\mathbb S_{\mathscr M},L^0(\XX)),\\
\mathfrak i&\colon L^0(\XX;\ell_1(\mathbb S_{\mathscr M}))\hat\otimes_\pi\mathscr N\to\ell_1(\mathbb S_{\mathscr M},\mathscr N)
\end{split}\]
the operators given by Lemma \ref{lem:mod_quotient_ell1}, Corollary \ref{cor:two_ell1_L0}, and Theorem \ref{thm:vv_sequence},
respectively. Recall that \(\varphi\) is a quotient operator, while \(\phi\) and \(\mathfrak i\) are isomorphisms of Banach
\(L^0(\XX)\)-modules. In particular, the mapping \(\tilde\varphi\coloneqq\varphi\circ\phi\colon L^0(\XX;\ell_1(\mathbb S_{\mathscr M}))\to\mathscr M\)
is a quotient operator, so that accordingly
\[
\psi\coloneqq(\tilde\varphi\otimes_\pi{\rm id}_{\mathscr N})\circ\mathfrak i^{-1}\colon
\ell_1(\mathbb S_{\mathscr M},\mathscr N)\to\mathscr M\hat\otimes_\pi\mathscr N\quad\text{ is a quotient operator}
\]
by Lemma \ref{lem:proj_hom_quotient}. Hence, for any \(\alpha\in\mathscr M\hat\otimes_\pi\mathscr N\) and
\(\varepsilon>0\) we can find \(w=(w_v)_{v\in\mathbb S_{\mathscr M}}\in\ell_1(\mathbb S_{\mathscr M},\mathscr N)\) such that
\(\psi(w)=\alpha\) and \(|w|_1\leq|\alpha|_\pi+\varepsilon\). Given that
\[
\bigvee_{F\in\mathscr P_f(\mathbb S_{\mathscr M})}\sum_{v\in F}|v\otimes w_v|_\pi\leq\bigvee_{F\in\mathscr P_f(\mathbb S_{\mathscr M})}\sum_{v\in F}|w_v|\in L^0(\XX)^+,
\]
we see that \((v\otimes w_v)_{v\in\mathbb S_{\mathscr M}}\in\ell_1(\mathbb S_{\mathscr M},\mathscr M\hat\otimes_\pi\mathscr N)\). By unwrapping the various definitions, we obtain
\[\begin{split}
\alpha&\overset{\phantom{\eqref{eq:two_ell1_L0}}}=\psi\big((w_v)_{v\in\mathbb S_{\mathscr M}}\big)
\overset{\eqref{eq:sum_elem_ell1_I_M}}=\psi\bigg(\sum_{v\in\mathbb S_{\mathscr M}}(\delta_{vu}w_v)_{u\in\mathbb S_{\mathscr M}}\bigg)
\overset{\eqref{eq:summable_comp_dual}}=\sum_{v\in\mathbb S_{\mathscr M}}\psi\big((\delta_{vu}w_v)_{u\in\mathbb S_{\mathscr M}}\big)\\
&\overset{\phantom{\eqref{eq:two_ell1_L0}}}=\sum_{v\in\mathbb S_{\mathscr M}}\psi\big((\underline{\sf e}_v(\cdot)_u\cdot w_v)_{u\in\mathbb S_{\mathscr M}}\big)
\overset{\eqref{eq:vv_sequence}}=\sum_{v\in\mathbb S_{\mathscr M}}(\tilde\varphi\otimes_\pi{\rm id}_{\mathscr N})(\underline{\sf e}_v\otimes w_v)
=\sum_{v\in\mathbb S_{\mathscr M}}\varphi\big(\phi(\underline{\sf e}_v)\big)\otimes w_v\\
&\overset{\eqref{eq:two_ell1_L0}}=\sum_{v\in\mathbb S_{\mathscr M}}\varphi\big((\delta_{vu}\1_\X)_{u\in\mathbb S_{\mathscr M}}\big)\otimes w_v
\overset{\eqref{eq:mod_quotient_ell1}}=\sum_{v\in\mathbb S_{\mathscr M}}\Big(\sum_{u\in\mathbb S_{\mathscr M}}\delta_{vu}u\Big)\otimes w_v
=\sum_{v\in\mathbb S_{\mathscr M}}v\otimes w_v.
\end{split}\]
It follows that there exists a sequence \((v_n)_{n\in\N}\subseteq\mathbb S_{\mathscr M}\) such that, letting
\(w_n\coloneqq w_{v_n}\) for every \(n\in\N\), we have \((v_n\otimes w_n)_{n\in\N}\in\ell_1(\N,\mathscr M\hat\otimes_\pi\mathscr N)\),
\(\alpha=\sum_{n\in\N}v_n\otimes w_n\), and \(\sum_{n\in\N}|v_n||w_n|=|w|_1\leq|\alpha|_\pi+\varepsilon\).
Therefore, we have proved that \(q(\alpha)\leq|\alpha|_\pi+\varepsilon\). By letting \(\varepsilon\searrow 0\),
we conclude that \(|\alpha|_\pi=q(\alpha)\).
\end{proof}
\subsection{Relation with duals and pullbacks}
In order to provide a characterisation of the dual of the projective tensor product in Theorem \ref{thm:dual_proj_tensor},
we need to apply the following universal property:
\begin{theorem}[Universal property of the projective tensor product]\label{thm:univ_prop_proj_tensor}
Let \(\XX\) be a \(\sigma\)-finite measure space. Let \(\mathscr M\), \(\mathscr N\), \(\mathscr Q\) be Banach \(L^0(\XX)\)-modules. Then for any \(b\in{\rm B}(\mathscr M,\mathscr N;\mathscr Q)\)
there exists a unique \(\tilde b_\pi\in\textsc{Hom}(\mathscr M\hat\otimes_\pi\mathscr N;\mathscr Q)\) for which the following diagram commutes:
\[\begin{tikzcd}
\mathscr M\times\mathscr N \arrow[r,"b"] \arrow[d,swap,"\otimes"] & \mathscr Q \\
\mathscr M\hat\otimes_\pi\mathscr N \arrow[ur,swap,"\tilde b_\pi"] &
\end{tikzcd}\]
Also, \({\rm B}(\mathscr M,\mathscr N;\mathscr Q)\ni b\mapsto\tilde b_\pi\in\textsc{Hom}(\mathscr M\hat\otimes_\pi\mathscr N;\mathscr Q)\)
is an isomorphism of Banach \(L^0(\XX)\)-modules.
\end{theorem}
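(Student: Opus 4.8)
The plan is to obtain $\tilde b_\pi$ by combining the algebraic universal property of $\mathscr M\otimes\mathscr N$ with a continuity estimate against $|\cdot|_\pi$ and an extension to the completion, and then to prove the isomorphism statement by writing down an explicit inverse for the assignment $b\mapsto\tilde b_\pi$. First I would apply the universal property of the algebraic tensor product to the $L^0(\XX)$-bilinear map $b$, obtaining the unique $L^0(\XX)$-linear map $\tilde b\colon\mathscr M\otimes\mathscr N\to\mathscr Q$ with $\tilde b(v\otimes w)=b(v,w)$. The crucial step is to check that $\tilde b$ is a homomorphism with respect to the projective pointwise norm: for any representation $\alpha=\sum_{i=1}^n v_i\otimes w_i$ one estimates
\[
|\tilde b(\alpha)|=\Big|\sum_{i=1}^n b(v_i,w_i)\Big|\leq\sum_{i=1}^n|b(v_i,w_i)|\leq|b|\sum_{i=1}^n|v_i||w_i|,
\]
and passing to the infimum over all such representations — which is precisely the infimum defining $|\alpha|_\pi$ in \eqref{eq:def_proj_ptwse_norm} — yields $|\tilde b(\alpha)|\leq|b||\alpha|_\pi$. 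Thus $\tilde b\in\textsc{Hom}(\mathscr M\otimes_\pi\mathscr N;\mathscr Q)$ with $|\tilde b|\leq|b|$; since $\mathscr Q$ is complete, $\tilde b$ extends uniquely to $\tilde b_\pi\in\textsc{Hom}(\mathscr M\hat\otimes_\pi\mathscr N;\mathscr Q)$ with $|\tilde b_\pi|\leq|b|$. Uniqueness of $\tilde b_\pi$ is immediate, since the linear span of the elementary tensors is dense in $\mathscr M\hat\otimes_\pi\mathscr N$ and two homomorphisms coinciding on a dense set are equal.

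For the isomorphism statement I would denote by $\Psi$ the map $b\mapsto\tilde b_\pi$, which is clearly $L^0(\XX)$-linear, and construct its inverse directly. Given $T\in\textsc{Hom}(\mathscr M\hat\otimes_\pi\mathscr N;\mathscr Q)$, the map $b_T(v,w)\coloneqq T(v\otimes w)$ is $L^0(\XX)$-bilinear, and using $|v\otimes w|_\pi=|v||w|$ from \eqref{eq:proj_norm_elem_tensor} one gets $|b_T(v,w)|\leq|T||v\otimes w|_\pi=|T||v||w|$, so $b_T\in{\rm B}(\mathscr M,\mathscr N;\mathscr Q)$. Since $\Psi(b_T)$ and $T$ agree on elementary tensors (hence on the dense span), they coincide, so $\Psi(b_T)=T$; conversely $b_{\Psi(b)}=b$ by construction. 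Therefore $\Psi$ is a bijection.

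It then remains to verify that $\Psi$ preserves the pointwise norm, which together with bijectivity and $L^0(\XX)$-linearity makes it an isomorphism of Banach $L^0(\XX)$-modules (both sides being complete, as $\mathscr Q$ is). The bound $|\tilde b_\pi|\leq|b|$ is already at hand, and the reverse inequality follows again from \eqref{eq:proj_norm_elem_tensor}:
\[
|b|=\bigvee_{(v,w)\in\mathscr M\times\mathscr N}\frac{\1_{\{|v||w|>0\}}\,|b(v,w)|}{|v||w|}
=\bigvee_{(v,w)}\frac{\1_{\{|v\otimes w|_\pi>0\}}\,|\tilde b_\pi(v\otimes w)|}{|v\otimes w|_\pi}\leq|\tilde b_\pi|,
\]
where the last inequality holds because each term is a ratio of the defining supremum for $|\tilde b_\pi|$ evaluated at the element $v\otimes w$. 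Hence $|\Psi(b)|=|b|$, so both $\Psi$ and $\Psi^{-1}$ are pointwise norm preserving homomorphisms, completing the argument. The only genuinely delicate point is this norm computation, and it rests entirely on the identity $|v\otimes w|_\pi=|v||w|$; everything else is a routine application of the universal property of $\mathscr M\otimes\mathscr N$ together with the extension-by-completeness mechanism.
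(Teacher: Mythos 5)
Your proposal is correct and follows essentially the same route as the paper's proof: linearise $b$ algebraically, establish the bound $|\tilde b(\alpha)|\leq|b||\alpha|_\pi$ by passing to the infimum over representations, extend by completeness of $\mathscr Q$, recover the reverse norm inequality from the identity $|v\otimes w|_\pi=|v||w|$, and invert the assignment via $T\mapsto T(\cdot\otimes\cdot)$. The only difference is cosmetic ordering — you prove bijectivity before norm preservation, while the paper does the opposite — so nothing substantive separates the two arguments.
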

\begin{proof}
Let \(b\in{\rm B}(\mathscr M,\mathscr N;\mathscr Q)\) be fixed. Denote by \(\tilde b\colon\mathscr M\otimes\mathscr N\to\mathscr Q\) the \(L^0(\XX)\)-linearisation
of \(b\) given by Lemma \ref{lem:alg_tensor_hom}. For any tensor \(\alpha=\sum_{i=1}^n v_i\otimes w_i\in\mathscr M\otimes\mathscr N\), we can estimate
\[
|\tilde b(\alpha)|\leq\sum_{i=1}^n\big|\tilde b(v_i\otimes w_i)\big|=\sum_{i=1}^n|b(v_i,w_i)|\leq|b|\sum_{i=1}^n|v_i||w_i|.
\]
By passing to the infimum over all representations of \(\alpha\), we get \(|\tilde b(\alpha)|\leq|b||\alpha|_\pi\),
whence it follows that \(\tilde b\in\textsc{Hom}(\mathscr M\otimes_\pi\mathscr N;\mathscr Q)\) and \(|\tilde b|\leq|b|\).
Letting \(\tilde b_\pi\) be the unique element of \(\textsc{Hom}(\mathscr M\hat\otimes_\pi\mathscr N;\mathscr Q)\) extending \(\tilde b\),
we have that \(|\tilde b_\pi|=|\tilde b|\leq|b|\). On the other hand, we have that
\[
|b(v,w)|=\big|\tilde b_\pi(v\otimes w)\big|\leq|\tilde b_\pi||v\otimes w|_\pi=|\tilde b_\pi||v||w|
\quad\text{ for every }(v,w)\in\mathscr M\times\mathscr N,
\]
which implies that \(|b|\leq|\tilde b_\pi|\). All in all, we have shown that \(|\tilde b_\pi|=|b|\). Moreover, the resulting operator
\({\rm B}(\mathscr M,\mathscr N;\mathscr Q)\ni b\mapsto\tilde b_\pi\in\textsc{Hom}(\mathscr M\hat\otimes_\pi\mathscr N;\mathscr Q)\)
is clearly a homomorphism of Banach \(L^0(\XX)\)-modules. In order to conclude, it remains to check that such map is surjective.
To this aim, let \(T\in\textsc{Hom}(\mathscr M\hat\otimes_\pi\mathscr N;\mathscr Q)\) be fixed. Now define
\(b^T\colon\mathscr M\times\mathscr N\to\mathscr Q\) as \(b^T(v,w)\coloneqq T(v\otimes w)\) for every \((v,w)\in\mathscr M\times\mathscr N\).
Then \(b^T\in{\rm B}(\mathscr M,\mathscr N;\mathscr Q)\) by construction and \(\tilde b^T_\pi=T\) by the uniqueness part
of the statement. Therefore, the proof is complete.
\end{proof}

Choosing \(\mathscr Q\coloneqq L^0(\XX)\) in Theorem \ref{thm:univ_prop_proj_tensor},
we obtain the following characterisation of \(\mathscr M\hat\otimes_\pi\mathscr N\):
\begin{theorem}[Dual of \(\mathscr M\hat\otimes_\pi\mathscr N\)]\label{thm:dual_proj_tensor}
Let \(\XX\) be a \(\sigma\)-finite measure space. Let \(\mathscr M\) and \(\mathscr N\) be Banach \(L^0(\XX)\)-modules.
Then it holds that
\[
(\mathscr M\hat\otimes_\pi\mathscr N)^*\cong{\rm B}(\mathscr M,\mathscr N),
\]
an isomorphism of Banach \(L^0(\XX)\)-modules being given by \({\rm B}(\mathscr M,\mathscr N)\ni b\mapsto\tilde b_\pi\in(\mathscr M\hat\otimes_\pi\mathscr N)^*\).
\end{theorem}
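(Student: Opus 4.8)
The plan is to recognise that this statement is nothing more than the specialisation of the universal property of the projective tensor product (Theorem \ref{thm:univ_prop_proj_tensor}) to the target module \(\mathscr Q\coloneqq L^0(\XX)\); all the genuine content has already been established there, so the proof should reduce to unwinding two definitions. First I would recall that, by definition, the dual of a Banach \(L^0(\XX)\)-module is \(\mathscr M^*=\textsc{Hom}(\mathscr M;L^0(\XX))\), so that
\[
(\mathscr M\hat\otimes_\pi\mathscr N)^*=\textsc{Hom}(\mathscr M\hat\otimes_\pi\mathscr N;L^0(\XX)),
\]
while the space of bounded bilinear operators into the ring is \({\rm B}(\mathscr M,\mathscr N)={\rm B}(\mathscr M,\mathscr N;L^0(\XX))\) by the standing convention.

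Next I would invoke Theorem \ref{thm:univ_prop_proj_tensor} with \(\mathscr Q=L^0(\XX)\). That theorem asserts that the assignment \(b\mapsto\tilde b_\pi\), sending each \(b\in{\rm B}(\mathscr M,\mathscr N;\mathscr Q)\) to the \(L^0(\XX)\)-linearisation through which it factors over \(\otimes\), is an isomorphism of Banach \(L^0(\XX)\)-modules onto \(\textsc{Hom}(\mathscr M\hat\otimes_\pi\mathscr N;\mathscr Q)\). Substituting \(\mathscr Q=L^0(\XX)\) and using the two identifications above, the same map \(b\mapsto\tilde b_\pi\) becomes an isomorphism of Banach \(L^0(\XX)\)-modules between \({\rm B}(\mathscr M,\mathscr N)\) and \((\mathscr M\hat\otimes_\pi\mathscr N)^*\), which is precisely the claimed statement together with the explicit form of the isomorphism.

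There is essentially no obstacle to overcome, as the substantive work (existence and uniqueness of \(\tilde b_\pi\), the pointwise-norm identity \(|\tilde b_\pi|=|b|\), and surjectivity of \(b\mapsto\tilde b_\pi\)) is already carried out in Theorem \ref{thm:univ_prop_proj_tensor}. The only points to verify are bookkeeping: that the reformulations of \((\mathscr M\hat\otimes_\pi\mathscr N)^*\) and \({\rm B}(\mathscr M,\mathscr N)\) are literally the definitions in force, and that the equality \(|\tilde b_\pi|=|b|\) supplies the pointwise-norm preservation required for \(b\mapsto\tilde b_\pi\) to qualify as an isomorphism of Banach \(L^0(\XX)\)-modules. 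Both are immediate, so the proof is a one-line appeal to the universal property.
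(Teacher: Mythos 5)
Your proposal is correct and coincides with the paper's own treatment: the paper states this theorem as an immediate consequence of Theorem \ref{thm:univ_prop_proj_tensor} by choosing \(\mathscr Q\coloneqq L^0(\XX)\), exactly as you do. The definitional identifications \((\mathscr M\hat\otimes_\pi\mathscr N)^*=\textsc{Hom}(\mathscr M\hat\otimes_\pi\mathscr N;L^0(\XX))\) and \({\rm B}(\mathscr M,\mathscr N)={\rm B}(\mathscr M,\mathscr N;L^0(\XX))\), together with the pointwise-norm identity \(|\tilde b_\pi|=|b|\) already proved there, are all that is needed.
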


As a consequence of Theorem \ref{thm:dual_proj_tensor}, we obtain a useful `dual representation formula' for \(|\cdot|_\pi\):
\begin{corollary}\label{cor:dual_formula_proj_norm}
Let \(\XX\) be a \(\sigma\)-finite measure space. Let \(\mathscr M\), \(\mathscr N\) be Banach \(L^0(\XX)\)-modules. Then
\begin{equation}\label{eq:dual_formula_proj_norm}
|\alpha|_\pi=\bigvee\big\{\tilde b_\pi(\alpha)\;\big|\;b\in{\rm B}(\mathscr M,\mathscr N),\,|b|\leq 1\big\}\quad\text{ for every }\alpha\in\mathscr M\hat\otimes_\pi\mathscr N.
\end{equation}
\end{corollary}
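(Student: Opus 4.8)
The plan is to derive the formula directly from the identification of the dual established in Theorem \ref{thm:dual_proj_tensor}, combined with the fact that the unit disc of the dual of a Banach $L^0(\XX)$-module is a norming subset. No new construction is needed; this is a formal consequence of results already proved.

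First I would recall that, as a consequence of the Hahn--Banach theorem (Theorem \ref{thm:Hahn-Banach}), for any Banach $L^0(\XX)$-module $\mathscr P$ the unit disc $\mathbb D_{\mathscr P^*}$ is a norming subset of $\mathscr P^*$, i.e.\ $|p|=\bigvee_{T\in\mathbb D_{\mathscr P^*}}T(p)$ for every $p\in\mathscr P$. Specialising this to $\mathscr P\coloneqq\mathscr M\hat\otimes_\pi\mathscr N$, one obtains for every $\alpha\in\mathscr M\hat\otimes_\pi\mathscr N$ the identity
\[
|\alpha|_\pi=\bigvee_{T\in\mathbb D_{(\mathscr M\hat\otimes_\pi\mathscr N)^*}}T(\alpha).
\]

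Next I would invoke Theorem \ref{thm:dual_proj_tensor}, which states that the map ${\rm B}(\mathscr M,\mathscr N)\ni b\mapsto\tilde b_\pi\in(\mathscr M\hat\otimes_\pi\mathscr N)^*$ is an isomorphism of Banach $L^0(\XX)$-modules; being an isomorphism, it is bijective and preserves the pointwise norm, so that $|\tilde b_\pi|=|b|$ (as already recorded in Theorem \ref{thm:univ_prop_proj_tensor}). Hence $b$ ranges over $\big\{b\in{\rm B}(\mathscr M,\mathscr N)\;\big|\;|b|\leq 1\big\}$ exactly when $\tilde b_\pi$ ranges over $\mathbb D_{(\mathscr M\hat\otimes_\pi\mathscr N)^*}$. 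Substituting into the display above gives precisely \eqref{eq:dual_formula_proj_norm}.

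I do not expect a genuine obstacle here, since the statement is a direct corollary. The only point requiring a little care is verifying that the isomorphism of Theorem \ref{thm:dual_proj_tensor}, being pointwise-norm preserving, carries the set $\{|b|\leq 1\}$ bijectively onto the unit disc $\mathbb D_{(\mathscr M\hat\otimes_\pi\mathscr N)^*}$, so that the two order-theoretic suprema really coincide; this is immediate from the equality $|\tilde b_\pi|=|b|$.
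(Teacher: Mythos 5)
Your proposal is correct and follows exactly the paper's route: the paper's own proof is the one-line observation that the formula ``follows from Theorem \ref{thm:dual_proj_tensor} and the Hahn--Banach theorem for normed \(L^0\)-modules,'' and your argument simply spells out the details of that combination. In particular, your two ingredients --- that \(\mathbb D_{(\mathscr M\hat\otimes_\pi\mathscr N)^*}\) is a norming subset by Hahn--Banach, and that the pointwise-norm-preserving isomorphism \(b\mapsto\tilde b_\pi\) carries \(\{|b|\leq 1\}\) bijectively onto \(\mathbb D_{(\mathscr M\hat\otimes_\pi\mathscr N)^*}\) --- are precisely what the paper's citation of those two results encodes.
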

\begin{proof}
It follows from Theorem \ref{thm:dual_proj_tensor} and the Hahn--Banach theorem for normed \(L^0\)-modules.
\end{proof}

We conclude the section by proving that `pullbacks and projective tensor products commute':
\begin{theorem}[Pullbacks vs.\ projective tensor products]\label{thm:pullback_and_proj}
Let \(\XX=(\X,\Sigma_\X,\mm_\X)\), \(\YY=(\Y,\Sigma_\Y,\mm_\Y)\) be separable, \(\sigma\)-finite measure spaces.
Let \(\varphi\colon\X\to\Y\) be a measurable map such that \(\varphi_\#\mm_\X\ll\mm_\Y\). Let \(\mathscr M\) and \(\mathscr N\)
be Banach \(L^0(\YY)\)-modules. Then it holds that
\[
\varphi^*(\mathscr M\hat\otimes_\pi\mathscr N)\cong(\varphi^*\mathscr M)\hat\otimes_\pi(\varphi^*\mathscr N),
\]
the pullback map \(\varphi^*\colon\mathscr M\hat\otimes_\pi\mathscr N\to(\varphi^*\mathscr M)\hat\otimes_\pi(\varphi^*\mathscr N)\)
being the unique homomorphism such that
\[
\varphi^*(v\otimes w)=(\varphi^*v)\otimes(\varphi^*w)\quad\text{ for every }v\in\mathscr M\text{ and }w\in\mathscr N.
\]
\end{theorem}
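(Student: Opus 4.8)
The plan is to verify that the pair \(\big((\varphi^*\mathscr M)\hat\otimes_\pi(\varphi^*\mathscr N),\Psi\big)\) satisfies the universal property that characterises the pullback \(\varphi^*(\mathscr M\hat\otimes_\pi\mathscr N)\), for a suitable homomorphism \(\Psi\colon\mathscr M\hat\otimes_\pi\mathscr N\to(\varphi^*\mathscr M)\hat\otimes_\pi(\varphi^*\mathscr N)\); uniqueness of the pullback will then produce the claimed isomorphism, with \(\Psi\) corresponding to \(\varphi^*\). To build \(\Psi\), I would first regard \((\varphi^*\mathscr M)\otimes_\pi(\varphi^*\mathscr N)\) as an \(L^0(\YY)\)-module by restriction of scalars along the ring homomorphism \(L^0(\YY)\ni f\mapsto f\circ\varphi\in L^0(\XX)\). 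Since \(\varphi^*(f\cdot v)=(f\circ\varphi)\cdot\varphi^*v\), the map \(\mathscr M\times\mathscr N\ni(v,w)\mapsto(\varphi^*v)\otimes(\varphi^*w)\) is \(L^0(\YY)\)-bilinear for this structure, so its \(L^0(\YY)\)-linearisation yields \(\Psi_0\colon\mathscr M\otimes\mathscr N\to(\varphi^*\mathscr M)\otimes(\varphi^*\mathscr N)\) with \(\Psi_0(v\otimes w)=(\varphi^*v)\otimes(\varphi^*w)\) and \(\Psi_0(f\cdot\alpha)=(f\circ\varphi)\cdot\Psi_0(\alpha)\).

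The generation requirement is then immediate: as \(\varphi^*[\mathscr M]\) and \(\varphi^*[\mathscr N]\) generate \(\varphi^*\mathscr M\) and \(\varphi^*\mathscr N\) respectively, Lemma \ref{lem:generators_proj_tensor} guarantees that \(\{(\varphi^*v)\otimes(\varphi^*w)\}\) generates \((\varphi^*\mathscr M)\hat\otimes_\pi(\varphi^*\mathscr N)\), hence so does the image of \(\Psi_0\) (and its closure). For the pointwise norm I would prove \(|\Psi_0(\alpha)|_\pi=|\alpha|_\pi\circ\varphi\). The inequality \(\leq\) is the routine half: writing \(\alpha=\sum_{i=1}^n v_i\otimes w_i\) and using \eqref{eq:proj_norm_elem_tensor} together with \(|\varphi^*v|=|v|\circ\varphi\) gives \(|\Psi_0(\alpha)|_\pi\leq\big(\sum_{i=1}^n|v_i||w_i|\big)\circ\varphi\); passing to the infimum over representations and using that precomposition with \(\varphi\) commutes with countable infima (by the countable inf property, and because \(\varphi_\#\mm_\X\ll\mm_\Y\) transfers null sets) yields \(|\Psi_0(\alpha)|_\pi\leq|\alpha|_\pi\circ\varphi\).

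The reverse inequality \(|\Psi_0(\alpha)|_\pi\geq|\alpha|_\pi\circ\varphi\) is the heart of the argument, and I expect it to be the main obstacle. The plan is to use the dual representation of Corollary \ref{cor:dual_formula_proj_norm} on both sides and to pull bilinear forms back along \(\varphi\). Given \(b\in{\rm B}(\mathscr M,\mathscr N)\) with \(|b|\leq 1\), I would consider the associated homomorphism \(T_b\colon\mathscr M\to\mathscr N^*\), \(T_b(v)\coloneqq b(v,\cdot)\), which satisfies \(|T_b|\leq 1\); by functoriality of the pullback (a consequence of its universal property) it induces \(\varphi^*T_b\colon\varphi^*\mathscr M\to\varphi^*\mathscr N^*\) with \(|\varphi^*T_b|\leq|T_b|\circ\varphi\leq 1\), and composing with the pointwise-norm-preserving map \({\sf I}_\varphi\) of \eqref{eq:def_I_varphi} produces \(c\in{\rm B}(\varphi^*\mathscr M,\varphi^*\mathscr N)\) with \(|c|\leq 1\) and, by \eqref{eq:def_I_varphi}, \(c(\varphi^*v,\varphi^*w)=b(v,w)\circ\varphi\). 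Hence \(\tilde c_\pi(\Psi_0(\alpha))=\tilde b_\pi(\alpha)\circ\varphi\), so Theorem \ref{thm:dual_proj_tensor} gives \(|\Psi_0(\alpha)|_\pi\geq\tilde b_\pi(\alpha)\circ\varphi\); taking the supremum over all admissible \(b\) and again commuting \(\bigvee\) with \(\circ\varphi\) yields \(|\Psi_0(\alpha)|_\pi\geq|\alpha|_\pi\circ\varphi\).

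Once \(|\Psi_0(\alpha)|_\pi=|\alpha|_\pi\circ\varphi\) is established, \(\Psi_0\) extends by continuity to a map \(\Psi\) on the completion \(\mathscr M\hat\otimes_\pi\mathscr N\) preserving the identity \(|\Psi(\alpha)|=|\alpha|_\pi\circ\varphi\) (precomposition with \(\varphi\) being continuous on \(L^0\)), and the universal property of the pullback closes the argument. The delicate point on which I would concentrate is precisely the reverse inequality: it requires that the forms \(b\) witnessing \(|\alpha|_\pi\) pull back to forms on \(\varphi^*\mathscr M\times\varphi^*\mathscr N\) that remain adequate on the relevant tensors. This is exactly where the interaction between pullbacks and duals is subtle — recall that \({\sf I}_\varphi\) need not be surjective — and where the separability hypothesis enters, through Theorem \ref{thm:seq_weak-star_density}, which ensures that pulled-back functionals are weak\(^*\)-densely available and therefore rich enough to recover the norm; I would invoke it to certify that the supremum over pulled-back \(c\) genuinely reproduces \(|\alpha|_\pi\circ\varphi\).
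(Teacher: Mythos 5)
Your proof is correct, and in its key step it takes a genuinely different --- and in fact stronger --- route than the paper's. The two arguments share the overall plan (produce a map \(\Psi\) with \(|\Psi(\alpha)|_\pi=|\alpha|_\pi\circ\varphi\) whose image generates, then invoke uniqueness of the pullback), the inequality \(\leq\) via representations, the generation step via Lemma \ref{lem:generators_proj_tensor}, and the inequality \(\geq\) via pulled-back bilinear forms together with Corollary \ref{cor:dual_formula_proj_norm}: the paper builds the pulled-back form \(b^\varphi\) directly on \(\mathscr G(\varphi^*[\mathscr M])\times\mathscr G(\varphi^*[\mathscr N])\), while you obtain the same form as \((V,W)\mapsto{\sf I}_\varphi\big((\varphi^*T_b)(V)\big)(W)\) via pullback functoriality and Proposition \ref{prop:alternative_B(M,N)}; these are equivalent, though note that the pullback \(\varphi^*T_b\) of a homomorphism is standard but nowhere constructed in this paper, so you would need to write it out. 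Where you genuinely differ is well-posedness. The paper must show that \(\sum_i v_i\otimes w_i=0\) forces \(\sum_i(\varphi^*v_i)\otimes(\varphi^*w_i)=0\), and it does so by testing nullity against \emph{all} \(\Theta\in(\varphi^*\mathscr N)^*\) through Corollary \ref{cor:null_tensor_conseq}; since only functionals of the form \({\sf I}_\varphi(\theta)\) are directly accessible, it invokes the sequential weak\(^*\)-density Theorem \ref{thm:seq_weak-star_density}, and this is the one and only point of the paper's proof where separability of \(\XX\) and \(\YY\) enters. Your restriction-of-scalars argument bypasses this completely: \((v,w)\mapsto(\varphi^*v)\otimes(\varphi^*w)\) is \(L^0(\YY)\)-bilinear into \((\varphi^*\mathscr M)\otimes(\varphi^*\mathscr N)\) viewed as an \(L^0(\YY)\)-module along \(f\mapsto f\circ\varphi\), so the universal property of the algebraic tensor product (equivalently, criterion \eqref{eq:gen_criter_null_tensor}, which permits values in an arbitrary module) hands you a well-defined linear \(\Psi_0\) with no duality and no density argument.

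One correction to your closing assessment: Theorem \ref{thm:seq_weak-star_density} is \emph{not} needed in your reverse inequality, and you should not invoke it. The pulled-back forms are used only as lower-bound witnesses: each \(b\) with \(|b|\leq 1\) yields \(c\) with \(|c|\leq 1\) and \(\tilde c_\pi(\Psi_0(\alpha))=\tilde b_\pi(\alpha)\circ\varphi\), whence \(|\Psi_0(\alpha)|_\pi\geq\bigvee_b\big(\tilde b_\pi(\alpha)\circ\varphi\big)=|\alpha|_\pi\circ\varphi\), the last equality holding because precomposition with \(\varphi\) commutes with suprema of order-bounded families (countable sup property plus \(\varphi_\#\mm_\X\ll\mm_\Y\)). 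Weak\(^*\)-density of pulled-back functionals would be needed only to bound \(|\Psi_0(\alpha)|_\pi\) from \emph{above} by such a supremum, which you instead prove via representations. The consequence, which you do not seem to have noticed: your argument never uses separability anywhere, so --- modulo the standard construction of \(\varphi^*T_b\) --- it proves the theorem for arbitrary \(\sigma\)-finite \(\XX\) and \(\YY\), removing the hypothesis that the paper's proof requires.
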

\begin{proof}
First, we define the map \(T\colon\mathscr M\otimes_\pi\mathscr N\to(\varphi^*\mathscr M)\otimes_\pi(\varphi^*\mathscr N)\) as
\[
T\bigg(\sum_{i=1}^n v_i\otimes w_i\bigg)\coloneqq\sum_{i=1}^n(\varphi^*v_i)\otimes(\varphi^*w_i)
\quad\text{ for every }\sum_{i=1}^n v_i\otimes w_i\in\mathscr M\otimes_\pi\mathscr N.
\]
In order to prove that the map \(T\) is well-posed, it is sufficient to show that
\begin{equation}\label{eq:pullback_and_proj_aux1}
(v_i)_{i=1}^n\subseteq\mathscr M,\,(w_i)_{i=1}^n\subseteq\mathscr N,\,\sum_{i=1}^n v_i\otimes w_i=0
\quad\Longrightarrow\quad\sum_{i=1}^n(\varphi^*v_i)\otimes(\varphi^*w_i)=0.
\end{equation}
Let \({\sf I}_\varphi\colon\varphi^*\mathscr N^*\to(\varphi^*\mathscr N)^*\) be the isometric embedding
defined in \eqref{eq:def_I_varphi}. Corollary \ref{cor:null_tensor_conseq} yields
\[
\sum_{i=1}^n{\sf I}_\varphi(\varphi^*\eta)(\varphi^*w_i)\cdot(\varphi^*v_i)=\sum_{i=1}^n(\eta(w_i)\circ\varphi)\cdot(\varphi^*v_i)
=\varphi^*\bigg(\sum_{i=1}^n\eta(w_i)\cdot v_i\bigg)=0
\]
for every \(\eta\in\mathscr N^*\), whence it follows that \(\sum_{i=1}^n{\sf I}_\varphi(\theta)(\varphi^*w_i)\cdot(\varphi^*v_i)=0\)
for every \(\theta\in\mathscr G(\varphi^*[\mathscr N^*])\).
Using Theorem \ref{thm:seq_weak-star_density} and the density of \(\mathscr G(\varphi^*[\mathscr N^*])\)
in \(\varphi^*\mathscr N^*\), we obtain \(\sum_{i=1}^n\Theta(\varphi^*w_i)\cdot(\varphi^*v_i)=0\)
for every \(\Theta\in(\varphi^*\mathscr N)^*\), so that \(\sum_{i=1}^n(\varphi^*v_i)\otimes(\varphi^*w_i)=0\)
by Corollary \ref{cor:null_tensor_conseq}. This proves \eqref{eq:pullback_and_proj_aux1}.

Observe that \(T\) is linear by construction. Moreover, for any given \(\alpha\in\mathscr M\otimes_\pi\mathscr N\) we can estimate
\[\begin{split}
|T(\alpha)|_\pi&\leq\bigwedge\bigg\{\sum_{i=1}^n|\varphi^*v_i||\varphi^*w_i|\;\bigg|\;(v_i)_{i=1}^n\subseteq\mathscr M,\,(w_i)_{i=1}^n\subseteq\mathscr N,\,\alpha=\sum_{i=1}^n v_i\otimes w_i\bigg\}\\
&=\bigwedge\bigg\{\bigg(\sum_{i=1}^n|v_i||w_i|\bigg)\circ\varphi\;\bigg|\;(v_i)_{i=1}^n\subseteq\mathscr M,\,(w_i)_{i=1}^n\subseteq\mathscr N,\,\alpha=\sum_{i=1}^n v_i\otimes w_i\bigg\}
=|\alpha|_\pi\circ\varphi.
\end{split}\]
Now let us pass to the verification of the converse inequality. Given any \(b\in{\rm B}(\mathscr M,\mathscr N)\), we define
\[
b^\varphi\bigg(\sum_{i=1}^n\1_{E_i}\cdot\varphi^*v_i,\sum_{j=1}^m\1_{F_j}\cdot\varphi^*w_j\bigg)\coloneqq\sum_{i=1}^n\sum_{j=1}^m\1_{E_i\cap F_j}\,b(v_i,w_j)\circ\varphi\in L^0(\XX)
\]
for every \(\sum_{i=1}^n\1_{E_i}\cdot\varphi^*v_i\in\mathscr G(\varphi^*[\mathscr M])\) and \(\sum_{j=1}^m\1_{F_j}\cdot\varphi^*w_j\in\mathscr G(\varphi^*[\mathscr N])\). Notice that
\[\begin{split}
\bigg|\sum_{i=1}^n\sum_{j=1}^m\1_{E_i\cap F_j}\,b(v_i,w_j)\circ\varphi\bigg|&=\sum_{i=1}^n\sum_{j=1}^m\1_{E_i\cap F_j}|b(v_i,w_j)|\circ\varphi\\
&=|b|\circ\varphi\bigg|\sum_{i=1}^n\1_{E_i}\cdot\varphi^*v_i\bigg|\bigg|\sum_{j=1}^m\1_{F_j}\cdot\varphi^*w_j\bigg|.
\end{split}\]
Therefore, \(b^\varphi\colon\mathscr G(\varphi^*[\mathscr M])\times\mathscr G(\varphi^*[\mathscr N])\to L^0(\XX)\) can be uniquely extended to an \(L^0(\XX)\)-bilinear operator
\(b^\varphi\in{\rm B}(\varphi^*\mathscr M,\varphi^*\mathscr N)\) satisfying \(|b^\varphi|\leq|b|\circ\varphi\). Thanks to Corollary \ref{cor:dual_formula_proj_norm}, we deduce that
\[\begin{split}
|\alpha|_\pi\circ\varphi&=\bigvee\big\{\tilde b_\pi(\alpha)\circ\varphi\;\big|\;b\in{\rm B}(\mathscr M,\mathscr N),\,|b|\leq 1\big\}\\
&=\bigvee\bigg\{\sum_{i=1}^n b^\varphi(\varphi^*v_i,\varphi^*w_i)\;\bigg|\;b\in{\rm B}(\mathscr M,\mathscr N),\,|b|\leq 1\bigg\}\\
&\leq\bigvee\big\{\tilde B_\pi(T(\alpha))\;\big|\;B\in{\rm B}(\varphi^*\mathscr M,\varphi^*\mathscr N),\,|B|\leq 1\big\}=|T(\alpha)|_\pi
\end{split}\]
for every tensor \(\alpha=\sum_{i=1}^n v_i\otimes w_i\in\mathscr M\otimes_\pi\mathscr N\). All in all, we have shown that \(|T(\alpha)|_\pi=|\alpha|_\pi\circ\varphi\)
for every \(\alpha\in\mathscr M\otimes_\pi\mathscr N\). It follows that \(T\) can be uniquely extended to a linear operator
\[
\varphi^*\colon\mathscr M\hat\otimes_\pi\mathscr N\to(\varphi^*\mathscr M)\hat\otimes_\pi(\varphi^*\mathscr N)
\]
satisfying \(|\varphi^*\alpha|_\pi=|\alpha|_\pi\circ\varphi\) for every \(\alpha\in\mathscr M\hat\otimes_\pi\mathscr N\). Finally, it remains to check that
\(\varphi^*[\mathscr M\hat\otimes_\pi\mathscr N]\) generates \((\varphi^*\mathscr M)\hat\otimes_\pi(\varphi^*\mathscr N)\). Given any \(v\in\mathscr M\) and \(w\in\mathscr N\), we have
\((\varphi^*v)\otimes(\varphi^*w)=\varphi^*(v\otimes w)\). This shows that \(S\coloneqq\{(\varphi^*v)\otimes(\varphi^*w)\,:\,v\in\mathscr M,\,w\in\mathscr N\}\subseteq\varphi^*[\mathscr M\hat\otimes_\pi\mathscr N]\).
Given that \(\varphi^*[\mathscr M]\) and \(\varphi^*[\mathscr N]\) generate \(\varphi^*\mathscr M\) and \(\varphi^*\mathscr N\), respectively, we know from Lemma \ref{lem:generators_proj_tensor} that \(S\) generates
the space \((\varphi^*\mathscr M)\hat\otimes_\pi(\varphi^*\mathscr N)\), thus a fortiori \(\varphi^*[\mathscr M\hat\otimes_\pi\mathscr N]\) generates \((\varphi^*\mathscr M)\hat\otimes_\pi(\varphi^*\mathscr N)\).
\end{proof}
\subsection{Other consequences of the universal property}
Let us now discuss other consequences of Theorem \ref{thm:univ_prop_proj_tensor} and Corollary
\ref{cor:dual_formula_proj_norm}. Our first goal is to rewrite \eqref{eq:dual_formula_proj_norm} in a different fashion.
\begin{proposition}\label{prop:alternative_B(M,N)}
Let \(\XX\) be a \(\sigma\)-finite measure space. Let \(\mathscr M\), \(\mathscr N\) be Banach \(L^0(\XX)\)-modules.
Then the map sending \(b\) to \(v\to b(v,\cdot)\) is an isomorphism of Banach \(L^0(\XX)\)-modules from
\({\rm B}(\mathscr M,\mathscr N)\) to \(\textsc{Hom}(\mathscr M;\mathscr N^*)\). In particular, it holds that
\[
{\rm B}(\mathscr M,\mathscr N)\cong\textsc{Hom}(\mathscr M;\mathscr N^*).
\]
\end{proposition}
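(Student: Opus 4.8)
The plan is to construct an explicit map $\Psi$ from ${\rm B}(\mathscr M,\mathscr N)$ to $\textsc{Hom}(\mathscr M;\mathscr N^*)$, check it is well-defined and an $L^0(\XX)$-module homomorphism preserving the pointwise norm, and then exhibit an inverse to establish that it is an isomorphism. This is the module-valued analogue of the classical adjunction $\mathcal B(X,Y)\cong\mathcal L(X;Y')$ for bilinear forms on Banach spaces, so I expect the structure to be routine once the pointwise-norm bookkeeping is handled correctly.

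Let me spell out the steps. First I would fix $b\in{\rm B}(\mathscr M,\mathscr N)$ and, for each $v\in\mathscr M$, define $\Psi(b)(v)\coloneqq b(v,\cdot)\colon\mathscr N\to L^0(\XX)$. The bound \eqref{eq:def_bilin_ptwse_norm}, namely $|b(v,w)|\leq|b||v||w|$, shows immediately that $b(v,\cdot)$ is $L^0(\XX)$-linear with $|b(v,\cdot)(w)|\leq(|b||v|)|w|$, so $b(v,\cdot)\in\mathscr N^*$ with $|b(v,\cdot)|\leq|b||v|$; thus $\Psi(b)(v)\in\mathscr N^*$ is well-defined. Next I would check that $v\mapsto b(v,\cdot)$ is itself $L^0(\XX)$-linear (immediate from bilinearity of $b$) and that it satisfies $|\Psi(b)(v)|\leq|b||v|$, so that $\Psi(b)\in\textsc{Hom}(\mathscr M;\mathscr N^*)$ with $|\Psi(b)|\leq|b|$. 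Linearity of $b\mapsto\Psi(b)$ over $L^0(\XX)$ is clear, so $\Psi$ is a homomorphism of Banach $L^0(\XX)$-modules.

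For the reverse direction I would define, for any $T\in\textsc{Hom}(\mathscr M;\mathscr N^*)$, the map $b_T(v,w)\coloneqq T(v)(w)$ for $(v,w)\in\mathscr M\times\mathscr N$. Bilinearity over $L^0(\XX)$ follows from the $L^0(\XX)$-linearity of $T$ and of each $T(v)\in\mathscr N^*$, and the estimate $|b_T(v,w)|=|T(v)(w)|\leq|T(v)||w|\leq|T||v||w|$ shows $b_T\in{\rm B}(\mathscr M,\mathscr N)$ with $|b_T|\leq|T|$. By construction $\Psi(b_T)=T$ and $b_{\Psi(b)}=b$, so $\Psi$ is a bijection with inverse $T\mapsto b_T$. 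Combining $|\Psi(b)|\leq|b|$ and $|b_T|\leq|T|$ applied to $T=\Psi(b)$ gives $|b|=|b_{\Psi(b)}|\leq|\Psi(b)|\leq|b|$, whence $|\Psi(b)|=|b|$; thus $\Psi$ preserves the pointwise norm and is an isomorphism of Banach $L^0(\XX)$-modules.

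The only point requiring mild care — and the step I would watch most closely — is verifying that $|\Psi(b)|=|b|$ as an equality of pointwise norms, since the pointwise norm on $\textsc{Hom}(\mathscr M;\mathscr N^*)$ is defined via a supremum of ratios $\1_{\{|v|>0\}}|\Psi(b)(v)|/|v|$ and the pointwise norm on $\mathscr N^*$ is itself such a supremum over $\mathscr N$. The two-sided norm estimate above sidesteps the need to manipulate these nested suprema directly; alternatively, one can unfold both sides as $\bigvee_{v,w}\1_{\{|v||w|>0\}}|b(v,w)|/(|v||w|)$ and observe the expressions coincide by the associativity of the join. Everything else is formal, so no genuine obstacle arises beyond this pointwise-norm reconciliation.
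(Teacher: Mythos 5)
Your proof is correct and follows essentially the same route as the paper: the same map \(b\mapsto\big(v\mapsto b(v,\cdot)\big)\), the same inverse \(T\mapsto\big((v,w)\mapsto T(v)(w)\big)\), with norm preservation established by the straightforward pointwise estimates. The only cosmetic difference is that you obtain \(|\Psi(b)|=|b|\) from the two one-sided bounds and bijectivity, whereas the paper unfolds the nested suprema directly --- an alternative you yourself mention --- so nothing of substance differs.
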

\begin{proof}
One can readily check that the map \(\varphi\colon b\mapsto\big(\mathscr M\ni v\to b(v,\cdot)\in\mathscr N^*\big)\)
is a homomorphism of Banach \(L^0(\XX)\)-modules between \({\rm B}(\mathscr M,\mathscr N)\) and \(\textsc{Hom}(\mathscr M;\mathscr N^*)\).
For any \(b\in{\rm B}(\mathscr M,\mathscr N)\), we have
\[
|\varphi(b)|=\bigvee_{v\in\mathscr M}\frac{\1_{\{|v|>0\}}|b(v,\cdot)|}{|v|}=
\bigvee_{v\in\mathscr M}\bigvee_{w\in\mathscr N}\frac{\1_{\{|v|>0\}}\1_{\{|w|>0\}}|b(v,w)|}{|v||w|}=|b|.
\]
Finally, we check that \(\varphi\) is surjective. For any \(T\in\textsc{Hom}(\mathscr M;\mathscr N^*)\),
we define \(b^T\colon\mathscr M\times\mathscr N\to L^0(\XX)\) as \(b^T(v,w)\coloneqq T(v)(w)\) for every
\(v\in\mathscr M\) and \(w\in\mathscr N\). Then \(b^T\in{\rm B}(\mathscr M,\mathscr N)\) and \(\varphi(b^T)=T\).
\end{proof}

Similarly, we have that \({\rm B}(\mathscr M,\mathscr N)\cong\textsc{Hom}(\mathscr N;\mathscr M^*)\),
an isomorphism of Banach \(L^0(\XX)\)-modules being given by the operator
\({\rm B}(\mathscr M,\mathscr N)\ni b\mapsto\big(\mathscr N\ni w\mapsto b(\cdot,w)\in\mathscr M^*\big)\in\textsc{Hom}(\mathscr N;\mathscr M^*)\).
\begin{corollary}\label{cor:alt_proj_norm_with_oper}
Let \(\XX\) be a \(\sigma\)-finite measure space. Let \(\mathscr M\), \(\mathscr N\) be Banach \(L^0(\XX)\)-modules. Then
\[
\big(T(v_n)(w_n)\big)_{n\in\N}\in\ell_1(\N,L^0(\XX))\quad\;\forall
(v_n\otimes w_n)_{n\in\N}\in\ell_1(\N,\mathscr M\hat\otimes_\pi\mathscr N),\,T\in\textsc{Hom}(\mathscr M;\mathscr N^*).
\]
Moreover, for every \(\alpha\in\mathscr M\hat\otimes_\pi\mathscr N\) we have that
\[
|\alpha|_\pi=\bigvee\bigg\{\Big|\sum_{n\in\N}T(v_n)(w_n)\Big|\,\bigg|\;(v_n\otimes w_n)_n\in\ell_1(\N,\mathscr M\hat\otimes_\pi\mathscr N),\,
\sum_{n\in\N}v_n\otimes w_n=\alpha,\,T\in\mathbb D_{\textsc{Hom}(\mathscr M;\mathscr N^*)}\bigg\}.
\]
\end{corollary}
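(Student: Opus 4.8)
The plan is to transfer everything through two earlier results: the identification ${\rm B}(\mathscr M,\mathscr N)\cong\textsc{Hom}(\mathscr M;\mathscr N^*)$ of Proposition \ref{prop:alternative_B(M,N)} and the dual representation formula of Corollary \ref{cor:dual_formula_proj_norm}. Given $T\in\textsc{Hom}(\mathscr M;\mathscr N^*)$, I would write $b^T$ for the associated bilinear operator, namely $b^T(v,w)\coloneqq T(v)(w)$, so that $b^T\in{\rm B}(\mathscr M,\mathscr N)$ with $|b^T|=|T|$ by Proposition \ref{prop:alternative_B(M,N)}, and I denote by $\tilde b^T_\pi\in(\mathscr M\hat\otimes_\pi\mathscr N)^*$ its $L^0(\XX)$-linearisation, which satisfies $|\tilde b^T_\pi|=|b^T|=|T|$ by Theorem \ref{thm:univ_prop_proj_tensor}. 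The whole argument then reduces to relating the pairing $\tilde b^T_\pi(\alpha)$ with the series $\sum_{n\in\N}T(v_n)(w_n)$.

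For the first claim I would simply estimate $|T(v_n)(w_n)|\leq|T(v_n)||w_n|\leq|T||v_n||w_n|$ for every $n\in\N$, so that $\sum_{n\in\N}|T(v_n)(w_n)|\leq|T|\sum_{n\in\N}|v_n||w_n|=|T|\,\big|(v_n\otimes w_n)_{n\in\N}\big|_1\in L^0(\XX)^+$, where I used $|v_n\otimes w_n|_\pi=|v_n||w_n|$. This shows $\big(T(v_n)(w_n)\big)_{n\in\N}\in\ell_1(\N,L^0(\XX))$, whence $\{T(v_n)(w_n)\}_{n\in\N}$ is summable in $L^0(\XX)$ and the series $\sum_{n\in\N}T(v_n)(w_n)$ is well-defined.

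The pivotal identity is that, for any $\ell_1$-representation $\alpha=\sum_{n\in\N}v_n\otimes w_n$, one has $\tilde b^T_\pi(\alpha)=\sum_{n\in\N}T(v_n)(w_n)$. This follows by applying Lemma \ref{lem:summable_comp_dual} to the homomorphism $\tilde b^T_\pi$ and the summable family $\{v_n\otimes w_n\}_{n\in\N}$, together with $\tilde b^T_\pi(v_n\otimes w_n)=b^T(v_n,w_n)=T(v_n)(w_n)$. With this in hand, the inequality $\geq$ is immediate: for $T\in\mathbb D_{\textsc{Hom}(\mathscr M;\mathscr N^*)}$ (that is, $|T|\leq 1$) and any $\ell_1$-representation of $\alpha$, one gets $\big|\sum_{n\in\N}T(v_n)(w_n)\big|=|\tilde b^T_\pi(\alpha)|\leq|\tilde b^T_\pi|\,|\alpha|_\pi\leq|\alpha|_\pi$, so the supremum $Q(\alpha)$ on the right-hand side of the asserted formula is $\leq|\alpha|_\pi$.

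For the reverse inequality I would invoke Corollary \ref{cor:dual_formula_proj_norm}, which gives $|\alpha|_\pi=\bigvee\{\tilde b_\pi(\alpha)\,:\,b\in{\rm B}(\mathscr M,\mathscr N),\,|b|\leq 1\}$. Fixing any such $b$ and letting $T$ be its image under Proposition \ref{prop:alternative_B(M,N)} (so $|T|=|b|\leq 1$ and $b=b^T$), I would choose any $\ell_1$-representation $\alpha=\sum_{n\in\N}v_n\otimes w_n$ — one exists by Theorem \ref{thm:altern_proj_tensor}, which is precisely what keeps the supremum defining $Q(\alpha)$ nonvacuous — and use the pivotal identity to obtain $\tilde b_\pi(\alpha)=\sum_{n\in\N}T(v_n)(w_n)\leq\big|\sum_{n\in\N}T(v_n)(w_n)\big|\leq Q(\alpha)$. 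Taking the supremum over all admissible $b$ yields $|\alpha|_\pi\leq Q(\alpha)$, and combining the two inequalities completes the proof. The only genuinely delicate point, the rest being bookkeeping through the stated identifications, is the commutation of the linearisation with the order-theoretic summation, i.e.\ the legitimacy of $\tilde b^T_\pi\big(\sum_{n}v_n\otimes w_n\big)=\sum_{n}T(v_n)(w_n)$; this is handled cleanly by Lemma \ref{lem:summable_comp_dual}, so I expect no substantial obstacle beyond invoking it correctly.
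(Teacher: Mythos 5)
Your proof is correct and takes essentially the same route as the paper's: the same reduction through Proposition \ref{prop:alternative_B(M,N)} and Theorem \ref{thm:univ_prop_proj_tensor}, the same use of Lemma \ref{lem:summable_comp_dual} to get \(\tilde b^T_\pi(\alpha)=\sum_{n}T(v_n)(w_n)\), and the same appeal to Corollary \ref{cor:dual_formula_proj_norm} for the inequality \(|\alpha|_\pi\leq Q(\alpha)\). The only cosmetic difference is in the bound \(Q(\alpha)\leq|\alpha|_\pi\), where you exploit the representation-independence of \(\sum_n T(v_n)(w_n)\) together with \(|\tilde b^T_\pi|\leq 1\), whereas the paper estimates each competitor by \(\sum_n|v_n||w_n|\) and invokes Theorem \ref{thm:altern_proj_tensor}; your variant is, if anything, slightly more transparent.
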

\begin{proof}
Let \((v_n\otimes w_n)_{n\in\N}\in\ell_1(\N,\mathscr M\hat\otimes_\pi\mathscr N)\). Denote \(\alpha\coloneqq\sum_{n\in\N}v_n\otimes w_n\in\mathscr M\hat\otimes_\pi\mathscr N\). Then
\[
\bigg|\sum_{n\in F}T(v_n)(w_n)\bigg|\leq|T|\sum_{n\in F}|v_n||w_n|=|T|\sum_{n\in F}|v_n\otimes w_n|_\pi
\quad\text{ for every }T\in\textsc{Hom}(\mathscr M;\mathscr N^*)
\]
and for every \(F\in\mathscr P_f(\N)\). By passing to the supremum over all \(F\in\mathscr P_f(\N)\), we thus obtain that
\begin{equation}\label{eq:alt_proj_norm_with_oper_aux}
\big|\big(T(v_n)(w_n)\big)_{n\in\N}\big|_1\leq|T|\big|(v_n\otimes w_n)_{n\in\N}\big|_1\in L^0(\XX)^+,
\end{equation}
which ensures that \(\big(T(v_n)(w_n)\big)_{n\in\N}\in\ell_1(\N,L^0(\XX))\). Now, let us introduce the shorthand notation
\[
Q(\alpha)\coloneqq\bigvee\bigg\{\Big|\sum_{n\in\N}T(v_n)(w_n)\Big|\,\bigg|\;(v_n\otimes w_n)_n\in\ell_1(\N,\mathscr M\hat\otimes_\pi\mathscr N),\,
\sum_{n\in\N}v_n\otimes w_n=\alpha,\,T\in\mathbb D_{\textsc{Hom}(\mathscr M;\mathscr N^*)}\bigg\}
\]
for every \(\alpha\in\mathscr M\hat\otimes_\pi\mathscr N\). On the one hand, whenever \((v_n\otimes w_n)_n\) and \(T\) are
competitors for \(Q(\alpha)\), we have that \(\big|\sum_{n\in\N}T(v_n)(w_n)\big|\leq\sum_{n\in\N}|v_n||w_n|\) by
\eqref{eq:alt_proj_norm_with_oper_aux}, so that \(Q(\alpha)\leq|\alpha|_\pi\) by Theorem \ref{thm:altern_proj_tensor}. On the
other hand, take any \(b\in{\rm B}(\mathscr M,\mathscr N)\) such that \(|b|\leq 1\). Proposition \ref{prop:alternative_B(M,N)}
tells that the element \(T_b\in\textsc{Hom}(\mathscr M;\mathscr N^*)\), which we define as \(T_b(v)\coloneqq b(v,\cdot)\)
for all \(v\in\mathscr M\), satisfies \(|T_b|\leq 1\). Hence, Lemma \ref{lem:summable_comp_dual} yields
\(\tilde b_\pi(\alpha)=\sum_{n\in\N}\tilde b_\pi(v_n\otimes w_n)=\sum_{n\in\N}T_b(v_n)(w_n)\). It follows that
\[
|\alpha|_\pi=\bigvee_{b\in\mathbb D_{{\rm B}(\mathscr M,\mathscr N)}}\tilde b_\pi(\alpha)=
\bigvee_{b\in\mathbb D_{{\rm B}(\mathscr M,\mathscr N)}}\sum_{n\in\N}T_b(v_n)(w_n)\leq Q(\alpha)
\]
thanks to Corollary \ref{cor:dual_formula_proj_norm}. Consequently, the statement is finally achieved.
\end{proof}

The following symmetric statement is verified as well: \(\big(S(w_n)(v_n)\big)_n\in\ell_1(\N,L^0(\XX))\) holds for every
\((v_n\otimes w_n)_n\in\ell_1(\N,\mathscr M\hat\otimes_\pi\mathscr N)\) and \(S\in\textsc{Hom}(\mathscr N;\mathscr M^*)\),
and we have that
\[
|\alpha|_\pi=\bigvee\bigg\{\Big|\sum_{n\in\N}S(w_n)(v_n)\Big|\,\bigg|\;(v_n\otimes w_n)_n\in\ell_1(\N,\mathscr M\hat\otimes_\pi\mathscr N),\,
\sum_{n\in\N}v_n\otimes w_n=\alpha,\,S\in\mathbb D_{\textsc{Hom}(\mathscr N;\mathscr M^*)}\bigg\}.
\]
These claims can be proved by arguing exactly as we did in the proof of Corollary \ref{cor:alt_proj_norm_with_oper}. 
\medskip

Next, we use Corollary \ref{cor:dual_formula_proj_norm} to characterise the `tensor diagonal' in \(L^0(\XX;\ell_2(I))\hat\otimes_\pi L^0(\XX;\ell_2(I))\):
\begin{proposition}
Let \(\XX\) be a \(\sigma\)-finite measure space. Then the Banach \(L^0(\XX)\)-submodule
of the space \(L^0(\XX;\ell_2(I))\hat\otimes_\pi L^0(\XX;\ell_2(I))\) generated by
\(\{\underline{\sf e}_i\otimes\underline{\sf e}_i\,:\,i\in I\}\) is isomorphic to \(L^0(\XX;\ell_1(I))\).
\end{proposition}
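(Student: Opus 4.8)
The plan is to realise the submodule explicitly as an isometric copy of $\ell_1(I,L^0(\XX))$, which is isomorphic to $L^0(\XX;\ell_1(I))$ by Corollary \ref{cor:two_ell1_L0}. Write $\mathscr M\coloneqq L^0(\XX;\ell_2(I))$ and observe that $|\underline{\sf e}_i|=\1_\X$, so $|\underline{\sf e}_i\otimes\underline{\sf e}_i|_\pi=|\underline{\sf e}_i||\underline{\sf e}_i|=\1_\X$ by \eqref{eq:proj_norm_elem_tensor}. First I would show that for every $f=(f_i)_{i\in I}\in\ell_1(I,L^0(\XX))$ the family $\{f_i\cdot(\underline{\sf e}_i\otimes\underline{\sf e}_i)\}_{i\in I}$ is summable in $\mathscr M\hat\otimes_\pi\mathscr M$: since $|f_i\cdot(\underline{\sf e}_i\otimes\underline{\sf e}_i)|_\pi=|f_i|$ and $\{|f_i|\}_{i\in I}$ is summable in $L^0(\XX)$, the Cauchy summability criterion (Proposition \ref{prop:Cauchy_sum_criterion}) applies. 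This lets me define $\Psi(f)\coloneqq\sum_{i\in I}f_i\cdot(\underline{\sf e}_i\otimes\underline{\sf e}_i)$, and $\Psi\colon\ell_1(I,L^0(\XX))\to\mathscr M\hat\otimes_\pi\mathscr M$ is $L^0(\XX)$-linear because both additivity and scalar homogeneity pass through the sum via Lemma \ref{lem:summable_comp_dual}.

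The heart of the argument is the identity $|\Psi(f)|_\pi=|f|_1=\sum_{i\in I}|f_i|$. The inequality ``$\leq$'' is immediate from Remark \ref{rmk:summable_ptwse_norms} together with $|\underline{\sf e}_i\otimes\underline{\sf e}_i|_\pi=\1_\X$. For ``$\geq$'' I would imitate the classical computation of the projective norm on the diagonal of $\ell_2\hat\otimes_\pi\ell_2$, using the dual representation formula of Corollary \ref{cor:dual_formula_proj_norm}. Fix $f$ and set $g_i\coloneqq{\rm sgn}(f_i)\in L^0(\XX)$, so that $|g_i|\leq\1_\X$ and $f_i g_i=|f_i|$. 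I define $b\colon\mathscr M\times\mathscr M\to L^0(\XX)$ by $b(a,a')\coloneqq\sum_{i\in I}g_i\,a(\cdot)_i\,a'(\cdot)_i$. The series converges in $L^0(\XX)$ because a pointwise Cauchy--Schwarz estimate gives $\sum_{i\in I}|a(\cdot)_i||a'(\cdot)_i|\leq|a||a'|$, and this same estimate yields $|b(a,a')|\leq|a||a'|$, so $b\in{\rm B}(\mathscr M,\mathscr M)$ with $|b|\leq\1_\X$. Since $b(\underline{\sf e}_i,\underline{\sf e}_j)=g_i\delta_{ij}$, Lemma \ref{lem:summable_comp_dual} gives $\tilde b_\pi(\Psi(f))=\sum_{i\in I}f_i g_i=\sum_{i\in I}|f_i|=|f|_1$, whence $|\Psi(f)|_\pi\geq|f|_1$ by Corollary \ref{cor:dual_formula_proj_norm}.

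Having shown that $\Psi$ preserves the pointwise norm, it is an isometric homomorphism of Banach $L^0(\XX)$-modules, hence injective with complete, therefore closed, image. It then remains to identify this image with the submodule $\mathscr D$ generated by $\{\underline{\sf e}_i\otimes\underline{\sf e}_i\,:\,i\in I\}$. Each finite combination $\sum_{i\in F}f_i\cdot(\underline{\sf e}_i\otimes\underline{\sf e}_i)$ equals $\Psi$ of a finitely supported sequence, so the algebraic submodule generated by the $\underline{\sf e}_i\otimes\underline{\sf e}_i$ lies in the image of $\Psi$; closedness of that image forces $\mathscr D\subseteq\Psi[\ell_1(I,L^0(\XX))]$. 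Conversely, by the convergence \eqref{eq:summable_aux} of Proposition \ref{prop:Cauchy_sum_criterion}, each $\Psi(f)$ is a limit of such finite combinations, so $\Psi[\ell_1(I,L^0(\XX))]\subseteq\mathscr D$. Thus $\Psi$ is an isomorphism onto $\mathscr D$, and composing with Corollary \ref{cor:two_ell1_L0} delivers the claimed isomorphism $\mathscr D\cong L^0(\XX;\ell_1(I))$.

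The step I expect to be the main obstacle is the construction and estimation of the bilinear form $b$: one has to verify that its defining series converges in $L^0(\XX)$ and that the pointwise Cauchy--Schwarz bound holds $\mm$-a.e., since this is precisely what guarantees $|b|\leq\1_\X$ and makes the dual formula produce the sharp lower bound $\sum_{i\in I}|f_i|$. Everything else is routine bookkeeping about summability and the identification of the generated submodule.
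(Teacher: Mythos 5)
Your proof is correct and follows essentially the same route as the paper's: the crucial lower bound on the projective pointwise norm is obtained from the same sign-weighted Cauchy--Schwarz bilinear form fed into the dual formula of Corollary \ref{cor:dual_formula_proj_norm}, and the final identification rests on Corollary \ref{cor:two_ell1_L0}, exactly as in the paper. The only difference is bookkeeping: the paper defines its isometry on the dense set of finite \(L^0(\XX)\)-combinations of the \(\underline{\sf e}_i\otimes\underline{\sf e}_i\) (so its bilinear forms are finite sums) and then extends by continuity, whereas you construct the inverse map on all of \(\ell_1(I,L^0(\XX))\) via infinite sums, which is why you must additionally verify the summability and boundedness of the infinite-sum bilinear form --- a step the paper's finite-support setup avoids.
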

\begin{proof}
Let \(\mathscr M\) be the Banach \(L^0(\XX)\)-submodule of \(L^0(\XX;\ell_2(I))\hat\otimes_\pi L^0(\XX;\ell_2(I))\) that is generated by
\(\{\underline{\sf e}_i\otimes\underline{\sf e}_i\,:\,i\in I\}\). Observe that \(\mathscr M\) can be described as \(\mathscr M={\rm cl}_{ L^0(\XX;\ell_2(I))\hat\otimes_\pi L^0(\XX;\ell_2(I))}(M)\), where
\[
M\coloneqq\bigg\{\sum_{i\in F}f_i\cdot(\underline{\sf e}_i\otimes\underline{\sf e}_i)\;\bigg|\;F\in\mathscr P_f(I),\,\{f_i\}_{i\in F}\subseteq L^0(\XX)\bigg\}.
\]
For \(F\in\mathscr P_f(I)\) and \(f=\{f_i\}_{i\in F}\subseteq L^0(\XX)\), we define \(b^f\colon L^0(\XX;\ell_2(I))\times L^0(\XX;\ell_2(I))\to L^0(\XX)\) as
\[
b^f(g,h)\coloneqq\sum_{i\in F}{\rm sgn}(f_i)g(\cdot)_i\,h(\cdot)_i\quad\text{ for every }g,h\in L^0(\XX;\ell_2(I)).
\]
The map \(b^f\) is \(L^0(\XX)\)-bilinear by construction. Also, for any \(g,h\in L^0(\XX;\ell_2(I))\) we can estimate
\[
|b^f(g,h)|\leq\sum_{i\in F}|g(\cdot)_i||h(\cdot)_i|\leq\bigg(\sum_{i\in F}|g(\cdot)_i|^2\bigg)^{1/2}\bigg(\sum_{i\in F}|h(\cdot)_i|^2\bigg)^{1/2}\leq|g||h|,
\]
which yields \(b^f\in{\rm B}\big(L^0(\XX;\ell_2(I)),L^0(\XX;\ell_2(I))\big)\) and \(|b^f|\leq 1\). Now define \(\psi\colon M\to\ell_1(I,L^0(\XX))\) as
\[
\psi\bigg(\sum_{i\in F}f_i\cdot(\underline{\sf e}_i\otimes\underline{\sf e}_i)\bigg)\coloneqq\sum_{i\in F}(\delta_{ij}f_i)_{j\in I}
\quad\text{ for every }F\in\mathscr P_f(I)\text{ and }\{f_i\}_{i\in F}\subseteq L^0(\XX).
\]
By virtue of Corollary \ref{cor:dual_formula_proj_norm}, for any \(F\in\mathscr P_f(I)\) and \(f=\{f_i\}_{i\in F}\subseteq L^0(\XX)\) we can estimate
\[\begin{split}
\bigg|\sum_{i\in F}(\delta_{ij}f_i)_{j\in I}\bigg|_1&=\bigg|\Big(\sum_{i\in F}\delta_{ij}f_i\Big)_{j\in I}\bigg|_1=\sum_{i\in F}|f_i|=\sum_{i\in F}f_i\,{\rm sgn}(f_i)=\sum_{i\in F}f_i\sum_{j\in F}{\rm sgn}(f_j)\delta_{ij}^2\\
&=\sum_{i\in F}f_i\sum_{j\in F}{\rm sgn}(f_j)\underline{\sf e}_i(\cdot)_j^2=\sum_{i\in F}f_i\,b^f(\underline{\sf e}_i,\underline{\sf e}_i)=\sum_{i\in F}f_i\,\tilde b^f_\pi(\underline{\sf e}_i\otimes\underline{\sf e}_i)\\
&=\tilde b^f_\pi\bigg(\sum_{i\in F}f_i\cdot(\underline{\sf e}_i\otimes\underline{\sf e}_i)\bigg)\leq\bigg|\sum_{i\in F}f_i\cdot(\underline{\sf e}_i\otimes\underline{\sf e}_i)\bigg|_\pi.
\end{split}\]
This shows that the operator \(\psi\) is well-defined (thus also \(L^0(\XX)\)-linear by construction) and that it satisfies \(|\psi(\alpha)|_1\leq|\alpha|_\pi\) for all \(\alpha\in M\).
Conversely, for any \(\alpha=\sum_{i\in F}f_i\cdot(\underline{\sf e}_i\otimes\underline{\sf e}_i)\in M\) we have
\[
|\alpha|_\pi\leq\sum_{i\in F}|f_i||\underline{\sf e}_i\otimes\underline{\sf e}_i|_\pi=\sum_{i\in F}|f_i||\underline{\sf e}_i|^2=\sum_{i\in F}|f_i|=|\psi(\alpha)|_1.
\]
All in all, we have shown that \(\psi\) preserves the pointwise norm, thus it can be uniquely extended to a homomorphism \(\bar\psi\in\textsc{Hom}\big(\mathscr M;\ell_1(I,L^0(\XX))\big)\)
that satisfies \(|\bar\psi(\alpha)|_1=|\alpha|_\pi\) for every \(\alpha\in\mathscr M\). Letting \(\phi\colon L^0(\XX;\ell_1(I))\to\ell_1(I,L^0(\XX))\) be the isomorphism given by Corollary
\ref{cor:two_ell1_L0}, we deduce that \(\varphi\coloneqq\phi^{-1}\circ\bar\psi\in\textsc{Hom}\big(\mathscr M;L^0(\XX;\ell_1(I))\big)\) satisfies \(|\varphi(\alpha)|=|\alpha|_\pi\)
for every \(\alpha\in\mathscr M\). Finally, we verify that \(\varphi\) is surjective. Fix any \(a\in L^0(\XX;\ell_1(I))\). Thanks to Lemma \ref{lem:basic_L0_l1(I)}, we can find an increasing
sequence \((F_n)_{n\in\N}\subseteq\mathscr P_f(I)\) with \(a_n\coloneqq\sum_{i\in F_n}a(\cdot)_i\cdot\underline{\sf e}_i\to a\) as \(n\to\infty\). Given that
\[
\varphi\bigg(\sum_{i\in F_n}a(\cdot)_i\cdot(\underline{\sf e}_i\otimes\underline{\sf e}_i)\bigg)=\sum_{i\in F_n}\phi^{-1}\Big(\big(\delta_{ij}a(\cdot)_i\big)_{j\in I}\Big)
=\sum_{i\in F_n}a(\cdot)_i\cdot\underline{\sf e}_i=a_n,
\]
we deduce that \((a_n)_{n\in\N}\subseteq\varphi[M]\), whence it follows that \(a\in\varphi[\mathscr M]\). The proof is complete.
\end{proof}

Finally, we discuss a categorical consequence of Theorem \ref{thm:dual_proj_tensor}. We first need
to introduce the two functors \(\mathscr M\hat\otimes_\pi-\colon{\bf BanMod}_\XX\to{\bf BanMod}_\XX\)
and \(\textsc{Hom}(\mathscr M;-)\colon{\bf BanMod}_\XX\to{\bf BanMod}_\XX\), where \(\mathscr M\) is a
Banach \(L^0(\XX)\)-module. The functors \(\mathscr M\hat\otimes_\pi-\) and \(\textsc{Hom}(\mathscr M;-)\) are given as follows:
\begin{itemize}
\item[\(\rm i)\)] For any object \(\mathscr N\) of \({\bf BanMod}_\XX\), we define
\((\mathscr M\hat\otimes_\pi-)(\mathscr N)\coloneqq\mathscr M\hat\otimes_\pi\mathscr N\).
For any morphism \(T\colon\mathscr N\to\tilde{\mathscr N}\) in \({\bf BanMod}_\XX\), we define the morphism
\((\mathscr M\hat\otimes_\pi-)(T)\colon\mathscr M\hat\otimes_\pi\mathscr N\to\mathscr M\hat\otimes_\pi\tilde{\mathscr N}\)
as \((\mathscr M\hat\otimes_\pi-)(T)\coloneqq{\rm id}_{\mathscr M}\otimes_\pi T\).
\item[\(\rm ii)\)] For any object \(\mathscr Q\) of \({\bf BanMod}_\XX\), we set
\(\textsc{Hom}(\mathscr M;-)(\mathscr Q)\coloneqq\textsc{Hom}(\mathscr M;\mathscr Q)\).
For any morphism \(T\colon\mathscr Q\to\tilde{\mathscr Q}\) in \({\bf BanMod}_\XX\), we set
\(
\textsc{Hom}(\mathscr M;-)(T)\colon\textsc{Hom}(\mathscr M;\mathscr Q)\to\textsc{Hom}(\mathscr M;\tilde{\mathscr Q})
\)
as \(\textsc{Hom}(\mathscr M;-)(T)(S)\coloneqq T\circ S\) for every \(S\in\textsc{Hom}(\mathscr M;\mathscr Q)\).
\end{itemize}
We can now pass to the ensuing result, which states that \(\mathscr M\hat\otimes_\pi-\) is the left adjoint of \(\textsc{Hom}(\mathscr M;-)\):
\begin{proposition}
Let \(\XX\) be a \(\sigma\)-finite measure space and \(\mathscr M\) a Banach \(L^0(\XX)\)-module. Then
\[
(\mathscr M\hat\otimes_\pi-)\dashv\textsc{Hom}(\mathscr M;-).
\]
\end{proposition}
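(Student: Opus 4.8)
The plan is to produce, for every pair of Banach $L^0(\XX)$-modules $\mathscr N$ and $\mathscr Q$, a bijection
\[
\Phi_{\mathscr N,\mathscr Q}\colon\textsc{Hom}(\mathscr M\hat\otimes_\pi\mathscr N;\mathscr Q)\to\textsc{Hom}\big(\mathscr N;\textsc{Hom}(\mathscr M;\mathscr Q)\big)
\]
that is natural in both variables; this is precisely the data of the asserted adjunction. I would obtain $\Phi$ by composing two isomorphisms already available. First, Theorem \ref{thm:univ_prop_proj_tensor} yields the isomorphism ${\rm B}(\mathscr M,\mathscr N;\mathscr Q)\cong\textsc{Hom}(\mathscr M\hat\otimes_\pi\mathscr N;\mathscr Q)$ sending $b$ to its linearisation $\tilde b_\pi$, characterised by $\tilde b_\pi(v\otimes w)=b(v,w)$. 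Second, I would record the (easy) $\mathscr Q$-valued generalisation of Proposition \ref{prop:alternative_B(M,N)}: the map $b\mapsto\big(w\mapsto b(\cdot,w)\big)$ is an isomorphism of Banach $L^0(\XX)$-modules from ${\rm B}(\mathscr M,\mathscr N;\mathscr Q)$ onto $\textsc{Hom}\big(\mathscr N;\textsc{Hom}(\mathscr M;\mathscr Q)\big)$.

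For this second isomorphism I would argue verbatim as in Proposition \ref{prop:alternative_B(M,N)}, replacing the target $L^0(\XX)$ by $\mathscr Q$: from $|b(v,w)|\leq|b||v||w|$ one reads off that $b(\cdot,w)\in\textsc{Hom}(\mathscr M;\mathscr Q)$ with $|b(\cdot,w)|\leq|b||w|$, so $w\mapsto b(\cdot,w)$ is an $L^0(\XX)$-linear continuous map with pointwise norm equal to $|b|$; surjectivity is witnessed by sending $S\in\textsc{Hom}\big(\mathscr N;\textsc{Hom}(\mathscr M;\mathscr Q)\big)$ to the bilinear operator $(v,w)\mapsto S(w)(v)$, which lies in ${\rm B}(\mathscr M,\mathscr N;\mathscr Q)$. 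Composing the two isomorphisms gives the explicit formula
\[
\Phi_{\mathscr N,\mathscr Q}(F)(w)(v)=F(v\otimes w)\quad\text{ for all }F\in\textsc{Hom}(\mathscr M\hat\otimes_\pi\mathscr N;\mathscr Q),\ v\in\mathscr M,\ w\in\mathscr N.
\]

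It then remains to verify naturality. In the variable $\mathscr Q$, for a morphism $T\colon\mathscr Q\to\tilde{\mathscr Q}$ the relevant square relates post-composition by $T$ on the left with the action $S\mapsto T\circ S$ of $\textsc{Hom}(\mathscr M;-)(T)$ on the right; evaluating on an elementary tensor gives $(T\circ F)(v\otimes w)=T\big(F(v\otimes w)\big)$ along both routes, so the square commutes. In the variable $\mathscr N$, for a morphism $g\colon\mathscr N'\to\mathscr N$ the left functor acts by $F\mapsto F\circ(\mathrm{id}_{\mathscr M}\otimes_\pi g)$ and the right by precomposition with $g$; using $(\mathrm{id}_{\mathscr M}\otimes_\pi g)(v\otimes w')=v\otimes g(w')$, both routes send $F$ to the map $(w',v)\mapsto F\big(v\otimes g(w')\big)$, so this square commutes as well. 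Since every element of $\mathscr M\hat\otimes_\pi\mathscr N$ is a limit of finite sums of elementary tensors and all maps in sight are continuous homomorphisms, checking the squares on elementary tensors suffices.

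I expect no genuine obstacle: the whole argument is a bookkeeping exercise built on Theorem \ref{thm:univ_prop_proj_tensor} and Proposition \ref{prop:alternative_B(M,N)}. The only points demanding care are aligning the two naturality squares with the precise definitions of the functors $\mathscr M\hat\otimes_\pi-$ and $\textsc{Hom}(\mathscr M;-)$ on morphisms (in particular the convention $(\mathscr M\hat\otimes_\pi-)(T)={\rm id}_{\mathscr M}\otimes_\pi T$), and confirming that the generalisation of Proposition \ref{prop:alternative_B(M,N)} to $\mathscr Q$-valued bilinear operators goes through unchanged. Alternatively, one could present the adjunction via its unit $\eta_{\mathscr N}\colon\mathscr N\to\textsc{Hom}(\mathscr M;\mathscr M\hat\otimes_\pi\mathscr N)$, $w\mapsto(v\mapsto v\otimes w)$, and counit $\varepsilon_{\mathscr Q}\colon\mathscr M\hat\otimes_\pi\textsc{Hom}(\mathscr M;\mathscr Q)\to\mathscr Q$, $v\otimes S\mapsto S(v)$, checking the two triangle identities; but the hom-set formulation above is the most economical given the tools already developed.
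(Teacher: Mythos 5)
Your proposal is correct and follows essentially the same route as the paper: the paper also defines the hom-set bijection $\Phi_{\mathscr N,\mathscr Q}(F)(w)(v)=F(v\otimes w)$, proves it is an isomorphism using Theorem \ref{thm:univ_prop_proj_tensor} together with the currying correspondence $b\leftrightarrow(w\mapsto b(\cdot,w))$ (your $\mathscr Q$-valued variant of Proposition \ref{prop:alternative_B(M,N)}), and then verifies the naturality squares by evaluating on elementary tensors. Your factorisation of $\Phi$ into two explicit isomorphisms is only a presentational repackaging of the paper's argument, not a different method.
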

\begin{proof}
Our goal is to find a natural isomorphism \(\Phi\colon\textsc{Hom}(\mathscr M\hat\otimes_\pi-;-)\to\textsc{Hom}(-;\textsc{Hom}(\mathscr M;-)),\)
which means that \(\big(\mathscr M\hat\otimes_\pi-,\textsc{Hom}(\mathscr M;-),\Phi\big)\) is a hom-set adjunction. To this aim,
fix two Banach \(L^0(\XX)\)-modules \(\mathscr N\) and \(\mathscr Q\). We define \(\Phi_{\mathscr N,\mathscr Q}\colon
\textsc{Hom}(\mathscr M\hat\otimes_\pi\mathscr N;\mathscr Q)\to\textsc{Hom}(\mathscr N;\textsc{Hom}(\mathscr M;\mathscr Q))\) as
\[
\Phi_{\mathscr N,\mathscr Q}(T)(w)(v)\coloneqq T(v\otimes w)\quad\text{ for every }
T\in\textsc{Hom}(\mathscr M\hat\otimes_\pi\mathscr N;\mathscr Q)\text{ and }(w,v)\in\mathscr N\times\mathscr M.
\]
One can readily check that \(\Phi_{\mathscr N,\mathscr Q}\) is a morphism and \(|\Phi_{\mathscr N,\mathscr Q}|\leq 1\).
On the other hand, let \(L\) be a given element of \(\textsc{Hom}(\mathscr N;\textsc{Hom}(\mathscr M;\mathscr Q))\).
Define \(b^L\colon\mathscr M\times\mathscr N\to\mathscr Q\) as \(b^L(v,w)\coloneqq L(w)(v)\) for every
\((v,w)\in\mathscr M\times\mathscr N\). Since \(b^L\in{\rm B}(\mathscr M,\mathscr N;\mathscr Q)\) and \(|b^L|\leq|L|\),
we know from Theorem \ref{thm:dual_proj_tensor} that the element
\(\tilde b^L_\pi\in\textsc{Hom}(\mathscr M\hat\otimes_\pi\mathscr N;\mathscr Q)\) satisfies \(|\tilde b^L_\pi|\leq|L|\).
Since \(\tilde b^L_\pi(v\otimes w)=b^L(v,w)=L(w)(v)\) for every \((v,w)\in\mathscr M\times\mathscr N\), we deduce that
\(\Phi_{\mathscr N,\mathscr Q}(\tilde b^L_\pi)=L\) and \(|\Phi_{\mathscr N,\mathscr Q}(\tilde b^L_\pi)|=|L|\geq|\tilde b^L_\pi|\).
All in all, we have shown that \(\Phi_{\mathscr N,\mathscr Q}\) is an isomorphism. Let us finally check the naturality of
\(\Phi\). Given any two morphisms \(T\colon\tilde{\mathscr N}\to\mathscr N\) and \(S\colon\mathscr Q\to\tilde{\mathscr Q}\)
in \({\bf BanMod}_\XX\), we consider the morphisms
\[\begin{split}
\textsc{Hom}(\mathscr M\hat\otimes_\pi T;S)&\colon\textsc{Hom}(\mathscr M\hat\otimes_\pi\mathscr N;\mathscr Q)
\to\textsc{Hom}(\mathscr M\hat\otimes_\pi\tilde{\mathscr N};\tilde{\mathscr Q}),\\
\textsc{Hom}(T;\textsc{Hom}(\mathscr M;S))&\colon\textsc{Hom}(\mathscr N;\textsc{Hom}(\mathscr M;\mathscr Q))
\to\textsc{Hom}(\tilde{\mathscr N};\textsc{Hom}(\mathscr M;\tilde{\mathscr Q})),
\end{split}\]
which are given by
\(\textsc{Hom}(\mathscr M\hat\otimes_\pi T;S)(\varphi)\coloneqq S\circ\varphi\circ({\rm id}_{\mathscr M}\otimes_\pi T)\)
for every \(\varphi\in\textsc{Hom}(\mathscr M\hat\otimes_\pi\mathscr N;\mathscr Q)\) and
\(\textsc{Hom}(T;\textsc{Hom}(\mathscr M;S))(\psi)(\tilde w)\coloneqq S\circ(\psi\circ T)(\tilde w)\) for every
\(\psi\in\textsc{Hom}(\mathscr N;\textsc{Hom}(\mathscr M;\mathscr Q))\) and \(\tilde w\in\tilde{\mathscr N}\).
Unwrapping the various definitions, one can see that the following diagram is commutative:
\[\begin{tikzcd}
\textsc{Hom}(\mathscr M\hat\otimes_\pi\mathscr N;\mathscr Q) \arrow[d,swap,"\textsc{Hom}(\mathscr M\hat\otimes_\pi T;S)"]
\arrow[r,"\Phi_{\mathscr N,\mathscr Q}"] & \textsc{Hom}(\mathscr N;\textsc{Hom}(\mathscr M;\mathscr Q))
\arrow[d,"\textsc{Hom}(T;\textsc{Hom}(\mathscr M;S))"] \\
\textsc{Hom}(\mathscr M\hat\otimes_\pi\tilde{\mathscr N};\tilde{\mathscr Q})
\arrow[r,swap,"\Phi_{\tilde{\mathscr N},\tilde{\mathscr Q}}"] &
\textsc{Hom}(\tilde{\mathscr N};\textsc{Hom}(\mathscr M;\tilde{\mathscr Q}))
\end{tikzcd}\]
whence it follows that \(\Phi\) is a natural isomorphism. Consequently, the proof is complete.
\end{proof}

The previous result implies that the functor \(\mathscr M\hat\otimes_\pi-\) is cocontinuous, i.e.\ it preserves colimits.
Notice however that we are considering \(\mathscr M\hat\otimes_\pi-\) as an endofunctor on \({\bf BanMod}_\XX\),
which is only finitely cocomplete, and not on the cocomplete category \({\bf BanMod}_\XX^1\).
\section{Injective tensor products of Banach \texorpdfstring{\(L^0\)}{L0}-modules}\label{s:inj_tens}
\subsection{Definition and main properties}
We begin by introducing the injective pointwise norm:
\begin{theorem}
Let \(\XX\) be a \(\sigma\)-finite measure space. Let \(\mathscr M\), \(\mathscr N\) be Banach \(L^0(\XX)\)-modules. Define
\begin{equation}\label{eq:def_inj_ptwse_norm}
|\alpha|_\varepsilon\coloneqq\bigvee\bigg\{\Big|\sum_{i=1}^n\omega(v_i)\eta(w_i)\Big|\;\bigg|\;
\sum_{i=1}^n v_i\otimes w_i=\alpha,\,\omega\in\mathbb D_{\mathscr M^*},\,\eta\in\mathbb D_{\mathscr N^*}\bigg\}
\end{equation}
for every \(\alpha\in\mathscr M\otimes\mathscr N\). Then \(|\cdot|_\varepsilon\colon\mathscr M\otimes\mathscr N\to L^0(\XX)^+\)
is a pointwise norm on \(\mathscr M\otimes\mathscr N\). Moreover,
\begin{equation}\label{eq:inj_norm_elem_tensor}
|v\otimes w|_\varepsilon=|v||w|\quad\text{ for every }v\in\mathscr M\text{ and }w\in\mathscr N.
\end{equation}
\end{theorem}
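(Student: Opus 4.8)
The plan is to first reduce the defining supremum to one taken over dual elements alone. The crucial preliminary observation is that, by Lemma \ref{lem:crit_null_tensor}, for fixed \(\omega\in\mathscr M^*\) and \(\eta\in\mathscr N^*\) the quantity \(\sum_{i=1}^n\omega(v_i)\eta(w_i)\) is independent of the chosen representation \(\alpha=\sum_{i=1}^n v_i\otimes w_i\): it equals \(\tilde b_{\omega,\eta}(\alpha)\), where \(\tilde b_{\omega,\eta}\) is the \(L^0(\XX)\)-linearisation of the bilinear map \(b_{\omega,\eta}(v,w)\coloneqq\omega(v)\eta(w)\). Writing \(\theta_{\omega,\eta}(\alpha)\coloneqq\tilde b_{\omega,\eta}(\alpha)\), this value is \(L^0(\XX)\)-linear in \(\alpha\) and \(L^0(\XX)\)-bilinear in \((\omega,\eta)\). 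Moreover, fixing any representation of \(\alpha\) and any \(\omega\in\mathbb D_{\mathscr M^*}\), \(\eta\in\mathbb D_{\mathscr N^*}\), one has \(|\theta_{\omega,\eta}(\alpha)|\le\sum_{i=1}^n|\omega||v_i||\eta||w_i|\le\sum_{i=1}^n|v_i||w_i|\in L^0(\XX)^+\); hence the family over which the supremum in \eqref{eq:def_inj_ptwse_norm} is taken is order-bounded, so \(|\alpha|_\varepsilon\in L^0(\XX)^+\) and \(|\alpha|_\varepsilon=\bigvee\{|\theta_{\omega,\eta}(\alpha)|\mid\omega\in\mathbb D_{\mathscr M^*},\,\eta\in\mathbb D_{\mathscr N^*}\}\).

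The subadditivity and the homogeneity then follow from the linearity of \(\theta_{\omega,\eta}\) in the first argument. For the triangle inequality, for each admissible pair \((\omega,\eta)\) I would estimate \(|\theta_{\omega,\eta}(\alpha+\beta)|\le|\theta_{\omega,\eta}(\alpha)|+|\theta_{\omega,\eta}(\beta)|\le|\alpha|_\varepsilon+|\beta|_\varepsilon\) and pass to the supremum over \((\omega,\eta)\). For homogeneity, from \(\theta_{\omega,\eta}(f\cdot\alpha)=f\,\theta_{\omega,\eta}(\alpha)\) I would get \(|\theta_{\omega,\eta}(f\cdot\alpha)|=|f|\,|\theta_{\omega,\eta}(\alpha)|\), and taking the supremum over \((\omega,\eta)\) while using that multiplication by the nonnegative function \(|f|\) distributes over suprema of order-bounded families in the Dedekind complete Riesz space \(L^0(\XX)\), I would conclude \(|f\cdot\alpha|_\varepsilon=|f|\,|\alpha|_\varepsilon\).

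The key step is the positive-definiteness, which is where Lemma \ref{lem:crit_null_tensor} is indispensable. Suppose \(|\alpha|_\varepsilon=0\); then \(\theta_{\omega,\eta}(\alpha)=0\) for every \(\omega\in\mathbb D_{\mathscr M^*}\) and \(\eta\in\mathbb D_{\mathscr N^*}\). To upgrade this to arbitrary dual elements, I would apply the signum map on the Banach \(L^0(\XX)\)-modules \(\mathscr M^*\) and \(\mathscr N^*\): for any \(\omega\in\mathscr M^*\) and \(\eta\in\mathscr N^*\), write \(\omega=|\omega|\cdot{\rm sgn}(\omega)\) and \(\eta=|\eta|\cdot{\rm sgn}(\eta)\), where \({\rm sgn}(\omega)\in\mathbb S_{\mathscr M^*}\subseteq\mathbb D_{\mathscr M^*}\) and \({\rm sgn}(\eta)\in\mathbb S_{\mathscr N^*}\subseteq\mathbb D_{\mathscr N^*}\). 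The \(L^0(\XX)\)-bilinearity of \((\omega,\eta)\mapsto\theta_{\omega,\eta}(\alpha)\) then gives \(\theta_{\omega,\eta}(\alpha)=|\omega||\eta|\,\theta_{{\rm sgn}(\omega),{\rm sgn}(\eta)}(\alpha)=0\), i.e.\ \(\sum_{i=1}^n\omega(v_i)\eta(w_i)=0\) for all \(\omega\in\mathscr M^*\) and \(\eta\in\mathscr N^*\). Lemma \ref{lem:crit_null_tensor} then forces \(\alpha=0\).

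Finally, for the identity \eqref{eq:inj_norm_elem_tensor} the upper bound \(|v\otimes w|_\varepsilon\le|v||w|\) is immediate by testing the defining supremum against the representation \(v\otimes w\), since \(|\omega(v)\eta(w)|\le|\omega||v||\eta||w|\le|v||w|\) whenever \(|\omega|,|\eta|\le 1\). For the reverse inequality I would invoke the Hahn--Banach theorem (Theorem \ref{thm:Hahn-Banach}) to select \(\omega_v\in\mathbb S_{\mathscr M^*}\) with \(\omega_v(v)=|v|\) and \(\eta_w\in\mathbb S_{\mathscr N^*}\) with \(\eta_w(w)=|w|\); as \(\mathbb S_{\mathscr M^*}\subseteq\mathbb D_{\mathscr M^*}\) and \(\mathbb S_{\mathscr N^*}\subseteq\mathbb D_{\mathscr N^*}\), the pair \((\omega_v,\eta_w)\) is admissible in \eqref{eq:def_inj_ptwse_norm}, whence \(|v\otimes w|_\varepsilon\ge|\omega_v(v)\eta_w(w)|=|v||w|\). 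The only genuinely delicate point in the whole argument is the representation-independence secured by Lemma \ref{lem:crit_null_tensor}; everything else is a routine manipulation of suprema in \(L^0(\XX)\).
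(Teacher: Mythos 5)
Your proof is correct and follows essentially the same route as the paper: representation-independence of \(\sum_i\omega(v_i)\eta(w_i)\) via Lemma \ref{lem:crit_null_tensor} (equivalently, the \(L^0(\XX)\)-linearisation), positive-definiteness from that same lemma, and the Hahn--Banach theorem for the lower bound in \eqref{eq:inj_norm_elem_tensor}. The paper dismisses the remaining pointwise-norm axioms as ``readily checked'', and your spelled-out verification --- in particular the signum-map upgrade from unit-disc functionals to arbitrary elements of \(\mathscr M^*\), \(\mathscr N^*\) in the positive-definiteness step --- is exactly the intended argument.
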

\begin{proof}
One can readily check that \(|\cdot|_\varepsilon\) verifies the pointwise norm axioms; the fact that
\(|\alpha|_\varepsilon=0\) implies \(\alpha=0\) is a consequence of Lemma \ref{lem:crit_null_tensor}.
To prove \eqref{eq:inj_norm_elem_tensor}, notice first that Lemma \ref{lem:crit_null_tensor} yields
\[
\bigg|\sum_{i=1}^n\omega(v_i)\eta(w_i)\bigg|=\big|\omega(v)\eta(w)\big|\leq|\omega||v||\eta||w|\leq|v||w|
\]
whenever \(\sum_{i=1}^n v_i\otimes w_i\) is a representation of the tensor \(v\otimes w\) and for all
\((\omega,\eta)\in\mathbb D_{\mathscr M^*}\times\mathbb D_{\mathscr N^*}\). Hence, we obtain that
\(|v\otimes w|_\varepsilon\leq|v||w|\). Conversely, an application of the Hahn--Banach theorem gives
two elements \(\omega_v\in\mathbb S_{\mathscr M^*}\) and \(\eta_w\in\mathbb S_{\mathscr N^*}\) such
that \(\omega_v(v)=|v|\) and \(\eta_w(w)=|w|\). Therefore, we have that
\(|v||w|=\omega_v(v)\eta_w(w)\leq|v\otimes w|_\varepsilon\). All in all, the identity in
\eqref{eq:inj_norm_elem_tensor} is proved.
\end{proof}
\begin{remark}{\rm
Observe that \(|\alpha|_\varepsilon\leq|\alpha|_\pi\) for all \(\alpha\in\mathscr M\otimes\mathscr N\).
Indeed, if \(\alpha=\sum_{i=1}^n v_i\otimes w_i\in\mathscr M\otimes\mathscr N\), \(\omega\in\mathbb D_{\mathscr M^*}\),
and \(\eta\in\mathbb D_{\mathscr N^*}\), then
\(\big|\sum_{i=1}^n\omega(v_i)\eta(w_i)\big|\leq\sum_{i=1}^n|\omega(v_i)||\eta(w_i)|\leq\sum_{i=1}^n|v_i||w_i|\).
\fr}\end{remark}
\begin{definition}[Injective tensor product]
Let \(\XX\) be a \(\sigma\)-finite measure space. Let \(\mathscr M\) and \(\mathscr N\) be Banach \(L^0(\XX)\)-modules.
Then we denote by \(\mathscr M\otimes_\varepsilon\mathscr N\) the normed \(L^0(\XX)\)-module
\((\mathscr M\otimes\mathscr N,|\cdot|_\varepsilon)\), where the pointwise norm \(|\cdot|_\varepsilon\)
is defined as in \eqref{eq:def_inj_ptwse_norm}. Moreover, the \textbf{injective tensor product} of \(\mathscr M\)
and \(\mathscr N\) is the Banach \(L^0(\XX)\)-module \(\mathscr M\hat\otimes_\varepsilon\mathscr N\) defined as
the \(L^0(\XX)\)-completion of \(\mathscr M\otimes_\varepsilon\mathscr N\).
\end{definition}

The injective tensor product can be realised as a Banach \(L^0(\XX)\)-submodule of \({\rm B}(\mathscr M^*,\mathscr N^*)\):
\begin{proposition}
Let \(\XX\) be a \(\sigma\)-finite measure space. Let \(\mathscr M\) and \(\mathscr N\) be Banach \(L^0(\XX)\)-modules.
Given any tensor \(\alpha=\sum_{i=1}^n v_i\otimes w_i\in\mathscr M\otimes\mathscr N\), we define
\(B_\alpha\colon\mathscr M^*\times\mathscr N^*\to L^0(\XX)\) as
\begin{equation}\label{eq:def_B_alpha}
B_\alpha(\omega,\eta)\coloneqq\sum_{i=1}^n\omega(v_i)\eta(w_i)\quad\text{ for every }\omega\in\mathscr M^*\text{ and }\eta\in\mathscr N^*.
\end{equation}
Then \(B_\alpha\) is well-defined and belongs to \({\rm B}(\mathscr M^*,\mathscr N^*)\). Moreover, the resulting operator
\[
\mathscr M\otimes_\varepsilon\mathscr N\ni\alpha\mapsto B_\alpha\in{\rm B}(\mathscr M^*,\mathscr N^*)
\]
can be uniquely extended to an isomorphism of Banach \(L^0(\XX)\)-modules from the injective tensor product
\(\mathscr M\hat\otimes_\varepsilon\mathscr N\) to the closure of \(\{B_\alpha\,:\,\alpha\in\mathscr M\otimes\mathscr N\}\)
in \({\rm B}(\mathscr M^*,\mathscr N^*)\).
\end{proposition}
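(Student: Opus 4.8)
The plan is to show that the assignment $\alpha\mapsto B_\alpha$ is a well-defined, pointwise-norm-preserving homomorphism of normed $L^0(\XX)$-modules from $\mathscr M\otimes_\varepsilon\mathscr N$ into the Banach $L^0(\XX)$-module ${\rm B}(\mathscr M^*,\mathscr N^*)$, and then to invoke the universal property of the completion in order to extend it to the desired isomorphism.

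First I would settle the well-posedness of $B_\alpha$ and its membership in ${\rm B}(\mathscr M^*,\mathscr N^*)$. If $\sum_{i=1}^n v_i\otimes w_i=\sum_{j=1}^m\tilde v_j\otimes\tilde w_j$ are two representations of the same tensor $\alpha$, then their difference is a null tensor, so Lemma \ref{lem:crit_null_tensor} yields $\sum_i\omega(v_i)\eta(w_i)=\sum_j\omega(\tilde v_j)\eta(\tilde w_j)$ for all $\omega\in\mathscr M^*$ and $\eta\in\mathscr N^*$; hence $B_\alpha$ does not depend on the chosen representation. Its bilinearity is immediate from the $L^0(\XX)$-linearity of the evaluations $\omega\mapsto\omega(v_i)$ and $\eta\mapsto\eta(w_i)$, while the estimate
\[
|B_\alpha(\omega,\eta)|\leq\sum_{i=1}^n|\omega(v_i)||\eta(w_i)|\leq|\omega||\eta|\sum_{i=1}^n|v_i||w_i|
\]
exhibits a function $g\in L^0(\XX)^+$ as in \eqref{eq:def_bilin_ptwse_norm}, so that indeed $B_\alpha\in{\rm B}(\mathscr M^*,\mathscr N^*)$.

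The core of the argument is the identity $|B_\alpha|=|\alpha|_\varepsilon$. Since $\sum_i\omega(v_i)\eta(w_i)=B_\alpha(\omega,\eta)$ is representation-independent, the supremum over representations in \eqref{eq:def_inj_ptwse_norm} is redundant and $|\alpha|_\varepsilon=\bigvee\{|B_\alpha(\omega,\eta)|\,:\,\omega\in\mathbb D_{\mathscr M^*},\,\eta\in\mathbb D_{\mathscr N^*}\}$. The inequality $|\alpha|_\varepsilon\leq|B_\alpha|$ is then clear, because $|B_\alpha(\omega,\eta)|\leq|B_\alpha||\omega||\eta|\leq|B_\alpha|$ whenever $|\omega|,|\eta|\leq 1$. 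For the reverse inequality — the step I expect to be the main obstacle — I would normalise arbitrary functionals via the signum map: given any $\omega\in\mathscr M^*$ and $\eta\in\mathscr N^*$, the elements ${\rm sgn}(\omega)\in\mathbb S_{\mathscr M^*}\subseteq\mathbb D_{\mathscr M^*}$ and ${\rm sgn}(\eta)\in\mathbb S_{\mathscr N^*}\subseteq\mathbb D_{\mathscr N^*}$ satisfy
\[
|B_\alpha({\rm sgn}(\omega),{\rm sgn}(\eta))|=\frac{\1_{\{|\omega||\eta|>0\}}|B_\alpha(\omega,\eta)|}{|\omega||\eta|}
\]
by $L^0(\XX)$-bilinearity, the left-hand side being $\leq|\alpha|_\varepsilon$ by the very definition of $|\alpha|_\varepsilon$. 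Taking the join over all couples $(\omega,\eta)$ yields $|B_\alpha|\leq|\alpha|_\varepsilon$, and the claimed identity follows.

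Finally, since $L^0(\XX)$ is complete, ${\rm B}(\mathscr M^*,\mathscr N^*)$ is a Banach $L^0(\XX)$-module, so the map $J\colon\alpha\mapsto B_\alpha$ is a pointwise-norm-preserving (in particular injective) homomorphism from the normed module $\mathscr M\otimes_\varepsilon\mathscr N$ into it. By the universal property of the completion, $J$ extends uniquely to a homomorphism $\hat J\colon\mathscr M\hat\otimes_\varepsilon\mathscr N\to{\rm B}(\mathscr M^*,\mathscr N^*)$ that still preserves the pointwise norm. Its image is complete, hence closed, and contains $\{B_\alpha\,:\,\alpha\in\mathscr M\otimes\mathscr N\}$; conversely, the continuity of $\hat J$ together with the density of $\mathscr M\otimes\mathscr N$ in $\mathscr M\hat\otimes_\varepsilon\mathscr N$ forces the image to be contained in the closure of this set. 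Therefore $\hat J$ is a pointwise-norm-preserving bijection onto the closure of $\{B_\alpha\,:\,\alpha\in\mathscr M\otimes\mathscr N\}$ in ${\rm B}(\mathscr M^*,\mathscr N^*)$, that is, the sought isomorphism of Banach $L^0(\XX)$-modules.
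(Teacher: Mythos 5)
Your proposal is correct and follows the same route as the paper: the paper's own proof consists of the single remark that well-posedness follows from Lemma \ref{lem:crit_null_tensor} ``while the rest is straightforward'', and your argument is precisely a careful expansion of that — the same well-posedness step, followed by the identity \(|B_\alpha|=|\alpha|_\varepsilon\) (your signum-normalisation argument is a clean way to get the inequality \(|B_\alpha|\leq|\alpha|_\varepsilon\)) and the standard extension-to-the-completion argument. Nothing to correct.
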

\begin{proof}
The well-posedness of \eqref{eq:def_B_alpha} follows from Lemma \ref{lem:crit_null_tensor}, while the rest is straightforward.
\end{proof}

The following result provides other two representations of the injective tensor product \(\mathscr M\hat\otimes_\varepsilon\mathscr N\):
\begin{proposition}\label{prop:alt_inj_with_Hom}
Let \(\XX\) be a \(\sigma\)-finite measure space. Let \(\mathscr M\) and \(\mathscr N\) be Banach \(L^0(\XX)\)-modules.
Given any tensor \(\alpha=\sum_{i=1}^n v_i\otimes w_i\in\mathscr M\otimes\mathscr N\), we define
\(L_\alpha\colon\mathscr M^*\to\mathscr N\) and \(R_\alpha\colon\mathscr N^*\to\mathscr M\) as
\[
L_\alpha(\omega)\coloneqq\sum_{i=1}^n\omega(v_i)\cdot w_i,\quad R_\alpha(\eta)\coloneqq\sum_{i=1}^n\eta(w_i)\cdot v_i
\quad\text{ for every }\omega\in\mathscr M^*\text{ and }\eta\in\mathscr N^*,
\]
respectively. Then \(L_\alpha\in\textsc{Hom}(\mathscr M^*;\mathscr N)\) and \(R_\alpha\in\textsc{Hom}(\mathscr N^*;\mathscr M)\).
Moreover, the resulting maps
\[
\mathscr M\otimes_\varepsilon\mathscr N\ni\alpha\mapsto L_\alpha\in\textsc{Hom}(\mathscr M^*;\mathscr N),
\qquad\mathscr M\otimes_\varepsilon\mathscr N\ni\alpha\mapsto R_\alpha\in\textsc{Hom}(\mathscr N^*;\mathscr M)
\]
can be uniquely extended to pointwise norm preserving homomorphisms defined on \(\mathscr M\hat\otimes_\varepsilon\mathscr N\).
\end{proposition}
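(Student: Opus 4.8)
The plan is to prove the statement for $L_\alpha$; the argument for $R_\alpha$ is symmetric (just swap the roles of $\mathscr M$ and $\mathscr N$). I would proceed in three stages: first show $L_\alpha$ is well-defined, then show it is a homomorphism of Banach $L^0(\XX)$-modules, and finally establish that the assignment $\alpha\mapsto L_\alpha$ preserves the pointwise norm and hence extends uniquely to the completion.

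\textbf{Well-posedness.} The map $L_\alpha$ is defined through a representation $\alpha=\sum_{i=1}^n v_i\otimes w_i$, so I must check independence of this choice. By linearity it suffices to show that if $\sum_{i=1}^n v_i\otimes w_i=0$ then $\sum_{i=1}^n\omega(v_i)\cdot w_i=0$ for every $\omega\in\mathscr M^*$. This is precisely the content of Corollary \ref{cor:null_tensor_conseq} (the equivalence of i) and ii)). Thus $L_\alpha(\omega)=\sum_{i=1}^n\omega(v_i)\cdot w_i$ is well-defined for each $\omega\in\mathscr M^*$, and $L_\alpha\colon\mathscr M^*\to\mathscr N$ is $L^0(\XX)$-linear by construction.

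\textbf{$L_\alpha$ is a homomorphism.} For any $\omega\in\mathscr M^*$ I can estimate
\[
|L_\alpha(\omega)|=\bigg|\sum_{i=1}^n\omega(v_i)\cdot w_i\bigg|\leq\sum_{i=1}^n|\omega(v_i)||w_i|\leq|\omega|\sum_{i=1}^n|v_i||w_i|,
\]
so $|L_\alpha(\omega)|\leq g|\omega|$ with $g\coloneqq\sum_{i=1}^n|v_i||w_i|\in L^0(\XX)^+$, which shows $L_\alpha\in\textsc{Hom}(\mathscr M^*;\mathscr N)$.

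\textbf{Pointwise norm preservation.} This is the main point. I want $|L_\alpha|=|\alpha|_\varepsilon$ for every $\alpha\in\mathscr M\otimes\mathscr N$. Unwinding definitions, $|L_\alpha|=\bigvee_{\omega\in\mathbb D_{\mathscr M^*}}|L_\alpha(\omega)|$, and by the Hahn--Banach theorem (Theorem \ref{thm:Hahn-Banach}, giving a norming family of functionals on $\mathscr N$) I can write $|L_\alpha(\omega)|=\bigvee_{\eta\in\mathbb D_{\mathscr N^*}}|\eta(L_\alpha(\omega))|=\bigvee_{\eta\in\mathbb D_{\mathscr N^*}}\big|\sum_{i=1}^n\omega(v_i)\eta(w_i)\big|$. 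Taking the further supremum over $\omega\in\mathbb D_{\mathscr M^*}$ and comparing with \eqref{eq:def_inj_ptwse_norm} yields exactly $|L_\alpha|=|\alpha|_\varepsilon$; here I must be slightly careful that the supremum over representations in \eqref{eq:def_inj_ptwse_norm} is harmless, since $L_\alpha$ is already known to be representation-independent and the quantity $\big|\sum_i\omega(v_i)\eta(w_i)\big|$ equals $|\eta(L_\alpha(\omega))|$ for any fixed representation. The hardest part is keeping the suprema and the Dedekind-completeness bookkeeping clean, in particular exchanging the two essential suprema over $\omega$ and $\eta$, which is justified because the lattice operations in $L^0(\XX)$ commute with arbitrary suprema of order-bounded families. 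Since $\alpha\mapsto L_\alpha$ is $L^0(\XX)$-linear and pointwise-norm preserving on the dense submodule $\mathscr M\otimes_\varepsilon\mathscr N$, it admits a unique continuous extension to $\mathscr M\hat\otimes_\varepsilon\mathscr N$ that is again a pointwise norm preserving homomorphism, as claimed.
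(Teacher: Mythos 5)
Your proposal is correct and follows essentially the same route as the paper: well-posedness via Corollary \ref{cor:null_tensor_conseq}, the elementary estimate \(|L_\alpha(\omega)|\leq\big(\sum_{i=1}^n|v_i||w_i|\big)|\omega|\) for membership in \(\textsc{Hom}(\mathscr M^*;\mathscr N)\), and the identity \(|L_\alpha|=\bigvee_{\omega\in\mathbb D_{\mathscr M^*}}\bigvee_{\eta\in\mathbb D_{\mathscr N^*}}\big|\sum_{i=1}^n\omega(v_i)\eta(w_i)\big|=|\alpha|_\varepsilon\) via the Hahn--Banach theorem, followed by extension to the completion. Your extra remarks (that the supremum over representations in \eqref{eq:def_inj_ptwse_norm} is harmless by representation-independence, and that iterated and joint suprema agree) are correct and merely make explicit what the paper leaves implicit.
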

\begin{proof}
We consider only \(L_\alpha\), the proof for \(R_\alpha\) being analogous. The well-posedness of \(L_\alpha\) follows
from Corollary \ref{cor:null_tensor_conseq}. It is then easy to check that \(L_\alpha\in\textsc{Hom}(\mathscr M^*;\mathscr N)\)
for every \(\alpha\in\mathscr M\otimes\mathscr N\) and that
\(\mathscr M\otimes_\varepsilon\mathscr N\ni\alpha\mapsto L_\alpha\in\textsc{Hom}(\mathscr M^*;\mathscr N)\)
is a homomorphism of normed \(L^0(\XX)\)-modules. Also,
\[
|L_\alpha|=\bigvee_{\omega\in\mathbb D_{\mathscr M^*}}|L_\alpha(\omega)|
=\bigvee_{\omega\in\mathbb D_{\mathscr M^*}}\bigvee_{\eta\in\mathbb D_{\mathscr N^*}}\big|\eta\big(L_\alpha(\omega)\big)\big|
=\bigvee_{\omega\in\mathbb D_{\mathscr M^*}}\bigvee_{\eta\in\mathbb D_{\mathscr N^*}}\bigg|\sum_{i=1}^n\omega(v_i)\eta(w_i)\bigg|=|\alpha|_\varepsilon
\]
for every \(\alpha=\sum_{i=1}^n v_i\otimes w_i\in\mathscr M\otimes_\varepsilon\mathscr N\) by the Hahn--Banach theorem.
The statement follows.
\end{proof}
\begin{corollary}\label{cor:inj_norm_with_norming_sets}
Let \(\XX\) be a \(\sigma\)-finite measure space. Let \(\mathscr M\), \(\mathscr N\) be Banach \(L^0(\XX)\)-modules.
Let \(\mathcal T\) and \(\mathcal S\) be norming subsets of \(\mathscr M^*\) and \(\mathscr N^*\), respectively.
Then it holds that
\[
|\alpha|_\varepsilon=\bigvee_{\omega\in\mathcal T}\bigg|\sum_{i=1}^n\omega(v_i)\cdot w_i\bigg|=\bigvee_{\eta\in\mathcal S}\bigg|\sum_{i=1}^n\eta(w_i)\cdot v_i\bigg|
\quad\text{ for every }\alpha=\sum_{i=1}^n v_i\otimes w_i\in\mathscr M\otimes_\varepsilon\mathscr N.
\]
\end{corollary}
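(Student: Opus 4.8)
The plan is to deduce both identities from Proposition~\ref{prop:alt_inj_with_Hom}, which already provides, for any representation $\alpha=\sum_{i=1}^n v_i\otimes w_i$, the homomorphisms $L_\alpha\in\textsc{Hom}(\mathscr M^*;\mathscr N)$ and $R_\alpha\in\textsc{Hom}(\mathscr N^*;\mathscr M)$ together with the norm identities $|L_\alpha|=|R_\alpha|=|\alpha|_\varepsilon$. Since the two claimed identities are symmetric under exchanging $(\mathscr M,\mathcal T)$ with $(\mathscr N,\mathcal S)$, it suffices to prove the first one, namely $|\alpha|_\varepsilon=\bigvee_{\omega\in\mathcal T}|L_\alpha(\omega)|$, where $L_\alpha(\omega)=\sum_{i=1}^n\omega(v_i)\cdot w_i$. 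The first preparatory step is an elementary observation about norming subsets: if $\mathcal T\subseteq\mathbb D_{\mathscr M^*}$ is norming and $x\in\mathscr M$, then $\bigvee_{\omega\in\mathcal T}|\omega(x)|=|x|$. Indeed, $\bigvee_{\omega\in\mathcal T}\omega(x)=|x|$ by definition of a norming subset, while $\omega(x)\leq|\omega(x)|\leq|\omega||x|\leq|x|$ for every $\omega\in\mathcal T$; taking the supremum over $\mathcal T$ sandwiches $\bigvee_{\omega\in\mathcal T}|\omega(x)|$ between $|x|$ and $|x|$.

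The key step is the following computation. For every $\omega\in\mathscr M^*$ and $\eta\in\mathscr N^*$ the definitions of $L_\alpha$ and $R_\alpha$ yield the ``Fubini'' identity
\[
\eta\big(L_\alpha(\omega)\big)=\sum_{i=1}^n\omega(v_i)\eta(w_i)=\omega\big(R_\alpha(\eta)\big).
\]
Because $\mathbb D_{\mathscr N^*}$ is a norming subset of $\mathscr N^*$ by the Hahn--Banach theorem (Theorem~\ref{thm:Hahn-Banach}), we may expand and rewrite
\[
\bigvee_{\omega\in\mathcal T}|L_\alpha(\omega)|=\bigvee_{\omega\in\mathcal T}\,\bigvee_{\eta\in\mathbb D_{\mathscr N^*}}\big|\eta\big(L_\alpha(\omega)\big)\big|=\bigvee_{\omega\in\mathcal T}\,\bigvee_{\eta\in\mathbb D_{\mathscr N^*}}\big|\omega\big(R_\alpha(\eta)\big)\big|.
\]
Every term on the right is dominated by $|R_\alpha(\eta)|\leq|R_\alpha|=|\alpha|_\varepsilon\in L^0(\XX)^+$, so the whole family is order-bounded and, in the Dedekind complete lattice $L^0(\XX)$, the two suprema may be interchanged. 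Freezing $\eta$ and applying the preparatory observation to $x\coloneqq R_\alpha(\eta)\in\mathscr M$ gives $\bigvee_{\omega\in\mathcal T}|\omega(R_\alpha(\eta))|=|R_\alpha(\eta)|$, whence
\[
\bigvee_{\omega\in\mathcal T}|L_\alpha(\omega)|=\bigvee_{\eta\in\mathbb D_{\mathscr N^*}}|R_\alpha(\eta)|=|R_\alpha|=|\alpha|_\varepsilon,
\]
the last equality again being Proposition~\ref{prop:alt_inj_with_Hom}.

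The real content of the argument is the decision to route through the \emph{other} representation $R_\alpha$: trying to show directly that $\mathcal T$ suffices to compute the vector-valued supremum $|L_\alpha|=\bigvee_{\omega\in\mathbb D_{\mathscr M^*}}|L_\alpha(\omega)|$ would require controlling a supremum of $\mathscr N$-valued quantities, which the norming property of $\mathcal T$ does not govern; the Fubini identity converts this into a supremum of scalar evaluations $\omega(R_\alpha(\eta))$, which norming \emph{does} control. The only technical point requiring care is the interchange of the suprema over $\mathcal T$ and $\mathbb D_{\mathscr N^*}$, which is legitimate precisely because of the order-bound $|\alpha|_\varepsilon$ placing every term in $L^0(\XX)^+$ rather than merely in $L^0_{\rm ext}(\XX)^+$. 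Finally, the second identity $|\alpha|_\varepsilon=\bigvee_{\eta\in\mathcal S}\big|\sum_{i=1}^n\eta(w_i)\cdot v_i\big|=\bigvee_{\eta\in\mathcal S}|R_\alpha(\eta)|$ follows verbatim after exchanging the roles of $L_\alpha$ and $R_\alpha$ and of $\mathcal T$ and $\mathcal S$.
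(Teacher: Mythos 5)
Your proof is correct and takes essentially the same route as the paper's: the paper's (one-line) proof also reduces the corollary to Proposition \ref{prop:alt_inj_with_Hom}, asserting without further detail the identities \(|L_\alpha|=\bigvee_{\omega\in\mathcal T}|L_\alpha(\omega)|\) and \(|R_\alpha|=\bigvee_{\eta\in\mathcal S}|R_\alpha(\eta)|\). Your ``Fubini'' identity \(\eta(L_\alpha(\omega))=\omega(R_\alpha(\eta))\), combined with the norming property and the interchange of the two (order-bounded) suprema, is exactly the verification of those asserted identities, so your write-up is a faithful, more detailed rendering of the paper's argument.
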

\begin{proof}
Since \(|L_\alpha|=\bigvee_{\omega\in\mathcal T}|L_\alpha(\omega)|\) and \(|R_\alpha|=\bigvee_{\eta\in\mathcal S}|R_\alpha(\eta)|\),
it follows from Proposition \ref{prop:alt_inj_with_Hom}.
\end{proof}

Let us now consider the injective tensor product of homomorphisms of Banach \(L^0(\XX)\)-modules:
\begin{proposition}[Injective tensor products of homomorphisms]
Let \(\XX\) be a \(\sigma\)-finite measure space.
Let \(T\colon\mathscr M\to\tilde{\mathscr M}\) and \(S\colon\mathscr N\to\tilde{\mathscr N}\) be homomorphisms
of Banach \(L^0(\XX)\)-modules. Then there exists a unique homomorphism of Banach \(L^0(\XX)\)-modules
\(T\otimes_\varepsilon S\colon\mathscr M\hat\otimes_\varepsilon\mathscr N\to\tilde{\mathscr M}\hat\otimes_\varepsilon\tilde{\mathscr N}\) with
\[
(T\otimes_\varepsilon S)(v\otimes w)=T(v)\otimes S(w)\quad\text{ for every }v\in\mathscr M\text{ and }w\in\mathscr N.
\]
Moreover, it holds that \(|T\otimes_\varepsilon S|=|T||S|\).
\end{proposition}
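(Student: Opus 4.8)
The plan is to mirror the proof of Proposition \ref{prop:prod_norm_tens_hom}, replacing the projective pointwise norm by the injective one; the essential new input is the way dual functionals are transported through $T$ and $S$ by composition. First I would invoke Lemma \ref{lem:alg_tensor_hom} to obtain the unique $L^0(\XX)$-linear map $T\otimes S\colon\mathscr M\otimes\mathscr N\to\tilde{\mathscr M}\otimes\tilde{\mathscr N}$ with $(T\otimes S)(v\otimes w)=T(v)\otimes S(w)$. The entire statement then reduces to the bound $|(T\otimes S)(\alpha)|_\varepsilon\le|T||S|\,|\alpha|_\varepsilon$ for every $\alpha\in\mathscr M\otimes\mathscr N$ (which yields a unique extension to the completions, hence the existence and uniqueness of $T\otimes_\varepsilon S$ with $|T\otimes_\varepsilon S|\le|T||S|$), together with the reverse inequality $|T||S|\le|T\otimes_\varepsilon S|$.

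For the estimate I would fix a representation $\alpha=\sum_{i=1}^n v_i\otimes w_i$, which induces the representation $(T\otimes S)(\alpha)=\sum_i T(v_i)\otimes S(w_i)$. By Lemma \ref{lem:crit_null_tensor}, the scalar $\sum_i\tilde\omega(T(v_i))\tilde\eta(S(w_i))$ does not depend on the chosen representation, so by \eqref{eq:def_inj_ptwse_norm} the quantity $|(T\otimes S)(\alpha)|_\varepsilon$ is the supremum over $\tilde\omega\in\mathbb D_{\tilde{\mathscr M}^*}$ and $\tilde\eta\in\mathbb D_{\tilde{\mathscr N}^*}$ of $\big|\sum_i\tilde\omega(T(v_i))\tilde\eta(S(w_i))\big|$. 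The key move is to rewrite this via the composite functionals $\omega\coloneqq\tilde\omega\circ T\in\mathscr M^*$ and $\eta\coloneqq\tilde\eta\circ S\in\mathscr N^*$, which satisfy $|\omega|\le|\tilde\omega||T|\le|T|$ and $|\eta|\le|S|$, and for which $\sum_i\tilde\omega(T(v_i))\tilde\eta(S(w_i))=\sum_i\omega(v_i)\eta(w_i)$.

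Here lies the main obstacle, and the only genuinely module-theoretic point (it is trivial for Banach spaces, where $|T|,|S|$ are constants): the operator norms $|T|$ and $|S|$ are $L^0(\XX)$-valued and may vanish on parts of $\X$, so $\omega,\eta$ need not lie in the unit discs. To handle this I would normalise by setting $\hat\omega\coloneqq\frac{\1_{\{|T|>0\}}}{|T|}\omega\in\mathbb D_{\mathscr M^*}$ and $\hat\eta\coloneqq\frac{\1_{\{|S|>0\}}}{|S|}\eta\in\mathbb D_{\mathscr N^*}$. On $\{|T|>0\}\cap\{|S|>0\}$ one has $\omega=|T|\hat\omega$ and $\eta=|S|\hat\eta$, whence $\sum_i\omega(v_i)\eta(w_i)=|T||S|\sum_i\hat\omega(v_i)\hat\eta(w_i)$; on the complement $|\omega|\le|T|=0$ (resp.\ $|\eta|\le|S|=0$) forces $\omega$ (resp.\ $\eta$) to vanish there, so $\sum_i\omega(v_i)\eta(w_i)=0$ on that set. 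Combining the two cases gives $\big|\sum_i\omega(v_i)\eta(w_i)\big|\le|T||S|\big|\sum_i\hat\omega(v_i)\hat\eta(w_i)\big|\le|T||S|\,|\alpha|_\varepsilon$ by \eqref{eq:def_inj_ptwse_norm}, and passing to the supremum over $\tilde\omega,\tilde\eta$ yields the required bound.

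Finally, for the reverse inequality I would argue exactly as in Proposition \ref{prop:prod_norm_tens_hom}. Using $|T|=\bigvee_{v\in\mathbb S_{\mathscr M}}|T(v)|$, $|S|=\bigvee_{w\in\mathbb S_{\mathscr N}}|S(w)|$ and the identity \eqref{eq:inj_norm_elem_tensor}, one computes
\[
|T||S|=\bigvee_{v\in\mathbb S_{\mathscr M}}\bigvee_{w\in\mathbb S_{\mathscr N}}|T(v)\otimes S(w)|_\varepsilon=\bigvee_{v,w}|(T\otimes_\varepsilon S)(v\otimes w)|_\varepsilon\le|T\otimes_\varepsilon S|\bigvee_{v,w}|v||w|\le|T\otimes_\varepsilon S|,
\]
the last step using $|v||w|\le 1$ for $v\in\mathbb S_{\mathscr M}$ and $w\in\mathbb S_{\mathscr N}$. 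Together with the forward bound this gives $|T\otimes_\varepsilon S|=|T||S|$, completing the proof.
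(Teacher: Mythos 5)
Your proof is correct and takes essentially the same route as the paper: the forward bound rests on exactly the same normalisation trick, namely that \(\frac{\1_{\{|T|>0\}}}{|T|}\cdot(\tilde\omega\circ T)\in\mathbb D_{\mathscr M^*}\) and \(\frac{\1_{\{|S|>0\}}}{|S|}\cdot(\tilde\eta\circ S)\in\mathbb D_{\mathscr N^*}\), and the reverse inequality is obtained by repeating the argument of Proposition \ref{prop:prod_norm_tens_hom}, just as the paper does. Your explicit case analysis on \(\{|T|>0\}\cap\{|S|>0\}\) and its complement merely spells out what the paper leaves implicit in that one-line justification.
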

\begin{proof}
Let \(T\otimes S\colon\mathscr M\otimes\mathscr N\to\tilde{\mathscr M}\otimes\tilde{\mathscr N}\) be as in Lemma \ref{lem:alg_tensor_hom}.
If \(\alpha=\sum_{i=1}^n v_i\otimes w_i\in\mathscr M\otimes\mathscr N\), then
\[\begin{split}
\big|(T\otimes S)(\alpha)\big|_\varepsilon&=\bigvee\bigg\{\Big|\sum_{i=1}^n\tilde\omega(T(v_i))\tilde\eta(S(w_i))\Big|\;
\bigg|\;\tilde\omega\in\mathbb D_{\tilde{\mathscr M}^*},\,\tilde\eta\in\mathbb D_{\tilde{\mathscr N}^*}\bigg\}\\
&\leq|T||S|\bigvee\bigg\{\Big|\sum_{i=1}^n\omega(v_i)\eta(w_i)\Big|\;\bigg|\;\omega\in\mathbb D_{\mathscr M^*},\,\eta\in\mathbb D_{\mathscr N^*}\bigg\}=|T||S||\alpha|_\varepsilon,
\end{split}\]
where we used the fact that \(\frac{\1_{\{|T|>0\}}}{|T|}\cdot(\tilde\omega\circ T)\in\mathbb D_{\mathscr M^*}\) and
\(\frac{\1_{\{|S|>0\}}}{|S|}\cdot(\tilde\eta\circ S)\in\mathbb D_{\mathscr N^*}\).
It follows that the operator \(T\otimes S\colon\mathscr M\otimes\mathscr N\to\tilde{\mathscr M}\otimes\tilde{\mathscr N}\)
can be uniquely extended to a homomorphism of Banach \(L^0(\XX)\)-modules
\(T\otimes_\varepsilon S\colon\mathscr M\hat\otimes_\varepsilon\mathscr N\to\tilde{\mathscr M}\hat\otimes_\varepsilon\tilde{\mathscr N}\)
satisfying \(|T\otimes_\varepsilon S|\leq|T||S|\). Finally, the validity of the converse inequality
\(|T\otimes_\varepsilon S|\geq|T||S|\) can be proved arguing as in Proposition \ref{prop:prod_norm_tens_hom}.
\end{proof}

One can easily check that \(L^0(\XX)\hat\otimes_\varepsilon L^0(\XX)=L^0(\XX)\otimes_\varepsilon L^0(\XX)\cong L^0(\XX)\) as Banach \(L^0(\XX)\)-modules via
\(L^0(\XX)\otimes_\varepsilon L^0(\XX)\ni\sum_{i=1}^n f_i\otimes g_i\mapsto\sum_{i=1}^n f_i g_i\in L^0(\XX)\). In particular, up to this identification,
\[
\omega\otimes_\varepsilon\eta\in(\mathscr M\hat\otimes_\varepsilon\mathscr N)^*,\quad|\omega\otimes_\varepsilon\eta|=|\omega||\eta|\quad\text{ for every }\omega\in\mathscr M^*\text{ and }\eta\in\mathscr N^*.
\]
\begin{lemma}\label{lem:generators_inj_tensor}
Let \(\XX\) be a \(\sigma\)-finite measure space and \(\mathscr M\), \(\mathscr N\) Banach \(L^0(\XX)\)-modules.
Let \(G\subseteq\mathscr M\) and \(H\subseteq\mathscr N\) be generating subsets. Then the set
\(\{v\otimes w\;\big|\;v\in G,\,w\in H\}\) generates \(\mathscr M\hat\otimes_\varepsilon\mathscr N\).
\end{lemma}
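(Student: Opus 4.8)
The plan is to transcribe the argument used for the projective analogue, Lemma \ref{lem:generators_proj_tensor}, since the only properties of the pointwise norm it relies upon are the triangle inequality and the value of the norm on elementary tensors, both of which hold equally for \(|\cdot|_\varepsilon\). First I would recall that, by construction, \(\mathscr M\hat\otimes_\varepsilon\mathscr N\) is the \(L^0(\XX)\)-completion of \((\mathscr M\otimes\mathscr N,|\cdot|_\varepsilon)\), so the linear span of the elementary tensors is dense in it. Consequently it suffices to approximate an arbitrary elementary tensor \(v\otimes w\), with \(v\in\mathscr M\) and \(w\in\mathscr N\), by elements of the \(L^0(\XX)\)-submodule generated by \(\{\tilde v\otimes\tilde w\mid\tilde v\in G,\,\tilde w\in H\}\).

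Next, using that \(G\) generates \(\mathscr M\) and \(H\) generates \(\mathscr N\), I would select sequences \((v_n)_{n\in\N}\subseteq\mathscr M\) and \((w_n)_{n\in\N}\subseteq\mathscr N\), each an \(L^0(\XX)\)-linear combination of elements of \(G\), respectively of \(H\), with \(|v_n-v|\to 0\) and \(|w_n-w|\to 0\) in the \(\mm\)-a.e.\ sense. By the \(L^0(\XX)\)-bilinearity of \(\otimes\), each product \(v_n\otimes w_n\) then lies in the \(L^0(\XX)\)-submodule generated by \(\{\tilde v\otimes\tilde w\mid\tilde v\in G,\,\tilde w\in H\}\).

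The final step is the norm estimate. Using the triangle inequality for \(|\cdot|_\varepsilon\) together with \eqref{eq:inj_norm_elem_tensor}, I would bound
\[
|v\otimes w-v_n\otimes w_n|_\varepsilon\leq|v\otimes w-v_n\otimes w|_\varepsilon+|v_n\otimes w-v_n\otimes w_n|_\varepsilon=|v-v_n||w|+|v_n||w-w_n|.
\]
Since \(|v_n-v|\to 0\) a.e.\ forces \(|v_n|\to|v|\) a.e., the right-hand side converges to \(0\) in the \(\mm\)-a.e.\ sense, whence \(v_n\otimes w_n\to v\otimes w\) in \(\mathscr M\hat\otimes_\varepsilon\mathscr N\). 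This yields the claim.

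There is essentially no obstacle here: the proof is a verbatim adaptation of the projective case, the sole substitution being \eqref{eq:inj_norm_elem_tensor} in place of \eqref{eq:proj_norm_elem_tensor}. The one point worth a moment's care is the passage \(|v_n||w-w_n|\to 0\), which requires noting that the a.e.\ convergence \(|v_n|\to|v|\) (immediate from the reverse triangle inequality) keeps the first factor finite while the second tends to zero; the product of an a.e.-convergent and an a.e.-null family is then a.e.-null.
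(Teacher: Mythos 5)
Your proposal is correct and coincides with the paper's own proof: the paper proves Lemma \ref{lem:generators_inj_tensor} by stating that one should argue exactly as in Lemma \ref{lem:generators_proj_tensor}, replacing \(|\cdot|_\pi\) with \(|\cdot|_\varepsilon\), which is precisely the transcription you carried out (density of the span of elementary tensors, approximation of \(v\) and \(w\) by \(L^0(\XX)\)-linear combinations of elements of \(G\) and \(H\), and the triangle inequality together with \eqref{eq:inj_norm_elem_tensor}). Your closing remark on why \(|v_n||w-w_n|\to 0\) \(\mm\)-a.e.\ is a valid justification of a step the paper leaves implicit.
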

\begin{proof}
Argue exactly as in the proof of Lemma \ref{lem:generators_proj_tensor}, by just replacing \(|\cdot|_\pi\) with \(|\cdot|_\varepsilon\).
\end{proof}
\subsection{Pointwise crossnorms}\label{s:crossnorms}
Let us now introduce a class of `tensor product pointwise norms':
\begin{definition}[Reasonable pointwise crossnorm]\label{def:crossnorm}
Let \(\XX\) be a \(\sigma\)-finite measure space. Let \(\mathscr M\) and \(\mathscr N\) be Banach \(L^0(\XX)\)-modules.
Then a pointwise norm \(|\cdot|_c\) on \(\mathscr M\otimes\mathscr N\) is said to be a \textbf{reasonable pointwise crossnorm}
provided the following properties are verified:
\begin{itemize}
\item[\(\rm i)\)] \(|v\otimes w|_c\leq|v||w|\) for every \(v\in\mathscr M\) and \(w\in\mathscr N\).
\item[\(\rm ii)\)] \(\omega\otimes\eta\in(\mathscr M\otimes\mathscr N)_c^*\) and \(|\omega\otimes\eta|_{c^*}\leq|\omega||\eta|\)
for every \(\omega\in\mathscr M^*\) and \(\eta\in\mathscr N^*\), where we denote by \(((\mathscr M\otimes\mathscr N)_c^*,|\cdot|_{c^*})\)
the dual of the normed \(L^0(\XX)\)-module \((\mathscr M\otimes\mathscr N,|\cdot|_c)\).
\end{itemize}
\end{definition}

The projective pointwise norm and the injective pointwise norm are examples of reasonable pointwise crossnorms.
In fact, they are the `greatest' and the `least' crossnorms, respectively:
\begin{theorem}[Characterisation of reasonable pointwise crossnorms]\label{thm:char_crossnorm}
Let \(\XX\) be a \(\sigma\)-finite measure space. Let \(\mathscr M\) and \(\mathscr N\) be Banach \(L^0(\XX)\)-modules.
Let \(|\cdot|_c\) be a given pointwise norm on \(\mathscr M\otimes\mathscr N\). Then \(|\cdot|_c\) is a reasonable pointwise crossnorm if and only if
\begin{equation}\label{eq:char_crossnorm}
|\alpha|_\varepsilon\leq|\alpha|_c\leq|\alpha|_\pi\quad\text{ for every }\alpha\in\mathscr M\otimes\mathscr N.
\end{equation}
\end{theorem}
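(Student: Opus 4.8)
The plan is to prove the two implications of the equivalence separately. Both directions rest on the explicit formulae \eqref{eq:def_proj_ptwse_norm} and \eqref{eq:def_inj_ptwse_norm}, on the identity $|v\otimes w|_\pi=|v||w|$ from \eqref{eq:proj_norm_elem_tensor}, and on the fact that each functional $\omega\otimes\eta$ (the $L^0(\XX)$-linearisation of $(v,w)\mapsto\omega(v)\eta(w)$) is well defined independently of the chosen representation of a tensor, which is the content of Lemma \ref{lem:crit_null_tensor}.

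First I would show that a reasonable pointwise crossnorm $|\cdot|_c$ satisfies \eqref{eq:char_crossnorm}. For the upper bound, given any representation $\alpha=\sum_{i=1}^n v_i\otimes w_i$, the triangle inequality for $|\cdot|_c$ together with property i) of Definition \ref{def:crossnorm} yields $|\alpha|_c\leq\sum_{i=1}^n|v_i\otimes w_i|_c\leq\sum_{i=1}^n|v_i||w_i|$; passing to the infimum over all representations gives $|\alpha|_c\leq|\alpha|_\pi$. For the lower bound, I would fix $\omega\in\mathbb D_{\mathscr M^*}$ and $\eta\in\mathbb D_{\mathscr N^*}$: property ii) gives $\omega\otimes\eta\in(\mathscr M\otimes\mathscr N)_c^*$ with $|\omega\otimes\eta|_{c^*}\leq|\omega||\eta|\leq 1$, whence $\big|\sum_{i=1}^n\omega(v_i)\eta(w_i)\big|=|(\omega\otimes\eta)(\alpha)|\leq|\alpha|_c$ for every representation. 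Taking the supremum over representations and over $(\omega,\eta)\in\mathbb D_{\mathscr M^*}\times\mathbb D_{\mathscr N^*}$ reproduces exactly the definition of $|\alpha|_\varepsilon$, so $|\alpha|_\varepsilon\leq|\alpha|_c$.

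Conversely, assuming \eqref{eq:char_crossnorm}, property i) is immediate, since $|v\otimes w|_c\leq|v\otimes w|_\pi=|v||w|$ by \eqref{eq:proj_norm_elem_tensor}. The heart of the matter is property ii), where I must upgrade the control of $\omega\otimes\eta$ from unit-disc functionals to arbitrary $\omega\in\mathscr M^*$, $\eta\in\mathscr N^*$. The clean device is the signum decomposition: setting $\omega'\coloneqq{\rm sgn}(\omega)\in\mathbb D_{\mathscr M^*}$ and $\eta'\coloneqq{\rm sgn}(\eta)\in\mathbb D_{\mathscr N^*}$, one has $\omega=|\omega|\cdot\omega'$ and $\eta=|\eta|\cdot\eta'$, hence $\omega\otimes\eta=|\omega||\eta|\cdot(\omega'\otimes\eta')$ as $L^0(\XX)$-linear functionals (checked on elementary tensors and extended by $L^0(\XX)$-linearity). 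Since $\omega',\eta'$ lie in the respective unit discs, the definition \eqref{eq:def_inj_ptwse_norm} gives $|(\omega'\otimes\eta')(\alpha)|\leq|\alpha|_\varepsilon$ for every $\alpha$, so that $|(\omega\otimes\eta)(\alpha)|=|\omega||\eta|\,|(\omega'\otimes\eta')(\alpha)|\leq|\omega||\eta|\,|\alpha|_\varepsilon\leq|\omega||\eta|\,|\alpha|_c$, the last step using the assumed lower inequality. This simultaneously shows $\omega\otimes\eta\in(\mathscr M\otimes\mathscr N)_c^*$ and $|\omega\otimes\eta|_{c^*}\leq|\omega||\eta|$, establishing ii).

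The point I would flag as requiring care—rather than as a genuine obstacle—is the choice of route for the dual estimate in the converse direction: the naive $\pi$-bound would give $|(\omega\otimes\eta)(\alpha)|\leq|\omega||\eta|\,|\alpha|_\pi$, which cannot be combined with $|\alpha|_c\leq|\alpha|_\pi$ to reach the desired conclusion. It is therefore essential to route the bound through $|\cdot|_\varepsilon$ and the \emph{lower} inequality of \eqref{eq:char_crossnorm}, which is precisely what the signum normalisation makes possible. Apart from this, the argument is a straightforward manipulation of the defining suprema and infima, and no further technical input beyond Lemma \ref{lem:crit_null_tensor} is needed.
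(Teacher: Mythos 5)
Your proof is correct and follows essentially the same route as the paper: the triangle inequality plus the infimum over representations for the $\pi$-bound, and testing against the functionals $\omega\otimes\eta$ for the $\varepsilon$-bound in both directions. The only cosmetic difference is in property ii) of the converse, where the paper cites its earlier observation that $|\omega\otimes_\varepsilon\eta|_{\varepsilon^*}=|\omega||\eta|$, while you re-derive that same estimate inline via the signum normalisation $\omega=|\omega|\cdot{\rm sgn}(\omega)$, $\eta=|\eta|\cdot{\rm sgn}(\eta)$ — the substance is identical.
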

\begin{proof}
Suppose \(|\cdot|_c\) is a reasonable pointwise crossnorm. If \(\alpha=\sum_{i=1}^n v_i\otimes w_i\in\mathscr M\otimes\mathscr N\), then
\[
|\alpha|_c\leq\sum_{i=1}^n|v_i\otimes w_i|_c\leq\sum_{i=1}^n|v_i||w_i|,
\]
thus by taking the infimum over all representations of \(\alpha\) we get \(|\alpha|_c\leq|\alpha|_\pi\). Moreover, we have
\[
|\alpha|_\varepsilon=\bigvee\big\{|(\omega\otimes\eta)(\alpha)|\;\big|\;(\omega,\eta)\in\mathbb D_{\mathscr M^*}\times\mathbb D_{\mathscr N^*}\big\}
\leq\bigvee\big\{|\Theta(\alpha)|\;\big|\:\Theta\in\mathbb D_{(\mathscr M\otimes\mathscr N)_c^*}\big\}=|\alpha|_c.
\]
Conversely, if \eqref{eq:char_crossnorm} holds, then \(|v\otimes w|_c\leq|v\otimes w|_\pi=|v||w|\) for every \((v,w)\in\mathscr M\times\mathscr N\). Moreover,
\[
|(\omega\otimes\eta)(\alpha)|\leq|\omega\otimes\eta|_{\varepsilon^*}|\alpha|_\varepsilon\leq|\omega||\eta||\alpha|_c\quad\text{ for every }(\omega,\eta)\in\mathscr M^*\times\mathscr N^*,
\]
thus \(\omega\otimes\eta\in(\mathscr M\otimes\mathscr N)_c^*\) and \(|\omega\otimes\eta|_{c^*}\leq|\omega||\eta|\). Hence, \(|\cdot|_c\) is a reasonable pointwise crossnorm.
\end{proof}
\begin{proposition}
Let \(\XX\) be a \(\sigma\)-finite measure space. Let \(\mathscr M\) and \(\mathscr N\) be Banach \(L^0(\XX)\)-modules. Let \(|\cdot|_c\) be a
reasonable pointwise crossnorm on \(\mathscr M\otimes\mathscr N\). Then it holds that
\[
|v\otimes w|_c=|v||w|,\quad|\omega\otimes\eta|_{c^*}=|\omega||\eta|\quad\text{ for every }(v,w,\omega,\eta)\in\mathscr M\times\mathscr N\times\mathscr M^*\times\mathscr N^*.
\]
\end{proposition}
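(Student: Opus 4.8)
The plan is to derive both identities directly from the characterisation of reasonable pointwise crossnorms in Theorem \ref{thm:char_crossnorm}, combined with the formulas \eqref{eq:proj_norm_elem_tensor} and \eqref{eq:inj_norm_elem_tensor} for the projective and injective pointwise norms on elementary tensors.

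First I would settle the primal identity. Fixing $v\in\mathscr M$ and $w\in\mathscr N$, the assumption that $|\cdot|_c$ is a reasonable pointwise crossnorm lets me invoke Theorem \ref{thm:char_crossnorm}, which yields the sandwich $|v\otimes w|_\varepsilon\leq|v\otimes w|_c\leq|v\otimes w|_\pi$. By \eqref{eq:inj_norm_elem_tensor} and \eqref{eq:proj_norm_elem_tensor} the two outer quantities both equal $|v||w|$, so the inequalities collapse to $|v\otimes w|_c=|v||w|$.

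Next I would treat the dual identity for $\omega\in\mathscr M^*$ and $\eta\in\mathscr N^*$. The bound $|\omega\otimes\eta|_{c^*}\leq|\omega||\eta|$ is precisely property ii) in Definition \ref{def:crossnorm}, so only the reverse inequality remains. To obtain it I would test the functional $\omega\otimes\eta$ against elementary tensors. Using the defining supremum of the dual pointwise norm on $(\mathscr M\otimes\mathscr N,|\cdot|_c)$, the relation $(\omega\otimes\eta)(v\otimes w)=\omega(v)\eta(w)$, and the already established identity $|v\otimes w|_c=|v||w|$, I get, for all $v\in\mathscr M$ and $w\in\mathscr N$,
\[
|\omega\otimes\eta|_{c^*}\geq\frac{\1_{\{|v||w|>0\}}\,\big|(\omega\otimes\eta)(v\otimes w)\big|}{|v\otimes w|_c}
=\frac{\1_{\{|v|>0\}}|\omega(v)|}{|v|}\cdot\frac{\1_{\{|w|>0\}}|\eta(w)|}{|w|}.
\]

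The final step, and the only point requiring a little care, is to pass to the supremum over $v$ and $w$ and recognise the product structure. Since both factors on the right are elements of $L^0(\XX)^+$, I would apply the lattice identity $\bigvee_{v,w}(f_v\,g_w)=\big(\bigvee_v f_v\big)\big(\bigvee_w g_w\big)$, valid for nonnegative families in the Dedekind complete Riesz space $L^0(\XX)$ (it holds $\mm$-a.e.\ pointwise, and the countable sup property guarantees measurability of the suprema). This gives
\[
|\omega\otimes\eta|_{c^*}\geq\bigg(\bigvee_{v\in\mathscr M}\frac{\1_{\{|v|>0\}}|\omega(v)|}{|v|}\bigg)\bigg(\bigvee_{w\in\mathscr N}\frac{\1_{\{|w|>0\}}|\eta(w)|}{|w|}\bigg)=|\omega||\eta|,
\]
which combined with property ii) yields $|\omega\otimes\eta|_{c^*}=|\omega||\eta|$. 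The main obstacle is thus merely the justification of this interchange of supremum and product in $L^0(\XX)$; the remainder is a direct consequence of Theorem \ref{thm:char_crossnorm} and the elementary-tensor formulas.
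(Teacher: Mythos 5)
Your proof is correct, but for the dual identity it takes a different route than the paper. For the primal identity both arguments coincide (sandwich $|\cdot|_\varepsilon\leq|\cdot|_c\leq|\cdot|_\pi$ from Theorem \ref{thm:char_crossnorm} plus the elementary-tensor formulas; the paper actually only needs the lower bound, since the upper bound is Definition \ref{def:crossnorm} i)). For $|\omega\otimes\eta|_{c^*}\geq|\omega||\eta|$, the paper instead observes that $|\cdot|_c\leq|\cdot|_\pi$ forces the projective unit disc $\mathbb D_{\mathscr M\otimes_\pi\mathscr N}$ to sit inside the $c$-unit disc, so the supremum defining $|\omega\otimes\eta|_{c^*}$ dominates the one defining $|\omega\otimes_\pi\eta|$, and then it cites the already established identity $|\omega\otimes_\pi\eta|=|\omega||\eta|$ (a consequence of Proposition \ref{prop:prod_norm_tens_hom}). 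You instead test only against elementary tensors, use your first identity $|v\otimes w|_c=|v||w|$, and conclude via the lattice identity $\bigvee_{v,w}(f_vg_w)=(\bigvee_v f_v)(\bigvee_w g_w)$ for nonnegative order-bounded families in $L^0(\XX)$. That identity is genuine: the inequality $\leq$ follows since multiplication by nonnegative elements preserves order, and $\geq$ follows from the countable sup property (reduce to countable subfamilies, then argue $\mm$-a.e.\ pointwise); note also that your families are order-bounded by $|\omega|$ and $|\eta|$ respectively, which is what makes the suprema exist and equal $|\omega|$ and $|\eta|$ by the definition of the dual pointwise norm. The trade-off: the paper's argument is shorter because it recycles the projective machinery (whose proof of $|T\otimes_\pi S|=|T||S|$ hides essentially the same sup-product manipulation over unit spheres), whereas yours is self-contained at the price of justifying the Riesz-space interchange explicitly — which you do, correctly.
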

\begin{proof}
First, \eqref{eq:char_crossnorm} and \eqref{eq:inj_norm_elem_tensor} yield \(|v\otimes w|_c\geq|v\otimes w|_\varepsilon=|v||w|\). Proposition \ref{prop:prod_norm_tens_hom} and \eqref{eq:char_crossnorm} yield
\[\begin{split}
|\omega\otimes\eta|_{c^*}&=\bigvee\big\{|(\omega\otimes\eta)(\alpha)|\;\big|\;\alpha\in\mathscr M\otimes\mathscr N,\,|\alpha|_c\leq 1\big\}\\
&\geq\bigvee\big\{|(\omega\otimes_\pi\eta)(\alpha)|\;\big|\;\alpha\in\mathbb D_{\mathscr M\otimes_\pi\mathscr N}\big\}=|\omega\otimes_\pi\eta|=|\omega||\eta|.
\end{split}\]
Therefore, the proof of the statement is complete.
\end{proof}

We conclude the section with another important example of reasonable pointwise crossnorm. A Banach \(L^0(\XX)\)-module \(\mathscr H\) is a \textbf{Hilbert \(L^0(\XX)\)-module}
if \(\langle\cdot,\cdot\rangle\in{\rm B}(\mathscr H,\mathscr H)\), where we define
\[
\langle v,w\rangle\coloneqq\frac{|v+w|^2-|v|^2-|w|^2}{2}\in L^0(\XX)\quad\text{ for every }v,w\in\mathscr H.
\]
The Riesz representation theorem for Hilbert \(L^0(\XX)\)-modules states that the space \(\mathscr H\) is canonically isomorphic to its dual \(\mathscr H^*\)
via the operator \(\mathscr H\ni v\mapsto\langle v,\cdot\rangle\in\mathscr H^*\).
Now let \(\mathscr H\) and \(\mathscr K\) be Hilbert \(L^0(\XX)\)-modules. Then we define the \textbf{Hilbert--Schmidt pointwise norm} on \(\mathscr H\otimes\mathscr K\) as
\begin{equation}\label{eq:def_HS}
|\alpha|_{\rm HS}\coloneqq\bigg(\sum_{i,j=1}^n\langle v_i,v_j\rangle\langle w_i,w_j\rangle\bigg)^{1/2}\in L^0(\XX)^+\quad\text{ for every }\alpha=\sum_{i=1}^n v_i\otimes w_i\in\mathscr H\otimes\mathscr K.
\end{equation}
Following \cite[Section 1.5]{Gigli14}, we define the \textbf{tensor product of Hilbert modules} \(\mathscr H\otimes_{\rm HS}\mathscr K\) as the completion of \((\mathscr H\otimes\mathscr K,|\cdot|_{\rm HS})\).
It holds that \(\mathscr H\otimes_{\rm HS}\mathscr K\) is a Hilbert \(L^0(\XX)\)-module. Also,
\[
|\cdot|_{\rm HS}\quad\text{ is a reasonable pointwise crossnorm on }\mathscr H\otimes\mathscr K.
\]
Indeed, the identity \(|v\otimes w|_{\rm HS}=|v||w|\) for all \((v,w)\in\mathscr H\times\mathscr K\) is a direct consequence of \eqref{eq:def_HS}, thus Definition \ref{def:crossnorm} i) holds.
Definition \ref{def:crossnorm} ii) then follows as well, thanks to the Riesz representation theorem for Hilbert \(L^0(\XX)\)-modules. In particular, Theorem \ref{thm:char_crossnorm} ensures that
\[
|\alpha|_\varepsilon\leq|\alpha|_{\rm HS}\leq|\alpha|_\pi\quad\text{ for every }\alpha\in\mathscr H\otimes\mathscr K.
\]
\subsection{Relation with order-continuous maps}\label{s:rel_with_ord-cont}
As we already mentioned in the first paragraph of Section \ref{s:cont_mod-valued}, the Banach space \({\rm C}(K)\) (where
\(K\) is a compact, Hausdorff topological space) has a special relevance in connection with injective tensor products.
For instance, it holds that \({\rm C}(K)\hat\otimes_\varepsilon\B\cong{\rm C}(K;\B)\) for every Banach space \(\B\),
whence it follows that any quotient operator \(f\colon\B_1\to\B_2\) between Banach spaces induces a quotient operator
\({\rm id}\otimes_\varepsilon f\colon{\rm C}(K)\hat\otimes_\varepsilon\B_1\to{\rm C}(K)\hat\otimes_\varepsilon\B_2\).
The goal of the present section is to extend these results to the setting of Banach \(L^0(\XX)\)-modules,
taking as \(K\) a compact, Hausdorff uniform space, and replacing \({\rm C}(K)\) with \({\rm UC}_{\rm ord}(K;L^0(\XX))\).
\begin{theorem}\label{thm:inj_tens_UC}
Let \((K,\Phi)\) be a compact, Hausdorff uniform space. Let \(\XX\) be a \(\sigma\)-finite measure space and
\(\mathscr M\) a Banach \(L^0(\XX)\)-module. Then the unique linear and continuous operator
\[
\mathfrak j\colon{\rm UC}_{\rm ord}(K;L^0(\XX))\hat\otimes_\varepsilon\mathscr M\to{\rm UC}_{\rm ord}(K;\mathscr M)
\]
satisfying \(\mathfrak j(f\otimes v)(\cdot)=f(\cdot)\cdot v\) for every \(f\in{\rm UC}_{\rm ord}(K;L^0(\XX))\)
and \(v\in\mathscr M\) is an isomorphism.
\end{theorem}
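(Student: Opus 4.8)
The plan is to first construct $\mathfrak j$ on the algebraic tensor product and verify it is well-defined, then show it preserves the pointwise norm (this is the crux), and finally argue surjectivity by a density-plus-completeness argument. I would begin by defining $\mathfrak j_0\colon{\rm UC}_{\rm ord}(K;L^0(\XX))\otimes\mathscr M\to{\rm UC}_{\rm ord}(K;\mathscr M)$ as the $L^0(\XX)$-linearisation of the bilinear map $(f,v)\mapsto\big(p\mapsto f(p)\cdot v\big)$. One must check that for fixed $f,v$ the section $p\mapsto f(p)\cdot v$ is order-bounded (clear, since $|f(p)\cdot v|\le|f|\,|v|$) and uniformly order-continuous (since $|f(p)\cdot v-f(q)\cdot v|=|f(p)-f(q)|\,|v|$, so ${\rm Var}(\mathfrak j_0(f\otimes v);\mathcal U)\le{\rm Var}(f;\mathcal U)\,|v|$). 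By Lemma \ref{lem:alg_tensor_hom} the linearisation exists and is the unique linear operator with the stated action on elementary tensors.

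\textbf{Norm preservation.}
The heart of the proof is the identity $|\mathfrak j_0(\alpha)|=|\alpha|_\varepsilon$ for every $\alpha\in{\rm UC}_{\rm ord}(K;L^0(\XX))\otimes\mathscr M$. For the upper bound, writing $\alpha=\sum_i f_i\otimes v_i$ and using that $\{\delta_p:p\in K\}$ is a norming subset of ${\rm UC}_{\rm ord}(K;L^0(\XX))^*$ (Remark \ref{rmk:evaluations_dual_UC}), I would compute, for any $\eta\in\mathbb D_{\mathscr M^*}$ and $p\in K$,
\[
\Big|\sum_i\delta_p(f_i)\,\eta(v_i)\Big|=\Big|\eta\Big(\sum_i f_i(p)\cdot v_i\Big)\Big|=\big|\eta\big(\mathfrak j_0(\alpha)(p)\big)\big|\le|\mathfrak j_0(\alpha)(p)|\le|\mathfrak j_0(\alpha)|.
\]
Taking the supremum over such $p$ and $\eta$, and invoking Corollary \ref{cor:inj_norm_with_norming_sets} with the norming sets $\{\delta_p\}$ on the first dual and $\mathbb D_{\mathscr M^*}$ on the second, gives $|\alpha|_\varepsilon\le|\mathfrak j_0(\alpha)|$. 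For the reverse bound, I would fix $p\in K$ and note $\mathfrak j_0(\alpha)(p)=\sum_i f_i(p)\cdot v_i=R_\alpha(\delta_p^{\,\prime})$-type expression; more directly, applying Corollary \ref{cor:inj_norm_with_norming_sets} again, $|\mathfrak j_0(\alpha)(p)|=\bigvee_{\eta\in\mathbb D_{\mathscr M^*}}|\sum_i f_i(p)\eta(v_i)|=\bigvee_\eta|\delta_p(\sum_i\eta(v_i)\cdot f_i)|\le|\alpha|_\varepsilon$, since $\delta_p\in\mathbb D_{{\rm UC}_{\rm ord}(K;L^0(\XX))^*}$. Passing to the supremum over $p\in K$ yields $|\mathfrak j_0(\alpha)|\le|\alpha|_\varepsilon$. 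Hence $\mathfrak j_0$ is pointwise-norm preserving and extends uniquely to a pointwise-norm preserving homomorphism $\mathfrak j$ on the completion $\mathscr M\hat\otimes_\varepsilon$-side; in particular $\mathfrak j$ is injective.

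\textbf{Surjectivity.}
The main obstacle is surjectivity, since a general $v\in{\rm UC}_{\rm ord}(K;\mathscr M)$ need not lie in the closure of the finite sums $\sum_i f_i\otimes v_i$ in any obvious way. The plan is to exploit compactness and uniform order-continuity to approximate $v$ by finitely-valued sections. Given $v\in{\rm UC}_{\rm ord}(K;\mathscr M)$ and $\eps>0$, choose (from uniform order-continuity) an entourage $\mathcal U\in\Phi$ with $\sfd_{L^0(\XX)}({\rm Var}(v;\mathcal U),0)<\eps$; by compactness of $(K,\tau_\Phi)$ cover $K$ by finitely many sets $\mathcal U[p_1],\dots,\mathcal U[p_N]$, subordinate to which one builds a uniform partition of unity $\{f_k\}_{k=1}^N\subseteq{\rm UC}_{\rm ord}(K;L^0(\XX))$ with $\sum_k f_k=\1$ and $f_k$ supported near $p_k$. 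Setting $\alpha_\eps:=\sum_k f_k\otimes v(p_k)$, the difference $\mathfrak j_0(\alpha_\eps)(p)-v(p)=\sum_k f_k(p)\cdot(v(p_k)-v(p))$ is controlled pointwise by ${\rm Var}(v;\mathcal U)$, so $|\mathfrak j_0(\alpha_\eps)-v|\le{\rm Var}(v;\mathcal U)$ in ${\rm UC}_{\rm ord}(K;\mathscr M)$, whence $\mathfrak j_0(\alpha_\eps)\to v$. Thus the range of $\mathfrak j$ is dense; being also closed (as $\mathfrak j$ is an isometric embedding between complete modules, its image is complete hence closed), it is all of ${\rm UC}_{\rm ord}(K;\mathscr M)$. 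The delicate point to verify carefully is the existence of the uniform partition of unity $\{f_k\}$ with values in $L^0(\XX)$ that are genuinely uniformly order-continuous; here the one-point-space analogy with the classical ${\rm C}(K)$ partition of unity, combined with the fact that these $f_k$ are $L^0(\XX)$-multiples of a.e.-constant scalar functions (so their uniform order-continuity reduces to ordinary uniform continuity of the scalar partition), should make the construction routine once the covering is fixed.
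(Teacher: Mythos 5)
Your proposal is correct and follows essentially the same route as the paper's proof: well-posedness of \(\mathfrak j\) on the algebraic tensor product, the pointwise-norm identity via the norming family of evaluation functionals \(\delta_p\) (Remark \ref{rmk:evaluations_dual_UC}) together with Corollary \ref{cor:inj_norm_with_norming_sets}, and surjectivity by covering \(K\) with finitely many entourage-balls \(\mathcal U[p_k]\), taking a subordinate scalar partition of unity \(\eta_k\) (your \(f_k\) are exactly the paper's \(\eta_k(\cdot)\1_\X\), uniformly order-continuous because continuous functions on a compact uniform space are uniformly continuous), and estimating \(|\mathfrak j(\alpha_\eps)-v|\le{\rm Var}(v;\mathcal U)\), so that the dense image of the isometry is all of \({\rm UC}_{\rm ord}(K;\mathscr M)\). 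The only cosmetic differences are that you get well-posedness from the linearisation/universal property rather than checking the null-tensor implication via Corollary \ref{cor:null_tensor_conseq}, and you split the norm identity into two inequalities where the paper writes one chain of equalities.
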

\begin{proof}
Note that \(|f(\cdot)\cdot v|=|f(\cdot)||v|\) and \({\rm Var}(f(\cdot)\cdot v;\mathcal U)=|v|{\rm Var}(f;\mathcal U)\)
for all \(f\in{\rm UC}_{\rm ord}(K;L^0(\XX))\), \(v\in\mathscr M\), and \(\mathcal U\in\Phi\), which implies
that \(f(\cdot)\cdot v\in{\rm UC}_{\rm ord}(K;\mathscr M)\). Therefore, it makes sense to define the map
\(\mathfrak j\colon{\rm UC}_{\rm ord}(K;L^0(\XX))\otimes_\varepsilon\mathscr M\to{\rm UC}_{\rm ord}(K;\mathscr M)\)
in the following way:
\[
\mathfrak j\bigg(\sum_{i=1}^n f_i\otimes v_i\bigg)\coloneqq\sum_{i=1}^n f_i(\cdot)\cdot v_i\quad
\text{ for every }\sum_{i=1}^n f_i\otimes v_i\in{\rm UC}_{\rm ord}(K;L^0(\XX))\otimes_\varepsilon\mathscr M.
\]
To prove that the definition of \(\mathfrak j\) is well-posed amounts to showing that
\begin{equation}\label{eq:inj_tens_UC_aux}
(f_i)_{i=1}^n\subseteq{\rm UC}_{\rm ord}(K;L^0(\XX)),\,(v_i)_{i=1}^n\subseteq\mathscr M,\,\sum_{i=1}^n f_i\otimes v_i=0
\quad\Longrightarrow\quad\sum_{i=1}^n f_i(\cdot)\cdot v_i=0.
\end{equation}
Assuming \(\sum_{i=1}^n f_i\otimes v_i=0\), we have that \(\sum_{i=1}^n f_i(p)\cdot v_i=\sum_{i=1}^n\delta_p(f_i)\cdot v_i=0\)
for every \(p\in K\) by Remark \ref{rmk:evaluations_dual_UC} and Corollary \ref{cor:null_tensor_conseq}, thus showing
that \eqref{eq:inj_tens_UC_aux} holds. Moreover, if \(\alpha=\sum_{i=1}^n f_i\otimes v_i\) is a given element
of \({\rm UC}_{\rm ord}(K;L^0(\XX))\otimes_\varepsilon\mathscr M\), then by Corollary \ref{cor:inj_norm_with_norming_sets}
and Remark \ref{rmk:evaluations_dual_UC} we can compute
\[
|\mathfrak j(\alpha)|=\bigvee_{p\in K}\bigg|\sum_{i=1}^n f_i(p)\cdot v_i\bigg|=
\bigvee_{p\in K}\bigg|\sum_{i=1}^n\delta_p(f_i)\cdot v_i\bigg|=|\alpha|_\varepsilon.
\]
Since \(\mathfrak j\) is also linear by construction, it can be uniquely extended to a homomorphism of Banach \(L^0(\XX)\)-modules
\(\mathfrak j\colon{\rm UC}_{\rm ord}(K;L^0(\XX))\hat\otimes_\varepsilon\mathscr M\to{\rm UC}_{\rm ord}(K;\mathscr M)\)
that preserves the pointwise norm.

In order to conclude, it remains to check that the isometric embedding map \(\mathfrak j\) is also surjective.
Let \(v\in{\rm UC}_{\rm ord}(K;\mathscr M)\) be given. Then we can find \((\mathcal U_n)_{n\in\N}\subseteq\Phi\)
with \({\rm Var}(v;\mathcal U_n)\to 0\) in \(L^0(\XX)\). Fix any \(n\in\N\). Given that \(\{\mathcal U_n[p]\}_{p\in K}\)
is an open cover of the compact set \(K\), there exist \(k_n\in\N\) and \((p^n_i)_{i=1}^{k_n}\subseteq K\)
such that \(K=\bigcup_{i=1}^{k_n}\mathcal U_n[p^n_i]\). Now, take a continuous partition of unity \((\eta^n_i)_{i=1}^{k_n}\)
subordinated to \((\mathcal U_n[p^n_i])_{i=1}^{k_n}\) (see e.g.\ \cite{rudin1987real}), i.e.\ \(\eta^n_i\colon K\to[0,1]\)
is continuous, supported in \(\mathcal U_n[p^n_i]\), and \(\sum_{i=1}^{k_n}\eta^n_i=1\) on \(K\).
With no loss of generality, we can also assume that for any \(i=1,\ldots,k_n\) there exists \(q^n_i\in\mathcal U_n[p^n_i]\)
such that \(\eta^n_i(q^n_i)=1\); this fact will be used in Remark \ref{rmk:density_in_UC}. Let us define
\[
\alpha_n\coloneqq\sum_{i=1}^{k_n}(\eta^n_i(\cdot)\1_\X)\otimes v(p^n_i)\in{\rm UC}_{\rm ord}(K;L^0(\XX))\otimes_\varepsilon\mathscr M.
\]
Observe that for any given point \(p\in K\) we can estimate
\[
\big|\mathfrak j(\alpha_n)(p)-v(p)\big|=\bigg|\sum_{i=1}^{k_n}\eta^n_i(p)\big(v(p^n_i)-v(p)\big)\bigg|
\leq\sum_{i=1}^{k_n}\eta^n_i(p)\big|v(p^n_i)-v(p)\big|\leq{\rm Var}(v;\mathcal U_n).
\]
By passing to the supremum over all \(p\in K\), we deduce that \(|\mathfrak j(\alpha_n)-v|\leq{\rm Var}(v;\mathcal U_n)\),
whence it follows that \(\mathfrak j(\alpha_n)\to v\) in \({\rm UC}_{\rm ord}(K;\mathscr M)\). This shows that
the map \(\mathfrak j\) is surjective, as desired.
\end{proof}
\begin{remark}\label{rmk:density_in_UC}{\rm
We isolate a useful byproduct of the proof of Theorem \ref{thm:inj_tens_UC}: letting
\[
\mathscr F_n\coloneqq\bigg\{(\eta_i)_{i=1}^n\subseteq{\rm C}(K;[0,1])\;\bigg|\;
\{\eta_i=1\}\neq\varnothing\text{ for every }i=1,\ldots,n,\,\sum_{i=1}^n\eta_i=1\bigg\}\quad\text{ for all }n\in\N,
\]
where \({\rm C}(K;[0,1])\) stands for the space of all continuous functions from \(K\) to \([0,1]\), we have that
\[
\mathscr D\coloneqq\bigcup_{n\in\N}\bigg\{\sum_{i=1}^n(\eta_i(\cdot)\1_\X)\otimes v_i\;\bigg|\;(\eta_i)_{i=1}^n\in\mathscr F_n,
\,(v_i)_{i=1}^n\subseteq\mathscr M\bigg\}\quad\text{ is dense in }{\rm UC}_{\rm ord}(K;L^0(\XX))\hat\otimes_\varepsilon\mathscr M,
\]
or equivalently \(\bigcup_{n\in\N}\big\{\sum_{i=1}^n\eta_i(\cdot)v_i\;\big|\;(\eta_i)_{i=1}^n\in\mathscr F_n,
\,(v_i)_{i=1}^n\subseteq\mathscr M\big\}\) is dense in \({\rm UC}_{\rm ord}(K;\mathscr M)\). Also,
\begin{equation}\label{eq:density_UC_form}
|\alpha|_\varepsilon=\bigvee_{i=1}^n|v_i|\quad\text{ for every }\alpha=\sum_{i=1}^n(\eta_i(\cdot)\1_\X)\otimes v_i\in\mathscr D.
\end{equation}
In order to prove it, take \((q_i)_{i=1}^n\subseteq K\) such that \(\eta_i(q_i)=1\) for every \(i=1,\ldots,n\). Therefore,
\[
|\alpha|_\varepsilon=|\mathfrak j(\alpha)|=\bigvee_{p\in K}\bigg|\sum_{i=1}^n\eta_i(p)v_i\bigg|\leq
\bigvee_{p\in K}\sum_{i=1}^n\eta_i(p)|v_i|\leq\bigvee_{i=1}^n|v_i|=\bigvee_{i=1}^n|\mathfrak j(\alpha)(q_i)|
\leq|\alpha|_\varepsilon,
\]
which shows the validity of \eqref{eq:density_UC_form}.
\fr}\end{remark}
\begin{proposition}
Let \((K,\Phi)\) be a compact, Hausdorff uniform space. Let \(\XX\) be a \(\sigma\)-finite measure space 
and let \(\mathscr M\), \(\mathscr N\) be Banach \(L^0(\XX)\)-modules. Let \(T\colon\mathscr M\to\mathscr N\)
be a quotient operator. Then
\[
{\rm id}\otimes_\varepsilon T\colon{\rm UC}_{\rm ord}(K;L^0(\XX))\hat\otimes_\varepsilon\mathscr M
\to{\rm UC}_{\rm ord}(K;L^0(\XX))\hat\otimes_\varepsilon\mathscr N\quad\text{ is a quotient operator.}
\]
\end{proposition}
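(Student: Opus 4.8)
The plan is to transport the statement through the isomorphisms of Theorem~\ref{thm:inj_tens_UC}. Writing $\mathfrak j_{\mathscr M}$ and $\mathfrak j_{\mathscr N}$ for the isomorphisms $\mathfrak j\colon{\rm UC}_{\rm ord}(K;L^0(\XX))\hat\otimes_\varepsilon\mathscr M\to{\rm UC}_{\rm ord}(K;\mathscr M)$ associated with $\mathscr M$ and $\mathscr N$, I would first introduce the post-composition map $\mathcal T\colon{\rm UC}_{\rm ord}(K;\mathscr M)\to{\rm UC}_{\rm ord}(K;\mathscr N)$, defined by $\mathcal T(v)\coloneqq T\circ v$. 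Since $T$ is a quotient operator we have $|T|\leq 1$ by \eqref{eq:norm_quotient_oper}, so $|T(v(p))-T(v(q))|\leq|v(p)-v(q)|$ gives ${\rm Var}(T\circ v;\mathcal U)\leq{\rm Var}(v;\mathcal U)$, and $\bigvee_{p\in K}|T(v(p))|\leq|v|$; hence $T\circ v\in{\rm UC}_{\rm ord}(K;\mathscr N)$ and $\mathcal T$ is a well-defined $L^0(\XX)$-linear homomorphism with $|\mathcal T|\leq 1$. Comparing on elementary tensors ($({\rm id}\otimes_\varepsilon T)(f\otimes v)=f\otimes T(v)$ maps under $\mathfrak j_{\mathscr N}$ to $f(\cdot)\cdot T(v)=T\circ(f(\cdot)\cdot v)$) shows $\mathfrak j_{\mathscr N}\circ({\rm id}\otimes_\varepsilon T)=\mathcal T\circ\mathfrak j_{\mathscr M}$. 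As composition with pointwise-norm-preserving isomorphisms on either side preserves the quotient-operator property, ${\rm id}\otimes_\varepsilon T=\mathfrak j_{\mathscr N}^{-1}\circ\mathcal T\circ\mathfrak j_{\mathscr M}$ is a quotient operator if and only if $\mathcal T$ is.

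To show $\mathcal T$ is a quotient operator I would invoke Lemma~\ref{lem:suff_cond_quotient_oper}, so it remains to verify its hypothesis: for every $w\in{\rm UC}_{\rm ord}(K;\mathscr N)$ and $\varepsilon>0$, to produce $v\in{\rm UC}_{\rm ord}(K;\mathscr M)$ with $\sfd_{{\rm UC}_{\rm ord}(K;\mathscr N)}(\mathcal T(v),w)<\varepsilon$ and $\sfd_{L^0(\XX)}(|v|,|w|)<\varepsilon$. The key tool is the density statement of Remark~\ref{rmk:density_in_UC}: I first approximate $w$, within $\varepsilon/2$ in $\sfd_{{\rm UC}_{\rm ord}(K;\mathscr N)}$, by a map $w_0=\sum_{i=1}^n\eta_i(\cdot)\,w_i$ with $(\eta_i)_{i=1}^n\in\mathscr F_n$ and $w_i\in\mathscr N$.

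Next I exploit the approximate lifting afforded by the quotient operator $T$ (the glueing property recalled before Lemma~\ref{lem:annihilator}): fixing $\delta<\varepsilon/2$, for each $i$ I choose $v_i\in\mathscr M$ with $T(v_i)=w_i$ and $|v_i|\leq|w_i|+\delta$, and set $v\coloneqq\sum_{i=1}^n\eta_i(\cdot)\,v_i$, which lies in ${\rm UC}_{\rm ord}(K;\mathscr M)$ and reuses the same partition of unity, so $(\eta_i)_{i=1}^n\in\mathscr F_n$. Then $\mathcal T(v)=\sum_{i=1}^n\eta_i(\cdot)\,T(v_i)=w_0$ exactly, whence $\sfd_{{\rm UC}_{\rm ord}(K;\mathscr N)}(\mathcal T(v),w)<\varepsilon/2$. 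For the norm, the crucial point is the identity \eqref{eq:density_UC_form}, transported through $\mathfrak j$, applied to both $\mathscr M$ and $\mathscr N$: it gives $|v|=\bigvee_{i=1}^n|v_i|$ and $|w_0|=\bigvee_{i=1}^n|w_i|$. Combined with $|v_i|\leq|w_i|+\delta$ and the general bound $|w_0|=|\mathcal T(v)|\leq|v|$, this yields $|w_0|\leq|v|\leq|w_0|+\delta\1_\X$, so that $\sfd_{L^0(\XX)}(|v|,|w_0|)\leq\delta$. Since $\big||w_0|-|w|\big|\leq|w_0-w|$ gives $\sfd_{L^0(\XX)}(|w_0|,|w|)<\varepsilon/2$, the triangle inequality produces $\sfd_{L^0(\XX)}(|v|,|w|)<\varepsilon$. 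This verifies the hypothesis of Lemma~\ref{lem:suff_cond_quotient_oper} and completes the proof.

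I expect the genuine difficulty to be precisely the pointwise-norm control in the last step: a uniformly order-continuous map cannot be lifted pointwise while retaining continuity, so the whole argument hinges on reducing to the finite partition-of-unity combinations of Remark~\ref{rmk:density_in_UC}, for which the norm formula \eqref{eq:density_UC_form} permits a coefficientwise lift with controlled pointwise norm.
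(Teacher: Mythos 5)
Your proposal is correct and is essentially the paper's own argument: both proofs approximate an arbitrary element by a partition-of-unity combination via Remark \ref{rmk:density_in_UC}, lift the finitely many coefficients through the quotient operator \(T\) with the bound \(|v_i|\leq|w_i|+\delta\), control the pointwise norm using \eqref{eq:density_UC_form}, and conclude with Lemma \ref{lem:suff_cond_quotient_oper}. The only difference is presentational: you first conjugate \({\rm id}\otimes_\varepsilon T\) through the isomorphisms \(\mathfrak j\) of Theorem \ref{thm:inj_tens_UC} so as to work with the post-composition operator \(\mathcal T(v)=T\circ v\) on \({\rm UC}_{\rm ord}(K;\mathscr M)\), whereas the paper runs the identical computation directly on the tensor-product side, so no new idea is gained or lost.
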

\begin{proof}
First, notice that \(|{\rm id}\otimes_\varepsilon T|=|T|\leq 1\).
Our goal is to apply Lemma \ref{lem:suff_cond_quotient_oper}. To this aim, we shorten
\({\rm UC}\coloneqq{\rm UC}_{\rm ord}(K;L^0(\XX))\), and we fix any
\(\beta\in{\rm UC}\hat\otimes_\varepsilon\mathscr N\) and \(\varepsilon>0\).
By virtue of Remark \ref{rmk:density_in_UC}, we can find \(n\in\N\), \((\eta_i)_{i=1}^n\in\mathscr F_n\), and
\((w_i)_{i=1}^n\subseteq\mathscr N\) such that \(\tilde\beta\coloneqq\sum_{i=1}^n(\eta_i(\cdot)\1_\X)\otimes w_i\)
satisfies \(\sfd_{{\rm UC}\hat\otimes_\varepsilon\mathscr N}(\tilde\beta,\beta)<\varepsilon/2\).
Since \(T\) is a quotient operator, for any \(i=1,\ldots,n\) we can find \(v_i\in\mathscr M\) such that
\(T(v_i)=w_i\) and \(|v_i|\leq|w_i|+\delta\), where \(\delta>0\) is chosen so that
\(\sfd_{L^0(\XX)}(\delta\1_\X,0)<\varepsilon/2\). Now define \(\alpha\coloneqq\sum_{i=1}^n(\eta_i(\cdot)\1_\X)\otimes v_i
\in{\rm UC}\hat\otimes_\varepsilon\mathscr M\). Since
\(({\rm id}\otimes_\varepsilon T)(\alpha)=\tilde\beta\), we have
\(\sfd_{{\rm UC}\hat\otimes_\varepsilon\mathscr N}(({\rm id}\otimes_\varepsilon T)(\alpha),\beta)<\varepsilon\).
Moreover, recalling \eqref{eq:density_UC_form} we see that \(|\alpha|_\varepsilon=\bigvee_{i=1}^n|v_i|\leq
\delta+\bigvee_{i=1}^n|w_i|=|\tilde\beta|_\varepsilon+\delta\leq|\alpha|_\varepsilon+\delta\), whence it follows that
\(\sfd_{L^0(\XX)}(|\alpha|_\varepsilon,|\beta|_\varepsilon)\leq\sfd_{L^0(\XX)}(\delta\1_\X,0)+\sfd_{L^0(\XX)}
(|\tilde\beta|_\varepsilon,|\beta|_\varepsilon)<\varepsilon\). Therefore, we can apply Lemma
\ref{lem:suff_cond_quotient_oper}, which gives that the map
\({\rm id}\otimes_\varepsilon T\colon{\rm UC}\hat\otimes_\varepsilon\mathscr M\to{\rm UC}\hat\otimes_\varepsilon\mathscr N\)
is a quotient operator.
\end{proof}
\subsection{Relation with duals and pullbacks}
First of all, we provide a characterisation of the dual of \(\mathscr M\hat\otimes_\varepsilon\mathscr N\).
By \(\mathbb D_{\mathscr M^*}^{w^*}\) we will mean the unit disc of \(\mathscr M^*\) endowed with the restriction
of the weak\(^*\) topology. Moreover, the space \(\mathbb D_{\mathscr M^*}^{w^*}\times\mathbb D_{\mathscr N^*}^{w^*}\)
will be tacitly equipped with the product topology.
\begin{theorem}[Dual of \(\mathscr M\hat\otimes_\varepsilon\mathscr N\)]\label{thm:dual_inj_prod}
Let \(\XX\) be a \(\sigma\)-finite measure space. Let \(\mathscr M\) and \(\mathscr N\) be Banach \(L^0(\XX)\)-modules.
Then there exists a unique homomorphism of Banach \(L^0(\XX)\)-modules
\[
\iota\colon\mathscr M\hat\otimes_\varepsilon\mathscr N
\to{\rm C}_{\rm pb}(\mathbb D_{\mathscr M^*}^{w^*}\times\mathbb D_{\mathscr N^*}^{w^*};L^0(\XX))
\]
such that \(\iota(v\otimes w)(\omega,\eta)=\omega(v)\eta(w)\) for every \((v,w,\omega,\eta)\in\mathscr M\times\mathscr N\times\mathbb D_{\mathscr M^*}\times\mathbb D_{\mathscr N^*}\).
Moreover, the homomorphism \(\iota\) preserves the pointwise norm. In particular, it holds that
\[
(\mathscr M\hat\otimes_\varepsilon\mathscr N)^*\cong{\rm C}_{\rm pb}(\mathbb D_{\mathscr M^*}^{w^*}\times
\mathbb D_{\mathscr N^*}^{w^*};L^0(\XX))^*/(\mathscr M\hat\otimes_\varepsilon\mathscr N)^\perp.
\]
\end{theorem}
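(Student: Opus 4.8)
The plan is to construct $\iota$ first on the algebraic tensor product $\mathscr M\otimes\mathscr N$ and then to extend it by density. For a tensor $\alpha=\sum_{i=1}^n v_i\otimes w_i$ I would set
\[
\iota(\alpha)(\omega,\eta)\coloneqq\sum_{i=1}^n\omega(v_i)\eta(w_i)\quad\text{ for }(\omega,\eta)\in\mathbb D_{\mathscr M^*}^{w^*}\times\mathbb D_{\mathscr N^*}^{w^*},
\]
which is precisely the restriction to the discs of the operator $B_\alpha$ introduced in \eqref{eq:def_B_alpha}. The independence of this value from the chosen representation of $\alpha$ is immediate from the null-tensor criterion of Lemma \ref{lem:crit_null_tensor}, so $\iota(\alpha)$ is a well-defined function. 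To verify that $\iota(\alpha)\in{\rm C}_{\rm pb}$, I would note that each map $(\omega,\eta)\mapsto\omega(v_i)=\delta_{v_i}(\omega)$ is weak$^*$-continuous by Definition \ref{def:weak_star_top}, that the coordinate projections off the product space are continuous, and that the multiplication $L^0(\XX)\times L^0(\XX)\to L^0(\XX)$ is jointly continuous because $L^0(\XX)$ is a topological ring; hence $\iota(\alpha)$, a finite sum of products of such maps, is continuous. Order-boundedness follows from the estimate $|\iota(\alpha)(\omega,\eta)|\le\sum_i|v_i||w_i|\in L^0(\XX)^+$ valid on the discs.

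The crucial point is that $\iota$ already preserves the pointwise norm at the algebraic level. Indeed, since by Lemma \ref{lem:crit_null_tensor} the quantity $\sum_i\omega(v_i)\eta(w_i)$ depends only on $\alpha$, the supremum over representations in the definition \eqref{eq:def_inj_ptwse_norm} is redundant, and therefore
\[
|\iota(\alpha)|=\bigvee_{(\omega,\eta)\in\mathbb D_{\mathscr M^*}\times\mathbb D_{\mathscr N^*}}\Big|\sum_{i=1}^n\omega(v_i)\eta(w_i)\Big|=|\alpha|_\varepsilon.
\]
As $\iota$ is plainly $L^0(\XX)$-linear and ${\rm C}_{\rm pb}(\cdots;L^0(\XX))$ is complete (Definition \ref{def:ptwse_cont}), the map extends uniquely from the dense subspace of elementary tensors to a pointwise-norm-preserving homomorphism on $\mathscr M\hat\otimes_\varepsilon\mathscr N$; uniqueness of $\iota$ itself likewise follows from density together with continuity. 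In particular $\iota$ is an isometric embedding, so its image $\mathscr T\coloneqq\iota[\mathscr M\hat\otimes_\varepsilon\mathscr N]$ is a complete, hence closed, Banach $L^0(\XX)$-submodule of ${\rm C}_{\rm pb}(\cdots;L^0(\XX))$, isomorphic to $\mathscr M\hat\otimes_\varepsilon\mathscr N$.

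For the quotient representation I would then invoke Lemma \ref{lem:annihilator} with ambient module $\mathscr C\coloneqq{\rm C}_{\rm pb}(\mathbb D_{\mathscr M^*}^{w^*}\times\mathbb D_{\mathscr N^*}^{w^*};L^0(\XX))$ and submodule $\mathscr T$, obtaining $\mathscr T^*\cong\mathscr C^*/\mathscr T^\perp$. Composing with the isometric isomorphism $\iota$ gives $(\mathscr M\hat\otimes_\varepsilon\mathscr N)^*\cong\mathscr T^*$, and the stated identification $(\mathscr M\hat\otimes_\varepsilon\mathscr N)^*\cong\mathscr C^*/(\mathscr M\hat\otimes_\varepsilon\mathscr N)^\perp$ is then exactly the notational convention of identifying $\mathscr M\hat\otimes_\varepsilon\mathscr N$ with $\mathscr T$ through $\iota$, so that $(\mathscr M\hat\otimes_\varepsilon\mathscr N)^\perp$ denotes $\mathscr T^\perp\subseteq\mathscr C^*$.

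The routine ingredients are the $L^0(\XX)$-linearity of $\iota$ and the extension-by-density argument, both standard for pointwise-norm-preserving homomorphisms. The only genuinely delicate step I anticipate is the continuity of $\iota(\alpha)$ as a map into $L^0(\XX)$: this rests on combining the weak$^*$-continuity of the evaluation maps $\delta_{v_i}$ and $\delta_{w_i}$ with the joint continuity of the ring multiplication of $L^0(\XX)$, using that $\mathbb D_{\mathscr M^*}^{w^*}\times\mathbb D_{\mathscr N^*}^{w^*}$ carries the product topology and hence keeps both coordinate evaluations continuous.
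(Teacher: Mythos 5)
Your proposal is correct and follows essentially the same route as the paper's proof: the same defining formula for \(\iota\), the same observation that Lemma \ref{lem:crit_null_tensor} makes \(\sum_i\omega(v_i)\eta(w_i)\) representation-independent so that \(|\iota(\alpha)|=\bigvee_{(\omega,\eta)\in\mathbb D_{\mathscr M^*}\times\mathbb D_{\mathscr N^*}}|\iota(\alpha)(\omega,\eta)|=|\alpha|_\varepsilon\), extension by density, and Lemma \ref{lem:annihilator} applied to the closed image \(\iota[\mathscr M\hat\otimes_\varepsilon\mathscr N]\) for the quotient identification. The only difference is one of detail: you spell out the continuity of \(\iota(\alpha)\) (weak\(^*\)-continuity of the evaluations combined with joint continuity of multiplication in the topological ring \(L^0(\XX)\)) and the final annihilator step, both of which the paper compresses into ``easily checked'' and a citation.
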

\begin{proof}
First of all, let us define the operator \(\iota\colon\mathscr M\otimes_\varepsilon\mathscr N\to
{\rm C}_{\rm pb}(\mathbb D_{\mathscr M^*}^{w^*}\times\mathbb D_{\mathscr N^*}^{w^*};L^0(\XX))\) as
\[
\iota(\alpha)(\omega,\eta)\coloneqq\sum_{i=1}^n\omega(v_i)\eta(w_i)\quad\text{ for every }\alpha=\sum_{i=1}^n v_i\otimes w_i
\in\mathscr M\otimes_\varepsilon\mathscr N\text{ and }(\omega,\eta)\in\mathbb D_{\mathscr M^*}\times\mathbb D_{\mathscr N^*}.
\]
It can be easily checked that \(\iota\) is well-posed and \(L^0(\XX)\)-linear. Moreover, \eqref{eq:def_ptwse_norm_UC}
and \eqref{eq:def_inj_ptwse_norm} yield
\[
|\iota(\alpha)|=\bigvee_{(\omega,\eta)\in\mathbb D_{\mathscr M^*}\times\mathbb D_{\mathscr N^*}}\big|\iota(\alpha)(\omega,\eta)\big|=
\bigvee_{(\omega,\eta)\in\mathbb D_{\mathscr M^*}\times\mathbb D_{\mathscr N^*}}\bigg|\sum_{i=1}^n\omega(v_i)\eta(w_i)\bigg|=|\alpha|_\varepsilon
\]
for all \(\alpha=\sum_{i=1}^n v_i\otimes w_i\in\mathscr M\otimes_\varepsilon\mathscr N\), thus \(\iota\) can be
uniquely extended to a pointwise norm preserving homomorphism \(\iota\colon\mathscr M\hat\otimes_\varepsilon\mathscr N
\to{\rm C}_{\rm pb}(\mathbb D_{\mathscr M^*}^{w^*}\times\mathbb D_{\mathscr N^*}^{w^*};L^0(\XX))\).
For the last claim, see Lemma \ref{lem:annihilator}.
\end{proof}

We stress that in Theorem \ref{thm:dual_inj_prod} we consider the space \({\rm C}_{\rm pb}\),
differently from Section \ref{s:rel_with_ord-cont}. It seems that in Theorem \ref{thm:dual_inj_prod} the space
\({\rm C}_{\rm pb}\) cannot be replaced by the smaller space \({\rm UC}_{\rm ord}\). Furthermore, we point out
that the description of \((\mathscr M\hat\otimes_\varepsilon\mathscr N)^*\) provided by Theorem \ref{thm:dual_inj_prod}
is rather implicit if compared with the corresponding one for Banach spaces (see \cite[Proposition 3.14]{Ryan02}).
Indeed, it is not clear whether the space
\({\rm C}_{\rm pb}(\mathbb D_{\mathscr M^*}^{w^*}\times\mathbb D_{\mathscr N^*}^{w^*};L^0(\XX))^*\)
can be described as a space of measures.
\medskip

We conclude the paper by proving that `pullbacks and injective tensor products commute':
\begin{theorem}[Pullbacks vs.\ injective tensor products]\label{thm:pullback_and_inj}
Let \(\XX=(\X,\Sigma_\X,\mm_\X)\), \(\YY=(\Y,\Sigma_\Y,\mm_\Y)\) be separable, \(\sigma\)-finite measure spaces. Let \(\varphi\colon\X\to\Y\) be a measurable map
such that \(\varphi_\#\mm_\X\ll\mm_\Y\). Let \(\mathscr M\) and \(\mathscr N\) be Banach \(L^0(\YY)\)-modules. Then it holds that
\[
\varphi^*(\mathscr M\hat\otimes_\varepsilon\mathscr N)\cong(\varphi^*\mathscr M)\hat\otimes_\varepsilon(\varphi^*\mathscr N),
\]
the pullback map \(\varphi^*\colon\mathscr M\hat\otimes_\varepsilon\mathscr N\to(\varphi^*\mathscr M)\hat\otimes_\varepsilon(\varphi^*\mathscr N)\)
being the unique homomorphism such that
\[
\varphi^*(v\otimes w)=(\varphi^*v)\otimes(\varphi^*w)\quad\text{ for every }v\in\mathscr M\text{ and }w\in\mathscr N.
\]
\end{theorem}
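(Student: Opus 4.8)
The plan is to follow closely the proof of Theorem \ref{thm:pullback_and_proj}, replacing the projective pointwise norm by the injective one. First I would define a linear map $T\colon\mathscr M\otimes_\varepsilon\mathscr N\to(\varphi^*\mathscr M)\otimes_\varepsilon(\varphi^*\mathscr N)$ on the algebraic tensor product by $T\big(\sum_{i=1}^n v_i\otimes w_i\big)\coloneqq\sum_{i=1}^n(\varphi^*v_i)\otimes(\varphi^*w_i)$. The well-posedness of $T$ is established exactly as in Theorem \ref{thm:pullback_and_proj}: if $\sum_{i=1}^n v_i\otimes w_i=0$, then Corollary \ref{cor:null_tensor_conseq} gives $\sum_{i=1}^n\eta(w_i)\cdot v_i=0$ for every $\eta\in\mathscr N^*$; applying $\varphi^*$ and using \eqref{eq:def_I_varphi} yields $\sum_{i=1}^n {\sf I}_\varphi(\varphi^*\eta)(\varphi^*w_i)\cdot(\varphi^*v_i)=\varphi^*\big(\sum_{i=1}^n\eta(w_i)\cdot v_i\big)=0$, hence the same identity holds for every $\theta\in\mathscr G(\varphi^*[\mathscr N^*])$ and, by continuity and density, for every $\theta\in\varphi^*\mathscr N^*$. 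Since each $\Theta\in(\varphi^*\mathscr N)^*$ is a sequential weak$^*$ limit of elements of ${\sf I}_\varphi[\varphi^*\mathscr N^*]$ by Theorem \ref{thm:seq_weak-star_density}, and the relevant sums are finite, I would conclude that $\sum_{i=1}^n\Theta(\varphi^*w_i)\cdot(\varphi^*v_i)=0$ for every $\Theta\in(\varphi^*\mathscr N)^*$, whence $\sum_{i=1}^n(\varphi^*v_i)\otimes(\varphi^*w_i)=0$ by Corollary \ref{cor:null_tensor_conseq}.

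The core of the argument is to show that $T$ scales the pointwise norm, i.e.\ $|T(\alpha)|_\varepsilon=|\alpha|_\varepsilon\circ\varphi$ for every $\alpha=\sum_{i=1}^n v_i\otimes w_i\in\mathscr M\otimes_\varepsilon\mathscr N$. Here, instead of the dual formula of Corollary \ref{cor:dual_formula_proj_norm} used in the projective case, I would invoke Corollary \ref{cor:inj_norm_with_norming_sets}, which computes $|\cdot|_\varepsilon$ against an arbitrary norming subset. The key auxiliary claim is that the set $\mathcal S$ of all glueings $\sum_k\1_{F_k}\cdot{\sf I}_\varphi(\varphi^*\eta_k)$ (with $(F_k)_k$ a partition of $\X$ and $\eta_k\in\mathbb D_{\mathscr N^*}$) lying in $\mathbb D_{(\varphi^*\mathscr N)^*}$ is a norming subset of $(\varphi^*\mathscr N)^*$. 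Granting this, for such a $\Theta$ one computes $\sum_{i=1}^n\Theta(\varphi^*w_i)\cdot(\varphi^*v_i)=\sum_k\1_{F_k}\cdot\varphi^*\big(\sum_{i=1}^n\eta_k(w_i)\cdot v_i\big)$, whose pointwise norm equals $\sum_k\1_{F_k}\big(\big|\sum_{i=1}^n\eta_k(w_i)\cdot v_i\big|\circ\varphi\big)\leq|\alpha|_\varepsilon\circ\varphi$ by Corollary \ref{cor:inj_norm_with_norming_sets} applied in $\mathscr M\otimes_\varepsilon\mathscr N$; taking $\Theta={\sf I}_\varphi(\varphi^*\eta)$ with the trivial partition and passing to the supremum over $\eta\in\mathbb D_{\mathscr N^*}$ gives the matching lower bound, so that $|T(\alpha)|_\varepsilon=|\alpha|_\varepsilon\circ\varphi$. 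As in the projective proof, this relies on the fact that composition with $\varphi$ commutes with suprema of families that are countable by the countable sup property, $\varphi$ inducing a Riesz homomorphism.

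To prove the auxiliary claim that $\mathcal S$ is norming I would argue that, since $\varphi^*[\mathscr N]$ generates $\varphi^*\mathscr N$, every $W\in\varphi^*\mathscr N$ is a limit of glueings $W_m=\sum_k\1_{E^m_k}\cdot\varphi^*w^m_k$; for each such glueing the identity $|W_m|=\bigvee_{\Theta\in\mathcal S}\Theta(W_m)$ follows by choosing, on each $E^m_k$, functionals $\eta\in\mathbb D_{\mathscr N^*}$ nearly attaining $|w^m_k|$ and recalling $|{\sf I}_\varphi(\varphi^*\eta)|\leq 1$. The passage to the limit is handled by the estimate $\bigvee_{\Theta\in\mathcal S}\Theta(W_m)\leq\bigvee_{\Theta\in\mathcal S}\Theta(W)+|W-W_m|$, which, upon letting $m\to\infty$, yields $|W|\leq\bigvee_{\Theta\in\mathcal S}\Theta(W)\leq|W|$.

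Finally, having shown $|T(\alpha)|_\varepsilon=|\alpha|_\varepsilon\circ\varphi$ on the algebraic tensor product, I would extend $T$ uniquely to a homomorphism $\varphi^*\colon\mathscr M\hat\otimes_\varepsilon\mathscr N\to(\varphi^*\mathscr M)\hat\otimes_\varepsilon(\varphi^*\mathscr N)$ with $|\varphi^*\alpha|_\varepsilon=|\alpha|_\varepsilon\circ\varphi$, and check that its image generates the target. This holds because $\varphi^*(v\otimes w)=(\varphi^*v)\otimes(\varphi^*w)$, so that $\{(\varphi^*v)\otimes(\varphi^*w):v\in\mathscr M,\,w\in\mathscr N\}\subseteq\varphi^*[\mathscr M\hat\otimes_\varepsilon\mathscr N]$, and this set generates $(\varphi^*\mathscr M)\hat\otimes_\varepsilon(\varphi^*\mathscr N)$ by Lemma \ref{lem:generators_inj_tensor}, since $\varphi^*[\mathscr M]$ and $\varphi^*[\mathscr N]$ generate the respective pullbacks. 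Together with the universal property characterising the pullback, this yields the claimed isomorphism. I expect the main obstacle to be the auxiliary claim that the glued pullback functionals norm $(\varphi^*\mathscr N)^*$, together with the care needed in commuting the suprema defining $|\cdot|_\varepsilon$ with composition by $\varphi$.
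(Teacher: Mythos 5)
Your proposal is correct. Its overall skeleton coincides with the paper's: you define $T$ on the algebraic tensor product, prove well-posedness via Corollary \ref{cor:null_tensor_conseq}, the density of glueings and Theorem \ref{thm:seq_weak-star_density}, extend by continuity once the pointwise-norm identity is established, and conclude via Lemma \ref{lem:generators_inj_tensor} and the universal property of the pullback. Where you genuinely diverge is in the central identity $|T(\alpha)|_\varepsilon=|\alpha|_\varepsilon\circ\varphi$. The paper works with the symmetric formula \eqref{eq:def_inj_ptwse_norm}: the pairs $\big({\sf I}_\varphi(\varphi^*\omega),{\sf I}_\varphi(\varphi^*\eta)\big)$ give one inequality, and for the other it estimates against pairs of glueings in $\mathscr G(\varphi^*[\mathbb D_{\mathscr M^*}])\times\mathscr G(\varphi^*[\mathbb D_{\mathscr N^*}])$ and then reaches arbitrary $(\Xi,\Theta)\in\mathbb D_{(\varphi^*\mathscr M)^*}\times\mathbb D_{(\varphi^*\mathscr N)^*}$ by combining the density of glueings with a second application of the sequential weak$^*$ density theorem, Theorem \ref{thm:seq_weak-star_density}. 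You instead invoke the one-sided norming-set formula of Corollary \ref{cor:inj_norm_with_norming_sets} and prove directly --- by Theorem \ref{thm:Hahn-Banach} applied piecewise to glueings of $\varphi^*[\mathscr N]$, followed by an approximation argument --- that the glued pulled-back functionals form a norming subset of $(\varphi^*\mathscr N)^*$; this claim is correct, and your limit passage works provided the order inequalities are passed to the limit $\mm_\X$-a.e.\ along subsequences, a standard detail. Your variant costs an extra lemma but buys something concrete: the separability of $\XX$ and $\YY$, which enters only through Theorem \ref{thm:seq_weak-star_density}, is then used solely in the well-posedness of $T$ rather than twice, and the norm identity itself becomes an elementary Hahn--Banach/density argument. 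The remaining points you flag --- commuting $\circ\,\varphi$ with suprema via the countable sup property and the Riesz homomorphism property of $f\mapsto f\circ\varphi$, and the lower bound via trivial partitions --- are exactly the right ones and are handled correctly.
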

\begin{proof}
Define \(T\colon\mathscr M\otimes_\varepsilon\mathscr N\to(\varphi^*\mathscr M)\otimes_\varepsilon(\varphi^*\mathscr N)\)
as \(T\big(\sum_{i=1}^n v_i\otimes w_i\big)\coloneqq\sum_{i=1}^n(\varphi^*v_i)\otimes(\varphi^*w_i)\) for every
\(\sum_{i=1}^n v_i\otimes w_i\in\mathscr M\otimes_\varepsilon\mathscr N\). The well-posedness of \(T\) can be proved
exactly as in Theorem \ref{thm:pullback_and_proj}, while its linearity is clear by construction.
Moreover, for any \(\alpha=\sum_{i=1}^n v_i\otimes w_i\in\mathscr M\otimes_\varepsilon\mathscr N\),
\[\begin{split}
|\alpha|_\varepsilon\circ\varphi&=\bigvee\bigg\{\Big|\sum_{i=1}^n{\sf I}_\varphi(\varphi^*\omega)(\varphi^*v_i)\,{\sf I}_\varphi(\varphi^*\eta)(\varphi^*w_i)\Big|\;\bigg|\;\omega\in\mathbb D_{\mathscr M^*},\,\eta\in\mathbb D_{\mathscr N^*}\bigg\}\\
&\leq\bigvee\bigg\{\Big|\sum_{i=1}^n\Xi(\varphi^*v_i)\Theta(\varphi^*w_i)\Big|\;\bigg|\;
\Xi\in\mathbb D_{(\varphi^*\mathscr M)^*},\,\Theta\in\mathbb D_{(\varphi^*\mathscr N)^*}\bigg\}=|T(\alpha)|_\varepsilon.
\end{split}\]
Conversely, if \(\xi=\sum_{j=1}^m\1_{E_j}\cdot\varphi^*\omega_j\in\mathscr G(\varphi^*[\mathbb D_{\mathscr M^*}])\)
and \(\theta=\sum_{k=1}^\ell\1_{F_k}\cdot\varphi^*\eta_k\in\mathscr G(\varphi^*[\mathbb D_{\mathscr N^*}])\), then
\[\begin{split}
\bigg|\sum_{i=1}^n{\sf I}_\varphi(\xi)(\varphi^*v_i)\,{\sf I}_\varphi(\theta)
(\varphi^*w_i)\bigg|&=\sum_{j=1}^m\sum_{k=1}^\ell\1_{E_j\cap F_k}\bigg|\sum_{i=1}^n\omega_j(v_i)\eta_k(w_i)\bigg|\circ\varphi
\leq|\alpha|_\varepsilon\circ\varphi.
\end{split}\]
Using Theorem \ref{thm:seq_weak-star_density}, as well as the density of \(\mathscr G(\varphi^*[\mathbb D_{\mathscr M^*}])\)
and \(\mathscr G(\varphi^*[\mathbb D_{\mathscr N^*}])\) in \(\mathbb D_{\varphi^*\mathscr M^*}\) and \(\mathbb D_{\varphi^*\mathscr N^*}\),
respectively, we deduce that \(\big|\sum_{i=1}^n\Xi(\varphi^*v_i)\Theta(\varphi^*w_i)\big|\leq|\alpha|_\varepsilon\circ\varphi\)
for all \(\Xi\in\mathbb D_{\varphi^*\mathscr M^*}\) and \(\Theta\in\mathbb D_{\varphi^*\mathscr N^*}\). It follows
that \(|T(\alpha)|_\varepsilon\leq|\alpha|_\varepsilon\circ\varphi\) for every \(\alpha\in\mathscr M\otimes_\varepsilon\mathscr N\),
thus accordingly \(T\) can be uniquely extended to a linear operator
\(\varphi^*\colon\mathscr M\hat\otimes_\varepsilon\mathscr N\to(\varphi^*\mathscr M)\hat\otimes_\varepsilon(\varphi^*\mathscr N)\)
satisfying \(|\varphi^*\alpha|_\varepsilon=|\alpha|_\varepsilon\circ\varphi\) for every \(\alpha\in\mathscr M\hat\otimes_\varepsilon\mathscr N\).
Finally, the fact that \(\varphi^*[\mathscr M\hat\otimes_\varepsilon\mathscr N]\) generates
\((\varphi^*\mathscr M)\hat\otimes_\varepsilon(\varphi^*\mathscr N)\) can be proved as in Theorem \ref{thm:pullback_and_proj},
using Lemma \ref{lem:generators_inj_tensor} in place of Lemma \ref{lem:generators_proj_tensor}. The proof is then complete.
\end{proof}
\def\cprime{$'$} \def\cprime{$'$}

\end{document}